\newtheorem*{corollary*}{Corollary}
\theoremstyle{definition}
\DeclareMathOperator*{\hocolim}{hocolim}   
\title{The homotopy type of the PL cobordism category. I}
\author{Mauricio Gomez Lopez}
\email{gomezlom@lafayette.edu}
\address{Department of Mathematics \\
	       Lafayette College, Pardee Hall\\
	       Easton, PA, 18042 \\USA}
\date{\today}
\begin{document}

\maketitle 

\begin{abstract}      
In this paper, we introduce a bordism category $\mathsf{Cob}_d^{\mathrm{PL}}$ whose objects are bundles of closed $(d-1)$-dimensional piecewise linear manifolds and whose morphisms are bundles of $d$-dimensional piecewise linear cobordisms. In the main theorem of this article, we show that the classifying space $B\mathsf{Cob}_d^{\mathrm{PL}}$  is weak homotopy equivalent to an infinite loop space. We regard $\mathsf{Cob}_d^{\mathrm{PL}}$  as the piecewise linear analogue of the category of smooth cobordisms which has been studied extensively in connection with the Madsen-Weiss Theorem, and the main result of this paper is a first step towards obtaining Madsen-Weiss type results in the context of PL topology. 
\end{abstract}  

\tableofcontents

\section{Introduction} \label{introduction}

\subsection{Background and statement of the main result} \label{intro1}

The goal of the present paper and the recent article \cite{Gomez2} is to prove a piecewise linear analogue of the following theorem by Galatius, Madsen, Tillmann, and Weiss \cite{GMTW}. 

\theoremstyle{plain}  \newtheorem*{mainfour}{Theorem (Galatius, Madsen, Tillmann, Weiss)}

\begin{mainfour}
There is a weak homotopy equivalence 
\[
B\mathsf{Cob}_d \simeq \Omega^{\infty - 1} \mathbf{MT}O(d).
\]
\end{mainfour}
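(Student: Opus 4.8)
The plan is to adapt the scanning and parametrised Pontryagin--Thom argument of Galatius--Madsen--Tillmann--Weiss, in the streamlined form later given by Galatius and Randal-Williams: one realises $B\mathcal{C}_d$ as a space of submanifolds of an infinite-dimensional Euclidean space, and then evaluates that space directly.

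First I would introduce the relevant spaces of manifolds. For $1 \le n \le \infty$ let $\psi_d(\mathbb{R}^n)$ be the space of smooth $d$-dimensional submanifolds $W \subseteq \mathbb{R}^n$ without boundary that are closed as subsets, topologised in the standard way (as the representing space of the corresponding sheaf on the category of smooth manifolds), and put $\psi_d = \colim_n \psi_d(\mathbb{R}^n)$. Singling out the first coordinate of $\mathbb{R}\times\mathbb{R}^\infty$ as a time axis, I would form the semi-simplicial space $\mathcal{D}_\bullet$ whose $k$-simplices are tuples $(W;\, a_0 < \cdots < a_k)$ with $W \in \psi_d$ cylindrical near each hyperplane $\{a_i\}\times\mathbb{R}^\infty$ and equal to a product outside the slab $[a_0,a_k]\times\mathbb{R}^\infty$, the $i$-th face map deleting $a_i$. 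Recording the embedded manifold underlying a chain of composable cobordisms, extended cylindrically past its ends, gives a map of semi-simplicial spaces $N_\bullet\mathcal{C}_d \to \mathcal{D}_\bullet$.

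The argument then proceeds in three steps. First, show that $N_\bullet\mathcal{C}_d \to \mathcal{D}_\bullet$ induces a weak equivalence $B\mathcal{C}_d \simeq \|\mathcal{D}_\bullet\|$: the point is that the space of embeddings of a fixed compact cobordism into $\mathbb{R}^\infty$ is contractible, so that remembering the embedding costs nothing, while conversely any $W$ of the above type is, after being cut along its cylindrical slices, precisely such a chain of embedded cobordisms. Second, forget the chosen time slices and show that the resulting map $\|\mathcal{D}_\bullet\| \to \psi_d$ is a weak equivalence; heuristically, realising over all admissible choices of cutting heights reassembles the full space of embedded $d$-manifolds. Third, evaluate $\psi_d$ by scanning: a parametrised Pontryagin--Thom construction should produce a weak equivalence $\psi_d(\mathbb{R}^n) \simeq \Omega^{n-1}\,\mathrm{Th}(U^\perp_{d,n})$, where $U_{d,n}$ is the tautological $d$-plane bundle over the Grassmannian $\mathrm{Gr}_d(\mathbb{R}^n)$ and $U^\perp_{d,n}$ is its orthogonal complement --- near a generic point the manifold looks like an affine $d$-plane, that is, a point of the total space of $U^\perp_{d,n}$, while far from the manifold one sees the empty manifold, that is, the basepoint of the Thom space, and one scans all but one of the coordinate directions. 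Taking the colimit over $n$, with the spectrum structure maps $\Sigma\,\mathrm{Th}(U^\perp_{d,n}) \to \mathrm{Th}(U^\perp_{d,n+1})$, this gives $\psi_d \simeq \Omega^{\infty-1}\mathbf{MT}O(d)$, and combining with the first two steps yields $B\mathcal{C}_d \simeq \Omega^{\infty-1}\mathbf{MT}O(d)$.

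I expect the third step to be the main obstacle. The scanning equivalence $\psi_d(\mathbb{R}^n)\simeq\Omega^{n-1}\mathrm{Th}(U^\perp_{d,n})$ demands real care with the topology on these (in the limit, infinite-dimensional) spaces of manifolds: one must show that $\psi_d$, regarded as a sheaf on manifolds, is sufficiently microflexible for the local-to-global comparison with a space of sections of a bundle over the Grassmannian to go through --- this is exactly where the original argument invokes surgery below the middle dimension, and where the later treatment substitutes a surgery-free submersion argument. The second step is also delicate: one needs that forgetting the cutting heights induces a weak equivalence and not merely a homology equivalence, and here one can exploit the fact that $\mathcal{C}_d$, unlike the bare monoid of closed $(d-1)$-manifolds under disjoint union, already becomes grouplike after realising its nerve, so that no separate group-completion argument is required.
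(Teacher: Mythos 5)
Your outline captures the broad shape of the Galatius--Randal-Williams proof (a semi-simplicial resolution of the cobordism category by spaces of manifolds equipped with slicing heights, followed by scanning into a Thom spectrum), which is the route the paper describes, but the intermediate spaces you have written down make the argument break at step two. The space $\psi_d = \colim_n \psi_d(\mathbb{R}^n)$ that you introduce as a receptacle for the forgetful map $\|\mathcal{D}_\bullet\| \to \psi_d$ is weakly contractible: any family of $d$-manifolds in $\mathbb{R}^n$ can be translated to infinity along the $(n+1)$-st coordinate inside $\psi_d(\mathbb{R}^{n+1})$, and in the topology on these spaces of non-compact submanifolds that translation converges to the empty manifold, so the inclusion $\psi_d(\mathbb{R}^n) \hookrightarrow \psi_d(\mathbb{R}^{n+1})$ kills every homotopy class. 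Consequently the claimed equivalence $\|\mathcal{D}_\bullet\| \simeq \psi_d$ is false, since the left side is to be identified with the nontrivial space $B\mathcal{C}_d$. What survives when you forget the slicing heights from your $\mathcal{D}_\bullet$ is not the full space of $d$-manifolds but the subspace of manifolds confined to $\mathbb{R}\times (-1,1)^{n-1}$, i.e.\ bounded in all coordinate directions except the time axis; in the notation reproduced in this paper's introduction that is $\psi_d(n,1)$, and it is $B\mathcal{C}_d(\mathbb{R}^n) \simeq \psi_d(n,1)$, not $B\mathcal{C}_d \simeq \psi_d$, that the semi-simplicial argument establishes.

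The scanning step inherits the same confusion. The parametrised Pontryagin--Thom comparison identifies the full space $\psi_d(\mathbb{R}^n)$ with the Thom space $\mathrm{Th}(U^\perp_{d,n})$ directly, with no loop space on either side, so your claimed $\psi_d(\mathbb{R}^n) \simeq \Omega^{n-1}\mathrm{Th}(U^\perp_{d,n})$ has the loops on the wrong factor. The loops only enter when one climbs the filtration $\psi_d(n,1) \subset \psi_d(n,2) \subset \cdots \subset \psi_d(n,n) = \psi_d(\mathbb{R}^n)$ via the one-direction-at-a-time scanning maps $\mathcal{S}_k : \psi_d(n,k) \to \Omega\psi_d(n,k+1)$, each of which is its own delooping (and group-completion) argument; composing them yields $\psi_d(n,1) \simeq \Omega^{n-1}\psi_d(\mathbb{R}^n)$ and thence $\psi_d(n,1) \simeq \Omega^{n-1}\mathrm{Th}(U^\perp_{d,n})$. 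Finally, the passage to the limit in $n$ must use the spectrum structure maps $\Omega^{n-1}\psi_d(\mathbb{R}^n) \to \Omega^n\psi_d(\mathbb{R}^{n+1})$ (adjoint to the Thom-spectrum bonding maps), which are \emph{not} the inclusions $\psi_d(\mathbb{R}^n) \hookrightarrow \psi_d(\mathbb{R}^{n+1})$ you began with; conflating the two is precisely what produces the contractibility contradiction above. So the missing ingredients are the restriction to the filtration stage $\psi_d(n,1)$, the iterated scanning maps through the intermediate filtration, and the distinction between the naive colimit over inclusions and the homotopy colimit over the adjoint structure maps.
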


In this statement, $\mathbf{MT}O(d)$ is the Madsen-Tillmann spectrum, whose $N$-th space is the Thom space
$\mathrm{Th}\big(\gamma_{d,N}^{\perp}\big)$ of the vector bundle orthogonal to the universal vector bundle
$\gamma_{d,N}$ defined over the Grassmannian $Gr_d(\mathbb{R}^N)$. The space on the left-hand side is the classifying space of the smooth cobordism category 
$\mathsf{Cob}_d$, which has played a key role in the study of stable phenomena of diffeomorphism groups.

Let us give an outline of the definition of the category $\mathsf{Cob}_d$ (see also \cite{GMTW}).
The set of objects 
of $\mathsf{Cob}_d$ is defined as follows:
let $B_N$ 
denote the set of all $(d-1)$-dimensional closed submanifolds  
$M$ in $\mathbb{R}^N$.
The natural inclusion $\mathbb{R}^N \hookrightarrow \mathbb{R}^{N+1}$
induces a map $B_N \hookrightarrow B_{N+1}$ and
the set of objects
$\mathrm{Ob}(\mathsf{Cob}_d)$ is then defined to be
the set of all tuples $(M,a)$ where
$a\in \mathbb{R}$ and $M$ is an element of
the colimit $B_{\infty}$ of the following sequence of maps
\[
\cdots \hookrightarrow B_N \hookrightarrow B_{N+1} \hookrightarrow \ldots. 
\]
A non-identity morphism between two objects $(M_0, a_0)$ and $(M_1, a_1)$ 
is a triple $(W,a_0,a_1)$, where $W$ is a $d$-dimensional compact submanifold
of a product $[a_0,a_1]\times \mathbb{R}^N$ for which there is a value $\epsilon > 0$ such that the following holds: 
\begin{itemize}
\item[(i)] $W\cap([a_0, a_0 + \epsilon) \times \mathbb{R}^N) = [a_0, a_0 + \epsilon) \times M_0$.
\item[(ii)] $W \cap ((a_1 - \epsilon, a_1]\times \mathbb{R}^N) =(a_1 - \epsilon, a_1]\times M_1$.
\item[(iii)] $\partial W = W \cap (\{a_0,a_1\} \times \mathbb{R}^N)$.
\end{itemize} 

Two morphisms $(W_1,a_0,a_1)$ and $(W_2,a_1,a_2)$ in $\mathsf{Cob}_d$ are composable 
if the outgoing boundary of $W_1$ is equal to 
the incoming boundary of $W_2$. In this case, their
composition is equal to the triple
\[
(W_1 \cup W_2, a_0, a_2).
\]
Galatius and Randal-Williams provided in \cite{GRW} a more elementary proof of the main theorem from \cite{GMTW} using scanning methods and  the spaces of manifolds $\Psi_d(\mathbb{R}^N)$ defined by Galatius in \cite{Ga}. For a fixed positive integer $N$, the underlying set of 
$\Psi_d(\mathbb{R}^N)$ is the set of all $d$-dimensional submanifolds of $\mathbb{R}^N$ which are closed as subspaces. As indicated in \cite{Ga}, the spaces $\Psi_d(\mathbb{R}^N)$  form a spectrum $\Psi_d$ by letting $N$ vary, and the main result from \cite{GMTW} is obtained in \cite{GRW} by showing that  there are two weak equivalences      
\begin{equation} \label{equiv1}
B\mathsf{Cob}_d \simeq \Omega^{\infty-1}\Psi_d
\end{equation}
\begin{equation} \label{equiv2}
\Omega^{\infty-1}\mathbf{MT}O(d) \stackrel{\simeq}{\longrightarrow} \Omega^{\infty-1}\Psi_d. 
\end{equation}
In this article, we introduce the $d$-\textit{dimensional PL cobordism category}, denoted by $\mathsf{Cob}_d^{\mathrm{PL}}$, which we consider to be the appropriate piecewise linear analogue of the category of smooth cobordisms studied in \cite{GMTW} and \cite{GRW}. Our main result gives a piecewise linear version of the equivalence (\ref{equiv1}) given above. More precisely, we prove the following theorem.

\theoremstyle{plain} \newtheorem*{mytheorem}{Theorem}                
 
\begin{mytheorem}  \label{mytheorem}

There is a weak homotopy equivalence
\[
B\mathsf{Cob}_d^{\mathrm{PL}} \stackrel{\simeq}{\longrightarrow} \Omega^{\infty-1}\Psi^{\mathrm{PL}}_d.
\]
\end{mytheorem}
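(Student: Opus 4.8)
The plan is to transplant the scanning proof of the equivalence \eqref{equiv1} from \cite{GRW} into the simplicial, piecewise linear setting. The argument factors through a simplicial set $\psi_d^{PL}(\infty,1)$ of PL submanifolds of a ``slab'': for $1\le k\le n$ let $\psi_d^{PL}(n,k)\subseteq\Psi_d^{PL}(\mathbb{R}^n)$ be the sub-simplicial set whose $p$-simplices are the $d$-dimensional PL submanifold bundles over $\Delta^p$ whose total space is contained in $\Delta^p\times\mathbb{R}^k\times(-1,1)^{n-k}$, and put $\psi_d^{PL}(\infty,k)=\colim_n\psi_d^{PL}(n,k)$. The equivalence asserted in the theorem is the composite
\[
B\mathcal{C}_d^{PL}\;\xrightarrow{\ \simeq\ }\;\psi_d^{PL}(\infty,1)\;\xrightarrow{\ \simeq\ }\;\Omega^{\infty-1}\mathbf{\Psi}_d^{PL},
\]
in which the first arrow is a zig-zag of weak equivalences between classifying spaces of auxiliary cobordism categories and the second is the (honest) iterated scanning map. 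Intuitively, passing from $B\mathcal{C}_d^{PL}$ to $\psi_d^{PL}(\infty,1)$ amounts to sending a string of composable PL cobordism bundles, viewed as a simplex of the (semi-simplicial, fatly realized) nerve of $\mathcal{C}_d^{PL}$, to the non-compact PL submanifold bundle obtained by laying the cobordisms end to end in $\Delta^p\times\mathbb{R}\times(-1,1)^\infty$ and extending their collars over all of $\mathbb{R}$.

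For the first arrow I would interpolate between $\mathcal{C}_d^{PL}$ and $\psi_d^{PL}(\infty,1)$ by a finite chain of non-unital simplicial cobordism categories, following \S 3--4 of \cite{GRW}: one successively enlarges the morphism simplicial sets of $\mathcal{C}_d^{PL}$ by recording the germ of a collar coordinate instead of an actual collar, then also a finite set of PL-regular values of the height function, while simultaneously replacing the discrete set of objects by the full simplicial set of closed PL $(d-1)$-submanifold bundles equipped with a real height. Each comparison functor should induce a weak equivalence on classifying spaces, by the usual mechanism: Segal's lemma on bar constructions reduces this to showing that the relevant forgetful maps have contractible homotopy fibres, and those statements reduce in turn to a single piece of PL general position, namely that for a PL submanifold bundle over a polyhedron a generic affine functional on the ambient $\mathbb{R}^n$ is fibrewise PL-regular and the space of such functionals is contractible. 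Identifying the classifying space of the largest category in the chain with $\psi_d^{PL}(\infty,1)$ is then a cofinality argument as in \cite{GRW}.

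For the second arrow I would establish a piecewise linear microflexibility property of the assignment $U\mapsto\Psi_d^{PL}(U)$ on PL open subsets of Euclidean spaces, and deduce that, for each fixed $n$, the restriction maps obtained by scanning successive coordinates,
\[
\psi_d^{PL}(n,1)\;\longrightarrow\;\Omega\,\psi_d^{PL}(n,2)\;\longrightarrow\;\cdots\;\longrightarrow\;\Omega^{n-1}\psi_d^{PL}(n,n)=\Omega^{n-1}\Psi_d^{PL}(\mathbb{R}^n),
\]
are weak equivalences; passing to the colimit over $n$ then yields the equivalence $\psi_d^{PL}(\infty,1)\simeq\Omega^{\infty-1}\mathbf{\Psi}_d^{PL}$, the right-hand colimit being $\Omega^{\infty-1}\mathbf{\Psi}_d^{PL}$ by the definition of the spectrum $\mathbf{\Psi}_d^{PL}$. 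Concretely, the content to be proved is a parametrized PL compression lemma: a finite, PL-continuously parametrized family over a polyhedron of $d$-dimensional PL submanifolds of a slab $\mathbb{R}^k\times(-1,1)^{n-k}$ can be PL-isotoped, compatibly with the parametrization, so as to become a product configuration in the last open interval factor near its two ends. This is exactly what forces the scanned family to have the correct behaviour at infinity and upgrades each scanning map to a weak equivalence.

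I anticipate that this parametrized compression statement, and the microflexibility property it feeds, will be the main obstacle, since the smooth argument is run with tubular neighbourhoods, gradient-like vector fields and their flows, none of which exist in PL. I would replace these by PL regular neighbourhoods, mapping-cylinder structures and, crucially, the PL isotopy extension theorem, and the real work is to carry out these constructions continuously---that is, simplicially---over a polyhedron of parameters, in a way compatible with the bundle-theoretic definitions of $\mathcal{C}_d^{PL}$ and $\mathbf{\Psi}_d^{PL}$. A subsidiary point demanding care throughout is the non-unitality of $\mathcal{C}_d^{PL}$: one must work with the semi-simplicial nerve and its fat realization, verify that the bar-construction arguments of the first step go through without identities, and check that ``laying cobordisms end to end'' is genuinely compatible with composition at the semi-simplicial level.
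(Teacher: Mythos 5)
Your overall architecture is correct and matches the paper's: factor the equivalence through $|\psi_d(\infty,1)_{\bullet}|$, with a zig-zag of auxiliary cobordism-type models on the left and iterated scanning on the right. But there is a genuine gap in the key technical step, and it is precisely the one the paper was written to overcome.

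You assert that the contractibility of the fibres in the first arrow ``reduces to a single piece of PL general position, namely that for a PL submanifold bundle over a polyhedron a generic affine functional on the ambient $\mathbb{R}^n$ is fibrewise PL-regular and the space of such functionals is contractible.'' This statement is false in the PL category, and the paper opens its introduction by isolating exactly this failure: the smooth fact that a regular value of $x_1$ on one fibre is a regular value on all nearby fibres has no PL analogue, because PL regularity is not an open condition across fibres. As a consequence, the $\Delta$-set of all $W$ with a fiberwise regular value, $\psi_d^R(N,1)_{\bullet}$, is an \emph{a priori} proper sub-object of $\psi_d(N,1)_{\bullet}$, and the theorem requires proving that the inclusion $\psi_d^R(N,1)_{\bullet}\hookrightarrow\psi_d(N,1)_{\bullet}$ is a weak equivalence. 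This is the paper's Theorem~\ref{longman}, the core of Section~6, and it is far from a genericity statement: it is proved by an inductive concordance construction using regular neighbourhoods and Hudson's Isotopy Extension Theorem, and the hypothesis $N-d\geq 3$ enters precisely here (and again in $\S7.4$). Your proposal mentions the PL isotopy extension theorem only as a drop-in replacement for smooth flows, without recognizing that the actual target of those techniques is a statement ($\psi_d^R\hookrightarrow\psi_d$ is an equivalence) that has no smooth counterpart and is not a corollary of general position.

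A second, related omission: the paper's proof of both Theorem~\ref{longman} and Proposition~\ref{incempty} relies on the subdivision map $\rho:|\widetilde{\Psi}_d(\mathbb{R}^N)_{\bullet}|\to|\widetilde{\Psi}_d(\mathbb{R}^N)_{\bullet}|$ constructed in $\S4$, which exists because $\Psi_d(\mathbb{R}^N)_{\bullet}$ is the underlying $\Delta$-set of a functor $\mathbf{PL}^{op}\to\mathbf{Sets}$. This is what lets one replace a map from a compact pair by a map from a finite ordered simplicial complex, subdivide finely, and then arrange fiberwise regularity simplex by simplex. Your ``microflexibility'' and ``parametrized compression'' sketches gesture at the right kind of deformation, but the mechanism by which one passes from a local improvement (a fiberwise regular value over one small simplex) to a global one (an element of $\psi_d^R$) is the subdivision map together with $\rho\simeq\mathrm{id}$, and that piece of the argument is absent from your outline. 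In short: the scaffolding of your proposal is right, but the single most important PL-specific ingredient --- the introduction of $\psi_d^R(N,1)_{\bullet}$, the nontrivial equivalence $\psi_d^R\simeq\psi_d$ under $N-d\geq 3$, and the subdivision-map machinery supporting it --- is replaced in your write-up by a genericity claim that is false.
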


The spectrum $\Psi^{\mathrm{PL}}_d$ that appears in this statement is a \textit{spectrum of PL manifolds,} analogous to the one defined by Galatius in \cite{Ga}. The $N$-th space of this spectrum is the geometric realization of a simplicial set $\Psi_d(\mathbb{R}^N)_{\bullet}$, whose role in this article is similar to the one played in \cite{GRW} by the space of smooth manifolds $\Psi_d(\mathbb{R}^N)$. As it  is the case with the equivalences indicated in (\ref{equiv1}) and (\ref{equiv2}), the superscript $\infty-1$ in  
$\Omega^{\infty-1}\Psi^{\mathrm{PL}}_d$ indicates we are taking the infinite loop space of the suspension
of $\Psi^{\mathrm{PL}}_d$.

\subsection{Outline of proof}  \label{intro2}

For the proof of our main theorem, we will follow a strategy similar to the one implemented by Galatius and Randal-Williams in \cite{GRW} to prove the weak equivalence (\ref{equiv1}). More precisely, we will introduce a filtration
\begin{equation} \label{introfil}
\psi_d(N,1)_{\bullet} \hookrightarrow \psi_d(N,2)_{\bullet} \hookrightarrow \ldots \hookrightarrow \psi_d(N,N)_{\bullet} = \Psi_d(\mathbb{R}^N)_{\bullet} 
\end{equation} 
similar to the one used in \cite{GRW} and we will show that
there are two weak equivalences
\begin{equation} \label{plequiv}
B\mathsf{Cob}_d^{\mathrm{PL}}(\mathbb{R}^N) \simeq |\psi_d(N,1)_{\bullet}|  
\end{equation}
\begin{equation} \label{plequiv2}
|\psi_d(N,1)_{\bullet}| \stackrel{\simeq}{\longrightarrow} \Omega^{N-1}|\Psi_d(\mathbb{R}^N)_{\bullet}|
\end{equation}
provided that $N-d\geq 3$. 
By allowing $N \rightarrow \infty$, we obtain the weak equivalence stated in the main theorem.
 In (\ref{plequiv}), 
$\mathsf{Cob}_d^{\mathrm{PL}}(\mathbb{R}^N)$ denotes a subcategory of $\mathsf{Cob}_d^{\mathrm{PL}}$ where morphisms (i.e., cobordisms between $d-1$ dimensional closed manifolds) are contained in $\mathbb{R}^N$. 

Even though our proof's strategy is similar to the one from \cite{GRW}, our methods differ significantly from those used by Galatius and Randal-Williams. This is especially the case for the proof of the weak equivalence (\ref{plequiv}). In the smooth case, the equivalence (\ref{plequiv}) follows quickly from the following fact:

\theoremstyle{plain} \newtheorem*{fact}{Fact}

\begin{fact}

Fix an $m$-dimensional smooth manifold $M$.
Let $W$ be a smooth $(d+m)$-dimensional submanifold
of $M\times \mathbb{R} \times (-1,1)^{N-1}$ which is closed as a subspace and
such that the projection $\pi: W \rightarrow M$
is a smooth submersion of codimension $d$. Moreover, fix a point
$\lambda_{0} \in M$. If $a\in \mathbb{R}$
is a regular value for the projection 
$x_1:\pi^{-1}(\lambda_{0}) \rightarrow \mathbb{R}$
onto the first component of $\mathbb{R}\times (-1,1)^{N-1}$, then
$a$ is also a regular value for 
$x_1:\pi^{-1}(\lambda) \rightarrow \mathbb{R}$ for all $\lambda$ sufficiently close
to $\lambda_{0}$. 

\end{fact}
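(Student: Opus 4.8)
The plan is to replace the $M$-indexed family of height functions $x_1|_{\pi^{-1}(\alpha)}$ by the single smooth map
\[
F := (\pi, x_1)\colon W \longrightarrow M\times\mathbb{R},
\]
where I also write $x_1\colon W\to\mathbb{R}$ for the composite of the inclusion $W\hookrightarrow M\times\mathbb{R}\times(-1,1)^{N-1}$ with the projection onto the $\mathbb{R}$-factor, and then to combine the fact that ``being a submersion'' is an open condition on the source with a compactness argument in the fibre direction. The first step is a pointwise reformulation: for $p\in W$ with $\pi(p)=\alpha$, the differential $dF_p$ is surjective if and only if the restriction of $d(x_1)_p$ to $\ker(d\pi_p)=T_p\pi^{-1}(\alpha)$ is non-zero. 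Indeed $d\pi_p$ is already surjective, so the image of $dF_p$ is the graph of a linear functional $T_\alpha M\to\mathbb{R}$ unless that image also contains $\{0\}\times\mathbb{R}$, and the latter happens exactly when $x_1$ has non-trivial derivative along the fibre direction at $p$; thus $F$ is a submersion at $p$ precisely when $p$ is a regular point of $x_1\colon\pi^{-1}(\alpha)\to\mathbb{R}$. Consequently $a_0$ is a regular value of $x_1|_{\pi^{-1}(\alpha)}$ if and only if $F$ is a submersion at every point of $F^{-1}(\alpha,a_0)$, and the statement to be proved turns into: the set of $\alpha\in M$ over which $F$ is submersive along $F^{-1}(\alpha,a_0)$ is open.

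Let $Z\subseteq W$ be the set of points at which $F$ fails to be a submersion. Since the rank of $dF$ is lower semicontinuous, $Z$ is closed in $W$. By the first step, the hypothesis on $a_0$ is precisely that $Z$ is disjoint from $F^{-1}(\lambda, a_0)$, that is $Z\cap\pi^{-1}(\lambda)\cap x_1^{-1}(a_0)=\emptyset$, and what is wanted is a neighbourhood $U\ni\lambda$ with $Z\cap\pi^{-1}(U)\cap x_1^{-1}(a_0)=\emptyset$.

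Here I would bring in compactness. The fibres of the families one cares about are compact (they are bundles of compact cobordisms), so $\pi$ is a proper map; fixing a compact neighbourhood $\overline{U}_0$ of $\lambda$, the set $K:=\pi^{-1}(\overline{U}_0)\cap x_1^{-1}(a_0)$ is compact, hence so is $Z\cap K$, and hence its image $\pi(Z\cap K)$ is a compact, and so closed, subset of $\overline{U}_0$. By the hypothesis this closed set does not contain $\lambda$, so any open neighbourhood $U$ of $\lambda$ contained in $\overline{U}_0\setminus\pi(Z\cap K)$ satisfies $Z\cap\pi^{-1}(U)\cap x_1^{-1}(a_0)=\emptyset$; by the first step this is the desired conclusion.

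The step I expect to be the real obstacle is exactly the compactness input of the last paragraph. For an arbitrary embedded submanifold $W$ of $M\times\mathbb{R}\times(-1,1)^{N-1}$ with $\pi$ merely a submersion, the fibrewise critical points lying over the level $a_0$ can drift off towards the boundary of the cube $(-1,1)^{N-1}$ as $\alpha\to\lambda$ --- so that $\pi(Z\cap x_1^{-1}(a_0))$ fails to be closed near $\lambda$ --- and the conclusion then breaks down; what rescues it is that, for the families relevant to this paper, $\pi$ (equivalently, its restriction to the critical locus $Z\cap x_1^{-1}(a_0)$) is proper over a neighbourhood of $\lambda$. Granting this, the remainder is just the openness of the submersion condition and the elementary point-set topology above.
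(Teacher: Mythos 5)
The paper states this Fact without proof, attributing it to Galatius and Randal--Williams as a standard smooth-topology input, so there is no in-text argument to compare against. Your proof is correct and is the expected one: the reformulation in terms of surjectivity of $dF$ for $F=(\pi,x_1)$ is exactly right, the non-submersion locus $Z$ is closed by lower semicontinuity of rank, and the remaining point is to separate $\lambda$ from $\pi\big(Z\cap x_1^{-1}(a_0)\big)$ using compactness. You are also right that the statement as literally written needs an extra hypothesis --- without it one can let a fibrewise critical point at height $a_0$ escape to the boundary of $(-1,1)^{N-1}$ as $\alpha\to\lambda$, and the conclusion fails --- and you correctly identify that in the context where the Fact is invoked this hypothesis is present, since $W$ is required to be closed as a subspace of $M\times\mathbb{R}^N$ while contained in $M\times\mathbb{R}\times(-1,1)^{N-1}$.

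One small imprecision is worth flagging. You justify the compactness of $K=\pi^{-1}(\overline{U}_0)\cap x_1^{-1}(a_0)$ by asserting that $\pi$ itself is proper because the fibres are compact cobordisms. For the families actually at stake ($p$-simplices of $\psi_d(N,1)_{\bullet}$ and their smooth analogues), the fibres $W_\lambda\subset\mathbb{R}\times(-1,1)^{N-1}$ are generally noncompact --- they run off to $\pm\infty$ in the $x_1$-direction --- so $\pi$ alone is not proper. What \emph{is} proper is the joint map $F=(\pi,x_1)\colon W\to M\times\mathbb{R}$, precisely because $W$ is closed in $M\times\mathbb{R}^N$ and the remaining factor $(-1,1)^{N-1}$ has compact closure; and it is $K=F^{-1}\big(\overline{U}_0\times\{a_0\}\big)$, the preimage of a compact set under this proper map, whose compactness your argument actually uses. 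With that one-line repair the proof is complete and gives exactly the Fact needed, in the form in which it is applied.
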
 

Let us discuss briefly why this result is true. Consider the fiber $W_{\lambda_0}$ of the projection $\pi$ over the point $\lambda_0$. Saying that $a$ is a regular value for the projection $x_1: W_{\lambda_0} \rightarrow \mathbb{R}$ is equivalent to saying that $W_{\lambda_0}$ is transversal to the hyperplane in $\mathbb{R}^N$ given by $x_1 = a$. Since transversality is an open condition in the $C^{\infty}$ topology, fibers of $\pi$ that are near to $W_{\lambda_0}$ will also be transversal to this hyperplane. In other words, $a$ is also a regular value for $x_1: W_{\lambda} \rightarrow \mathbb{R}$ for any $\lambda$ sufficiently close to $\lambda_0$. 

Unfortunately, the above statement is not valid for PL manifolds in general. A counterexample can be obtained as follows. First, 
we express the product $[0,1]\times \mathbb{R}$ as a union $[0,1]\times \mathbb{R} = A \cup B \cup C$, where $A = [0,1]\times (-\infty,0]$, $B$ is the convex hull of the points $(0,0)$, $(1,0)$, $(1, \frac{1}{2})$, and $C$ is the closure of the complement of $A \cup B$. We can also express the subsets $B$ and $C$ in terms of linear combinations as follows:
\begin{equation*}
B = \Big\{ \alpha\hspace{0.01cm}(1,0) + \beta(1, \frac{1}{2}) \hspace{0.03cm} : \hspace{0.03cm} 0 \leq \alpha \leq 1, \hspace{0.03cm} 0 \leq \beta \leq 1  \Big\}
\end{equation*}
\begin{equation*}
C = \Big\{ \lambda(0,1) + \beta(1, \frac{1}{2}) \hspace{0.03cm} : \hspace{0.03cm} 0 \leq \lambda, \hspace{0.03cm} 0 \leq \beta \leq 1  \Big\}
\end{equation*}
Next, we define a PL embedding $f:[0,1]\times \mathbb{R} \hookrightarrow [0,1]\times \mathbb{R} \times (-1,1)$ by gluing the following three linear maps: 

\begin{itemize}

\item[$\cdot$] $f_A: A \hookrightarrow   [0,1]\times \mathbb{R} \times (-1,1)$, defined by $f_A(t,x) = (t,x,0)$.

\item[$\cdot$] $f_B: B \hookrightarrow   [0,1]\times \mathbb{R} \times (-1,1)$, which maps $(1,0)$ and  $(1, \frac{1}{2})$ to $(1,0,0)$ and $(1, 0, \frac{1}{2})$ respectively. We then extend $f_B$ linearly to the rest of $B$. 

\item[$\cdot$] $f_C: C \hookrightarrow   [0,1]\times \mathbb{R} \times (-1,1)$, which maps $(0,1)$ and  $(1, \frac{1}{2})$ to $(0,1,0)$ and $(1, 0, \frac{1}{2})$ respectively. Again, we extend $f_C$ linearly to the rest of $C$.

\end{itemize}

The image of the embedding $f:[0,1]\times \mathbb{R} \hookrightarrow [0,1]\times \mathbb{R} \times (-1,1)$, which we will denote by $W$, is a PL submanifold of $ [0,1]\times \mathbb{R} \times (-1,1)$. Evidently, $W$ is a closed subspace (in the topological sense) of  $ [0,1]\times \mathbb{R} \times (-1,1)$. Since the map $f$ commutes with the projection onto the interval $[0,1]$,  the projection $\pi: W \rightarrow [0,1]$ is a trivial PL bundle with fiber $\mathbb{R}$. In particular, $\pi$ is a PL submersion of codimension 1 (see \S \ref{section2.2} for the definition of PL submersion). Moreover, the restriction of $f$ on $\{0\}\times \mathbb{R}$ commutes with the projection onto $\mathbb{R}$. Thus, if $W_0$ is the fiber over $0 \in [0,1]$ of the map $\pi: W \rightarrow [0,1]$, then $x =0$ is a regular value for the projection $W_0 \rightarrow \mathbb{R}$. However,  for any point $\lambda \in [0,1]$ with $\lambda \neq 0$, the pre-image of $x=0$ for the projection $W_{\lambda} \rightarrow \mathbb{R}$ is a codimension 0 submanifold. Therefore, $x=0$ cannot be a regular value for $W_{\lambda}\rightarrow \mathbb{R}$ if $\lambda \neq 0$ (see \S \ref{section4.1} for the definition of regular value for PL maps).

Since the fact stated above fails for PL manifolds, we cannot follow the argument from \cite{GRW} too closely. To fix this issue, we define a subsimplicial set $\psi_d^R(N,1)_{\bullet}$ of  $\psi_d(N,1)_{\bullet}$  for which the \textit{fiberwise regularity} described in the fact given above does hold. The definition of $\psi_d^R(N,1)_{\bullet}$ will be provided in \S \ref{section4.1}. We will then obtain (\ref{plequiv}) by showing that there are two equivalences of the form
\begin{equation} \label{introchain}
\xymatrix{
B\mathsf{Cob}_d^{\mathrm{PL}}(\mathbb{R}^N) \simeq |\psi_d^R(N,1)_{\bullet}| \ar@{^{(}->}[r]^{\quad\qquad\simeq} & |\psi_d(N,1)_{\bullet}| }
\end{equation} 
when $N-d\geq 3$. Proving that the inclusion $\psi_d^R(N,1)_{\bullet} \hookrightarrow \psi_d(N,1)_{\bullet}$ is a weak equivalence is the core of the present paper. It is also the part of the argument where we introduce most of our novel methods. Similar fiberwise regularity issues arise in the proof of the weak equivalence 
(\ref{plequiv2}), which we can also fix by applying the techniques we use to establish (\ref{plequiv}).

\subsection{Recent developments and related work}  \label{intro3}

In the recent article \cite{Gomez2}, we introduced \textit{the PL Madsen-Tillmann spectrum}
$\mathbf{MT}PL(d)$ and showed that there is a weak homotopy equivalence
\begin{equation} \label{equiv3}
\Omega^{\infty-1}\mathbf{MT}PL(d) \stackrel{\simeq}{\longrightarrow}\Omega^{\infty-1}\Psi^{\mathrm{PL}}_d.
\end{equation}
Together with this article's main theorem, the weak equivalence given above completes the proof of the piecewise linear analogue of the theorem proven by Galatius, Madsen, Tillmann, and Weiss in \cite{GMTW}. It is worth pointing out that, 
in joint work with A. Kupers, the author has proven in  \cite{GomezKupers} 
the topological version of the main result from 
 \cite{GMTW}. More concretely, in \cite{GomezKupers}, we introduce the \textit{topological cobordism category}
 $\mathsf{Cob}_d^{\mathrm{Top}}$ and the \textit{topological Madsen-Tillmann spectrum} 
 $\mathbf{MT}TOP(d)$, and we show that there is a weak equivalence of the form  
$B\mathsf{Cob}_d^{\mathrm{Top}} \simeq \Omega^{\infty -1} \mathbf{MT}TOP(d)$, provided that 
$d \neq 4$. The reason we need the restriction $d \neq 4$ is that our proof for the topological version of the weak equivalence (\ref{equiv3}) relies on smoothing theory for topological manifolds, which requires the condition 
$d \neq 4$. However, an advantage of the methods  developed in \cite{Gomez2} is that they can be easily adapted to the topological case, regardless of the dimension. Thus, by carrying out the arguments from \cite{Gomez2} in the topological setting, we obtain a new proof of the weak equivalence  $B\mathsf{Cob}_d^{\mathrm{Top}} \simeq \Omega^{\infty -1} \mathbf{MT}TOP(d)$ which works for all dimensions $d$, including $d = 4$. See \cite{Gomez2} for more details. 

We close this section by pointing out the main differences between the methods implemented in \cite{GomezKupers} and those used by the author in \cite{Gomez2} and the present paper. The proof of the weak equivalence $B\mathsf{Cob}_d^{\mathrm{Top}} \simeq \Omega^{\infty -1} \mathbf{MT}TOP(d)$ follows a strategy similar to the one used by the author 
to prove the same result in the PL category. Namely, the proof in \cite{GomezKupers} is broken down into the following steps: 
\begin{equation} \label{plequivtop}
B\mathsf{Cob}_d^{\mathrm{Top}} \hspace{0.1cm} \stackrel{(1)}{\simeq}  \hspace{0.1cm}\Omega^{\infty-1}\Psi^{\mathrm{Top}}_d  \hspace{0.1cm}
\stackrel{(2)}{\simeq} \hspace{0.1cm} \Omega^{\infty -1} \mathbf{MT}TOP(d).
\end{equation}
Here, $\Psi^{\mathrm{Top}}_d$ is 
the topological version of the spectrum $\Psi^{\mathrm{PL}}_d$. 
As mentioned before, the equivalence (2)
in (\ref{plequivtop}) is established in \cite{GomezKupers} using smoothing theory, whereas  the PL version of this equivalence 
is proven in \cite{Gomez2} using scanning techniques akin to those used by Galatius and Randal-Williams in \cite{GRW}. On the other hand, the first
equivalence of (\ref{plequivtop}) is proven in \cite{GomezKupers} using Gromov's $h$-principle. A key result that makes it possible to 
apply Gromov's $h$-principle in the topological setting is a `respectful' version of the Isotopy Extension Theorem proven
by Siebenmann (see Theorem 6.5 and Complement 6.6 of \cite{Si}). While it is claimed in \cite{Si} that this `respectful' version of the Isotopy Extension Theorem 
holds for both the $\mathrm{TOP}$ and $\mathrm{PL}$ categories, the proof of the PL version of this result does not exist in the literature
to the author's knowledge, which is the main reason why Gromov's $h$-principle cannot be used to prove the
first weak equivalence of (\ref{plequivtop}) in the PL setting. Instead, in the present paper,
we bypass Gromov's $h$-principle machinery by performing a delooping argument similar to the
one carried out by Galatius and Randal-Williams in \cite{GRW}. 

\theoremstyle{definition}  \newtheorem*{ack}{Acknowledgments}

\begin{ack}

First and foremost, I would like to express my sincere gratitude to Oscar Randal-Williams for being such a supportive advisor and mentor during my Ph.D. at the University of Copenhagen. This paper is based on the results I presented in my doctoral dissertation, and I would like to thank Oscar for his guidance, patience, and time during the development of this project.
I am also grateful to S\o ren Galatius for helpful discussions and the anonymous referee for their valuable comments and suggestions. 

\end{ack}

\section{Spaces of PL manifolds}  \label{section2}

\subsection{Standard notions from piecewise linear topology} \label{section2.1} 

We start this section by recollecting a few standard definitions from piecewise linear topology that we will use throughout this paper. A \textit{piecewise linear chart} (or PL chart for short) for a topological space $X$ is a continuous embedding $h: |K| \rightarrow X$, where $|K|$ is the geometric realization of a finite simplicial complex $K$. Two piecewise linear charts 
$h_1: |K| \rightarrow X$ and $h_2: |K'| \rightarrow X$  are said to be \textit{compatible} if either $h_1(|K|)$ and $h_2(|K'|)$ are disjoint,  or there exists subdivisions of $K$ and $K'$ which triangulate $h_1^{-1}(h_1(|K|)\cap h_2(|K'|))$ and $h_2^{-1}(h_1(|K|)\cap h_2(|K'|))$ respectively and such that the composition
\[
\xymatrix{ 
h_1^{-1}(h_1(|K|)\cap h_2(|K'|)) \ar[r]^{\hspace{0.35cm}h_1} & h_1(|K|)\cap h_2(|K'|) \ar[r]^{\hspace{-0.35cm}h^{-1}_2} & h_2^{-1}(h_1(|K|)\cap h_2(|K'|))}
\]
is linear on each simplex of the subdivision of $K$ contained in $h_1^{-1}(h_1(|K|)\cap h_2(|K'|))$. A \textit{piecewise linear space} (or PL space for short) is a second countable Hausdorff space $X$ together with a collection $\Lambda$ of PL charts satisfying the following properties: 

\begin{itemize}

\item[(i)] Any two charts in $\Lambda$ are compatible. 

\item[(ii)] For any point $x \in X$, there is a PL chart $h: |K| \rightarrow X$ in $\Lambda$ such that $h(|K|)$ is a neighborhood of $x$. 

\item[(iii)] The collection $\Lambda$ is maximal. That is, if $h: |K| \rightarrow X$ is a PL chart which is compatible with every chart in $\Lambda$, then $h: |K| \rightarrow X$ must also belong to $\Lambda$. 

\end{itemize}

The collection of charts $\Lambda$ is typically called a \textit{PL structure on X}. Now, consider a PL space $X$ with PL structure 
$\Lambda$, and let $X_0 \subseteq X$ be a subspace of $X$. We say that $X_0$ is a \textit{PL subspace of $X$} if, for any point 
$x \in X_0$, we can find a chart $h: |K| \rightarrow X$ in the PL structure $\Lambda$
whose image is contained in $X_0$ and has the property that
$\mathrm{Im}\hspace{0.05cm}h$ is a neighborhood of $x$ in $X_0$. 
Note that $X_0$ is itself a PL space. 
The PL structure on $X_0$ is the subcollection $\Lambda_0$ of $\Lambda$ consisting of all charts $h: |K| \rightarrow X$ such that 
$\mathrm{Im}\hspace{0.05cm}h \subseteq X_0$.

Now, consider two PL spaces $X$ and $Y$. A continuous map $f: X \rightarrow Y$ is said to be a \textit{PL map} if, for each $x \in X$, we can find PL charts $h_0: |K| \rightarrow X$ and $h_1:|K'| \rightarrow Y$ for $X$ and $Y$ satisfying the following:

\begin{itemize}

\item[(i)] $x$ is in the interior of $h_0(|K|)$ and $f(x)$ is in the interior of $h_1(|K'|)$. 

\item[(ii)] $f\big( h_0(|K|) \big) \subseteq h_1(|K'|)$. 

\item[(iii)] The composition $h_1^{-1} \circ f \circ h_0$ maps each simplex of $K$ linearly  to a simplex of $K'$. 

\end{itemize}

We say that $f: X \rightarrow Y$ is a \textit{PL homeomorphism} if its both a PL map and a homeomorphism. 
Moreover, we say that a PL map $g: X \rightarrow Y$ is a \textit{PL embedding} if 
$\mathrm{Im}\hspace{0.05cm}g$ is a PL subspace of $Y$ and the map 
$g: X \rightarrow \mathrm{Im}\hspace{0.05cm}g$ is a PL homeomorphism.

For any open set $U$ in a Euclidean space $\mathbb{R}^n$, we can define a canonical piecewise linear structure on $U$ by taking all PL charts compatible with inclusions  of the form $|K| \hookrightarrow U$, where $K$ is any finite simplicial complex consisting of linear simplices contained in $U$. 
We say that $M$ is a \textit{PL manifold of dimension n} if $M$ is a PL space and, for each $x \in M$, we can find a piecewise linear embedding $h: U \rightarrow M$ which is defined on some open set $U \subseteq \mathbb{R}^n$ and whose image is a neighborhood of the point $x$. Similarly, for any open set $U \subseteq \mathbb{R}^n$, we can also define a canonical piecewise linear structure on the intersection $U \cap \mathbb{R}^n_+$, where $\mathbb{R}^n_+$ denotes the subspace of $\mathbb{R}^n$ defined by the inequality $x_n \geq 0$.  Then, we can define an \textit{n-dimensional PL manifold with boundary} to be a PL space $M$ such that, for each $x \in M$, there is a piecewise linear embedding 
$h: U \cap \mathbb{R}^n_+ \rightarrow M$ whose image is a neighborhood of $x$. 

\subsection{Spaces of PL manifolds} \label{section2.2}

Our goal in this subsection is to define the simplicial sets $\Psi_d(U)_{\bullet}$ (where $U$ is an arbitrary open set inside some $\mathbb{R}^N$) that will act as the PL analogues of the spaces of smooth manifolds defined by Galatius in \cite{Ga}.  However, instead of just defining a simplicial set $\Psi_d(U)_{\bullet}$, 
we will construct a much more general object. Namely, for each open set $U$, we will define a \textit{PL set}, i.e., a contravariant functor 
$\Psi_d(U): \mathbf{PL}^{op} \rightarrow \mathbf{Sets}$ defined on the category $\mathbf{PL}$ of PL spaces and PL maps. Roughly speaking, for a given PL space $P$, the elements of the set $\Psi_d(U)(P)$ will be families of $d$-dimensional PL submanifolds of $U$ parameterized by $P$. As we shall see later, there is a canonical embedding $\iota:\Delta \hookrightarrow \mathbf{PL}$, where $\Delta$ denotes the category of finite sets $[p]$ and 
order-preserving functions. The simplicial set $\Psi_d(U)_{\bullet}$ is then obtained by precomposing $\Psi_d(U): \mathbf{PL}^{op} \rightarrow \mathbf{Sets}$ 
with the induced embedding $\iota^{op}: \Delta^{op} \hookrightarrow \mathbf{PL}^{op}$.

To define the sets $\Psi_d(U)(P)$, we will need the following notion from PL topology which perhaps is not as standard as the material that we reviewed in the previous subsection. 

\theoremstyle{definition} \newtheorem{sbmn}{Definition}[section]

\begin{sbmn} \label{sbmn}

A PL map
$\pi: E \rightarrow P$ is said to be a
\textit{PL submersion of codimension d} if, for
each $x \in E$, there is an open neighborhood $V$ of $\pi(x)$
in $P$ and an open piecewise linear embedding
$h: V\times \mathbb{R}^d \rightarrow E$ such that 
$x$ is in the image of $h$ and 
$\pi\circ h$ is equal to the standard projection 
$\mathrm{pr}_1: V\times   \mathbb{R}^d \rightarrow V$. 

\end{sbmn}

Note that each fiber of a PL submersion $\pi$ of codimension $d$ is a $d$-dimensional PL manifold. An open piecewise linear  embedding $h: V\times \mathbb{R}^d \rightarrow E$ satisfying $\pi \circ h = \mathrm{pr}_1$ and $x \in \mathrm{Im}\hspace{0.1cm}h$ is typically called a \textit{submersion chart around $x$}. We will also need the following construction for the definition of the functor
$\Psi_d(U): \mathbf{PL}^{op} \rightarrow \mathbf{Sets}$.
 
\theoremstyle{definition} \newtheorem{pullback}[sbmn]{Definition}

\begin{pullback} \label{pullback}

Fix a PL space $P$ and an open set $U \subseteq \mathbb{R}^N$. Let $W$ be a closed PL subspace of the product $P \times U$ such that the projection 
$\pi: W \rightarrow P$ is a PL submersion of codimension $d$. Given a PL map $f: Q \rightarrow P$, the subspace 
\[
\big\{ (q, x) \in Q \times U \hspace{0.2cm} | \hspace{0.2cm} f(q) = \pi(x)   \big\}
\]
of $Q \times U$ will be called \textit{the pull-back of W along f,} and we will denote it by $f^*W.$\end{pullback}

Since pull-backs of submersions are again submersions, we have that the standard projection 
$\tilde{\pi}: f^*W \rightarrow Q$ is also a PL submersion of codimension $d$. Moreover, note that $f^*W$ is a closed PL subspace of $Q \times U$. With this construction at hand, we can now define the PL sets
 $\Psi_d(U): \mathbf{PL}^{op} \rightarrow \mathbf{Sets}$. 

\theoremstyle{definition} \newtheorem{spacemangen}[sbmn]{Definition}

\begin{spacemangen} \label{spacemangen} 

Given an open subset $U \subseteq \mathbb{R}^N$ and a non-negative integer $d$, we define the functor 
$\Psi_d(U): \mathbf{PL}^{op} \rightarrow \mathbf{Sets}$ as follows:

\begin{itemize}

\item[$\cdot$] For each PL space $P$, $\Psi_d(U)(P)$ is the set of all closed PL subspaces $W$ 
of $P \times U$ with the property that the standard projection 
$\pi: W \rightarrow P$ is a PL submersion of codimension $d$. 

\item[$\cdot$] For a PL map $f: Q \rightarrow P$, the function $\Psi_d(U)(f): \Psi_d(U)(P) \rightarrow \Psi_d(U)(Q)$ will send an element $W$ of $\Psi_d(U)(P)$ to the pull-back $f^*W$ of $W$ along $f$. From now on, we will denote the function 
$\Psi_d(U)(f)$ simply by $f^*$.  
\end{itemize}

\end{spacemangen}

Consider a PL space $P$. For an element 
$W$ of $\Psi_d(U)(P)$, the fiber
over a point $\lambda$ in $P$ of the 
standard projection $\pi: W \rightarrow P$ 
is a $d$-dimensional PL submanifold
of $\{ \lambda \} \times U$ which is also closed
as a subspace.
Thus, we can view an element $W$ of 
$\Psi_d(U)(P)$ as a family of 
$d$-dimensional PL submanifolds of $U$,
closed as subspaces,
which are parameterized \textit{piecewise linearly} by the base-space
$P$. 

In this paper, we will mostly be working with the functor $\Psi_d(\mathbb{R}^N): \mathbf{PL}^{op} \rightarrow \mathbf{Sets}$, 
i.e., the PL set corresponding to $U = \mathbb{R}^N$. Besides $\Psi_d(\mathbb{R}^N)$, the following PL sets will
also play a pivotal role in the proof of our main theorem.

\theoremstyle{definition} \newtheorem{spacemanfil}[sbmn]{Definition}

\begin{spacemanfil} \label{spacemanfil}
Fix two integers $N > 0$ and $d\geq 0$. For any integer $0 \leq k \leq N$, let $\psi_d(N,k): \mathbf{PL}^{op} \rightarrow \mathbf{Sets}$ 
be the PL set defined as follows: 

\begin{itemize}

\item[$\cdot$] For each PL space $P$, we define $\psi_d(N,k)(P)$ to be the subset of $\Psi_d(\mathbb{R}^N)(P)$ consisting of all elements $W$ such that $W \subseteq P  \times \mathbb{R}^{k}\times (-1,1)^{N-k}$. That is, each fiber of the projection 
$\pi: W \rightarrow P$ is a PL submanifold of $\mathbb{R}^N$ contained in $\mathbb{R}^{k}\times (-1,1)^{N-k}$. 

\item[$\cdot$] For a PL map $f: Q \rightarrow P$, the function $\psi_d(N,k)(f)$ is equal to 
the restriction of the pull-back function $f^{*}: \Psi_d(\mathbb{R}^N)(P) \rightarrow \Psi_d(\mathbb{R}^N)(Q)$ on
$\psi_d(N,k)(P)$. By abuse of notation, we will also denote $\psi_d(N,k)(f)$ simply by $f^*$.

\end{itemize}

\end{spacemanfil} 

Note that  $\Psi_d(\mathbb{R}^N) = \psi_d(N,N)$. Also, 
for any PL space $P$, we evidently have that
$\psi_d(N,0)(P) \subseteq \psi_d(N,1)(P) \subseteq \ldots \subseteq \psi_d(N,N-1)(P) \subseteq \Psi_d(\mathbb{R}^N)(P)$. 
Thus, the PL sets $\psi_d(N,k)$ form a filtration for $\Psi_d(\mathbb{R}^N): \mathbf{PL}^{op} \rightarrow \mathbf{Sets}$. 

\theoremstyle{definition} \newtheorem{sbmn.conv}[sbmn]{Convention}

\begin{sbmn.conv} \label{sbmn.conv}

Before giving any more definitions, we need to fix the following conventions, which we will use throughout the rest of this article: 

\begin{itemize}

\item[(1)] Unless noted otherwise, we will denote the elements of the standard basis of $\mathbb{R}^{p+1}$ by 
$e_0, \ldots, e_{p}$. In particular, the indexing of the standard basis vectors will typically start at $j=0$.

\item[(2)] For any non-negative integer $p$, we define \textit{the standard p-simplex} $\Delta^p$ to be the convex hull of the vectors $e_0, \ldots, e_{p}$ of the standard basis in $\mathbb{R}^{p+1}$. 
 
 \item[(3)] For any morphism $\eta:[p] \rightarrow [q]$ in $\Delta$, we will denote by  
$\widetilde{\eta}$ the linear map $\Delta^p \rightarrow \Delta^q$ defined by $e_j \mapsto e_{\eta(j)}$.

\end{itemize}

\end{sbmn.conv}

The embedding  $\iota: \Delta \rightarrow \mathbf{PL}$ that we referred to at the 
beginning of this subsection is defined by $[p]\mapsto \Delta^p$ on objects and by
$\eta \mapsto \widetilde{\eta}$ on morphisms. Then, 
for any PL set $\mathcal{F}: \mathbf{PL}^{op} \rightarrow \mathbf{Sets}$ 
(not just those of the form $\Psi_d(U)$), we obtain a simplicial set $\mathcal{F}_{\bullet}$ by 
precomposing $\mathcal{F}$ with the induced embedding $\iota^{op}$. We will typically call
$\mathcal{F}_{\bullet}$ \textit{the underlying simplicial set of $\mathcal{F}$.} Note that
the set of $p$-simplices of $\mathcal{F}_{\bullet}$ is equal to $\mathcal{F}(\Delta^p)$.
Clearly, any natural transformation 
$\gamma: \mathcal{F} \Rightarrow \mathcal{G}$ between PL sets will induce a simplicial set map
$\gamma_{\bullet}:  \mathcal{F}_{\bullet} \rightarrow \mathcal{G}_{\bullet}$. Moreover, it is evident that the operation 
$\gamma \mapsto \gamma_{\bullet}$ respects compositions and identity maps. Thus, the correspondences 
$\mathcal{F} \mapsto \mathcal{F}_{\bullet}$, $\gamma \mapsto \gamma_{\bullet}$ define a functor 
$\Gamma: \mathbf{PLsets} \rightarrow \mathbf{Ssets}$, where 
$\mathbf{PLsets}$ is the category of 
PL sets. Morphisms in this category are given by natural transformations. 

 In the following remarks, we will make several useful observations about the 
PL sets $\Psi_d(U): \mathbf{PL}^{op} \rightarrow \mathbf{Sets}$ and their 
underlying simplicial sets $\Psi_d(U)_{\bullet}$.

\theoremstyle{definition} \newtheorem{spacerem0}[sbmn]{Remark}

\begin{spacerem0} \label{spacerem0} 
Note that, for any integer $0\leq k \leq N$, the simplicial set $\psi_d(N,k)_{\bullet}$ underlying the functor 
$\psi_d(N,k):  \mathbf{PL}^{op} \rightarrow \mathbf{Sets}$ is a subsimplicial set of $\Psi_d(\mathbb{R}^N)_{\bullet}$.  
Evidently, the filtration $\psi_d(N,0) \subseteq \psi_d(N,1) \subseteq \ldots \subseteq \psi_d(N,N-1) \subseteq \Psi_d(\mathbb{R}^N)$  of PL sets
induces an analogous filtration for the simplicial set $\Psi_d(\mathbb{R}^N)_{\bullet}$. 

\end{spacerem0} 

\theoremstyle{definition} \newtheorem{spacerem}[sbmn]{Remark}

\begin{spacerem} \label{spacerem}
\textbf{(The base-point of} $\Psi_d(U)_{\bullet}$\textbf{)}
We point out that, for any open set $U \subseteq \mathbb{R}^N$ and any $p\geq 0$, \textit{the empty manifold}
$\varnothing_p \subseteq \Delta^{p}\times U$
is also an element of $\Psi_d(U)_{p}$.  The empty simplices 
$\varnothing_p$ form a subsimplicial set of $\Psi_d(U)_{\bullet}$, which we will denote by $\varnothing_{\bullet}$. We will usually regard the geometric realization $|\varnothing_{\bullet}|$ as the canonical base-point for the space
$|\Psi_d(U)_{\bullet}|$. 
\end{spacerem}

\theoremstyle{definition} \newtheorem{spacerem2}[sbmn]{Remark}

\begin{spacerem2} \label{spacerem2}
\textbf{(The sheaf} $\Psi_d$\textbf{)} 
Given any two open sets $U, V \subseteq \mathbb{R}^N$ with $V \subseteq U$, we can define a 
\textit{restriction morphism} $r_{U,V}: \Psi_d(U) \Rightarrow \Psi_d(V)$ between the PL sets $\Psi_d(U)$ and 
$ \Psi_d(V)$. Namely, let $r_{U,V}$ be the natural transformation such that, for any PL space $P$,  
the component $ \Psi_d(U)(P) \stackrel{r_{U,V}}{\longrightarrow} \Psi_d(V)(P)$ corresponding to $P$ is the function which sends an element $W \in \Psi_d(U)(P)$ to the intersection $W\cap(P\times V)$.
If $\mathcal{O}(\mathbb{R}^N)$ is the poset of open sets in $\mathbb{R}^N$, then the correspondences
\begin{equation} \label{pre-sheaf}
U \mapsto \Psi_d(U) \qquad V \subseteq U \mapsto r_{U,V}
\end{equation}
define a contravariant functor $\Psi_d: \mathcal{O}(\mathbb{R}^N)^{op} \rightarrow \mathbf{PLsets}$. 
In fact, it is not hard
to prove that $\Psi_d: \mathcal{O}(\mathbb{R}^N)^{op} \rightarrow \mathbf{PLsets}$ is actually a sheaf of PL sets. This sheaf descends to a sheaf of simplicial sets $\Psi_d: \mathcal{O}(\mathbb{R}^N)^{op} \rightarrow \mathbf{Ssets}$ via the functor 
$\Gamma: \mathbf{PLsets} \rightarrow \mathbf{Ssets}$ that we defined immediately after 
Convention \ref{sbmn.conv}. 
\end{spacerem2}

\theoremstyle{definition} \newtheorem{spacerem3}[sbmn]{Remark}

\begin{spacerem3} \label{spacerem3}

Given an element $W$ of a set $\Psi_d(U)(P)$
and a point $\lambda \in P$, we shall typically denote the fiber 
of the projection $\pi:W \rightarrow P$ over $\lambda$ by $W_{\lambda}$.

\end{spacerem3}

We end this subsection with the following proposition, which provides a convenient tool for producing elements in a set of the form $\Psi_d(U)(P)$.

\theoremstyle{plain} \newtheorem{pullemb}[sbmn]{Proposition}

\begin{pullemb} \label{pullemb} 

Fix a PL space $P$ and two open sets $U,V \subseteq \mathbb{R}^N$. 
Then, for any element $W$ in $\Psi_d(V)(P)$ and any
open piecewise linear embedding 
$H: P\times U \rightarrow P \times V$
which commutes with the projection onto $P$,
the pre-image $H^{-1}(W)$ is an element of the set
$\Psi_d(U)(P)$. 
\end{pullemb}

\begin{proof}
First, it is clear that the intersection of $W$ and $\mathrm{Im}\hspace{0.05cm}H$
is a closed PL subspace of $\mathrm{Im}\hspace{0.05cm}H$.
Since $H$ is a PL homeomorphism
from $P\times U$ to $\mathrm{Im}\hspace{0.05cm}H$,
it follows that $H^{-1}(W)$ is a closed 
PL subspace  of $P\times U$. Next,
note that the standard projection $\pi: H^{-1}(W) \rightarrow P$ is equal to the 
composition of $H|_{H^{-1}(W)}$,
which is a PL homeomorphism
from $H^{-1}(W)$ to $W$,
and the standard projection $\widetilde{\pi}: W \rightarrow P$. 
Since $\widetilde{\pi}$ is a PL submersion of codimension $d$, then so is 
the projection  $\pi: H^{-1}(W) \rightarrow P$.
\end{proof}

\subsection{Properties of $\Psi_d(U)_{\bullet}$}  \label{section2.3}    

In this subsection, 
we will prove that any simplicial set of the form $\Psi_d(U)_{\bullet}$
acts as a \textit{moduli space of PL manifolds} 
in the sense that, for any PL space $P$, $\Psi_d(U)_{\bullet}$  classifies all elements 
of the set $\Psi_d(U)(P )$. 
This property of $\Psi_d(U)_{\bullet}$ will be a consequence of Theorem \ref{classsub}, 
which states a more general classification result for a special type of PL set, 
which we will introduce in Definition \ref{quasiPL} below.
For this definition, we shall use the following terminology: Suppose that 
$\mathcal{F}: \mathbf{PL}^{op} \rightarrow \mathbf{Sets}$ is a PL set. If $P$ is a PL space
  and $Q$ is a PL subspace of $P$, then the map 
  $\mathcal{F}(P) \rightarrow \mathcal{F}(Q)$ induced by the 
  inclusion $Q \hookrightarrow P$ shall be called a \textit{restriction map}. 
  
\theoremstyle{definition}  \newtheorem{quasiPL}[sbmn]{Definition} 

\begin{quasiPL} \label{quasiPL}
A \textit{quasi-PL space}  is a PL set $\mathcal{F}: \mathbf{PL}^{op} \rightarrow \mathbf{Sets}$ which satisfies 
the following gluing condition: For any PL space $P$ and any locally finite collection of closed PL subspaces $\{ Q_i \}_i$ which covers $P$, the diagram of restriction maps
\begin{equation} \label{gluing.condition}
\xymatrix{ \mathcal{F}(P) \ar[r] & \prod_i  \mathcal{F}(Q_i)  \ar@<0.7ex>[r] \ar@<-0.7ex>[r]  & 
\prod_{i,j}  \mathcal{F}(Q_i \cap Q_j)
}
\end{equation}
is an equalizer diagram of sets.  
\end{quasiPL}

\theoremstyle{definition}  \newtheorem{quasiPLrem}[sbmn]{Remark} 

\begin{quasiPLrem} \label{quasiPLrem}
It is worth pointing out that any quasi-PL space must also necessarily be a sheaf on PL spaces.
In other words, if $\mathcal{F}$ is a PL set satisfying the gluing condition described in 
Definition \ref{quasiPL}, then it must also satisfy the analogous gluing condition for arbitrary open covers. 
One can prove this fact using Lemma 2.1.6 from \cite{Jo} and standard subdivision arguments from PL topology. 
However, we shall not use this property of quasi-PL spaces in the remainder of this paper. 
\end{quasiPLrem}

Before we can state Theorem \ref{classsub}, we need to review yet another notion from PL topology: \textit{Triangulations}. 
First, suppose that $K$ is a locally finite simplicial complex in some $\mathbb{R}^m$, possibly with $m = \infty$. Note that
the geometric realization $|K|$ (i.e., the union of all the simplices in $K$) is itself a PL space. The PL structure on $|K|$ is the collection of all PL charts compatible with all inclusions of the form $|L|\hookrightarrow |K|$, where $L$ is any finite subcomplex of $K$.  
A \textit{triangulation} for a PL space $P$ is a tuple $(K,h)$ where $K$ is a locally finite simplicial complex in $\mathbb{R}^{m}$ (possibly with $m = \infty$) and
$h$ is a PL homeomorphism $|K| \stackrel{\cong}{\longrightarrow} P$.  
As proven in \cite{plhud}, any PL space $P$ admits a triangulation. If $P$ is compact, we may assume that $K$ is a finite simplicial complex in some finite-dimensional $\mathbb{R}^m$.  

Consider a locally finite simplicial complex $K$ in $\mathbb{R}^{m}$ (again, we allow $m=\infty$) and fix an ordering $\leq$ on the set of vertices of $K$. In order to state Theorem \ref{classsub}, we need to explain how the tuple  $(K, \leq)$ induces a simplicial set  $K_{\bullet}$. A $p$-simplex in $K_{\bullet}$ is a non-decreasing sequence $v_{0}\leq \ldots \leq v_{p}$ of vertices, possibly with repetitions, with the property that the set $\{ v_0,\ldots, v_p \}$  spans a simplex of $K$. For a morphism 
$\eta: [q] \rightarrow [p]$ in the category $\Delta$, the induced structure map 
$\eta^*: K_p \rightarrow K_q$ sends a $p$-simplex $v_{0}\leq \ldots \leq v_{p}$ to 
$v_{\eta(0)}\leq \ldots \leq v_{\eta(q)}$. Finally, we will also use the following notation  
in the statement of Theorem \ref{classsub}:

\begin{itemize}

\item[$\cdot$] Consider a PL set $\mathcal{F}$. As we do for PL
sets of the form $\Psi_d(U)$, we shall denote by $f^*$ the map 
$\mathcal{F}(Q) \rightarrow \mathcal{F}(P)$ induced by a PL map $f: P \rightarrow Q$. Moreover,
as we also do for $\Psi_d(U)$, we shall denote a generic element of a set $\mathcal{F}(P)$ by $W$.

\item[$\cdot$] If $v_0, \ldots, v_p$ is a collection of $p+1$ points (possibly with repetitions)  in $\mathbb{R}^m$
and if $e_0,\ldots, e_p$ are the elements of the standard basis
of $\mathbb{R}^{p+1}$, 
then we will denote by
$s_{v_0\ldots v_p} : \Delta^p \rightarrow \mathbb{R}^m$
the linear map defined
by $e_j \mapsto v_j$.

\end{itemize}

\theoremstyle{plain}  \newtheorem{classsub}[sbmn]{Theorem} 

\begin{classsub} \label{classsub}

Consider a PL set $\mathcal{F}: \mathbf{PL}^{op} \rightarrow \mathbf{Sets}$.
If $\mathcal{F}$ is a quasi-PL space, then $\mathcal{F}$ also has the following properties: 

\begin{itemize}

\item[(i)] Fix a PL space $P$. For any choice of triangulation $(K,h)$ of $P$  and ordering $\leq$ on the vertices of $K$,
the function of sets 
\[
\mathcal{S}_{K,h} : \mathcal{F}(P) \rightarrow \mathbf{Ssets}(K_{\bullet}, \mathcal{F}_{\bullet})
\]
which sends an element $W$
of $\mathcal{F}(P)$ to the morphism of simplicial sets 
$F_W: K_{\bullet} \rightarrow \mathcal{F}_{\bullet}$
defined by
$F_W(v_0\leq \ldots \leq v_p) = s_{v_0\ldots v_p}^*h^*W $
is a bijection. 

\item[(ii)] The underlying simplicial set $\mathcal{F}_{\bullet}$ is Kan. 

\end{itemize}

\end{classsub}

\theoremstyle{definition} \newtheorem{classsub.remark}[sbmn]{Remark}

\begin{classsub.remark} \label{classsub.remark}
Before proving Theorem \ref{classsub}, let us make a few comments.  
At a first glance, the definition of the morphism $F_W$ appearing in (i) might be hard to digest, so let us spell it out more carefully. 
Fix a triangulation $(K,h)$ of $P$ and an ordering $\leq$ on $\mathrm{Vert}(K)$. For any $p$-simplex 
$v_0\leq \ldots \leq v_p$ of $K_{\bullet}$, the image $F_W(v_0\leq \ldots \leq v_p)$ is obtained by first taking the pull-back of $W$ along the PL homeomorphism $h:|K| \rightarrow P$, and then pulling back 
the element $h^*W \in \mathcal{F}(|K|)$ along the 
linear map $s_{v_0\ldots v_p}: \Delta^p \rightarrow \mathbb{R}^m$ defined before the statement of Theorem \ref{classsub}. 
Note that, since the set $\{v_0, \ldots, v_p\}$ spans a simplex of $K$, the image of 
$s_{v_0\ldots v_p}$ lands in $|K|$. For this reason, it is possible to pull back $h^*W$ along
$s_{v_0\ldots v_p}$.  We close this remark by making three observations 
about the function $\mathcal{S}_{K,h}$  defined in part (i) of Theorem \ref{classsub}:

\begin{itemize}

\item[$\cdot$] First, note that the function $\mathcal{S}_{K,h}$ 
does not only depend on the triangulation $(K,h)$ 
but also on the order relation we fix on $\mathrm{Vert}(K)$.

\item[$\cdot$] Second, note that it is possible to define the function 
$\mathcal{S}_{K,h}: \mathcal{F}(P) \rightarrow \mathbf{Ssets}(K_{\bullet}, \mathcal{F}_{\bullet})$ 
even if $\mathcal{F}$ is not a quasi-PL space. 
The point of the theorem is that, if 
$\mathcal{F}$ happens to be a quasi-PL space, then the function 
$\mathcal{S}_{K,h}$ is a bijection. 

\item[$\cdot$] Finally, it is worth pointing out that in the case when $P = \Delta^p$ and $(K,h)$ is the canonical triangulation of $\Delta^p$ (i.e., take $K$ to be the standard simplicial complex triangulating $\Delta^p$ and $h$ to be the identity map $\Delta^p \rightarrow \Delta^p$), then the function $\mathcal{S}_{K,h}$ agrees with the natural bijection 
$\mathcal{F}_{p} \rightarrow \mathbf{Ssets}(\Delta^p_{\bullet}, \mathcal{F}_{\bullet})$. 

\end{itemize}

\end{classsub.remark}

\begin{proof}[\textbf{\textit{Proof of Theorem \ref{classsub}:}}] 
To prove part (i), we shall need the following notation:

\begin{itemize}

\item[$\cdot$] Given $p+1$ points $v_0, \ldots, v_p$ in $\mathbb{R}^m$, we defined $s_{v_0\ldots v_p}: \Delta^p \rightarrow \mathbb{R}^m$ to be the linear map which sends the element $e_j$ of the standard basis of $\mathbb{R}^{p+1}$ to $v_j$. If the points $v_0, \ldots, v_p$ are in general position (i.e., their convex hull is a $p$-simplex), then $s_{v_0\ldots v_p}$ is a PL homeomorphism from $\Delta^p$ to the convex hull of $v_0, \ldots, v_p$. In this case, we will denote the inverse of $s_{v_0\ldots v_p}$ by $t_{v_0\ldots v_p}$. 

\item[$\cdot$] We will denote the inverse of the PL homeomorphism $h: |K| \rightarrow P$ by $\tilde{h}$. 

\item[$\cdot$] Finally, given a collection of points $v_0, \ldots, v_p$ in $\mathbb{R}^m$, we will denote its convex hull by $\langle v_0, \ldots, v_p  \rangle$. 

\end{itemize}

Now, fix a PL space $P$, a triangulation $(K,h)$ of $P$, and an ordering $\leq$ of the vertices of $K$. Also, 
let $K_{\bullet}$ be the simplicial set induced by the tuple $(K, \leq)$. 
If $\Lambda$ is the set of all non-degenerate simplices $v_0 < \ldots < v_p$ of $K_{\bullet}$, then
 \[
 \Big\{ h(\langle v_0, \ldots, v_p  \rangle)  \Big\}_{v_0 < \ldots < v_p \in \Lambda} 
 \]
is a collection of closed PL subspaces of $P$  which is locally finite and covers $P$. 
For simplicity, we shall denote each PL subspace $h(\langle v_0, \ldots, v_p  \rangle)$ by 
$Q_{v_0\ldots v_p}$.  
We will prove that the function 
$\mathcal{S}_{K,h} : \mathcal{F}(P) \rightarrow \mathbf{Ssets}(K_{\bullet}, \mathcal{F}_{\bullet})$
is a bijection by constructing an inverse,
which we shall denote by $\mathcal{T}_{K,h}$.
For any simplicial set map $g: K_{\bullet} \rightarrow \mathcal{F}_{\bullet}$,
we will construct the image $\mathcal{T}_{K,h}(g)$ under the inverse $\mathcal{T}_{K,h}: \mathbf{Ssets}(K_{\bullet}, \mathcal{F}_{\bullet}) \rightarrow  \mathcal{F}(P)$
as follows: 

\begin{itemize}

\item[$\cdot$] First, for any non-degenerate simplex $v_0 < \ldots < v_p$ of $K_{\bullet}$,
we will denote the image $g(v_0 < \ldots < v_p) \in \mathcal{F}_p$ by $W^{v_0\ldots v_p}$. 

\item[$\cdot$]  Next, for each non-degenerate simplex $v_0 < \ldots < v_p$, 
take the element $\tilde{h}^*t^*_{v_0\ldots v_p}W^{v_0 \ldots v_p}$ of $\mathcal{F}(Q_{v_0\ldots v_p})$
obtained by first pulling back $W^{v_0\ldots v_p}$ along $t_{v_0\ldots v_p}$, and then pulling back
$t_{v_0\ldots v_p}^*W^{v_0\ldots v_p}$ along the inverse of $h: |K| \rightarrow P$. 

\item[$\cdot$] Suppose that $v_0 < \ldots < v_p$ and $v'_0 < \ldots < v'_{q}$ are two non-degenerate
simplices of $K_{\bullet}$ such that the convex hulls 
$\langle v_0, \ldots, v_p  \rangle$ and $\langle v'_0, \ldots, v'_{q}  \rangle$ 
have a non-empty intersection, and let $Q$ be the intersection of the closed subspaces
$Q_{v_0\ldots v_p}$ and $Q_{v'_0\ldots v'_q}$. Then, since 
$g: K_{\bullet} \rightarrow \mathcal{F}_{\bullet}$ is a morphism of 
simplicial sets, it is not hard to show that the restrictions 
of  $\tilde{h}^*t^*_{v_0\ldots v_p}W^{v_0 \ldots v_p}$ and
$\tilde{h}^*t^*_{v'_0\ldots v'_q}W^{v'_0 \ldots v'_q}$ 
over $Q$ agree with each other. Therefore, given that
$\mathcal{F}$ is a quasi-PL space, there must exist 
a unique element $W^{g}$ of $\mathcal{F}(P)$ such that, for each
non-degenerate simplex $v_0 < \ldots < v_p$, the restriction of $W^g$ over 
$Q_{v_0\ldots v_p}$ is equal to $\tilde{h}^*t^*_{v_0\ldots v_p}W^{v_0 \ldots v_p}$. 
We define this element $W^g$ to be the image $\mathcal{T}_{K,h}(g)$. 
\end{itemize}

It is straightforward to verify that the function 
$\mathcal{T}_{K,h}: \mathbf{Ssets}(K_{\bullet}, \mathcal{F}_{\bullet}) \rightarrow  \mathcal{F}(P)$,
as we defined it above, is indeed an inverse for $\mathcal{S}_{K,h}$. We can therefore conclude
that $\mathcal{S}_{K,h}$ is a bijection between $\mathcal{F}(P)$ and $\mathbf{Ssets}(K_{\bullet}, \mathcal{F}_{\bullet})$. 

Now, let us turn to part (ii) of this theorem. Fix then a
simplicial set map of the form $g: \Lambda^p_{k\bullet} \rightarrow \mathcal{F}_{\bullet}$.
To prove that the  simplicial set $\mathcal{F}_{\bullet}$ is Kan, we
must show that $g$ admits an extension of the form $\Delta^p_{\bullet} \rightarrow \mathcal{F}_{\bullet}$. 
First, let $K$ be the standard simplicial complex triangulating the horn 
$\Lambda^p_k$ of the standard $p$-simplex $\Delta^p$.
If $\leq$ is the canonical order relation among the vertices of $\Lambda^p_k$, then
clearly the simplicial set $K_{\bullet}$ induced by $(K, \leq)$ is isomorphic to 
$\Lambda^p_{k\bullet}$. Thus, part (i) of this theorem guarantees that there is 
a bijection of sets 
$\mathcal{F}(\Lambda^p_k) \stackrel{\cong}{\longrightarrow} \mathbf{Ssets}(\Lambda^p_{k\bullet}, \mathcal{F}_{\bullet})$,
which we shall denote simply by $\mathcal{S}$.  
If $W$ is the element of $\mathcal{F}(\Lambda^p_k)$ obtained by pulling back the map $g: \Lambda^p_{k\bullet} \rightarrow \mathcal{F}_{\bullet}$
along the bijection $\mathcal{S}$, then $W$ has the property that 
$g(v_0 \leq \ldots \leq v_q) = s^*_{v_0\ldots v_q}W$ for any simplex 
$v_0 \leq \ldots \leq v_q$  of $\Lambda^p_{k\bullet}$ 
(again, $s_{v_0\ldots v_q}$ is the linear map 
$\Delta^q \rightarrow \Lambda^p_k$ which sends $e_j$ to $v_j$). 
If we now pull back the element $W$ along a PL retraction 
$r:\Delta^p \rightarrow \Lambda^p_k$, we obtain an element $W' := r^*W$ of
$\mathcal{F}(\Delta^p)$ which (via part (i) of this theorem) induces a
morphism of simplicial sets $f: \Delta^p_{\bullet} \rightarrow \mathcal{F}_{\bullet}$
which maps a simplex $v_0 \leq \ldots \leq v_q$ of $\Delta^p_{\bullet}$
to $s^*_{v_0\ldots v_q}W'$. Since $r:\Delta^p \rightarrow \Lambda^p_k$ is a retraction,
we have that $W'$ agrees with $W$ over $\Lambda^p_k$, which implies 
that $s^*_{v_0\ldots v_q}W' = s^*_{v_0\ldots v_q}W$ for any simplex 
$v_0 \leq \ldots \leq v_q$  of $\Lambda^p_{k\bullet}$. It follows that 
$f: \Delta^p_{\bullet} \rightarrow \mathcal{F}_{\bullet}$ extends 
$g: \Lambda^p_{k\bullet} \rightarrow \mathcal{F}_{\bullet}$, and we have thus proven that 
the underlying simplicial set $\mathcal{F}_{\bullet}$ is Kan.
\end{proof}

The next theorem will allow us to interpret $\Psi_d(U)_{\bullet}$ as a moduli space of PL manifolds. 

 \theoremstyle{plain} \newtheorem{kan}[sbmn]{Theorem}

\begin{kan}  \label{kan}
Any PL set of the form $\Psi_d(U)$ is a quasi-PL space and, therefore,
satisfies the properties listed in Theorem \ref{classsub}. 
In particular, the underlying simplicial set $\Psi_d(U)_{\bullet}$ is Kan. 
\end{kan}
 
 To discuss the proof of this theorem, it is convenient to introduce some notation.
 
 \theoremstyle{definition} \newtheorem{gluesub.not}[sbmn]{Note}

\begin{gluesub.not} \label{gluesub.not}
Fix an open set $U \subseteq \mathbb{R}^N$ and a PL space $P$. Throughout the rest of this article, we shall adopt the following notation: 
Let $W$ be a PL subspace of $P\times U$, and let $\pi:W \rightarrow P$ be the standard projection onto $P$ (we are not assuming that $\pi$ is necessarily a submersion). If $S$ is a PL subspace 
of $P$, then we will denote the pre-image $\pi^{-1}(S)$ by $W_S$, and we will denote the restriction 
$\pi|_{W_S}$ simply by $\pi_S$. Note that if $W$ is an element of $\Psi_d(U)(P)$, then $W_{S}$ is an element of  $\Psi_d(U)(S)$. 
\end{gluesub.not}
 
The key step in the proof of Theorem \ref{kan} 
is to show that compatible collections of PL submersions 
can be glued together to produce a global PL submersion. 
Let us try to articulate this idea more clearly.
Fix an open subset $U \subseteq \mathbb{R}^N$, a PL space $P$, and a closed PL subspace 
$W$ of $P\times U$ (we are not assuming that $W \in \Psi_d(U)(P)$ for the time being).  
As we have done before,
we shall denote the standard projection $W \rightarrow P$ by $\pi$. 
Now, suppose that 
$\{Q_i\}_{i \in \Lambda}$ is a locally finite collection of closed PL subspaces that covers $P$ 
with the property that, for each $i$ in $\Lambda$, the restriction $W_{Q_i}$ is an element of $\Psi_d(U)(Q_i)$. 
In particular, the restriction $\pi_{Q_i}$ is a PL submersion of codimension $d$. 
Then, 
as we shall see in the proof of Theorem \ref{kan}, the fact that each individual 
$\pi_{Q_i}$ is a PL submersion of codimension $d$ will imply that the map
$\pi: W \rightarrow P$ is also a PL submersion of codimension $d$.
Said differently, if $\pi$ is \textit{locally} a PL submersion (with respect to a locally finite cover of closed PL subspaces), then
it must also be so \textit{globally}. 
The following lemma is the key tool we will use to perform this 
gluing procedure for submersions.

\theoremstyle{plain} \newtheorem{pregluesub}[sbmn]{Lemma}

\begin{pregluesub} \label{pregluesub}

Let $\sigma$ be a $p$-dimensional simplex (with $p > 0$) inside some 
$\mathbb{R}^m$, $W$ an element of the set 
$\Psi_d(U)(\sigma)$, $\lambda_0$ a point in $\partial \sigma$, and $y_0$ a point in the fiber over $\lambda_0$ of the projection $\pi: W \rightarrow \sigma$. Suppose that $g: V \times \mathbb{R}^d \rightarrow W_{\partial \sigma}$ and $f: V' \times \mathbb{R}^d \rightarrow W$ are submersion charts around $y_0$ for $\pi_{\partial \sigma}: W_{\partial\sigma} \rightarrow \partial \sigma$ and $\pi: W \rightarrow \sigma$ respectively with the property that $\mathrm{Im}\hspace{0.05cm}g \subseteq \mathrm{Im}\hspace{0.05cm} f$. Then, after possibly shrinking $V$, we can find an open neighborhood $\widetilde{V}$ of $\lambda_0$ in $\sigma$ such that $\widetilde{V} \cap \partial \sigma = V$ and a submersion chart $\widetilde{g}: \widetilde{V} \times \mathbb{R}^d \rightarrow W$ around $y_0$ for the submersion $\pi: W \rightarrow \sigma$ such that 
$\widetilde{g}|_{V\times \mathbb{R}^d} = g$. 

\end{pregluesub}

\begin{proof}
First, let $h: V\times \mathbb{R}^d \rightarrow V \times \mathbb{R}^d$ be the PL embedding obtained by taking the composition
\[
\xymatrix{
V\times \mathbb{R}^d \ar[r]^{\hspace{0.1cm}g} & \mathrm{Im}\hspace{0.05cm}g \ar[r]^{f^{-1}} & V\times \mathbb{R}^d,} 
\] 
where the second map is just the restriction of $f^{-1}$ on $\mathrm{Im}\hspace{0.05cm}g$. Note that 
$h$ is actually an open PL embedding. Next, let 
$c: \partial \sigma \times [0,1] \rightarrow \sigma$ be a collar for the boundary $\partial \sigma$, i.e., $c$ is a PL embedding with the property that $c(\lambda,0) = \lambda$ for any point $\lambda \in \partial \sigma$. 
After shrinking $V$ if necessary, we can find a value 
$0 < \delta < 1$ such that $c\big(V\times [0,\delta)\big) \subseteq V'$. 
Throughout the rest of this proof, we will denote the image 
$c\big(V\times [0,\delta)\big)$ by $\widetilde{V}$.
Now, let $h_2: V\times \mathbb{R}^d \rightarrow \mathbb{R}^d$ be the second component of the map
$h: V\times \mathbb{R}^d \rightarrow V \times \mathbb{R}^d$, and define a new PL embedding 
$\widetilde{h}: \widetilde{V} \times \mathbb{R}^d \rightarrow \widetilde{V} \times \mathbb{R}^d$ by setting 
$\widetilde{h}(c(\lambda,t), x) = (c(\lambda,t), h_2(\lambda,x))$ for any point $(c(\lambda,t), x)$ in $\widetilde{V}\times \mathbb{R}^d$. 
Our desired submersion chart $\widetilde{g}: \widetilde{V} \times \mathbb{R}^d \rightarrow W$ will be defined as 
$\widetilde{g} := f \circ \widetilde{h}.$  It is straightforward to 
verify that $\widetilde{g}(\lambda_0,0) = y_0$ and that $\widetilde{g}$ commutes with the projection onto $\widetilde{V}$. Moreover, since the image of $h$ is open, it follows that the image of $\widetilde{h}$ is also open, which then implies that the image  of $\widetilde{g}$ is open in $W$. Thus, we can conclude that $\widetilde{g}$ is a submersion chart around $y_0$ for the map $\pi: W \rightarrow \sigma$. Finally, we also have that $\widetilde{g}$ extends $g$ since for any point $(\lambda, x) \in V \times \mathbb{R}^d$ we have the following: 
\[
\widetilde{g}(\lambda, x) = f \circ \widetilde{h}(c(\lambda,0), x) = f(c(\lambda,0), h_2(\lambda,x)) =  f(\lambda, h_2(\lambda, x)) =  f (h(\lambda, x)) =  g(\lambda,x) 
\]
\end{proof}

We will now give the proof of Theorem \ref{kan}. To do so, we need to review some basic terminology from PL topology. Let $K$ be some locally finite simplicial complex 
and fix a vertex $a_0$ of $K$. \textit{The star of $a_0$ in K,} denoted by $\mathrm{st}(a_0, K)$, is the subcomplex of $K$ consisting of all simplices $\sigma \in K$ 
satisfying one of the following two conditions:

\begin{itemize}

\item[(i)] $\sigma$ contains $a_0$, or

\item[(ii)] $\sigma$ is a face of a simplex $\sigma' \in K$ that contains $a_0$.  

\end{itemize} 

\textit{The link of $a_0$ in $K$,} denoted by $\mathrm{lk}(a_0, K)$, is the subcomplex of $\mathrm{st}(a_0, K)$ consisting of those simplices that do not contain $a_0$. It is not hard to show that the subspace $|\mathrm{st}(a_0, K)| - |\mathrm{lk}(a_0, K)|$ is an open neighborhood of the vertex $a_0$ in $|K|$. We will also need the following definition: For a non-negative integer $j \geq 0$, \textit{the j-th skeleton of $K$} (denoted by $K^j$) is the subcomplex of $K$ consisting of all simplices of dimension at most $j$.

\begin{proof}[\textbf{\textit{Proof of Theorem \ref{kan}:}}] 
Consider an open set $U \subseteq \mathbb{R}^N$. 
Our goal in this proof is to show that the PL set $\Psi_d(U)$ satisfies the gluing condition
described in Definition \ref{quasiPL}. 
Fix then  a PL space $P$ and let $\{Q_i\}_{i\in \Lambda}$ be a locally finite collection of closed PL subspaces 
of $P$ which covers $P$. By Theorem 3.6 of \cite{plhud}, it is possible to find a triangulation 
$h: |K| \rightarrow P$ of $P$ which also triangulates each PL subspace of the collection $\{Q_i\}_{i\in \Lambda}$. 
Thus, it is enough to prove that $\Psi_d(U)$ satisfies the desired gluing condition in the
case when the locally finite cover of closed PL subspaces is of the form 
$\{h(\sigma)\}_{\sigma \in K}$, where $(K, h)$ is a triangulation for $P$
and the collection $\{h(\sigma)\}_{\sigma \in K}$ is indexed by the simplices of $K$. 
In fact, by taking pull-backs, it suffices to prove that $\Psi_d(U)$ 
satisfies this gluing condition in the special case
when $P = |K|$ for some locally finite simplicial complex $K$ in 
$\mathbb{R}^m$ (possibly with $m = \infty$) and 
$\{Q_i\}_{i\in \Lambda}$ is the collection $\{ \sigma \}_{\sigma \in K}$
of simplices of $K$. We shall from now on only focus on this case. 
At this point,  the reader is encouraged to review the notation introduced
in Note \ref{gluesub.not}. 

Take now an element $(W^{\sigma})_{\sigma \in K}$ of the product $\prod_{\sigma \in K}\Psi_d(U)(\sigma)$ with the
property that, if $\beta$ is a face of a simplex $\sigma \in K$, then the restriction $W^{\sigma}_{\beta}$ of $W^{\sigma}$ over 
$\beta$ is equal to $W^{\beta}$.  
We need to show that there is a unique element $W$ of 
$\Psi_d(U)(|K|)$ such that, for any simplex $\sigma \in K$, the restriction 
$W_{\sigma}$ of $W$ over $\sigma$ agrees with $W^{\sigma}$.
Our candidate for such a
$W$ is defined by taking the following union in the space $|K|\times U$: 
\begin{equation} \label{candidate.union}
W := \bigcup_{\sigma \in K} W^{\sigma}. 
\end{equation}
We must prove two things: (1) $W$ is a closed PL subspace of $|K|\times U$, and (2) the standard projection 
$\pi: W \rightarrow |K|$ is a PL submersion of codimension $d$. 
To prove the first claim, note that the right-hand side of (\ref{candidate.union}) is a locally finite union
of closed PL subspaces. 
Then, since one can always  find common triangulations for a
finite collection of compatible PL charts (see Lemma 3.2 in \cite{plhud}), any locally finite union of PL subspaces 
is also a PL subspace. We thus have that $W$ is a PL subspace of $|K|\times U$. 
Moreover, via an elementary general topology argument, one can prove that the union of a locally finite collection of closed subspaces in a topological space is itself closed, which implies that
$W$ is indeed a closed PL subspace of $|K|\times U$. This concludes the proof 
of the first claim. 
It is evident that the element 
$W$ we just constructed satisfies $W_{\sigma} = W^{\sigma}$ for any
simplex $\sigma \in K$. Also, since each $W^{\sigma}$ was assumed to be an element
of $\Psi_d(U)(\sigma)$, the restriction of the projection $\pi: W \rightarrow |K|$ on 
any $W_{\sigma}$ is a PL submersion of codimension $d$. Following the conventions  
we introduced in Note \ref{gluesub.not}, for each simplex $\sigma \in K$
we shall denote the restriction of $\pi$ on $W_{\sigma}$
by $\pi_{\sigma}$. 

Next, we will prove that the projection $\pi: W \rightarrow |K|$ is a PL submersion of codimension $d$. 
Take any point $y_0$ in the PL space $W$, and denote the image $\pi(y_0)$ by $a_0$. 
After subdividing $K$ if necessary, we may assume that $a_0$ is a vertex of $K$. Now, let $\mathrm{st}(a_0, K)$ be the star of $a_0$ in $K$ and let $p$
 be the maximal value of the set  
 $\{\mathrm{dim}(\sigma) \hspace{0.05cm} : \hspace{0.05cm} \sigma \in \mathrm{st}(a_0, K) \}$. 
 Our goal is to construct a submersion chart $h:V\times \mathbb{R}^d \rightarrow W$ around $y_0$ for the map 
 $\pi: W \rightarrow |K|$ over an open neighborhood $V$ of $a_0$ contained in 
 $|\mathrm{st}(a_0, K)| - |\mathrm{lk}(a_0, K)|$.
We will construct this chart via an induction argument on the skeleta $K^j$ of $K$. 
In other words, starting with $j=0$, we will inductively produce submersion charts around $y_0$ for each projection 
$\pi_{|K^j|}: W_{|K^j|} \rightarrow |K^j|$. Since $p$ is the maximal value in 
 $\{\mathrm{dim}(\sigma) \hspace{0.05cm} : \hspace{0.05cm} \sigma \in \mathrm{st}(a_0, K) \}$, this induction will terminate when we reach $j=p$. 
First, note that the $0$-th skeleton $K^0$ is just a discrete set of points in $\mathbb{R}^m$. In particular, each point in $K^0$ is its own neighborhood. Thus, producing a submersion chart around $y_0$ in the case $j=0$ simply amounts to choosing a chart $h: \mathbb{R}^d \rightarrow W_{a_0}$ for the PL manifold $W_{a_0}$ with the property that $y_ 0 \in \mathrm{Im}\hspace{0.05cm}h$. This takes care of the base case $j=0$. 
Now, take any $j$ in $\{0, \ldots, p -1\}$ and suppose we have a  
 submersion chart $g: V \times \mathbb{R}^d \rightarrow W_{|K^j|}$ around $y_0$ for the projection $\pi_{|K^j|}: W_{|K^j|} \rightarrow |K^j|$. By taking a smaller open neighborhood of $a_0$ if necessary, we may assume that $V$ is contained in 
 $|\mathrm{st}(a_0, K^j)| - |\mathrm{lk}(a_0, K^j)|$.
Using Lemma \ref{pregluesub}, we will extend $g$ slightly (after possibly shrinking its image) over the interior of each $(j+1)$-simplex of $K$ that contains $a_0$. 
Suppose then that $\sigma_1, \ldots, \sigma_q$ are the 
$(j+1)$-simplices of $K$ that have $a_0$ as a vertex. For each 
 $i \in \{1, \ldots, q\}$, $V_i$ will denote the intersection $V \cap \partial \sigma_i$, and $g_i$ will denote the restriction of 
 the chart $g:V \times \mathbb{R}^d \rightarrow W_{|K^j|}$ on $V_i \times \mathbb{R}^d$. Additionally,  $V_0$ will denote the intersection of $V$ with the union of all simplices of $K$ of dimension at most $j$ that contain $a_0$ but which are not contained in any $(j+1)$-simplex, and $g_0$ will denote the restriction of $g:V \times \mathbb{R}^d \rightarrow W_{|K^j|}$ on $V_0 \times \mathbb{R}^d$. 
 Since $\pi_{\sigma_i}: W_{\sigma_i} \rightarrow \sigma_i$ is a PL submersion of codimension $d$ for each  $i \in \{1, \ldots, q\}$, we can find a submersion chart $f_i: V'_i \times \mathbb{R}^d \rightarrow W_{\sigma_i}$ around $a_0$ for each $\pi_{\sigma_i}$. 
 For each chart $f_i$, the set $V'_i$ is an open neighborhood of $a_0$ in $\sigma_i$. 
 By shrinking $V$ and reparameterizing $g$ if necessary, we may assume that 
 $\mathrm{Im}\hspace{0.05cm}g_i \subseteq \mathrm{Im}\hspace{0.05cm}f_i$ for all $i$ in $\{1, \ldots, q\}$. 
 Therefore, after possibly shrinking $V$ further, Lemma \ref{pregluesub} ensures that  
 we can find for each $i \in \{1, \ldots, q\}$ an open neighborhood
$\widetilde{V}_i$ of $a_0$ in $\sigma_i$ and a submersion chart
$\widetilde{g}_i: \widetilde{V}_i \times \mathbb{R}^d \rightarrow W_{\sigma_i}$ around $y_0$ for the submersion 
$\pi_{\sigma_i}: W_{\sigma_i} \rightarrow \sigma_i$ such that $\widetilde{V}_i \cap \partial \sigma_i = V_i$ and 
$\widetilde{g}_i|_{V_i \times \mathbb{R}^d} = g_i$. 
Note that any two maps in the collection $g_0, \widetilde{g}_1, \ldots, \widetilde{g}_q$ are equal when restricted on the intersection of their domains. Thus, if $\widetilde{V}$ denotes the union of the sets $V_0, \widetilde{V}_1, \ldots, \widetilde{V}_q$, we can produce a map 
$\widetilde{g}: \widetilde{V}\times \mathbb{R}^d \rightarrow W_{|K^{j+1}|}$ by gluing all the charts  $g_0, \widetilde{g}_1, \ldots, \widetilde{g}_q$. 
In other words, $\widetilde{g}: \widetilde{V}\times \mathbb{R}^d \rightarrow W_{|K^{j+1}|}$ is the unique map 
satisfying $\widetilde{g}|_{V_0\times \mathbb{R}^d} = g_0$ and $\widetilde{g}|_{\widetilde{V}_i\times \mathbb{R}^d} = \widetilde{g}_i$ for each  $i \in \{1, \ldots, q\}$.  
By pre-composing with a linear translation if necessary, we can also guarantee 
that $\widetilde{g}(a_0, 0) = y_0$, where $0$ denotes the origin in $\mathbb{R}^d$.  

Recall that we assumed $V \subseteq |\mathrm{st}(a_0, K^j)| - |\mathrm{lk}(a_0, K^j)|$. Because of this assumption, we can choose the sets $\widetilde{V}_i$ to be small enough so that the set 
$\widetilde{V}$ is contained in $|\mathrm{st}(a_0, K^{j+1})| - |\mathrm{lk}(a_0, K^{j+1})|$. If this is the case, then $\widetilde{V}$
is a subspace of $|\mathrm{st}(a_0, K^{j+1})| - |\mathrm{lk}(a_0, K^{j+1})|$ with the additional property that,
for any simplex $\delta \in K^{j+1}$ containing $a_0$, the intersection $\widetilde{V}\cap \delta$ is open in 
$\delta$. It follows that 
$\widetilde{V}$ is an open neighborhood of $a_0$ in $|K^{j+1}|$. 
Moreover, it is evident that $\widetilde{g}: \widetilde{V}\times \mathbb{R}^d \rightarrow W_{|K^{j+1}|}$ 
commutes with the projection onto $\widetilde{V}$. 
Thus, to conclude that $\widetilde{g}$ is a submersion chart for the map $\pi_{|K^{j+1}|}: W_{|K^{j+1}|} \rightarrow |K^{j+1}|$, we would just need to prove that $\widetilde{g}$ is a PL homeomorphism onto its image. 
However, instead of proving this, it is much easier to prove that $\widetilde{g}$ is a PL homeomorphism when restricted to an appropriate subspace of $\widetilde{V}\times \mathbb{R}^d$ that
contains $(a_0, 0)$. Indeed, pick an open neighborhood of  $a_0$ contained in $\widetilde{V}$. By abuse of notation, we will denote this new open neighborhood of $a_0$ by $V$. 
Also, let us choose $V$ so that $\overline{V} \subseteq \widetilde{V}$. 
Since $|\mathrm{st}(a_0, K^{j+1})|$ is compact and $\widetilde{V} \subseteq |\mathrm{st}(a_0, K^{j+1})|$, the condition 
$\overline{V} \subseteq \widetilde{V}$ guarantees that $\overline{V}$ is also compact. 
By construction, the map $\widetilde{g}: \widetilde{V}\times \mathbb{R}^d \rightarrow W_{|K^{j+1}|}$ is piecewise linear and a bijection onto its image. Then, since $\overline{V} \times [-1,1]^d$ is compact and $W_{|K^{j+1}|}$ is Hausdorff, the restriction 
$\widetilde{g}|_{\overline{V} \times [-1,1]^d}$ is a PL homeomorphism from $\overline{V} \times [-1,1]^d$ to 
$\widetilde{g}(\overline{V} \times [-1,1]^d)$. Consequently, the restriction of $\widetilde{g}$ on $V \times (-1,1)^d$ is also a PL homeomorphism onto its image. Finally, by reparameterizing the domain of $\widetilde{g}|_{V \times (-1,1)^d}$ with a PL homeomorphism $\mathbb{R}^d \stackrel{\cong}{\longrightarrow} (-1,1)^d$, we obtain a submersion chart
$V\times \mathbb{R}^d \rightarrow W_{|K^{j+1}|}$ around $y_0$ for the projection 
$\pi_{|K^{j+1}|}: W_{|K^{j+1}|} \rightarrow |K^{j+1}|$. 

By iterating the above procedure $p$ times (where, recall, $p$ is the maximum of the set  
$\{\mathrm{dim}(\sigma) \hspace{0.05cm} : \hspace{0.05cm} \sigma \in \mathrm{st}(a_0, K) \}$),
we can produce
 a submersion chart $h:V\times \mathbb{R}^d \rightarrow W_{|K^p|}$ around $y_0$ for the projection 
$\pi_{|K^p|}: W_{|K^p|} \rightarrow |K^p|$ over some open neighborhood $V$ of $a_0$ contained in 
$|\mathrm{st}(a_0, K^p)| - |\mathrm{lk}(a_0, K^p)|$. 
However, since $p$ is the maximal possible dimension of any simplex in $\mathrm{st}(a_0, K)$, we have that $V\cap |K| = V \cap |K^p|$, which implies that
the projections $\pi: W \rightarrow |K|$ and $\pi_{|K^p|}: W_{|K^p|} \rightarrow |K^p|$ agree over $V$. 
It follows that $h:V\times \mathbb{R}^d \rightarrow W_{|K^p|}$ is actually a  submersion chart around $y_0$ for 
the projection $\pi: W \rightarrow |K|$. 
Since the point $y_0 \in W$ that we fixed at the beginning was arbitrary, we can conclude that $\pi: W \rightarrow |K|$ is a PL submersion of codimension $d$,
and it follows that $W$ is an element of $\Psi_d(U)(|K|)$.
\end{proof}

\theoremstyle{definition} \newtheorem{moreclasssub}[sbmn]{Remark}

\begin{moreclasssub}  \label{moreclasssub}
The proof of  
Theorem \ref{kan} works verbatim to prove that the PL sets $\psi_d(N,k)$ introduced 
in Definition \ref{spacemanfil} are also quasi-PL spaces.
In particular, each simplicial set of the form $\psi_d(N,k)_{\bullet}$
is Kan. 
\end{moreclasssub}

We shall adopt the following terminology in the remainder of this paper. 

\theoremstyle{definition} \newtheorem{defclass}[sbmn]{Definition}

\begin{defclass}  \label{defclass}

Fix a PL set $\mathcal{F}: \mathbf{PL}^{op} \rightarrow \mathbf{Sets}$ (not necessarily a quasi-PL space),
a triangulation $(K,h)$ for a PL space $P$,  
and an order relation on the set of vertices $\mathrm{Vert}(K)$.  
Also, let $\mathcal{S}_{K,h}: \mathcal{F}(P) \rightarrow \mathbf{Ssets}(K_{\bullet}, \mathcal{F}_{\bullet})$ 
be the function defined in Theorem \ref{classsub}. 
For an element $W$
in $\mathcal{F}(P)$, we will say that the map
of simplicial sets $F_W := \mathcal{S}_{K,h}(W)$ 
\textit{classifies $W$ relative to the triangulation $(K,h)$}.

\end{defclass}

If $\mathcal{F}$ happens to be a quasi-PL space, then Theorem \ref{classsub} asserts that any morphism 
$g: K_{\bullet} \rightarrow \mathcal{F}_{\bullet}$ can be realized as the classifying map 
of some element in $\mathcal{F}(P)$. 

As we shall see in Proposition \ref{concord.homotopy} below,
we can use classifying maps (in the sense of Definition \ref{defclass}) 
to produce homotopies of maps into spaces of the form $|\mathcal{F}_{\bullet}|$,
where $\mathcal{F}$ is an arbitrary PL set. 
For the statement of Proposition \ref{concord.homotopy},
we shall need the following definition, which will play a fundamental
role in many of the arguments that will appear later in this paper. 
Even though we will mainly apply this definition to PL sets of the form 
$\Psi_d(U)$ and $\psi_d(N,k)$, we choose to formulate it for arbitrary PL sets. 
 
\theoremstyle{definition}  \newtheorem{concord}[sbmn]{Definition}

\begin{concord} \label{concord}
Fix a PL set $\mathcal{F}: \mathbf{PL}^{op} \rightarrow \mathbf{Sets}$, 
a PL space $P$, and two elements
 $W_0$ and $W_1$ of the set $\mathcal{F}(P)$.
 Also, let $i_0$, $i_1: P \rightarrow [0,1] \times P$ be the embeddings defined 
by $i_0(\lambda) = (0,\lambda)$ and   $i_1(\lambda) = (1,\lambda)$ respectively. 
We say that $W_0$ and $W_1$ are \textit{concordant} if there is 
an element $\widetilde{W}$ in $\mathcal{F}([0,1]\times P)$
such that $i_0^{*}\widetilde{W} = W_0$ and $i_1^*\widetilde{W} = W_1$. 
In this case, we say that $\widetilde{W}$
is a \textit{concordance from $W_0$ to $W_1$}.
\end{concord}

Now we turn to Proposition \ref{concord.homotopy}, which tells us that concordances are a good source of homotopies. 

\theoremstyle{plain} \newtheorem{concord.homotopy}[sbmn]{Proposition}

\begin{concord.homotopy} \label{concord.homotopy}
Let $\mathcal{F}$ be a PL set, $P$ a compact $PL$ space, $(K,h)$ a triangulation of $P$, and $\leq$ an order relation on  
$\mathrm{Vert}(K)$. By the compactness of $P$, we may assume that $K$ is a finite simplicial complex in some Euclidean space 
$\mathbb{R}^m$. Moreover, consider two elements $W$ and $W'$ of the set 
$\mathcal{F}(P)$, and let 
\[
F_{W}: K_{\bullet} \rightarrow \mathcal{F}_{\bullet} \qquad F_{W'}: K_{\bullet} \rightarrow \mathcal{F}_{\bullet}
\]
be the classifying maps of $W$ and $W'$ relative to the triangulation $(K,h)$ (see Definition \ref{defclass}). 
If $\widetilde{W} \in \mathcal{F}([0,1]\times P)$  
is a concordance from $W$ to $W'$, then $\widetilde{W}$ induces a
homotopy $H: [0,1] \times |K_{\bullet}| \rightarrow |\mathcal{F}_{\bullet}|$ with 
$H_0 = |F_W|$ and $H_1 = |F_{W'}|$. 
\end{concord.homotopy}

\begin{proof}
First, identify the product $[0,1] \times |K|$ with its image under the embedding 
$[0,1]\times |K| \hookrightarrow \mathbb{R}^{m+1}$ defined by 
$(t,x) \mapsto (t,x)$.  Next, 
for each simplex $\sigma$ of $K$, subdivide the product $[0,1]\times \sigma$
with the simplicial complex $\widetilde{K}_{\sigma}$ obtained by pulling back 
the standard triangulation of the prism $[0,1]\times \Delta^{\mathrm{dim}(\sigma)}$ 
along the obvious linear isomorphism $[0,1]\times\sigma \stackrel{\cong}{\longrightarrow} [0,1]\times \Delta^{\mathrm{dim}(\sigma)}$. 
By assembling all the triangulations $\widetilde{K}_{\sigma}$, we obtain 
a simplicial complex $\widetilde{K}$ which  
subdivides $[0,1]\times |K|$ and agrees with $K$ on 
$\{0\} \times |K|$ and $\{1\}\times |K|$. Finally, fix an ordering $\widetilde{\leq}$ on the vertices of $\widetilde{K}$ which extends the order relation $\leq$ on $\mathrm{Vert}(K)$. Evidently, the tuple 
$(\widetilde{K}, \mathrm{Id}_{[0,1]}\times h)$ is a triangulation of the product $[0,1]\times P$. 
Now, let $F_{\widetilde{W}}: \widetilde{K}_{\bullet} \rightarrow \mathcal{F}_{\bullet}$
be the classifying map of the concordance $\widetilde{W}$
 relative to $(\widetilde{K}, \mathrm{Id}_{[0,1]}\times h)$. Also, for $j \in \{ 0,1 \}$, let
 $I_j: K_{\bullet} \hookrightarrow \widetilde{K}_{\bullet}$ be the simplicial set map induced by the inclusion
 $i_j: |K| \hookrightarrow |\widetilde{K}|$
 defined by $i_j(\lambda) = (\hspace{0.08cm}j,\lambda)$. 
 Since $(\widetilde{K}, \widetilde{\leq})$ agrees with $(K,\leq)$ when restricted to the bottom and top face
 of $[0,1]\times |K|$, we have that $F_W = F_{\widetilde{W}}\circ I_0$ and $F_{W'} = F_{\widetilde{W}}\circ I_1$. 
 Thus, after identifying $|\widetilde{K}_{\bullet}|$ with $[0,1]\times |K_{\bullet}|$, 
 the geometric realization of $F_{\widetilde{W}}$ 
 gives a homotopy $H$ from $|F_{W}|$ to $|F_{W'}|$. 
\end{proof}

\theoremstyle{definition} \newtheorem{semi.class.version}[sbmn]{Remark}

\begin{semi.class.version} \label{semi.class.version} 

Fix a quasi-PL space $\mathcal{F}$ and let $\widetilde{\mathcal{F}}_{\bullet}$ be the semi-simplicial set obtained by forgetting the degeneracies of the underlying simplicial set $\mathcal{F}_{\bullet}$. By doing an argument nearly identical to the one we did in the proof of Theorem \ref{classsub}, one can obtain a version of this theorem for the semi-simplicial set $\widetilde{\mathcal{F}}_{\bullet}$. More precisely, let $P$ be a PL space, and fix a
triangulation $(K,h)$ of $P$ and an order relation $<$ on $\mathrm{Vert}(K)$. Also, let 
$\Delta_{\mathrm{inj}}$ be the category of finite sets $[p]$ and \textit{injective} order-preserving functions. 
The tuple $(K, <)$ induces a semi-simplicial set $K_{\bullet}$ as follows: A $p$-simplex in $K_{\bullet}$ is a strictly increasing 
chain of vertices $v_0 < v_1 < \ldots < v_p$ that span a simplex of $K$. Additionally, we define structure maps in exactly the same way we did in the case of simplicial sets. 
Then, if $\mathbf{Semi}\text{-}\mathbf{Ssets}$ denotes the category of semi-simplicial sets, one can show that the function 
\[
\mathcal{F}(P) \rightarrow \mathbf{Semi}\text{-}\mathbf{Ssets}(K_{\bullet}, \widetilde{\mathcal{F}}_{\bullet})
\]
which sends an element $W\in \mathcal{F}(P)$
 to the morphism of semi-simplicial sets 
$F_W: K_{\bullet} \rightarrow \widetilde{\mathcal{F}}_{\bullet}$
defined by
$F_W(v_0 < \ldots < v_p) = s_{v_0\ldots v_p}^*h^*W $
is a bijection. 
Also, the proof we did in part (ii) of Theorem \ref{classsub} works verbatim for semi-simplicial sets. Consequently,
$\widetilde{\mathcal{F}}_{\bullet}$ is Kan if $\mathcal{F}$ is a quasi-PL space.
Moreover, Definition \ref{defclass} and Proposition \ref{concord.homotopy} admit versions
that apply to semi-simplicial sets, and 
the proof of Proposition \ref{concord.homotopy}  can be replicated without any issues in the
semi-simplicial setting.
All of the above observations apply to the semi-simplicial sets
$\widetilde{\Psi}_d(U)_{\bullet}$ and $\widetilde{\psi}_d(N,k)_{\bullet}$ corresponding to the PL sets
$\Psi_d(U)$ and $\psi_d(N,k)$ respectively.  
\end{semi.class.version}

\subsection{The subdivision map} \label{secsubmap}  
In this subsection, we will perform a construction that works for PL sets in general, not 
just quasi-PL spaces. In particular, all the constructions we shall describe in this subsection
work for PL sets of the form $\Psi_d(U)$ and $\psi_d(N,k)$.
Consider then an arbitrary PL set $\mathcal{F}: \mathbf{PL}^{op} \rightarrow \mathbf{Sets}$. As we did
in Remark \ref{semi.class.version}, we shall denote by $\widetilde{\mathcal{F}}_{\bullet}$ the semi-simplicial set obtained
by forgetting the degeneracies of the underlying simplicial set $\mathcal{F}_{\bullet}$. 
Our goal is to show that it is possible to 
construct a map
 $\rho: |\widetilde{\mathcal{F}}_{\bullet}| \rightarrow  |\widetilde{\mathcal{F}}_{\bullet}|$ 
 from the geometric realization $|\widetilde{\mathcal{F}}_{\bullet}|$ to itself  
 satisfying the following properties:  
 
 \begin{enumerate}
 
 \item $\rho$ is homotopic to the identity map on $ |\widetilde{\mathcal{F}}_{\bullet}|$. 
 
 \item For any morphism of semi-simplicial sets $f:X_{\bullet} \rightarrow \widetilde{\mathcal{F}}_{\bullet}$
and any non-negative integer $r$,
there is a unique morphism 
$g: \mathrm{sd}^r X_{\bullet} \rightarrow \widetilde{\mathcal{F}}_{\bullet}$
making the following diagram commute
\[
\xymatrix{
\left|X_{\bullet}\right| \ar[r]^{\left|f\right|}  & \left| \widetilde{\mathcal{F}}_{\bullet}\right| \ar[d]^{\rho^r} \\
\left|\mathrm{sd}^r X_{\bullet}\right| \ar[u]^{\cong}\ar[r]^{\left|g\right|} & \left| \widetilde{\mathcal{F}}_{\bullet}\right|. }
\]
 
 \end{enumerate}
 
 In the second property stated above, $ \mathrm{sd}^r X_{\bullet} $ denotes the \textit{$r$-th barycentric subdivision} of $X_{\bullet}$, and the left vertical map in the previous diagram is the canonical 
homeomorphism from  $\left|\mathrm{sd}^r X_{\bullet}\right|$ to $\left| X_{\bullet} \right|$. 
In view of this second property, the automorphism 
 $\rho$ will be called \textit{the subdivision map of} $\widetilde{\mathcal{F}}_{\bullet}$. 

To explain the construction of the map  
$\rho: |\widetilde{\mathcal{F}}_{\bullet}| \rightarrow  |\widetilde{\mathcal{F}}_{\bullet}|$, we need to introduce some notation: 

\begin{itemize}

\item[$\cdot$] By abuse of notation, we shall denote the canonical simplicial complex triangulating the standard $p$-simplex $\Delta^p$ simply by $\Delta^p$. 

\item[$\cdot$] For any face $F$ of the standard $p$-simplex $\Delta^p$, we will denote its barycentric point by $bF$. 

\item[$\cdot$] $\mathrm{sd}\hspace{0.05cm}\Delta^p$ will denote \textit{the first barycentric subdivision} of the standard simplicial complex triangulating $\Delta^p$. That is, $\mathrm{sd}\hspace{0.05cm}\Delta^p$ is the simplicial complex whose vertices are all barycentric points $bF$ and whose 
higher-dimensional simplices are all possible convex hulls $\langle bF_0, bF_1, \ldots bF_p \rangle$ corresponding to strictly increasing flags of faces $F_0 \subset F_1 \subset \ldots \subset F_p$. 

\item[$\cdot$] We can define an order relation $<_{\mathrm{s}p}$ on 
$\mathrm{Vert}(\mathrm{sd}\hspace{0.05cm}\Delta^p)$ by setting $bF_i <_{\mathrm{s}p} bF_j$ if and only if 
$F_i \subset F_j$. Also, we will denote the obvious order relation on $\mathrm{Vert}(\Delta^p)$ by $<_p$. 

\item[$\cdot$] Fix a semi-simplicial set $Y_{\bullet}$. As typically done in the literature, we will denote by 
$\mathrm{Ex}\hspace{0.05cm} Y_{\bullet}$ the semi-simplicial set whose $p$-simplices are all semi-simplicial morphisms of the form $\mathrm{sd} \hspace{0.05cm}\Delta^p_{\bullet} \rightarrow Y_{\bullet}$. 

\item[$\cdot$] For any PL set $\mathcal{F}$, we will denote the semi-simplicial set $\widetilde{\mathcal{F}}_{\bullet}$
simply by $\mathcal{F}_{\bullet}$. 
Also, if $W$ is a $p$-simplex of $\mathcal{F}_{\bullet}$, we will denote its 
characteristic map $\Delta^p_{\bullet} \rightarrow \mathcal{F}_{\bullet}$ by $f_W$.

\end{itemize}

\theoremstyle{definition} \newtheorem{construction.rho}[sbmn]{Note}

\begin{construction.rho} \label{construction.rho} 

\textbf{(Construction of the subdivision map)} Fix a PL set $\mathcal{F}$, and
let $\Delta^p_{\bullet}$ and 
$\mathrm{sd}\hspace{0.05cm}\Delta^p_{\bullet}$ be the 
semi-simplicial sets induced respectively by the ordered simplicial complexes 
$(\Delta^p, <_p)$ and $(\mathrm{sd}\hspace{0.05cm}\Delta^p, <_{\mathrm{s}p} )$ (see Remark \ref{semi.class.version}). 
The key observation for constructing the 
subdivision map $\rho$ is the following: Given any $p$-simplex $W$ of the semi-simplicial set $\mathcal{F}_{\bullet}$ 
(i.e., an element of $\mathcal{F}(\Delta^p)$), we can triangulate the base-space $\Delta^p$ with 
$\mathrm{sd}\hspace{0.05cm}\Delta^p$. Thus, by the semi-simplicial version of Definition \ref{defclass}, the triangulation 
$(\mathrm{sd}\hspace{0.05cm}\Delta^p, \mathrm{Id}_{\Delta^p})$ and the order relation 
$<_{\mathrm{s}p}$ will induce a classifying map 
$\mathrm{sd}\hspace{0.05cm}\Delta^p_{\bullet} \rightarrow \mathcal{F}_{\bullet}$, which we will denote by 
$f_{\mathrm{s},W}$ throughout this subsection.  It is not hard to verify that the assignment $W \mapsto f_{\mathrm{s},W}$ is natural with respect to order-preserving inclusions $\theta: [q] \rightarrow [p]$; i.e., for any $p$-simplex $W$ of 
$\mathcal{F}_{\bullet}$ and any morphism  $\theta: [q] \rightarrow [p]$ in 
$\Delta_{\mathrm{inj}}$, the following diagram commutes: 
\begin{equation} \label{construction.rho.diagram}
\xymatrix{ 
\mathrm{sd}\hspace{0.05cm}\Delta^q_{\bullet} \ar[rr]^{\theta_{\mathrm{s}}} \ar[rd]^{f_{\mathrm{s},\theta^*W}} & & \mathrm{sd}\hspace{0.05cm}\Delta^p_{\bullet}  \ar[ld]_{f_{\mathrm{s},W}} \\
 & \mathcal{F}_{\bullet} & }
\end{equation}
The top arrow is the natural morphism between subdivisions induced by $\theta$
(see Remark \ref{construction.rho.remark} below). It follows that the assignments $W \mapsto f_{\mathrm{s},W}$ define a morphism of semi-simplicial sets
$\widetilde{\gamma}: \mathcal{F}_{\bullet} \rightarrow \mathrm{Ex}\hspace{0.05cm} \mathcal{F}_{\bullet}$.  Then, if 
$\gamma: \mathrm{sd} \hspace{0.05cm} \mathcal{F}_{\bullet} \rightarrow \mathcal{F}_{\bullet}$ is the adjoint of 
$\widetilde{\gamma}$, we define \textit{the subdivision map} 
$\rho: |\mathcal{F}_{\bullet}| \rightarrow |\mathcal{F}_{\bullet}|$ to be the composition 
\[
\xymatrix{
|\mathcal{F}_{\bullet}| \ar[r]^{h^{-1}} & |\mathrm{sd} \hspace{0.05cm} \mathcal{F}_{\bullet}| 
\ar[r]^{|\gamma|} & |\mathcal{F}_{\bullet}|,}
\]
where $h^{-1}$ is the inverse of the canonical homeomorphism 
$h: |\mathrm{sd} \hspace{0.05cm} \mathcal{F}_{\bullet}|  \rightarrow |\mathcal{F}_{\bullet}|$. 

\end{construction.rho}

\theoremstyle{definition} \newtheorem{construction.rho.remark}[sbmn]{Remark}

\begin{construction.rho.remark} \label{construction.rho.remark} 
Let $\theta: [q] \rightarrow [p]$ be a morphism in $\Delta_{\mathrm{inj}}$. The top map $\theta_{\mathrm{s}}$ in 
(\ref{construction.rho.diagram}) is defined by 
$bF_{i_0} <_{\mathrm{s}q} \ldots <_{\mathrm{s}q} bF_{i_m} \mapsto 
\widetilde{\theta}(bF_{i_0}) <_{\mathrm{s}p} \ldots <_{\mathrm{s}p} \widetilde{\theta}(bF_{i_m})$, where 
$\widetilde{\theta}$ is the linear map $\Delta^q \rightarrow \Delta^p$ that sends a basis vector 
$e_j \in \mathbb{R}^{q+1}$ to  $e_{\theta(j)} \in \mathbb{R}^{p+1}$. We can guarantee that each 
image $\widetilde{\theta}(bF_{i_k})$ is a barycentric point in $\Delta^p$ because the map $\widetilde{\theta}$
is injective, and it therefore maps barycentric points to barycentric points, which is a property that does not hold for simplicial maps in general. This is the reason why we work with semi-simplicial sets for the construction of $\rho$ instead of simplicial sets. 
\end{construction.rho.remark}

As mentioned earlier, the subdivision map satisfies the following fundamental property. 

\theoremstyle{plain}  \newtheorem{subhopid}[sbmn]{Proposition}

\begin{subhopid} \label{subhopid}
The subdivision map $\rho: |\mathcal{F}_{\bullet}| \rightarrow |\mathcal{F}_{\bullet}|$
is homotopic to the identity map on $|\mathcal{F}_{\bullet}|$. 
\end{subhopid}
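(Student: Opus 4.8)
\noindent The plan is to lift the construction of $\widetilde{\rho}$ from the barycentric subdivision functor $\mathrm{sd}$ up to the ``homotopy functor'' $\mathcal{R}$ of $\S4.1$, exploiting its defining property that $|\mathcal{R}X_{\bullet}|\cong|X_{\bullet}|\times[0,1]$ naturally in $X_{\bullet}$ (see (\ref{functhtpy}) and (\ref{functhtpy2})). First I would produce, for each $p$-simplex $\beta\in\mathcal{F}_p$, a morphism of $\Delta$-sets $R^{\beta}\colon\mathcal{R}(p)_{\bullet}\to\mathcal{F}_{\bullet}$ which restricts to the characteristic map of $\beta$ on the sub-$\Delta$-set $\Delta^{p}_{\bullet}\hookrightarrow\mathcal{R}(p)_{\bullet}$ and to the morphism $\rho^{\beta}$ of Lemma~\ref{submap1} on the sub-$\Delta$-set $\mathrm{sd}\Delta^{p}_{\bullet}\hookrightarrow\mathcal{R}(p)_{\bullet}$. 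Recall that a simplex of $\mathcal{R}(p)_{\bullet}$ has the form $e_{i_0}\le_{p}\cdots\le_{p}e_{i_k}\le_{p}F_{j_1}\le_{p}\cdots\le_{p}F_{j_l}$ (with the $e$-part or the $F$-part possibly empty); to such a simplex I assign the element $\widetilde{\mathcal{F}}(\psi)(\beta)$, where $\psi\colon\Delta^{k+l}\to\Delta^{p}$ is the linear (hence piecewise linear) map determined on vertices by $e_{0},\dots,e_{k}\mapsto e_{i_0},\dots,e_{i_k}$ and $e_{k+1},\dots,e_{k+l}\mapsto bF_{j_1},\dots,bF_{j_l}$. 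This recipe is uniform: on a simplex of $\Delta^{p}_{\bullet}$ the map $\psi$ is an order-preserving simplicial injection, i.e.\ a morphism of $\Delta_{inj}$, and $\widetilde{\mathcal{F}}(\psi)(\beta)$ is the usual value of the characteristic map of $\beta$; on a simplex of $\mathrm{sd}\Delta^{p}_{\bullet}$ the map $\psi$ is the map $\phi_{F}$ of $\S4.2$, so $\widetilde{\mathcal{F}}(\psi)(\beta)$ equals $\rho^{\beta}$ evaluated on that simplex.

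\noindent Two points require verification. That each $R^{\beta}$ is a well-defined morphism of $\Delta$-sets is routine: a face operator of $\mathcal{R}(p)_{\bullet}$ corresponds to precomposing $\psi$ with a face inclusion $\Delta^{k+l-1}\hookrightarrow\Delta^{k+l}$, and the required identity follows from the contravariant functoriality of $\widetilde{\mathcal{F}}$ on $\mathbf{PL}$, exactly as in the proof of Lemma~\ref{submap1}. The step I expect to be the real obstacle is \emph{naturality of the family $\{R^{\beta}\}$}: for every $\theta\colon\Delta^{p}\to\Delta^{q}$ in $\Delta_{inj}$ and every $\beta\in\mathcal{F}_q$ with $\theta^{*}\beta=\omega$, one needs $R^{\beta}\circ\mathcal{R}\theta=R^{\omega}$, where $\mathcal{R}\theta$ is the morphism of $\Delta$-sets of (\ref{comorph}). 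This is the precise analogue of Lemma~\ref{submap2}; unwinding both sides through the equality $\widetilde{\mathcal{F}}(\psi_S)(\theta^{*}\beta)=\widetilde{\mathcal{F}}(\theta\circ\psi_S)(\beta)$, it reduces to the key identity $\theta\circ\psi_{S}=\psi_{\mathcal{R}\theta(S)}$ for every simplex $S$ of $\mathcal{R}(p)_{\bullet}$, which in turn amounts to the fact that $\theta$ carries the barycentre $bF$ of each face $F$ of $\Delta^p$ to the barycentre of the face $\theta_{*}F$ of $\Delta^q$. As emphasized in Remark~\ref{submapimp}, this holds precisely because $\theta$ is an injective order-preserving simplicial map, and this is exactly the point at which it is essential to work with $\Delta$-sets rather than simplicial sets.

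\noindent Granting naturality, the maps $R^{\beta}$ form a cocone on the diagram $\mathcal{R}_{\mathcal{F}}\colon\Delta\downarrow\mathcal{F}\to\Delta\text{-}\mathbf{sets}$, so the universal property of the colimit $\mathcal{R}\mathcal{F}_{\bullet}=\colim_{\Delta\downarrow\mathcal{F}}\mathcal{R}_{\mathcal{F}}$ yields a unique morphism of $\Delta$-sets $\widehat{R}\colon\mathcal{R}\mathcal{F}_{\bullet}\to\mathcal{F}_{\bullet}$ extending every $R^{\beta}$. By the restriction properties noted above, $\widehat{R}$ restricts to the identity on the sub-$\Delta$-set $\mathcal{F}_{\bullet}\hookrightarrow\mathcal{R}\mathcal{F}_{\bullet}$ and to $\widetilde{\rho}$ on the sub-$\Delta$-set $\mathrm{sd}\mathcal{F}_{\bullet}\hookrightarrow\mathcal{R}\mathcal{F}_{\bullet}$. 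I then set
\[
G\ :=\ |\widehat{R}|\circ H_{\mathcal{F}}^{-1}\ \colon\ |\mathcal{F}_{\bullet}|\times[0,1]\longrightarrow|\mathcal{F}_{\bullet}|,
\]
where $H_{\mathcal{F}}\colon|\mathcal{R}\mathcal{F}_{\bullet}|\xrightarrow{\ \cong\ }|\mathcal{F}_{\bullet}|\times[0,1]$ is the natural homeomorphism. Chasing the commutative diagram (\ref{functhtpy2}) with $X_{\bullet}=\mathcal{F}_{\bullet}$ shows that $G(-,0)$ is the identity of $|\mathcal{F}_{\bullet}|$, while $G(-,1)=|\widetilde{\rho}|\circ h^{-1}$ with $h\colon|\mathrm{sd}\mathcal{F}_{\bullet}|\xrightarrow{\ \cong\ }|\mathcal{F}_{\bullet}|$ the canonical homeomorphism; by Definition~\ref{submap} the latter is exactly the subdivision map $\rho$. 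Hence $G$ is a homotopy from $\mathrm{id}_{|\mathcal{F}_{\bullet}|}$ to $\rho$, which proves the theorem (and, specialized to $\widetilde{\mathcal{F}}=\Psi_d(\mathbb{R}^N)$, gives the stated property of the subdivision map of $\Psi_d(\mathbb{R}^N)_{\bullet}$).
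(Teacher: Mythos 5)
Your proof is correct and is essentially the paper's own argument: your map $R^{\beta}$ is exactly the paper's $\gamma^{\sigma}$ (the paper writes $\gamma_k^{\sigma}(V)=\widetilde{\mathcal{F}}(s_V)(\sigma\times[0,1])=\widetilde{\mathcal{F}}(\mathrm{pr}_1\circ s_V)(\sigma)$ and $\mathrm{pr}_1\circ s_V$ is your $\psi$, so the two formulas agree), your naturality check is the paper's "slight upgrading" of Lemma~\ref{submap2}, and your $\widehat{R}$ and $G$ are the paper's $\Gamma$ and $H$. You have simply inlined the intermediate $\sigma\times[0,1]$ notation; otherwise the argument is identical.
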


\begin{proof}
The construction of the homotopy between the subdivision map $\rho$ and 
$\mathrm{Id}_{|\mathcal{F}_{\bullet}|}$
is similar to the construction of $\rho$ itself, although it might seem substantially longer due to all the 
preliminaries we need to discuss. 
First, consider the product $[0,1]\times \Delta^p$, which we will view as a subspace of 
$\mathbb{R}^{p+2}$. We will triangulate $[0,1]\times \Delta^p$ with a simplicial complex 
$\mathcal{R}\hspace{0.05cm}\Delta^p$ defined as follows. On $\{0\} \times \Delta^p$, $\mathcal{R}\Delta^p$ will agree with the standard simplicial complex triangulating $\Delta^p$. On $\{1\} \times \Delta^p$, $\mathcal{R}\Delta^p$ agrees with the first barycentric subdivision $\mathrm{sd}\hspace{0.05cm}\Delta^p$. Finally, a subset of the form
\[
\big\{ (0, e_{i_0}), \ldots, (0, e_{i_k}) \big\} \cup \big\{ (1, bF_{j_0}), \ldots, (1, bF_{j_{k'}}) \big\} 
\] 
will span a $(k+ k')$-simplex of $\mathcal{R}\hspace{0.05cm}\Delta^p$ if and only if 
we have the following:

\begin{itemize}

\item[i)] The convex hull $\langle bF_{j_0}, \ldots, bF_{j_{k'}} \rangle$ is a simplex of 
$\mathrm{sd}\hspace{0.05cm}\Delta^p$, and 

\item[ii)] $\langle e_{i_0}, \ldots, e_{i_k} \rangle$ is a face of the smallest face in the collection 
$\{F_{j_0}, \ldots, F_{j_{k'}} \}$.

\end{itemize}

The reader may have perhaps noticed that $\mathcal{R}\hspace{0.05cm}\Delta^p$ is the 
triangulation typically used to prove excision for singular homology. Moreover, we 
can turn 
$\mathcal{R}\Delta^p$ into an ordered simplicial complex by defining the following order $<_{\mathrm{r}p}$ on
$\mathrm{Vert}(\mathcal{R}\hspace{0.05cm}\Delta^p)$: 

\begin{itemize}

\item[-] The restriction of $<_{\mathrm{r}p}$ on $\{0\} \times \Delta^p$ and $\{1\} \times \Delta^p$ agrees with $<_p$ and $<_{\mathrm{s}p}$ respectively. 

\item[-] $(0, e_i) <_{\mathrm{r}p} (1, bF_{j})$ for any pair of vertices  $(0, e_i)$ and $(1, bF_{j})$. 

\end{itemize}

Now, let $\mathcal{R}\hspace{0.05cm}\Delta^p_{\bullet}$ be the semi-simplicial set induced by 
the pair $(\mathcal{R}\hspace{0.05cm}\Delta^p, <_{\mathrm{r}p})$ and,  
for any morphism $\theta: [q] \rightarrow [p]$ in $\Delta_{\mathrm{inj}}$, let 
$\theta_{\mathrm{r}}: \mathcal{R}\hspace{0.05cm}\Delta^q_{\bullet} \rightarrow \mathcal{R}\hspace{0.05cm}\Delta^p_{\bullet}$ be the morphism which maps a simplex of the form 
$(0, e_{i_0}) <_{\mathrm{r}q} \ldots <_{\mathrm{r}q} (0, e_{i_k}) <_{\mathrm{r}q} (1, bF_{j_0}) <_{\mathrm{r}q} \ldots <_{\mathrm{r}q} (1, bF_{j_{k'}})$ to 
\[
(0, \widetilde{\theta}(e_{i_0})) <_{\mathrm{r}p} \ldots <_{\mathrm{r}p} (0, \widetilde{\theta}( e_{i_k})) <_{\mathrm{r}p} (1, \widetilde{\theta}( bF_{j_0})) <_{\mathrm{r}p} \ldots <_{\mathrm{r}p} (1, \widetilde{\theta}( bF_{j_{k'}})).
\]
In the previous definition, $\widetilde{\theta}$ denotes once again the linear map $\Delta^q \rightarrow \Delta^p$ which sends $e_j$ to $e_{\theta(j)}$. To proceed with our construction, we need to introduce a new kind of
endofunctor on $\mathbf{Semi}\text{-}\mathbf{Ssets} $ which will play in this proof a role 
similar to the one played by the functor $\mathrm{Ex}(\cdot)$ in the construction of the
subdivision map $\rho$. Namely, for any semi-simplicial set $Y_{\bullet}$, we define 
$\mathrm{Ex}^{\mathrm{r}}\hspace{0.05cm}Y_{\bullet}$ to be the semi-simplicial set
whose $p$-simplices are all semi-simplicial maps of the form 
$\mathcal{R}\hspace{0.05cm}\Delta^p_{\bullet} \rightarrow Y_{\bullet}$. The 
structure map of $\mathrm{Ex}^{\mathrm{r}}\hspace{0.05cm}Y_{\bullet}$ 
induced by a morphism $\theta \in \Delta_{\mathrm{inj}}([q], [p])$ is given by pre-composition 
with $\theta_{\mathrm{r}}$. Next, we will define a morphism 
$\mathcal{F}_{\bullet} \rightarrow \mathrm{Ex}^{\mathrm{r}}\hspace{0.05cm} \mathcal{F}_{\bullet}$ as follows.
For any $p$-simplex $W$ of $\mathcal{F}_{\bullet}$, the product $[0,1] \times W$ is an element 
of $\mathcal{F}([0,1]\times \Delta^p)$. Then, by the semi-simplicial version of
Definition \ref{defclass}, the triangulation 
$(\mathcal{R}\hspace{0.05cm}\Delta^p, \mathrm{Id}_{[0,1]\times \Delta^p})$
and the order relation $<_{\mathrm{r}p}$
induce a classifying map $\mathcal{R}\hspace{0.05cm}\Delta^p_{\bullet} \rightarrow \mathcal{F}_{\bullet}$
for $[0,1]\times W$, which we will denote by $f_{\mathrm{r}, W}$. 
As it was the case for the maps  $f_{\mathrm{s}, W}$ defined in 
Note \ref{construction.rho}, it is not hard to verify that
$f_{\mathrm{r},W}\circ \theta_{\mathrm{r}} = f_{\mathrm{r}, \theta^*W}$ for all $W \in \mathcal{F}_p$ and all morphisms
$\theta: [q] \rightarrow [p]$ in $\Delta_{\mathrm{inj}}$. Therefore, all the assignments 
$W \mapsto f_{\mathrm{r}, W}$  assemble into a map of semi-simplicial sets 
$\mathcal{F}_{\bullet} \rightarrow \mathrm{Ex}^{\mathrm{r}}\hspace{0.05cm} \mathcal{F}_{\bullet}$, 
which we will denote by $\widetilde{\Gamma}$ throughout the rest of this proof. 

In the same way that the functor $Y_{\bullet} \mapsto \mathrm{Ex}\hspace{0.05cm}Y_{\bullet}$
is right adjoint to $X_{\bullet} \mapsto \mathrm{sd}\hspace{0.05cm}X_{\bullet}$, the functor 
$Y_{\bullet} \mapsto \mathrm{Ex}^{\mathrm{r}}\hspace{0.05cm}Y_{\bullet}$ is right adjoint to a functor
$\mathcal{R}: \mathbf{Semi}\text{-}\mathbf{Ssets} \rightarrow  \mathbf{Semi}\text{-}\mathbf{Ssets}$ whose construction is similar to that of $\mathrm{sd}: \mathbf{Semi}\text{-}\mathbf{Ssets} \rightarrow  \mathbf{Semi}\text{-}\mathbf{Ssets}$; i.e.,
we define $\mathcal{R}$ via a colimit construction involving the simplex category.
Recall that, for any semi-simplicial set 
$X_{\bullet}$,  \textit{the simplex category} $\Delta\downarrow X$ is the category 
whose objects are semi-simplicial set maps $f:\Delta^p_{\bullet} \rightarrow X_{\bullet}$, and morphisms are commutative diagrams of the form
\begin{equation} \label{simplex.morph}
\xymatrix{
\Delta^p_{\bullet} \ar[r]^{f} \ar[d]_{\theta} & X_{\bullet}  \ar@{=}[d]\\
\Delta^q_{\bullet} \ar[r]^{g}  & X_{\bullet}. }
\end{equation}
The left vertical map in this diagram is induced by a morphism $\theta: [p] \rightarrow [q]$ in $\Delta_{\mathrm{inj}}$. Then, $\mathcal{R}: \mathbf{Semi}\text{-}\mathbf{Ssets} \rightarrow  \mathbf{Semi}\text{-}\mathbf{Ssets}$ is the functor defined by $X_{\bullet} \mapsto \mathcal{R}\hspace{0.05cm}X_{\bullet}$, where 
$\mathcal{R}\hspace{0.05cm}X_{\bullet}$ is the colimit of the functor 
$\Delta\downarrow X \rightarrow \mathbf{Semi}\text{-}\mathbf{Ssets}$ which sends an object 
 $\Delta^p_{\bullet} \rightarrow X_{\bullet}$ to $\mathcal{R}\hspace{0.05cm}\Delta^p_{\bullet}$ and
 sends a morphism of the form (\ref{simplex.morph}) to $\theta_{\mathrm{r}}$ (to obtain
 $\mathrm{sd}\hspace{0.05cm}X_{\bullet}$, we would use instead the functor that sends 
  $\Delta^p_{\bullet} \rightarrow X_{\bullet}$ to $\mathrm{sd}\hspace{0.05cm}\Delta^p_{\bullet}$, and a
  morphism (\ref{simplex.morph}) to $\theta_{\mathrm{s}}$). For any semi-simplicial set map
  $g: X_{\bullet} \rightarrow Y_{\bullet}$, the induced map $\mathcal{R}\hspace{0.05cm}g$ is the one obtained via the universal property of colimits. It is straightforward to verify that
  $X_{\bullet} \mapsto \mathcal{R}\hspace{0.05cm}X_{\bullet}$ is left adjoint to the functor $Y_{\bullet} \mapsto \mathrm{Ex}^{\mathrm{r}}\hspace{0.05cm}Y_{\bullet}$. 
 
We are now ready to define the homotopy between $\mathrm{Id}_{|\mathcal{F}_{\bullet}|}$ 
and the subdivision map. Let then 
$\Gamma: \mathcal{R}\hspace{0.05cm} \mathcal{F}_{\bullet} \rightarrow \mathcal{F}_{\bullet}$
be the adjoint of the map $\widetilde{\Gamma}: \mathcal{F}_{\bullet} \rightarrow 
\mathrm{Ex}^{\mathrm{r}}\hspace{0.05cm} \mathcal{F}_{\bullet}$ given by $W \mapsto f_{\mathrm{r}, W}$. 
Note that for each $p \geq 0$, we have two obvious inclusions 
$j_{0\hspace{-0.03cm},\hspace{0.05cm} p}: \Delta^p_{\bullet} \hookrightarrow \mathcal{R}\hspace{0.05cm}\Delta^p_{\bullet}$ and
$j_{1\hspace{-0.03cm},\hspace{0.05cm} p}: \mathrm{sd}\hspace{0.05cm}\Delta^p_{\bullet} \hookrightarrow \mathcal{R}\hspace{0.05cm}\Delta^p_{\bullet}$. By the way we defined the morphisms 
$f_{\mathrm{r}, W}: \mathcal{R}\hspace{0.05cm}\Delta^p_{\bullet} \rightarrow \mathcal{F}_{\bullet}$, 
we have for each $p$-simplex $W$ of $\mathcal{F}_{\bullet}$ that  
$f_{W} = f_{\mathrm{r}, W} \circ j_{0\hspace{-0.03cm},\hspace{0.05cm} p}$
and $f_{\mathrm{s}, W} = f_{\mathrm{r}, W} \circ j_{1\hspace{-0.03cm},\hspace{0.05cm} p}$.
Recall that $f_W$ is the characteristic map $\Delta^p_{\bullet} \rightarrow \mathcal{F}_{\bullet}$ of $W$,
and $f_{\mathrm{s}, W}$ is the classifying map of $W$ with respect to the triangulation
$(\mathrm{sd}\hspace{0.05cm}\Delta^p, \mathrm{Id}_{\Delta^p})$. Then, 
by a standard colimit argument, the collection of maps 
$\{ \hspace{0.07cm} j_{0\hspace{-0.03cm},\hspace{0.05cm} p}\}_{p\geq 0}$
will induce  an inclusion
$\mathcal{J}_0: \mathcal{F}_{\bullet} \hookrightarrow  \mathcal{R}\hspace{0.05cm} \mathcal{F}_{\bullet}$
such that $\Gamma \circ \mathcal{J}_0 = \mathrm{Id}_{\mathcal{F}_{\bullet}}$. 
Similarly, the collection  $\{ \hspace{0.07cm} j_{1\hspace{-0.03cm},\hspace{0.05cm} p}\}_{p\geq 0}$ induces an inclusion 
$\mathcal{J}_1:\mathrm{sd}\hspace{0.05cm} \mathcal{F}_{\bullet} \hookrightarrow  \mathcal{R}\hspace{0.05cm}\mathcal{F}_{\bullet}$ with the property that $\Gamma \circ \mathcal{J}_1 = \gamma$, where $\gamma$
is the morphism $\mathrm{sd} \hspace{0.05cm} \mathcal{F}_{\bullet} \rightarrow \mathcal{F}_{\bullet}$
that we obtained in Note \ref{construction.rho}.
The reader should think of $\mathcal{J}_0$ and
$\mathcal{J}_1$ as the inclusions into the `bottom' and `top' face of 
$\mathcal{R}\hspace{0.05cm} \mathcal{F}_{\bullet}$. Finally, for $j \in \{0,1\}$, 
let  $i_j$ 
be the inclusion $|\mathcal{F}_{\bullet}| \hookrightarrow [0,1] \times |\mathcal{F}_{\bullet}|$
defined by  $x \mapsto (\hspace{0.07cm}j, x)$. Using the fact that
the geometric realization functor $|\cdot|$ commutes with colimits, 
we can show that there exists a homeomorphism 
$H: |\mathcal{R}\hspace{0.05cm}\mathcal{F}_{\bullet}| \rightarrow [0,1] \times |\mathcal{F}_{\bullet}|$
satisfying the following: 
\[
H\circ |\mathcal{J}_0| = i_0 \qquad 
H\circ |\mathcal{J}_1| = i_1\circ h.
\]
In the right-hand equality, $h$ is the canonical homeomorphism
$|\mathrm{sd} \hspace{0.05cm} \mathcal{F}_{\bullet}|  \stackrel{\cong}{\longrightarrow} |\mathcal{F}_{\bullet}|$. 
Putting together all the constructions discussed in this proof, we obtain the following commutative diagram:
\begin{equation} \label{subdivision.comm}
\xymatrix{
|\mathcal{F}_{\bullet}| \ar[d]_{i_1} & & |\mathrm{sd} \hspace{0.05cm} \mathcal{F}_{\bullet}| \ar[d]_{|\mathcal{J}_1|}
\ar[ll]_{\hspace{0.35cm}h}  \ar@/^/[drr]^{|\gamma|} & & \\
[0,1]\times |\mathcal{F}_{\bullet}| & & |\mathcal{R}\hspace{0.05cm}\mathcal{F}_{\bullet}| \ar[ll]_{\hspace{0.4cm}H} 
\ar[rr]^{|\Gamma|} & & 
|\mathcal{F}_{\bullet}| \\
|\mathcal{F}_{\bullet}| \ar[u]^{i_0} & & |\mathcal{F}_{\bullet}| \ar[ll]_{\quad\mathrm{Id}_{|\mathcal{F}_{\bullet}|}} \ar@/_/[urr]_{\quad\mathrm{Id}_{|\mathcal{F}_{\bullet}|}} \ar[u]^{|\mathcal{J}_0|} & & } 
\end{equation}
Recall that the subdivision map $\rho$ is equal to the composition $|\gamma|\circ h^{-1}$. 
Then,  if  
$\mathcal{H}: [0,1]\times |\mathcal{F}_{\bullet}| \rightarrow |\mathcal{F}_{\bullet}|$
is the map
defined by $\mathcal{H} := |\Gamma| \circ H^{-1}$,
the commutativity of (\ref{subdivision.comm}) implies that
$\mathcal{H}\circ i_0 = \mathrm{Id}_{|\mathcal{F}_{\bullet}|}$
and  $\mathcal{H}\circ i_1 = \rho$. In other words, we have shown that $\mathcal{H}$
is a homotopy between $\mathrm{Id}_{|\mathcal{F}_{\bullet}|}$ and the subdivision map
$\rho$, which concludes the proof. 
\end{proof}

In the next proposition, we will prove the second property of the subdivision map that we discussed at the 
beginning of this subsection. 

\theoremstyle{plain} \newtheorem{unisub}[sbmn]{Proposition}

\begin{unisub} \label{unisub}
For any map of semi-simplicial sets $f: X_{\bullet} \rightarrow \mathcal{F}_{\bullet}$
and any  integer $r>0$, there is a unique morphism
$g: \mathrm{sd}^r \hspace{0.05cm} X_{\bullet} \rightarrow \mathcal{F}_{\bullet}$
which makes the following diagram commute
\[
\xymatrix{
\left|X_{\bullet}\right| \ar[r]^{ \hspace{-0.45cm} \left|f\right|}  & \left| \mathcal{F}_{\bullet} \right| \ar[d]^{\rho^r} \\
\left|\mathrm{sd}^r \hspace{0.05cm} X_{\bullet}\right| \ar[u]^{\cong}\ar[r]^{ \left|g\right|} & \left|  \mathcal{F}_{\bullet} \right|. }
\]
The left vertical map in this diagram is the canonical homeomorphism
$\left|\mathrm{sd}^r \hspace{0.05cm}  X_{\bullet}\right| \stackrel{\cong}{\longrightarrow} |X_{\bullet}|$.
\end{unisub} 
 
\begin{proof}
 
In the case $r=1$, we simply take $g = \gamma \circ \mathrm{sd}\hspace{0.05cm}f$, where 
$\mathrm{sd}\hspace{0.05cm}f$ is the map between subdivisions induced by 
$f: X_{\bullet} \rightarrow \mathcal{F}_{\bullet}$ and
$\gamma: \mathrm{sd} \hspace{0.05cm} \mathcal{F}_{\bullet} \rightarrow \mathcal{F}_{\bullet}$
is the map we defined in Note \ref{construction.rho}. For the case $r>1$, 
we just iterate the previous argument.    
\end{proof}

In the next result, we highlight a very useful special case of Proposition \ref{unisub}. 

\theoremstyle{plain} \newtheorem{unisub.cor}[sbmn]{Proposition}

\begin{unisub.cor} \label{unisub.cor}

Let $(K,h)$ be a triangulation of a PL space $P$ and $\leq$ an order relation on the set of vertices of $K$. Also,  fix an element $W$ of the set $\mathcal{F}(P)$. Then, if we apply Proposition \ref{unisub} 
to the classifying map 
$f: K_{\bullet} \rightarrow \mathcal{F}_{\bullet}$ 
of $W$ relative to the triangulation $(K,h)$, the map  
$g: \mathrm{sd}^r \hspace{0.05cm} K_{\bullet} \rightarrow \mathcal{F}_{\bullet}$
appearing at the bottom of the diagram
\[
\xymatrix{
\left|K_{\bullet}\right| \ar[r]^{ \left|f\right|}  & \left| \mathcal{F}_{\bullet} \right| \ar[d]^{\rho^r} \\
\left|\mathrm{sd}^r \hspace{0.05cm} K_{\bullet}\right| \ar[u]^{\cong}\ar[r]^{  \left|g\right|} & \left|  \mathcal{F}_{\bullet} \right| }
\] 
is the classifying map of $W$ relative to the triangulation $(\mathrm{sd}^r \hspace{0.05cm} K, h)$ of $P$. 
\end{unisub.cor}

\begin{proof}
It is enough to prove this result assuming that $r =1$. First, we will consider the
case when $W$ is an element of $\mathcal{F}(\Delta^p)$ (i.e., $W$ is a $p$-simplex of $\mathcal{F}_{\bullet}$)
and $f_W: \Delta^p_{\bullet} \rightarrow \mathcal{F}_{\bullet}$ is the classifying map of $W$ relative to the standard triangulation $(\Delta^p, \mathrm{Id}_{\Delta^p})$ of $\Delta^p$
(i.e., $f_W$ is the characteristic map of the simplex $W$). 
In this case, we need to check that the diagram  
\begin{equation} \label{diag.unisub1}
\xymatrix{
\left|\Delta^p_{\bullet}\right| \ar[rr]^{  \left|f_W\right|}  & & \left| \mathcal{F}_{\bullet} \right| 
\ar[d]^{\rho} \\
\left|\mathrm{sd} \hspace{0.05cm} \Delta^p_{\bullet}\right| \ar[u]^{\cong}\ar[rr]^{  
\left|f_{\mathrm{s}, W}\right|} & & \left|  \mathcal{F}_{\bullet} \right| }
\end{equation}
commutes, where the bottom map $f_{\mathrm{s}, W}$ is the classifying map of $W$ relative to the triangulation
$(\mathrm{sd} \hspace{0.05cm} \Delta^p, \mathrm{Id}_{\Delta^p})$ of $\Delta^p$ (we are using the same notation that we introduced in Note \ref{construction.rho}).  To show this, we first note that 
(\ref{diag.unisub1}) agrees with the outer rectangle of the diagram   
\begin{equation} \label{diag.unisub2}
\xymatrix{
|\Delta^p_{\bullet}| \ar[rr]^{  \left|f_W\right|} & & |\mathcal{F}_{\bullet}| \ar[d]^{\cong} \\
|\mathrm{sd} \hspace{0.05cm} \Delta^p_{\bullet}| \ar[u]^{\cong} \ar@{=}[d] 
\ar[rr]^{|\mathrm{sd} \hspace{0.05cm} f_W|}  
& &
 |\mathrm{sd} \hspace{0.05cm} \mathcal{F}_{\bullet}| \ar[d]^{|\gamma|} \\
|\mathrm{sd} \hspace{0.05cm} \Delta^p_{\bullet}| 
\ar[rr]^{ \left|f_{\mathrm{s}, W}\right|} & & |\mathcal{F}_{\bullet}|, \\
}
\end{equation}
where the two vertical maps in the top rectangle are the obvious homeomorphisms, and the right-vertical map
in the bottom rectangle is the geometric realization of the morphism
$\gamma: \mathrm{sd} \hspace{0.05cm} \mathcal{F}_{\bullet} \rightarrow \mathcal{F}_{\bullet}$ that we defined in
Note \ref{construction.rho}. The top rectangle in (\ref{diag.unisub2}) is clearly 
commutative. 
To see that the bottom rectangle also commutes,  let us consider first the diagram
\begin{equation} \label{diag.unisub3}
\xymatrix{
\Delta^p_{\bullet} \ar[rr]^{ f_W }  \ar@{=}[d] & &  \mathcal{F}_{\bullet}
\ar[d]^{\widetilde{\gamma}} \\
\Delta^p_{\bullet}   \ar[rr]^{ \widetilde{f}_W }  
& & \mathrm{Ex}\hspace{0.05cm}\mathcal{F}_{\bullet}, }
\end{equation}
where the right-vertical map $\widetilde{\gamma}$ is the adjoint 
of $\gamma: \mathrm{sd} \hspace{0.05cm} \mathcal{F}_{\bullet} \rightarrow \mathcal{F}_{\bullet}$ 
(see Note \ref{construction.rho}) 
and the bottom map is the characteristic map of 
$f_{\mathrm{s},W}: \mathrm{sd} \hspace{0.05cm} \Delta^p_{\bullet} \rightarrow \mathcal{F}_{\bullet}$
(when viewed as a $p$-simplex of $\mathrm{Ex}\hspace{0.05cm}\mathcal{F}_{\bullet}$). 
 Since the morphism $\widetilde{\gamma}$ maps $W$ to 
$f_{\mathrm{s},W}$, the diagram given in (\ref{diag.unisub3}) is commutative. Thus, since 
the functors $Y_{\bullet} \mapsto \mathrm{Ex}\hspace{0.05cm}Y_{\bullet}$
and $X_{\bullet} \mapsto \mathrm{sd}\hspace{0.05cm}X_{\bullet}$
are adjoint to each other, 
it follows that the bottom rectangle 
of (\ref{diag.unisub2}) also commutes. Consequently, we can conclude that (\ref{diag.unisub1})
is commutative. 

Now, consider a PL space $P$, and fix a triangulation 
$(K,h)$ of $P$ and an order relation $\leq$ on $\mathrm{Vert}(K)$. Moreover,
pick an element $W \in \mathcal{F}(P)$, and let
$f: K_{\bullet} \rightarrow \mathcal{F}_{\bullet}$ and 
$g: \mathrm{sd}\hspace{0.05cm}K_{\bullet} \rightarrow \mathcal{F}_{\bullet}$ be the classifying maps
of $W$ with respect to the triangulations 
$(K, h)$ and $(\mathrm{sd}\hspace{0.05cm}K, h)$ respectively. 
To finish this proof, we need to show 
that the diagram 
\begin{equation} \label{diag.unisub4}
\xymatrix{
\left|K_{\bullet}\right| \ar[r]^{  \left|f\right|}  & \left| \mathcal{F}_{\bullet} \right| \ar[d]^{\rho} \\
\left|\mathrm{sd} \hspace{0.05cm} K_{\bullet}\right| \ar[u]^{\cong}\ar[r]^{  \left|g\right|} & \left|  \mathcal{F}_{\bullet} \right| }
\end{equation}
commutes, where the left-vertical map is the canonical homeomorphism from 
$\left|\mathrm{sd} \hspace{0.05cm} K_{\bullet}\right|$ to $|K_{\bullet}|$. 
To show this, it is enough to prove that this diagram is locally commutative. In other words,
for any simplex $\sigma$ of $K$, we must show that the diagram
\begin{equation} \label{diag.unisub5}
\xymatrix{
\left|\sigma_{\bullet}\right| \ar[r]^{  \left|f\right|}  & \left| \mathcal{F}_{\bullet} \right| \ar[d]^{\rho} \\
\left|\mathrm{sd} \hspace{0.05cm} \sigma_{\bullet}\right| \ar[u]^{\cong}\ar[r]^{   \left|g\right|} & \left|  \mathcal{F}_{\bullet} \right| }
\end{equation}
 is commutative. 
 In this last diagram, $\sigma_{\bullet}$ is the semi-simplicial set induced by the order 
 $\leq$ and the subcomplex of $K$ which triangulates $\sigma.$ 
 However, after identifying
 $\sigma_{\bullet}$ and $\mathrm{sd} \hspace{0.05cm} \sigma_{\bullet}$ with 
 $\Delta^{p}_{\bullet}$ and $\mathrm{sd}\hspace{0.05cm}\Delta^p_{\bullet}$ respectively
 (where $p$ is the dimension of $\sigma$), (\ref{diag.unisub5})
 turns into a diagram of the form (\ref{diag.unisub1}), which we already proved is commutative. 
Therefore, (\ref{diag.unisub5}) must also be commutative. 
 Since we can do this argument for any simplex $\sigma$
 of $K$, we can conclude that (\ref{diag.unisub4}) commutes. 
 \end{proof}

The next proposition will be used in several of the arguments presented 
in \S \ref{section5} and \S \ref{section6}.  

\theoremstyle{plain} \newtheorem{subsimphop}[sbmn]{Proposition}

\begin{subsimphop} \label{subsimphop}
Fix a PL set $\mathcal{F}$ and let $\mathcal{F}_{\bullet}$ be the semi-simplicial set
obtained by forgetting the degeneracies of the underlying simplicial set of $\mathcal{F}$. Also,
let $X_{\bullet}$ be a sub-semi-simplicial set of $\mathcal{F}_{\bullet}$ with the property that $|X_{\bullet}|$ is invariant both under the subdivision map $\rho$ of 
$\mathcal{F}_{\bullet}$ and the homotopy $\mathcal{H}$ between 
$\mathrm{Id}_{|\mathcal{F}_{\bullet}|}$ and $\rho$ (see the proof of Proposition \ref{subhopid}). Then, any map of pairs 
\[
(\Delta^p, \partial\Delta^p) \stackrel{f}{\longrightarrow} (|\mathcal{F}_{\bullet}|, |X_{\bullet}|)
\]
is homotopic, as a map of pairs, to a composite of the form
\[
(\Delta^p, \partial\Delta^p) \stackrel{f'}{\longrightarrow} (|K_{\bullet}|, |K'_{\bullet}|)  \stackrel{|g|}{\longrightarrow} 
(|\mathcal{F}_{\bullet}|, |X_{\bullet}|), 
\]
where $(K_{\bullet}, K'_{\bullet})$ is a pair of semi-simplicial sets induced by a pair $(K,K')$ of finite ordered simplicial complexes $(K,K')$.  
\end{subsimphop}

\begin{proof}

Since $\Delta^p$ is compact, the image of  
$f$ only intersects finitely many simplices
of $|\mathcal{F}_{\bullet}|$. Let
$L_{\bullet}$ be the sub-semi-simplicial set
of $\mathcal{F}_{\bullet}$ generated by 
these simplices, and let $L'_{\bullet}$
be the sub-semi-simplicial set of $L_{\bullet}$
generated by those simplices which intersect
the image $f(\partial\Delta^p)$. If
$i: L_{\bullet} \hookrightarrow \mathcal{F}_{\bullet}$ is the natural inclusion from $L_{\bullet}$ to 
$\mathcal{F}_{\bullet}$, then Proposition \ref{unisub} guarantees that there exists 
a map of semi-simplicial sets
$g: \mathrm{sd}^2 \hspace{0.05cm} L_{\bullet} \rightarrow \mathcal{F}_{\bullet}$ 
whose geometric realization makes the following diagram commute
\[
\xymatrix{
\left|L_{\bullet}\right| \ar[r]^{  \left|i\right|} 
& \left| \mathcal{F}_{\bullet} \right| \ar[d]^{ \rho^2} \\
\left|\mathrm{sd}^2 \hspace{0.05cm} L_{\bullet}\right|  \ar[r]^{   \left|g\right|}  
\ar[u]^{  \vspace{-0.7cm} h}  
& \left|  \mathcal{F}_{\bullet} \right|. }
\]
The left-vertical map $h$ is the canonical homeomorphism 
$h:\left|\mathrm{sd}^2L_{\bullet}\right| \rightarrow\left|L_{\bullet}\right|$. 
Now, let us express the map $f$ as the composition 
\[
(\Delta^p,\partial\Delta^p) \stackrel{\hat{f}}{\longrightarrow}  (\left|L_{\bullet}\right|, \left|L'_{\bullet}\right|) \stackrel{|i|}{\longrightarrow} 
(|\mathcal{F}_{\bullet} |, |X_{\bullet}|),
\] 
where $\hat{f}$ is the map obtained by restricting the target of $f$ to $\left|L_{\bullet}\right|$.
Since the subdivision map $\rho$ is homotopic to 
the identity map $\mathrm{Id}_{|\mathcal{F}_{\bullet}|}$, it follows that
$f$ is homotopic to the composite
\begin{equation} \label{eq1}
(\Delta^p,\partial\Delta^p)  \stackrel{f'}{\longrightarrow}  (\left|\mathrm{sd}^2L_{\bullet}\right|,\left|\mathrm{sd}^2L'_{\bullet}\right|) \stackrel{\left|g\right|}{\longrightarrow} (\left| \mathcal{F}_{\bullet} \right|, \left| X_{\bullet}\right|),
\end{equation}
where $f' = h^{-1} \circ \hat{f}$. 
In (\ref{eq1}), we can guarantee that $g(\mathrm{sd}^2 \hspace{0.05cm} L'_{\bullet}) \subseteq X_{\bullet}$ because we are assuming that 
$|X_{\bullet}|$ is invariant under the subdivision map $\rho$.  Furthermore, since
$|X_{\bullet}|$ is also invariant under the homotopy $\mathcal{H}$ between $\mathrm{Id}_{|\mathcal{F}_{\bullet}|}$ 
and $\rho$, the map given in (\ref{eq1}) 
and $f$ are homotopic as maps of pairs. 

Finally, as explained in \cite{RSD}, the second barycentric
subdivision $\mathrm{sd}^2\hspace{0.05cm}Y_{\bullet}$ of a (locally)
finite semi-simplicial set $Y_{\bullet}$
is isomorphic to a semi-simplicial set induced by  
a (locally) finite ordered simplicial complex. Therefore, in (\ref{eq1})
we can replace 
$(\mathrm{sd}^2 \hspace{0.05cm} L_{\bullet},\mathrm{sd}^2 \hspace{0.05cm} L'_{\bullet})$ 
with a pair $(K_{\bullet},K_{\bullet}')$ induced by 
finite ordered simplicial complexes $(K,K')$.  
\end{proof}

\theoremstyle{definition} \newtheorem{simp.approx}[sbmn]{Remark}

\begin{simp.approx} \label{simp.approx}
The reader might be wondering why we do not simply take $(K,K')$ to be a subdivision of
the pair $(\Delta^p, \partial\Delta^p)$ in the previous proposition and apply a semi-simplicial version of the 
Simplicial Approximation Theorem to proof this result. To the author's knowledge, there
are two simplicial approximation theorems that hold in the semi-simplicial setting. Both of these can
be found in \cite{RSD} (in that paper, semi-simplcial sets are called $\Delta$-\textit{sets}). 
However, neither of these two simplicial approximation theorems can be used to prove Proposition \ref{subsimphop}.
For one of these, one needs $X_{\bullet}$ to be Kan, a condition that fails in all the cases in which we will 
apply Proposition \ref{subsimphop} later in this paper. For the other version, one needs
the map $\partial \Delta^p \rightarrow |X_{\bullet}|$ to be \textit{simplicial} 
(the definition of simplicial map between geometric realizations 
of semi-simplicial sets can be found on page 327 of \cite{RSD}).  
However, not all maps between geometric realizations of semi-simplicial sets 
are simplicial, so we cannot use this other version to prove Proposition \ref{subsimphop} either. 
\end{simp.approx}

\subsection{The spectrum of PL manifolds}    \label{section2.5}

As explained in the introduction, the spaces 
$|\Psi_d(\mathbb{R}^N)_{\bullet}|$ will be the stages of a spectrum, and it is the purpose
of this subsection to define the structure maps
\[
\mathcal{E}_N: S^1\wedge |\Psi_d(\mathbb{R}^N)_{\bullet}| \rightarrow |\Psi_d(\mathbb{R}^{N+1})_{\bullet}|
\]
of this spectrum.  

To define $\mathcal{E}_N$, we will first define a \textit{translation map} $T^+:  \Delta^1_{\bullet} \times\Psi_d(\mathbb{R}^N)_{\bullet} \rightarrow \Psi_d(\mathbb{R}^{N+1})_{\bullet}$. Geometrically, this map will push any element $W$ to $+\infty$ along the extra coordinate direction in $\mathbb{R}^{N+1}$. Let us fix once and for all a PL homeomorphism
\begin{equation} \label{plhomeo.fixed}
f:[0,1) \rightarrow [0,\infty).
\end{equation}
The definition of the translation map $T^+$ requires several steps. 

\textit{Step 1.} For any non-negative integer $p$, let $F_p:  [0,1)\times\Delta^p\times \mathbb{R}^N \rightarrow [0,1]\times\Delta^p\times \mathbb{R}^{N+1}$ be the PL embedding defined by
\[
(t,\lambda, x_1, \ldots,x_N) \mapsto (t,\lambda,x_1, \ldots, x_N, f(t)).
\]
For each $p$-simplex $W \in \Psi_d(\mathbb{R}^N)_p$, we will denote the image of $[0,1)\times W$ under the PL embedding $F_p$ by $\widetilde{W}^{+}$. It is straightforward to verify that 
 $\widetilde{W}^{+}$ is an element of $\Psi_d(\mathbb{R}^{N+1})([0,1]\times \Delta^p)$. Moreover, $\widetilde{W}^{+}$ is a concordance from $W$ to $\varnothing$ 
 when viewed as elements of $\Psi_d(\mathbb{R}^{N+1})_p$. 

 \textit{Step 2.} Consider the product $[0,1] \times \Delta^p$, which we will view as a subspace of 
$\mathbb{R}^{p+2}$. Also, let $K_p$ be the canonical simplicial complex triangulating $[0,1] \times \Delta^p$
 (i.e., $K_p$ is the simplicial complex typically used to prove homotopy invariance for singular homology). 
We can define an order relation $\leq$ on  
 $\mathrm{Vert}(K^p) = \mathrm{Vert}(\{0\}\times \Delta^p) \cup \mathrm{Vert}(\{1\}\times \Delta^p)$ by  
 ordering all vertices in  $\{0\}\times \Delta^p$ and $\{1\}\times \Delta^p$ in the obvious way, and
declaring all top vertices to be greater than the bottom ones. It is not hard to see that, 
after identifying $[0,1]$ with $\Delta^1$, the simplicial set $K_{\bullet}^p$ induced by 
$(K^p,\leq)$ is equal to $\Delta^1_{\bullet}\times \Delta^p_{\bullet}$. Then, for any $p$-simplex $W$ of 
$\Psi_d(\mathbb{R}^N)_{\bullet}$, the concordance
$\widetilde{W}^{+}$ that we defined in Step 1 will induce a map of the form
$F_{\widetilde{W}^{+}}: \Delta^1_{\bullet}\times\Delta^p_{\bullet} \rightarrow  \Psi_d(\mathbb{R}^{N+1})_{\bullet}$, i.e., 
$F_{\widetilde{W}^{+}}$ is the map that classifies $\widetilde{W}^{+}$ relative to the triangulation 
$(K^p, \mathrm{Id}_{[0,1]\times \Delta^p})$.   From now on, for any 
$W \in \Psi_d(\mathbb{R}^N)_p$, 
we will denote the map $F_{\widetilde{W}^{+}}$ simply by $T^{W}$. 

 \textit{Step 3.} Finally, for any $p$-simplex $W$ of $\Psi_d(\mathbb{R}^N)_{\bullet}$ and any morphism $\eta:[q]\rightarrow [p]$ in the category $\Delta$, the diagram 
\begin{equation} \label{pret2}
\xymatrix{
\Delta^1_{\bullet}\times\Delta_{\bullet}^{q}  \ar[r]^{\mathrm{Id}_{\Delta^1_{\bullet}}\times\eta} \ar[d]_{T^{\eta^*W}} & \Delta^1_{\bullet} \times\Delta_{\bullet}^{p}\ar[d]^{T^{W}} \\
\Psi_d(\mathbb{R}^{N+1})_{\bullet} \ar@{=}[r] & \Psi_d(\mathbb{R}^{N+1})_{\bullet}}
\end{equation}   
is commutative. Then, by a standard colimit argument, there exists a unique map 
$T^+:  \Delta^1_{\bullet} \times \Psi_d(\mathbb{R}^{N})_{\bullet}\rightarrow \Psi_d(\mathbb{R}^{N+1})_{\bullet}$ 
such that  $T^{+}\circ \big( \mathrm{Id}_{\Delta^1_{\bullet}}\times f_W  \big) = T ^W$ for any $p$-simplex 
$W$ in $\Psi_d(\mathbb{R}^{N})_{\bullet}$. Again, recall that $f_W: \Delta^p_{\bullet} \rightarrow \Psi_d(\mathbb{R}^{N})_{\bullet}$ is the characteristic map of the simplex $W$. This morphism 
$T^+$ is our desired translation map.

We can follow the procedure outlined above to define a \textit{negative} translation map $T^-:  \Delta^1_{\bullet} \times \Psi_d(\mathbb{R}^{N})_{\bullet}\rightarrow \Psi_d(\mathbb{R}^{N+1})_{\bullet}$ which pushes elements $W$ of $\Psi_d(\mathbb{R}^N)_{\bullet}$ to $-\infty$ along the extra coordinate direction in $\mathbb{R}^{N+1}$. For the definition of $T^-$, we need to use the PL homeomorphism $-f: [0,1) \rightarrow (-\infty, 0]$
instead of  the map $f:[0,1)\rightarrow [0,\infty)$ that we fixed in (\ref{plhomeo.fixed}).  

Next, via canonical homeomorphisms, we can identify the domains of $|T^+|$ and $|T^-|$ with the products 
$[0,1]\times |\Psi_d(\mathbb{R}^{N})_{\bullet}|$ and
$[-1,0]\times |\Psi_d(\mathbb{R}^{N})_{\bullet}|$ respectively. Once we do these identifications, we can glue $|T^+|$ and $|T^-|$ 
 along the subspace $\{0\} \times |\Psi_d(\mathbb{R}^{N})_{\bullet}| $ to produce a map 
\begin{equation} \label{pre.structure.map}
\mathcal{T}_N: [-1,1] \times |\Psi_d(\mathbb{R}^{N})_{\bullet}| \rightarrow |\Psi_d(\mathbb{R}^{N+1})_{\bullet}|.
\end{equation}
Finally, let us denote the canonical base-points of $|\Psi_d(\mathbb{R}^{N})_{\bullet}|$ and $|\Psi_d(\mathbb{R}^{N+1})_{\bullet}|$ by $\bullet_N$ and $\bullet_{N+1}$ respectively (see Remark \ref{spacerem}).  By construction, 
the map $\mathcal{T}_N$ in (\ref{pre.structure.map}) maps both $[-1,1]\times \{\bullet_N\}$ and $\{-1,1\}\times |\Psi_d(\mathbb{R}^{N})_{\bullet}|$ to  the base-point $\bullet_{N+1}$. Therefore, $\mathcal{T}_N$ descends to a map of the form
\begin{equation} \label{spectrummap}
\mathcal{E}_N: S^1\wedge |\Psi_d(\mathbb{R}^N)_{\bullet}| \rightarrow |\Psi_d(\mathbb{R}^{N+1})_{\bullet}|.
\end{equation}
We are now ready to give the following definition.

\theoremstyle{definition} \newtheorem{specpsi}[sbmn]{Definition}
 
\begin{specpsi} \label{specpsi}

The \textit{spectrum of PL manifolds}, denoted by $\Psi^{\mathrm{PL}}_d$, is the  
spectrum whose $N$-th space is equal to $|\Psi_d(\mathbb{R}^N)_{\bullet}|$
and whose structure maps $\mathcal{E}_N$ are those defined in (\ref{spectrummap}).

\end{specpsi}

\theoremstyle{definition} \newtheorem{contractible.choice}[sbmn]{Remark}
 
\begin{contractible.choice} \label{contractible.choice}

We close this section by observing that the weak homotopy type of the spectrum $\Psi^{\mathrm{PL}}_d$ does not depend on the map
$f: [0, 1) \rightarrow [0,\infty)$ that we fixed in (\ref{plhomeo.fixed}). 
Indeed, we can view any PL homeomorphism 
$[0,1) \stackrel{\cong}{\longrightarrow} [0,\infty)$ as a 0-simplex of the 
simplicial set $\mathcal{H}_{\bullet}$ whose $p$-simplices are PL homeomorphisms 
$\Delta^p \times [0,1)  \stackrel{\cong}{\longrightarrow} \Delta^p \times [0,\infty)$ which commute with the projection onto $\Delta^p$. It is not hard to prove that this simplicial set $\mathcal{H}_{\bullet}$ is contractible.   Now, instead of just using one single PL homeomorphism 
$f: [0, 1) \rightarrow [0,\infty)$, we can define a spectrum $\Phi$ where we use all possible PL homeomorphisms 
$[0,1)  \stackrel{\cong}{\longrightarrow}  [0,\infty)$ at once. More precisely, let $\varnothing_{\bullet}$ denote once again the subsimplicial set of $\Psi_d(\mathbb{R}^N)_{\bullet}$ consisting of all the empty manifolds $\varnothing$
(see Remark \ref{spacerem}). 
The $N$-th stage of $\Phi$ will be the space
$\mathcal{X}_N$ obtained by taking the geometric realization
$|\Psi_d(\mathbb{R}^N)_{\bullet} \times \mathcal{H}_{\bullet}|$ and then collapsing the subspace 
$|\varnothing_{\bullet} \times \mathcal{H}_{\bullet}|$ to a point, which we will denote by $\tilde{\bullet}_N$. From now on, we take 
$\tilde{\bullet}_N$ to be the base-point of $\mathcal{X}_N$. The structure maps 
$S^1\wedge \mathcal{X}_N \rightarrow \mathcal{X}_{N+1}$ are obtained as follows: 

\begin{itemize}

\item[$\cdot$] First, for any $p$-simplex $(W,G)$ of $\Psi_d(\mathbb{R}^N)_{\bullet} \times \mathcal{H}_{\bullet}$, we let 
$\widetilde{W}^{G,+}$ denote the image of $[0,1)\times W$ under the PL embedding 
$ [0,1)\times\Delta^p\times \mathbb{R}^N \rightarrow [0,1]\times\Delta^p\times \mathbb{R}^{N+1}$
defined by $(t,\lambda, x_1, \ldots,x_N) \mapsto (t,\lambda,x_1, \ldots, x_N, G(\lambda,t)).$ Again, it is not hard to
verify that $\widetilde{W}^{G,+}$ is an element of 
$\Psi_d(\mathbb{R}^{N+1})([0,1]\times \Delta^p)$. 

\item[$\cdot$] For each simplex $(W,G)$ of $\Psi_d(\mathbb{R}^N)_{\bullet} \times \mathcal{H}_{\bullet}$, let 
$F_{\widetilde{W}^{G,+}}: \Delta^1_{\bullet}\times\Delta^p_{\bullet} \rightarrow  \Psi_d(\mathbb{R}^{N+1})_{\bullet}$  be the classifying map of $\widetilde{W}^{G,+}$ relative to the triangulation
$(K^p, \mathrm{Id}_{[0,1]\times \Delta^p})$ of $[0,1]\times \Delta^p$.
By essentially repeating the same argument given in Step 3 above, we can glue all the morphisms 
$F_{\widetilde{W}^{G,+}}$ together to produce a translation map 
$T^+:  \Delta^1_{\bullet} \times \Psi_d(\mathbb{R}^{N})_{\bullet}\times \mathcal{H}_{\bullet}\rightarrow \Psi_d(\mathbb{R}^{N+1})_{\bullet}$. Then, we define a new morphism 
$\mathcal{T}^+: \Delta^1_{\bullet} \times \Psi_d(\mathbb{R}^{N})_{\bullet}\times \mathcal{H}_{\bullet} \rightarrow \Psi_d(\mathbb{R}^{N+1})_{\bullet} \times \mathcal{H}_{\bullet}$ by setting 
$\mathcal{T}^+ := T^+ \times \mathrm{Id}_{\mathcal{H}_{\bullet}}$. 

\item[$\cdot$] In a similar fashion, we can also construct a map of simplicial sets of the form  
$\mathcal{T}^-:  \Delta^1_{\bullet} \times \Psi_d(\mathbb{R}^{N})_{\bullet}\times \mathcal{H}_{\bullet} \rightarrow \Psi_d(\mathbb{R}^{N+1})_{\bullet} \times \mathcal{H}_{\bullet}$. Then, after identifying the domains of 
$|\mathcal{T}^+|$ and $|\mathcal{T}^-|$ with the products
$[0,1]\times |\Psi_d(\mathbb{R}^{N})_{\bullet}\times \mathcal{H}_{\bullet}|$
and $[-1,0]\times |\Psi_d(\mathbb{R}^{N})_{\bullet}\times \mathcal{H}_{\bullet}|$ respectively, we can glue 
$|\mathcal{T}^+|$ and $|\mathcal{T}^-|$ together to produce a new map 
$\mathcal{T}_N: [-1,1] \times |\Psi_d(\mathbb{R}^{N})_{\bullet} \times \mathcal{H}_{\bullet}| 
\rightarrow |\Psi_d(\mathbb{R}^{N+1})_{\bullet} \times \mathcal{H}_{\bullet}|.$ The composition of 
$\mathcal{T}_N$ and the quotient map 
$|\Psi_d(\mathbb{R}^{N+1})_{\bullet} \times \mathcal{H}_{\bullet}| \rightarrow \mathcal{X}_{N+1}$ maps 
both $\{-1,1\}\times|\Psi_d(\mathbb{R}^{N})_{\bullet} \times \mathcal{H}_{\bullet}| $ and 
$[-1,1]\times |\varnothing_{\bullet} \times \mathcal{H}_{\bullet}|$ to the base-point $\tilde{\bullet}_{N+1}$. Thus, 
$\mathcal{T}_N$ descends to a map $\widetilde{\mathcal{E}}_N: S^1\wedge \mathcal{X}_{N} \rightarrow \mathcal{X}_{N+1}$.
This is our desired structure map. 
\end{itemize}

Now consider again the PL homeomorphism $f: [0,1) \rightarrow [0,\infty)$ that we fixed in 
(\ref{plhomeo.fixed}). For each $N$, we can define a morphism
$\delta_N: \Psi_d(\mathbb{R}^{N})_{\bullet} \rightarrow \Psi_d(\mathbb{R}^{N})_{\bullet} \times \mathcal{H}_{\bullet}$ 
by setting $W \mapsto (W, \mathrm{Id}_{\Delta^p} \times f)$ for all $p$-simplices $W$. These morphisms 
$\delta_N$ induce a map of spectra  $\delta: \Psi_d^{\mathrm{PL}} \rightarrow \Phi$ which, by the contractibility of $\mathcal{H}_{\bullet}$, is a weak homotopy equivalence. Thus, regardless of the PL homeomorphism $f$ we use, the spectrum $\Psi_d^{\mathrm{PL}}$ will always have the same weak homotopy type as $\Phi$.

\end{contractible.choice}

\section{The piecewise linear cobordism category}   \label{section4}

In this section, we will introduce the  
\textit{piecewise linear cobordism category} $\mathsf{Cob}_d^{\mathrm{PL}}$, which is the central
object of study in this paper. Throughout this article, we have used simplicial
sets to define piecewise linear versions of the
spaces studied in \cite{GRW}, and we will
apply this strategy again for the definition of $\mathsf{Cob}_d^{\mathrm{PL}}$. 
In particular, $\mathsf{Cob}_d^{\mathrm{PL}}$ will be defined as  
a \textit{non-unital simplicial category}, i.e.,
a non-unital category whose sets of objects and morphisms
are actually simplicial sets, and whose
structure maps are morphisms of simplicial sets. 

Besides defining the piecewise linear cobordism category, in this section we will
begin the proof of the following result, which is the first step to
prove the main theorem of this article.

\theoremstyle{plain} \newtheorem{DesCob}{Theorem}[section]

\begin{DesCob} \label{DesCob}
There is a weak homotopy equivalence
\[
B\mathsf{Cob}_d^{\mathrm{PL}} \simeq |\psi_d(\infty,1)_{\bullet}|.
\]
\end{DesCob}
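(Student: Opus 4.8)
\noindent The plan is to prove Theorem \ref{DesCob} by the strategy sketched in the introduction, spread over the remainder of $\S$5 and all of $\S$6, working around the failure in the PL category of the Fact quoted above. Both sides of the asserted equivalence are filtered colimits along closed inclusions: on the left $B\mathcal{C}_d^{PL} = \colim_N B\mathcal{C}_d^{PL}(\mathbb{R}^N)$, by construction and because the classifying space functor commutes with this filtered colimit of non-unital simplicial categories; on the right $|\psi_d(\infty,1)_{\bullet}| = \colim_N |\psi_d(N,1)_{\bullet}|$, because geometric realization commutes with the colimit of the simplicial sets $\psi_d(N,1)_{\bullet}$. Since a filtered colimit is computed by any cofinal subdiagram, it suffices to produce, compatibly in $N$, a weak homotopy equivalence $B\mathcal{C}_d^{PL}(\mathbb{R}^N) \simeq |\psi_d(N,1)_{\bullet}|$ for every $N$ with $N - d \geq 3$ --- this is the equivalence (\ref{plequiv}) --- and then to pass to the colimit over $N \geq d+3$. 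As in (\ref{introchain}), I would factor this equivalence through the regularized subsimplicial set $\psi_d^R(N,1)_{\bullet}$.

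\noindent The first half of the work, in $\S$5, establishes $B\mathcal{C}_d^{PL}(\mathbb{R}^N) \simeq |\psi_d^R(N,1)_{\bullet}|$ by a PL version of the cutting construction of Galatius and Randal-Williams. The nerve of $\mathcal{C}_d^{PL}(\mathbb{R}^N)$ is a bisimplicial set whose $(k,p)$-bisimplices are chains of $k$ composable $p$-parameter bundles of $d$-dimensional PL cobordisms in $\mathbb{R}\times(-1,1)^{N-1}$. Slicing an element $W$ of $\psi_d^R(N,1)_p$ along an increasing flag of fiberwise regular values of the coordinate $x_1$ yields exactly such a chain; that the resulting pieces are honest PL cobordism bundles, and that the slicing is well behaved in families, follows from Williamson's regular value theorem \ref{williamson}, the local triviality built into Definition \ref{sbmn}, and the glueing statement \ref{gluechart}. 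One then compares $|\psi_d^R(N,1)_{\bullet}|$ with the realization of the nerve by the usual bisimplicial argument, in which the space of cutting data over a fixed $W$ is contractible whenever it is nonempty, nonemptiness being precisely the defining condition of $\psi_d^R$.

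\noindent The second half, in $\S$6, shows that the inclusion $|\psi_d^R(N,1)_{\bullet}| \hookrightarrow |\psi_d(N,1)_{\bullet}|$ is a weak homotopy equivalence once $N - d \geq 3$. Given a map of a pair $(D^n, S^{n-1})$ into $|\psi_d(N,1)_{\bullet}|$, I would first replace it, using the subdivision map $\rho$ of $\S$4 and Proposition \ref{subsimphop}, by a simplicial map out of a finite ordered simplicial complex; the associated compact PL family of manifolds is then PL-isotoped, fiberwise over the parameter complex and rel the relative part, to one admitting a fiberwise regular value of $x_1$. The required isotopy is produced by Hudson's Isotopy Extension Theorem, which applies precisely because the manifolds have codimension $N - d \geq 3$ in the ambient Euclidean space. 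Composing the equivalences of the two halves gives (\ref{plequiv}), and the colimit over $N \geq d+3$ proves Theorem \ref{DesCob}; the map in the statement is the PL scanning map concatenating a chain of cobordism bundles into a single PL manifold.

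\noindent The main obstacle will be the PL cutting construction of $\S$5. In the smooth case this step is short because, by the quoted Fact, a regular value of $x_1$ on one fiber remains regular on nearby fibers; in PL this fails, so every slicing, collapsing and glueing must be carried out explicitly with the PL submersion-chart machinery of $\S$3 (Propositions \ref{pullsub}, \ref{gluechart}, \ref{pullembop}, \ref{pullemb}) and must be kept inside $\psi_d^R(N,1)_{\bullet}$ throughout. The codimension hypothesis $N-d\geq 3$ plays no role there; it enters only through Hudson's theorem in $\S$6, which is why that comparison, rather than the cutting construction itself, is flagged as the core of the paper.
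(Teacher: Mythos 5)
Your proposal is correct and follows essentially the same route as the paper: factor through $\psi_d^R(N,1)_{\bullet}$, compare $B\mathcal{C}_d^{PL}(\mathbb{R}^N)$ with $|\psi_d^R(N,1)_{\bullet}|$ via contractible posets of cutting data (realized in the paper through the two poset models $\mathcal{D}_d^{\perp}(\mathbb{R}^N)_{\bullet,\bullet}$ and $\mathcal{D}_d(\mathbb{R}^N)_{\bullet,\bullet}$ of $\S$5.4), then show in $\S$6 that $\psi_d^R(N,1)_{\bullet}\hookrightarrow\psi_d(N,1)_{\bullet}$ is a weak equivalence using the subdivision map $\rho$ and Hudson's isotopy extension theorem, and finally pass to the colimit over $N$. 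One terminological slip worth noting: the underlying map is not the \emph{scanning map} (that term is reserved for the maps $\mathcal{S}_k$ of $\S$7) but the zig-zag of $\S$5.4 built from the forgetful map $\mathcal{D}_d(\mathbb{R}^N)_{\bullet,\bullet}\to\widetilde{\psi}_d^R(N,1)_{\bullet}$ and the concordance comparisons with the nerve.
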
  

In this statement, $\psi_d(\infty,1)_{\bullet}$ is the colimit of the sequence
\[
\xymatrix{
\cdots \ar@{^{(}->}[r] &  \psi_d(N-1,1)_{\bullet}  \ar@{^{(}->}[r]^{\hspace{0.4cm}i_{N-1}} &  \psi_d(N,1)_{\bullet}  \ar@{^{(}->}[r]^{\hspace{-0.3cm}i_{N}} &  \psi_d(N+1,1)_{\bullet}  \ar@{^{(}->}[r] & \cdots}
\]
where  $i_N$ is the map of simplicial sets induced by the inclusion $\mathbb{R}^N \hookrightarrow \mathbb{R}^{N+1}$ defined by 
$(x_1, \ldots, x_N) \mapsto (x_1, \ldots, x_N, 0)$. 

Before we define the non-unital category $\mathsf{Cob}_d^{\mathrm{PL}}$, we need to introduce the notion of \textit{fiberwise regular value.} 

\subsection{Fiberwise regular values}  \label{section4.1}  

First, we shall discuss the notion of \textit{regular value} for PL maps. 

\theoremstyle{definition} \newtheorem{regular}[DesCob]{Definition}

\begin{regular} \label{regular} 
Let $P$ and $Q$ be PL spaces, and let $f:P\rightarrow Q$ be a proper PL map.
A point $q \in Q$ is said to be a \textit{regular value of $f$}
if there is an open neighborhood $U$ of $q$
in $Q$ and a PL homeomorphism $h: f^{-1}(q)\times U \rightarrow f^{-1}(U)$
such that $f\circ h$ agrees with the standard projection
$f^{-1}(q)\times U \rightarrow U$.
\end{regular}

We remark that the notion of regular value for PL maps was already introduced by Williamson in \cite{cobordism}. However, in \cite{cobordism}, this definition was only formulated for maps between compact PL spaces.
In this paper, we decided to define regular values for proper PL maps since the most prominent consequences of regularity involve these kinds of maps. The same observation applies to the use of regular values in differential topology, as exemplified by results such as Ehresmann's Fibration Theorem. 

The following proposition, which we will use frequently in the remaining sections, can be viewed as a PL version of Sard's Theorem. For a proof of this result, see Theorem 1.3.1 in \cite{cobordism}, or Lemma 1.7 in Essay III of \cite{KS}. 
 
 \theoremstyle{plain} \newtheorem{williamson}[DesCob]{Proposition}

\begin{williamson} \label{williamson}
Let $P$ be a compact PL space and 
$f:P \rightarrow \Delta^p$  a 
PL map. Suppose that there is
a triangulation
$h: |K| \rightarrow P$  of $P$ 
such that the composition $f\circ h$
is a simplicial map when we take $\Delta^p$ with its standard simplicial complex structure.
Then, any point $\lambda_{0}$ in 
$\mathrm{Int}\hspace{0.03cm}\Delta^p$ is a regular value of
$f$.  
\end{williamson}

\theoremstyle{definition} \newtheorem{remark.williamson}[DesCob]{Remark}

\begin{remark.williamson} \label{remark.williamson}

The following statement is a standard fact from piecewise linear topology: \textit{Let $P$ be a PL space. If $P\times \mathbb{R}$ is a PL manifold of dimension $m+1$, then $P$ is a PL manifold of dimension $m$.} This result also holds for PL manifolds with boundary. By induction, we can generalize the previous statement as follows: If $P\times \mathbb{R}^p$ is a PL manifold (with boundary) of dimension $m+p$, then $P$ is a PL manifold (with boundary) of dimension $m$. Therefore, if $M$ and $N$ are PL manifolds and  $\lambda_0 \in N - \partial\hspace{0.03cm}N$ is a regular value for a proper PL map of the form
$f:M \rightarrow N$, it follows that $f^{-1}(\lambda_0)$ is a PL submanifold of $M$ of dimension 
$\mathrm{dim}(M) - \mathrm{dim}(N)$. Additionally, in the case when $M$ is a PL manifold with boundary, we have that $f^{-1}(\lambda_0)$ is a \textit{proper} PL submanifold, i.e., the intersection $\partial M \cap f^{-1}(\lambda_0)$ is equal to the boundary of $f^{-1}(\lambda_0)$.  
\end{remark.williamson}

\theoremstyle{definition} \newtheorem{remark.williamson2}[DesCob]{Remark}

\begin{remark.williamson2} \label{remark.williamson2}
The following discussion is a good example to illustrate how Proposition \ref{williamson} plays in the PL category a role similar to the one played by Sard's Theorem in differential topology. 
Let then $P$ be a PL space and $W$ an element of the set $\psi_d(N,k)(P)$. Also, fix a point $\lambda \in P$, 
and let $W_{\lambda}$ be the fiber over $\lambda$ of the standard projection $\pi: W \rightarrow P$. 
As explained in \S \ref{section2.2}, we can view the fiber $W_{\lambda}$ as a PL submanifold of $\mathbb{R}^N$ which is closed as a subspace. In fact, by the definition of $\psi_d(N,k)$, the fiber $W_{\lambda}$ is strictly contained 
in $\mathbb{R}^k \times (-1,1)^{N-k}$. Now, let $x_k: W_{\lambda} \rightarrow \mathbb{R}^k$ be the projection of $W_{\lambda}$ onto the first
component of $\mathbb{R}^k \times (-1,1)^{N-k}$. Using Proposition \ref{williamson}, we can prove that 
the map $x_k: W_{\lambda} \rightarrow \mathbb{R}^k$  has an abundance of regular values. Indeed,
since $W$ is a closed PL subspace of $\mathbb{R}^k \times (-1,1)^{N-k}$, the projection 
$x_k: W_{\lambda} \rightarrow \mathbb{R}^k$ is a proper PL map. Therefore,
by Theorem 3.6 in \cite{plhud}, we can find locally finite simplicial complexes
$K$ and $L$ such that $|K| = W_{\lambda}$ and $|L| = \mathbb{R}^k$, and which make $x_k$ simplicial. 
Proposition \ref{williamson} then implies that each point
$x \in \mathbb{R}^k$ which lies 
in the interior of a $k$-simplex of $L$ is a regular value of  $x_k: W_{\lambda} \rightarrow \mathbb{R}^k$.
Consequently, the set of regular values of $x_k$ is dense in $\mathbb{R}^k$. 
\end{remark.williamson2}

We are now ready to state the definition of fiberwise regular value. In this definition, given a point $a \in \mathbb{R}^k$, 
$B(a,\delta)$ will denote the open ball
of radius $\delta>0$ centered at $a$ with respect to the norm $\left\|\cdot\right\|$
in $\mathbb{R}^k$ defined by 
\begin{equation} \label{norm.pl}
\left\|(x_1, \ldots,x_k)\right\| = \mathrm{max}\{|x_1|, \ldots, |x_k| \}. 
\end{equation}

\theoremstyle{definition} \newtheorem{fibregular}[DesCob]{Definition}

\begin{fibregular} \label{fibregular}

Fix a PL space $P$ and let
$W \subseteq P \times \mathbb{R}^{k}\times (-1,1)^{N-k}$ be an element 
of the set $\psi_d(N,k)(P)$. Also, let 
$\pi: W \rightarrow P$ be the standard 
projection onto $P$, and let $x_k: W \rightarrow \mathbb{R}^k$ be
the projection onto the second factor of $P \times \mathbb{R}^{k}\times (-1,1)^{N-k}$.
A value $a_0\in \mathbb{R}^k$ is said to be
a \textit{fiberwise regular value} of the projection $x_k: W \rightarrow \mathbb{R}^k$
if for every point $w$ in the pre-image $x_k^{-1}(a_0)$ there is a 
$\delta >0$, an open neighborhood $V$ of $\lambda_0=\pi(w)$ in $P$,  
and a PL homeomorphism
\[ 
h:  V \times B(a_0,\delta) \times (\pi,x_k)^{-1}\big((\lambda_0,a_0)\big) \longrightarrow
(\pi,x_k)^{-1}\big(V\times B(a_0,\delta)\big)  
\]
such that $(\pi, x_k) \circ h$ 
is equal to the standard projection onto
$V \times B(a_0,\delta)$.
\end{fibregular}

\theoremstyle{definition} \newtheorem{remark.fib.reg}[DesCob]{Remark}

\begin{remark.fib.reg} \label{remark.fib.reg}
Let us make a couple of comments about the previous definition. Fix again a PL space $P$ and
 an element $W  \subseteq P \times \mathbb{R}^{k}\times (-1,1)^{N-k}$ of the set
 $\psi_d(N,k)(P)$. As we have done before in this article, for any point $\lambda$ in $P$, we will denote  the fiber of the projection $\pi: W \rightarrow P$ over $\lambda$ by $W_{\lambda}$. If  $a_0$ is a fiberwise regular value of the projection 
$x_k: W \rightarrow \mathbb{R}^k$, then for any point $\lambda \in P$ we have that $a_0$ is a regular value (in the sense of Definition \ref{regular}) of the map $W_{\lambda} \rightarrow \mathbb{R}^k$ obtained by restricting $x_k$ to the fiber $W_{\lambda}$.  In particular, for any $\lambda \in P$, the pre-image $(\pi,x_k)^{-1}\big((\lambda,a_0)\big)$ will be a PL submanifold of dimension $d - k$ of $W_{\lambda}$ (see the discussion given in Remark \ref{remark.williamson}). However, besides guaranteeing regularity on each fiber, Definition \ref{fibregular} also asserts that the restriction of the projection $\pi: W \rightarrow P$ on the pre-image 
$x_k^{-1}(a_0)$ is a PL bundle whose fibers are compact manifolds of dimension $d -k$. Thus, after identifying 
$P \times\{a_0\}\times (-1,1)^{N-k}$ with  $P\times (-1,1)^{N-k}$, we have that the pre-image $x_k^{-1}(a_0)$ is an 
element of the set
$\psi_{d-k}(N-k, 0)(P)$. Moreover, if $P$ happens to be compact, we can make a stronger claim. In this case, the compactness of $P$ guarantees that there is a $\delta > 0$ such that the restriction of the map 
$(\pi,x_k): W \rightarrow P \times \mathbb{R}^k$ on $x_k^{-1}\big(B(a_0, \delta)\big)$ is also a PL bundle whose fibers are compact PL manifolds of dimension $d -k$. Thus, by considering the restriction of the projection $(\pi, x_k)$ on 
$x_k^{-1}\big(B(a_0, \delta)\big)$, we can view 
$x_k^{-1}\big(B(a_0, \delta)\big)$ as an 
element of the set $\psi_{d-k}(N - k, 0)\big(P \times B(a_0, \delta)\big)$. 
In fact, assuming that $P$ is compact, it is easy to prove the following equivalence: 
$a_0 \in \mathbb{R}^k$ is a fiberwise regular value of $x_k: W \rightarrow \mathbb{R}^k$ if and only if there exists a 
$\delta > 0$ such that the pre-image $x_k^{-1}\big(B(a_0, \delta)\big)$ is an element of 
$\psi_{d-k}(N - k, 0)\big(P \times B(a_0, \delta)\big)$.
\end{remark.fib.reg}

\theoremstyle{definition} \newtheorem{notation.fib.reg}[DesCob]{Notation}

\begin{notation.fib.reg} \label{notation.fib.reg}
If $W$ is an element of  $\psi_d(N,k)(P)$ and $a_0 \in \mathbb{R}^k$ is a fiberwise regular value of $x_k: W \rightarrow \mathbb{R}^k$, we will typically denote the pre-image $x_k^{-1}(a_0)$ by $W_{a_0}$. 
\end{notation.fib.reg}

In this section and the next, we will mostly be concerned with projections of the form 
$x_1: W \rightarrow \mathbb{R}$ where $W$ is a simplex of $\psi_d(N,1)_{\bullet}$, i.e., $W$ belongs to a set 
$\psi_d(N,1)(\Delta^p)$ for some $p \geq 0$. The following simplicial set will be essential for our proof of the equivalence $B\mathsf{Cob}_d^{\mathrm{PL}} \simeq |\psi_d(\infty,1)_{\bullet}|$.

\theoremstyle{definition} \newtheorem{psireg}[DesCob]{Definition}

\begin{psireg} \label{psireg}
$\psi_d^R(N,1)_{\bullet}$ is the subsimplicial set
of $\psi_d(N,1)_{\bullet}$ which consists of all
simplices $W$ with the property that the map
$x_1:W \rightarrow \mathbb{R}$ has a  fiberwise regular value.
\end{psireg}

We point out that it is possible to construct elements of $\psi_d(N,1)_{\bullet}$ which do not have any fiberwise regular values. However, we will give an example of such an element $W$ in \S \ref{section6} (more specifically, Remark \ref{example.non.fib}) since this construction will rely on methods that we will introduce in that section.  

Our strategy to prove Theorem \ref{DesCob} goes as follows. First, we will prove in this section that we have an equivalence of the form $B\mathsf{Cob}_d^{\mathrm{PL}} \simeq |\psi_d^{R}(\infty,1)_{\bullet}|$. Then, in Section \S \ref{section5}, we will prove that the inclusion $\psi_d^{R}(N,1)_{\bullet} \hookrightarrow \psi_d(N,1)_{\bullet}$ is a weak homotopy equivalence, as long as we have $N - d\geq 3$. 

\theoremstyle{definition} \newtheorem{remark.psireg}[DesCob]{Remark}

\begin{remark.psireg} \label{remark.psireg}

Let us make a couple of comments regarding the simplicial set $\psi_d^R(N,1)_{\bullet}$.

(1) First, we will discuss the kind of submersions that $\psi_d^R(N,1)_{\bullet}$ classifies. 
Let then $P$ be a PL space and 
$(K,h)$ a triangulation of $P$. Also, fix an order relation $\leq$ on the set of vertices $\mathrm{Vert}(K)$, and let
$K_{\bullet}$ be the simplicial set induced by the pair 
$(K,\leq)$. Finally, let $\mathcal{S}_{K,h}:\Psi_d(\mathbb{R}^N)(P) \rightarrow \mathbf{Ssets}(K_{\bullet}, \Psi_d(\mathbb{R}^N)_{\bullet})$ be the function defined in the statement of Theorem \ref{classsub}. 
Since $\mathcal{S}_{K,h}$ is a bijection, it admits an inverse, which we will denote 
by $\mathcal{T}_{K,h}$ in this note. 
Now, consider an arbitrary simplicial set map of the form
$g: K_{\bullet} \rightarrow \psi_d^R(N,1)_{\bullet}$. Via the methods that we used in the proof of Theorem \ref{classsub}, we can produce an element $W^g$ in the set $\psi_d(N,1)(P) \subseteq \Psi_d(\mathbb{R}^N)(P)$ which satisfies 
$\mathcal{S}_{K,h}(W^g) = g$. However, since the target of $g$ is $\psi_d^R(N,1)_{\bullet}$, this
element $W^g$ will have the following property: 

\begin{itemize}

\item[(*)] For each simplex $\sigma$ of $K$, the restriction of 
$x_1: W^g \rightarrow \mathbb{R}$ on $W^g_{h(\sigma)}$ has a fiberwise regular value.

\end{itemize}

In this statement, 
$h(\sigma)$ is the image of $\sigma$ under the PL homeomorphism $h: |K| \rightarrow P$
given in the triangulation  $(K,h)$, and 
$W^g_{h(\sigma)}$ is the restriction of $W^g$ over $h(\sigma)$. On the other hand, if we start with an element $W \in \psi_d(N,1)(P)$ satisfying property (*), then it is not hard to verify that the image of the simplicial set map 
$F_W := \mathcal{S}_{K,h}(W)$ is contained in $\psi_d^R(N,1)_{\bullet}$. Therefore, if we restrict the inverse 
$\mathcal{T}_{K,h}$ to
$\mathbf{Ssets}(K_{\bullet}, \psi_d^R(N,1)_{\bullet})$, we obtain a bijection from $\mathbf{Ssets}(K_{\bullet}, \psi_d^R(N,1)_{\bullet})$ to the subset of $\psi_d(N,1)(P)$ consisting of all $W$ satisfying property (*), i.e., all $W$ which admit a fiberwise regular value when restricted over the image $h(\sigma)$ of a simplex $\sigma$ of $K$. However, such a $W$ does not need to have a global fiberwise regular value that works over all $P$.

(2) Let $\widetilde{\psi}_d(N,1)_{\bullet}$ and $\widetilde{\psi}^R_d(N,1)_{\bullet}$ be the semi-simplicial sets obtained by 
forgetting degeneracies in 
the simplicial sets $\psi_d(N,1)$ and $\psi_d^R(N,1)$ respectively. In \S \ref{section5}, to prove that $\psi_d^{R}(N,1)_{\bullet} \hookrightarrow \psi_d(N,1)_{\bullet}$ is a weak homotopy equivalence, we will show that the corresponding inclusion of semi-simplicial sets
$\widetilde{\psi}_d^{R}(N,1)_{\bullet} \hookrightarrow \widetilde{\psi}_d(N,1)_{\bullet}$ is a weak equivalence. We will do this by proving that any map
of the form $f:(\Delta^p, \partial\Delta^p) \rightarrow \big( |\widetilde{\psi}_d(N,1)_{\bullet}|, |\widetilde{\psi}^R_d(N,1)_{\bullet}| \big)$ represents the trivial class in  $\pi_p\big(|\widetilde{\psi}_d(N,1)_{\bullet}|, |\widetilde{\psi}_d^R(N,1)_{\bullet}| \big)$.
Ideally, we would like 
$f$ to be homotopic (as a map of pairs) to the realization of a morphism 
$g: (\Delta^p_{\bullet}, \partial \Delta^p_{\bullet}) \rightarrow ( \widetilde{\psi}_d(N,1)_{\bullet}, \widetilde{\psi}^R_d(N,1)_{\bullet})$ 
of pairs of semi-simplicial sets. Unfortunately, it is not necessarily true that $f$ is homotopic to the realization of such a morphism
$g$ because the semi-simplicial set $\widetilde{\psi}_d^R(N,1)_{\bullet}$ is not Kan (the same goes for the simplicial set 
$\psi_d^R(N,1)_{\bullet}$). To get around this issue, we will use Proposition \ref{subsimphop}. By this proposition, $f$ will be homotopic (as a map of pairs) to a composition of the form
\[
(\Delta^p, \partial\Delta^p) \stackrel{f'}{\longrightarrow} \big(|K_{\bullet}|, |K'_{\bullet}|\big)  \stackrel{|g|}{\longrightarrow} 
\big(|\widetilde{\psi}_d(N,1)_{\bullet}|, |\widetilde{\psi}_d^R(N,1)_{\bullet}| \big),
\]
where $(K_{\bullet}, K'_{\bullet})$ is a pair of finite semi-simplicial sets induced by a pair $(K,K')$ of finite ordered simplicial complexes. Then, we will show that the realization of the morphism 
$g: (K_{\bullet}, K'_{\bullet}) \rightarrow ( \widetilde{\psi}_d(N,1)_{\bullet}, \widetilde{\psi}^R_d(N,1)_{\bullet})$
is homotopic (as a map of pairs) to a map that sends $|K_{\bullet}|$ to
$|\widetilde{\psi}^R_d(N,1)_{\bullet})|$. During this deformation of $|g|$, we will keep the first map $f'$ fixed. 
\end{remark.psireg}

\subsection{The non-unital simplicial category $\mathsf{Cob}_d^{\mathrm{PL}}$}    \label{section4.2} 

Before we give the definition of $\mathsf{Cob}_d^{\mathrm{PL}}$, we need to clarify what we mean by non-unital simplicial category. For this definition, we will use the following notation: Given two maps $f: X_{\bullet} \rightarrow Z_{\bullet}$ and $g: Y_{\bullet} \rightarrow Z_{\bullet}$ of simplicial sets, we will denote their fiber product by $X \hspace{0.105cm}_f\hspace{-0.1cm}\times_g Y$. 

\theoremstyle{definition} \newtheorem{simpnoncat}[DesCob]{Definition}

\begin{simpnoncat} \label{simpnoncat}
A \textit{non-unital simplicial category} $\mathcal{C}$ 
consists of the following data:
\begin{enumerate}
\item A simplicial set of objects $\mathcal{O}_{\bullet}$.
\item A simplicial set of morphisms $\mathcal{M}_{\bullet}$.
\item Maps of simplicial sets 
$s,t: \mathcal{M}_{\bullet} \rightarrow \mathcal{O}_{\bullet}$, called
respectively the \textit{source} and \textit{target map} of $\mathcal{C}$.
\item A map of simplicial sets 
$\mu: \mathcal{M} \hspace{0.105cm}_t\hspace{-0.1cm}\times_s \mathcal{M} \rightarrow \mathcal{M}_{\bullet}$,
called the \textit{composition map}, which satisfies the following conditions:
\begin{itemize}
\item[$\cdot$] $s\big( \mu(f,g)\big) = s(f)$ and $t\big( \mu(f,g) \big) = t(g)$
\item[$\cdot$] $\mu\big(\mu(f,g), h\big) = \mu\big( f, \mu(g,h) \big)$ 
whenever $t(f) = s(g)$ and $t(g) = s(h)$.   
\end{itemize} 
\end{enumerate}
\end{simpnoncat}
Typically, we will denote the image $\mu(f,g)$ by $g \circ f$. Note that we can also define a non-unital simplicial category as a simplicial object $\Delta^{op}\rightarrow\mathbf{Non}$-$\mathbf{Cat}$ in
the category of non-unital categories.This point of view is more convenient for formulating the notion of \textit{nerve of a non-unital simplicial category}, which we will do in Definition \ref{nervenoncat} below. For this definition, we shall denote the category of simplicial sets by $\mathbf{Fun}\big(\Delta^{op},\mathbf{Sets}\big)$ and the category of semi-simplicial sets by $\mathbf{Fun}\big(\Delta^{op}_{\mathrm{inj}},\mathbf{Sets}\big)$. Also, we shall denote by $\mathcal{N}:\mathbf{Non}\text{-}\mathbf{Cat} \rightarrow \mathbf{Fun}\big(\Delta^{op}_{\mathrm{inj}},\mathbf{Sets}\big)$ the functor which sends a non-unital category to its nerve. 

\theoremstyle{definition} \newtheorem{nervenoncat}[DesCob]{Definition}

\begin{nervenoncat} \label{nervenoncat}
Let $\mathcal{C}: \Delta^{op}\rightarrow\mathbf{Non}$-$\mathbf{Cat}$ be a non-unital simplicial category.
\textit{The nerve of} $\mathcal{C}$ is the functor $\mathcal{N}\mathcal{C}: \Delta^{op}_{\mathrm{inj}} \rightarrow \mathbf{Fun}\big(\Delta^{op}, \mathbf{Sets}\big)$ obtained by applying the categorical exponential law to the composition $\mathcal{N}\circ \mathcal{C}: \Delta^{op} \rightarrow  \mathbf{Fun}\big(\Delta^{op}_{\mathrm{inj}}, \mathbf{Sets}\big)$. 

\end{nervenoncat}

\theoremstyle{definition}  \newtheorem{notadd4}[DesCob]{Convention}

\begin{notadd4}

To make our arguments easier to formulate, we shall make one small adjustment to the definition of 
$\psi_d(N, 1)_{\bullet}$. Namely, 
throughout the rest of this section, we shall suppose that $p$-simplices of $\psi_d(N, 1)_{\bullet}$ are contained in $\mathbb{R}\times \Delta^p\times (-1,1)^{N-1}$ instead of $\Delta^p \times \mathbb{R} \times (-1,1)^{N-1}$. Moreover, we point out that fiberwise regular values are preserved by structure maps of $\psi_d(N, 1)_{\bullet}$. In other words, if $W \in \psi_d(N,1)_p$ and $a\in \mathbb{R}$ is a fiberwise regular value of $x_1: W \rightarrow \mathbb{R}$, then $a$ will also be a fiberwise regular value of $x_1: \eta^*W \rightarrow \mathbb{R}$ for any morphism $\eta: [q] \rightarrow [p]$ in the category $\Delta$. 

\end{notadd4}

We can now start developing the definition of the
PL cobordism category $\mathsf{Cob}_d^{\mathrm{PL}}$. 
First, we will define a version of this category in which morphisms are manifolds contained in $\mathbb{R}^N$
for some fixed $N$ (see Definition 3.7 from \cite{GRW}). For this definition, it is important to keep in mind that if $W$ is a $p$-simplex of 
$\psi_d(N,1)_{\bullet}$ and $a$ is a fiberwise regular value of the projection $x_1: W \rightarrow \mathbb{R}$ onto the first factor of $\mathbb{R}\times \Delta^p \times (-1,1)^{N-1}$, then the pre-image $x^{-1}(a)$ (which we will denote by $W_a$) is a $p$-simplex of $\psi_{d-1}(N-1,0)_{\bullet}$ (see Remark \ref{remark.fib.reg}). Also,  in this definition, we will use the following notation: If $W \in \psi_d(N,1)_p$ and $a \in \mathbb{R}$, then $W + a$ will denote the image of $W$ under the map $\mathbb{R} \times \Delta^p \times (-1,1)^{N-1} \rightarrow \mathbb{R} \times \Delta^p \times (-1,1)^{N-1}$ given by $(t, \lambda, x) \mapsto (t + a, \lambda, x)$. It is evident that $W + a$ is also a $p$-simplex of 
$ \psi_{d}(N,1)_{\bullet}$. 

\theoremstyle{definition}  \newtheorem{plcob}[DesCob]{Definition}

\begin{plcob} \label{plcob}

$\mathsf{Cob}_d^{\mathrm{PL}}(\mathbb{R}^N)$ is the non-unital simplicial category whose simplical sets of objects and morphisms $\mathcal{O}_{\bullet}$, $\mathcal{M}_{\bullet}$ and structure maps $s, t, \mu$ are defined as follows: 

\begin{enumerate}

\item $\mathcal{O}_{\bullet} = \psi_{d-1}(N-1,0)_{\bullet}$.

\item The set of $p$-simplices of $\mathcal{M}_{\bullet}$ is the subset of $\psi_d(N,1)_p\times (0, \infty)$
of tuples $(W, a)$ which satisfy the following conditions:  

\begin{itemize}

\item[\textit{(i)}]  $a$ and $0$ are fiberwise regular values of the projection $x_1: W \rightarrow \mathbb{R}$.

\item[\textit{(ii)}]  There exists an $\epsilon >0 $ such that
\[
x_1^{-1}\big((-\infty, \epsilon)\big) = (-\infty, \epsilon) \times W_0
\]
\[
x_1^{-1}\big((a -\epsilon, \infty)\big) = (a - \epsilon, \infty)\times W_a.
\]
Given a morphism $\eta: [q] \rightarrow [p]$ in $\Delta$, the structure map $\eta^*:\mathcal{M}_p \rightarrow \mathcal{M}_q$ sends a $p$-simplex $(W, a)$ to $(\eta^*W, a)$. 

\end{itemize}

\item $s: \mathcal{M}_{\bullet} \rightarrow \psi_{d-1}(N-1,0)_{\bullet}$ and $t: \mathcal{M}_{\bullet} \rightarrow \psi_{d-1}(N-1,0)_{\bullet}$ map a morphism $(W, a)$ to $W_0$ and $W_a$ respectively. 

\item If $(W, a)$ and $(W', a')$ are two morphisms with the property that $ W_a = W'_0$, then the map $\mu$ sends the tuple $\big((W,a), (W', a')\big)$ to the morphism $(\widetilde{W}, a + a')$, 
where $\widetilde{W}$ is equal to the union
\[
\Big( W \cap x_1^{-1}\big((-\infty, a]\big) \Big) \cup \Big( (W' + a)\cap x_1^{-1}\big([a,\infty)\big) \Big).
\] 

\end{enumerate}

\end{plcob}

For the definition of the PL cobordism category, we need versions of $\psi_d(N,1)_{\bullet}$  and $\psi_{d-1}(N-1,0)_{\bullet}$ in which the background space is infinite dimensional. First, $\psi_d(\infty,1)_{\bullet}$ will denote the simplicial set whose $p$-simplices are subsets $W \subseteq \mathbb{R}\times\Delta^p \times (-1,1)^{\infty - 1}$ for which we can find a positive integer $N$ such that $W \subseteq \mathbb{R}\times\Delta^p \times (-1,1)^{N- 1}$ and $W \in \psi_d(N,1)_p$. Structure maps for $\psi_d(\infty,1)_{\bullet}$ are also defined by taking pull-backs. It is evident how we can also define the infinite-dimensional version of $\psi_{d-1}(N-1,0)_{\bullet}$, which we will denote by $\psi_{d-1}(\infty-1,0)_{\bullet}$. With all of these preliminaries, we can now state the central definition of this article.  

\theoremstyle{definition} \newtheorem{plcobcat}[DesCob]{Definition} 
 
\begin{plcobcat}  \label{plcobcat} 
\textit{The PL cobordism category} $\mathsf{Cob}_d^{\mathrm{PL}}$ is the non-unital
simplicial category obtained by carrying out the construction given in Definition \ref{plcob} with 
$\psi_d(N,1)_{\bullet}$ and $\psi_{d-1}(N-1, 0)_{\bullet}$ replaced by 
$\psi_d(\infty,1)_{\bullet}$ and $\psi_{d-1}(\infty-1, 0)_{\bullet}$ respectively.

\end{plcobcat}

\theoremstyle{definition} \newtheorem{remarkcob}[DesCob]{Remark} 

\begin{remarkcob} \label{remarkcob}
Let $(W,a)$ be a morphism in $\mathsf{Cob}_d^{\mathrm{PL}}$ with $W \in \psi_d(\infty,1)_{p}$.
Moreover, suppose that $W\subseteq \mathbb{R}\times \Delta^p \times (-1,1)^{N-1}$,
and let $x_1: W \rightarrow \mathbb{R}$ be the projection onto the first
component of $\mathbb{R}\times\Delta^p\times (-1,1)^{N-1}$. Note that the 
underlying PL manifold $W$ is completely determined by the
pre-image $x_1^{-1}\big([0,a] \big)$, which we will 
denote by $W_{[0,a]}$. Similarly, for any $\lambda \in \Delta^p$, 
we will denote the intersection of
the fiber $W_{\lambda}$ of $\pi: W\rightarrow \Delta^p$
with  $x_1^{-1}\big([0,a] \big)$ by $W_{\lambda,[0,a]}$.
For a fixed $\lambda_0 \in \Delta^p$, it is evident 
that $W_{\lambda_0,[0,a]}$ is a $d$-dimensional
piecewise linear cobordism whose boundary components  
are contained in $\{0\}\times (-1,1)^{N-1}$ and 
$\{a\}\times (-1,1)^{N-1}$. Moreover, since $0$
and $a$ are fiberwise regular values of 
$x_1:W \rightarrow \mathbb{R}$,  
the PL version of Ehresmann's Fibration Theorem implies that
the restriction $\pi|_{W_{[0,a]}}: W_{[0,a]} \rightarrow \Delta^p$
is a trivial PL bundle whose fibers
are all PL homeomorphic to the cobordism 
$W_{\lambda_0,[0,a]}$. This observation
justifies the name \textit{PL cobordism category}. 
\end{remarkcob}

Consider again the non-unital category $\mathsf{Cob}_d^{\mathrm{PL}}(\mathbb{R}^N)$. For $k \geq 1$, we will describe a very explicit model for the $k$-th nerve (the simplicial set of $k$ composable morphisms) of  $\mathsf{Cob}_d^{\mathrm{PL}}(\mathbb{R}^N)$, which we will denote by $N_k\mathsf{Cob}_d^{\mathrm{PL}}(\mathbb{R}^N)_{\bullet}$. The set of $p$-simplices of  $N_k\mathsf{Cob}_d^{\mathrm{PL}}(\mathbb{R}^N)_{\bullet}$ will be the set of all tuples of the form 
$(W, a_1 < \ldots < a_k)$ such that $0 < a_1$, $W \in \psi_{d}(N,1)_{p}$, $(W, a_k)$ is a morphism in 
$\mathsf{Cob}_d^{\mathrm{PL}}(\mathbb{R}^N)$,   and $0, a_1, \ldots, a_k$ are fiberwise regular values of $x_1: W \rightarrow \mathbb{R}$. Moreover, there must exist an $\epsilon > 0$ such that 
\[
x_1^{-1}\big( (a_{j} - \epsilon, a_j + \epsilon) \big) = (a_{j} - \epsilon, a_j + \epsilon) \times W_{a_j} 
\]
for all $j =1,\ldots, k-1$. The structure maps of $N_k\mathsf{Cob}_d^{\mathrm{PL}}(\mathbb{R}^N)_{\bullet}$ are again defined by taking pull-backs. In the case $k=0$, we set 
$N_0\mathsf{Cob}_d^{\mathrm{PL}}(\mathbb{R}^N)_{\bullet} = \psi_{d-1}(\infty-1, 0)_{\bullet}$. 

Next, we will assemble the simplicial sets $\{ N_k\mathsf{Cob}_d^{\mathrm{PL}}(\mathbb{R}^N)_{\bullet}\}_{k\geq 0}$ into a functor 
\[
\mathcal{N}\mathsf{Cob}_d^{\mathrm{PL}}(\mathbb{R}^N): \Delta^{op}_{\mathrm{inj}} \rightarrow \mathbf{Ssets} 
\]
as follows. At the level of objects, we define $[k] \mapsto N_k\mathsf{Cob}_d^{\mathrm{PL}}(\mathbb{R}^N)_{\bullet}$. To describe the map 
$\eta^*: N_k\mathsf{Cob}_d^{\mathrm{PL}}(\mathbb{R}^N)_{\bullet} \rightarrow N_{k'}\mathsf{Cob}_d^{\mathrm{PL}}(\mathbb{R}^N)_{\bullet}$ induced by a morphism $\eta: [k'] \rightarrow [k]$ in $\Delta_{\mathrm{inj}}$, with $k ' > 0$, we consider two cases: 

\begin{itemize}

\item[(i)] If $\eta(0) = 0$, then $\eta^*\big((W, a_1 < \ldots < a_k)\big) = (W', a_{\eta(1)} < \ldots < a_{\eta(k')} )$, where $W' = x^{-1}_1\big( (-\infty, a_{\eta(k')}] \big) \cup \big([a_{\eta(k')}, \infty) \times W_{a_{\eta(k')}}  \big)$. 

\item[(ii)] If $\eta(0) > 0$, then 
\[
\eta^*\big((W, a_1 < \ldots < a_k)\big) = (W', a_{\eta(1)}  - a_{\eta(0)} < \ldots < a_{\eta(k')} - a_{\eta(0)} ),
\]
where $W'$ is obtained by first taking the union
\[
x^{-1}_1\big( [a_{\eta(0)}, a_{\eta(k')}] \big) \cup \big( (-\infty,a_{\eta(0)}]\times W_{a_{\eta(0)}}\big) \cup \big([a_{\eta(k')}, \infty) \times W_{a_{\eta(k')}} \big)
\]
and then taking the image of this union under the PL homeomorphism $\mathbb{R} \times \Delta^p \times (-1,1)^{N-1} \rightarrow \mathbb{R} \times \Delta^p \times (-1,1)^{N-1} $ defined by 
\[
(t, \lambda, x) \mapsto (t - a_{\eta(0)}, \lambda, x). 
\]

\end{itemize}
When $k' = 0$, the map $\eta^*: N_k\mathsf{Cob}_d^{\mathrm{PL}}(\mathbb{R}^N)_{\bullet} \rightarrow N_{0}\mathsf{Cob}_d^{\mathrm{PL}}(\mathbb{R}^N)_{\bullet}$ induced by $\eta: [0] \rightarrow [k]$ is given by $(W, a_1 < \ldots < a_k) \mapsto W_{a_{\eta(0)}}$. That is, we just get the level set of the fiberwise regular value $a_{\eta(0)}$. The functor  $\mathsf{Cob}_d^{\mathrm{PL}}(\mathbb{R}^N): \Delta^{op}_{\mathrm{inj}} \rightarrow \mathbf{Ssets}$ that we have just defined is \textit{the nerve of} $\mathsf{Cob}_d^{\mathrm{PL}}(\mathbb{R}^N)$. We can define the nerve 
$\mathcal{N}\mathsf{Cob}_d^{\mathrm{PL}}$ of the PL cobordism category $\mathsf{Cob}_d^{\mathrm{PL}}$ 
in a completely analogous fashion. 

Note that, by forgetting degeneracies in $N_k\mathsf{Cob}_d^{\mathrm{PL}}(\mathbb{R}^N)_{\bullet}$ and 
$N_k\mathsf{Cob}_d^{\mathrm{PL}}$, both $\mathcal{N}\mathsf{Cob}_d^{\mathrm{PL}}(\mathbb{R}^N)$ and 
$\mathcal{N}\mathsf{Cob}_d^{\mathrm{PL}}$ turn into functors of the form $\Delta_{\mathrm{inj}}^{op} \rightarrow \mathbf{Semi}$-$\mathbf{Ssets}$, which we can view as bi-semi-simplicial sets $\mathcal{N}\mathsf{Cob}_d^{\mathrm{PL}}(\mathbb{R}^N)_{\bullet, \bullet}$ and  $\mathcal{N}\mathsf{Cob}_{d\bullet, \bullet}^{\mathrm{PL}}$ by the categorical exponential law. We will use this observation to formulate the following definition. 

\theoremstyle{definition}  \newtheorem{classspace}[DesCob]{Definition}

\begin{classspace} \label{classspace}

The classifying spaces $B\mathsf{Cob}_d^{\mathrm{PL}}(\mathbb{R}^N)$ and $B\mathsf{Cob}_d^{\mathrm{PL}}$ are defined as the geometric realizations of 
$\mathcal{N}\mathsf{Cob}_d^{\mathrm{PL}}(\mathbb{R}^N)_{\bullet, \bullet}$ and  $\mathcal{N}\mathsf{Cob}_{d\bullet, \bullet}^{\mathrm{PL}}$ respectively. 

\end{classspace}

\subsection{The equivalence $B\mathsf{Cob}_d^{\mathrm{PL}} \simeq |\psi_d^{R}(\infty,1)_{\bullet}|$}   \label{section4.3}

In what remains of this section, we will show that $B\mathsf{Cob}_d^{\mathrm{PL}}$
is weak homotopy equivalent to $\left|\psi_d^R(\infty,1)_{\bullet}\right|$. In order
to do this, we need to introduce the following poset model
for the non-unital category $\mathsf{Cob}_d^{\mathrm{PL}}(\mathbb{R}^N)$. 
For this definition,
$\widetilde{\psi}_d(N,1)_{\bullet}$ will denote once again 
the semi-simplicial set obtained from $\psi_d(N,1)_{\bullet}$ 
after forgetting degeneracies.

\theoremstyle{definition} \newtheorem{cobPOset}[DesCob]{Definition}

\begin{cobPOset} \label{cobPOset}

Let $\mathcal{D}_d(\mathbb{R}^N)_{\bullet,\bullet}$ be the bi-semi-simplicial set defined as follows: 

\begin{itemize}

\item[(i)] The set of $(p,q)$-simplices is equal to the subset of $\widetilde{\psi}_d(N,1)_p\times \mathbb{R}^{q+1}$ consisting of tuples $(W, a_0 < a_1 < \ldots < a_q)$ with the property that each $a_i$ is a fiberwise regular value of $x_1: W \rightarrow \mathbb{R}$. 

\item[(ii)] Given morphisms $\eta: [p'] \rightarrow [p]$ and $\theta: [q'] \rightarrow [q]$ in $\Delta_{\mathrm{inj}}$, the structure map 
$(\eta \times \theta)^*: \mathcal{D}_d(\mathbb{R}^N)_{p,q}\rightarrow \mathcal{D}_d(\mathbb{R}^N)_{p',q'} $ induced by $\eta \times \theta$ is given by
\[
(W, a_0 < a_1 < \ldots < a_q) \mapsto  (\eta^*W, a_{\theta(0)} < a_{\theta(1)} < \ldots < a_{\theta(q')}).
\]
In other words, maps in the $p$-direction are given by pull-backs, and in the $q$-direction they are defined by forgetting fiberwise regular values. 
\end{itemize}

\end{cobPOset}

We also need a version of $\mathcal{D}_d(\mathbb{R}^N)_{\bullet,\bullet}$ where manifolds are required to be cylindrical near the fiberwise regular values. More specifically, we define a bi-semi-simplicial set 
$\mathcal{D}_d^{\perp}(\mathbb{R}^N)_{\bullet,\bullet}$ as follows:

\begin{itemize}

\item[(i)] $\mathcal{D}_d^{\perp}(\mathbb{R}^N)_{p,q}$ is the subset of $\mathcal{D}_d(\mathbb{R}^N)_{p,q}$
consisting of elements $(W, a_0 < \ldots < a_q)$ for which there is an $\epsilon > 0$ such that
\[
x_1^{-1}\big( (a_{j} - \epsilon, a_j + \epsilon) \big) = (a_{j} - \epsilon, a_j + \epsilon) \times W_{a_j} 
\]
for all $j=0,\ldots,q$. 

\item[(ii)] The structure maps of $\mathcal{D}_d^{\perp}(\mathbb{R}^N)_{\bullet,\bullet}$  coincide with those of 
$\mathcal{D}_d(\mathbb{R}^N)_{\bullet,\bullet}$. 

\end{itemize}

We will prove that $B\mathsf{Cob}_d^{\mathrm{PL}} \simeq |\psi_d^R(\infty,1)_{\bullet}|$ by showing that there is
a zig-zag of weak equivalences 
\begin{equation} \label{chain.equiv.cob}
\left\|\mathcal{N}\mathsf{Cob}_d^{\mathrm{PL}}(\mathbb{R}^N)_{\bullet,\bullet}\right\| \stackrel{\simeq}{\leftarrow} 
\left\|\mathcal{D}_d^{\perp}(\mathbb{R}^N)_{\bullet,\bullet}\right\| \stackrel{\simeq}{\rightarrow}
\left\|\mathcal{D}_d(\mathbb{R}^N)_{\bullet,\bullet}\right\| \stackrel{\simeq}{\rightarrow}
\left|\widetilde{\psi}_d^R(N,1)_{\bullet}\right| \stackrel{\simeq}{\rightarrow} \left|\psi_d^R(N,1)_{\bullet}\right|
\end{equation}
and then letting $N$ go to infinity.
The right-most equivalence is the natural quotient
map from the geometric realization of the semi-simplicial set 
$\widetilde{\psi}^R_d(N,1)_{\bullet}$ to
the geometric realization of $\psi_d^R(N,1)_{\bullet}$.  If $X_{\bullet}$ is a simplicial set and $\widetilde{X}_{\bullet}$ is the semi-simplicial set obtained by forgetting degeneracies, then the natural quotient map $\widetilde{X}_{\bullet} \rightarrow X_{\bullet}$ is a weak homotopy equivalence. 
This fact is proven in \cite{RSD}. 

In order to show that $B\mathsf{Cob}_d^{\mathrm{PL}} \simeq |\psi_d^R(\infty,1)_{\bullet}|$, we will  start by comparing the spaces
$\left\|\mathcal{N}\mathsf{Cob}_d^{\mathrm{PL}}(\mathbb{R}^N)_{\bullet,\bullet}\right\|$ and
$\left\|\mathcal{D}_d^{\perp}(\mathbb{R}^N)_{\bullet,\bullet}\right\|$. To do so, let us explain first how we will define a function 
\[
\mathcal{D}_d^{\perp}(\mathbb{R}^N)_{p,q} \stackrel{f_{p,q}}{\longrightarrow} \mathcal{N}\mathsf{Cob}_d^{\mathrm{PL}}(\mathbb{R}^N)_{p,q}
\] 
for any pair $p, q$ with $p \geq 0$ and $q > 0$. Given an element $(W, a_0 < \ldots < a_q)$ of $\mathcal{D}_d^{\perp}(\mathbb{R}^N)_{p,q}$, we define its image in $\mathcal{N}\mathsf{Cob}_d^{\mathrm{PL}}(\mathbb{R}^N)_{p,q}$ as follows:

\begin{itemize}

\item[$\cdot$] First, we take the pre-image of $[a_0, a_q]$ under the map $x_1: W \rightarrow \mathbb{R}$. Using the same style of notation that we used in Remark \ref{remarkcob}, we will denote this pre-image by $W_{[a_0, a_q]}$. Next, we define $W(a_0,a_q)$ to be the union 
\[
W_{[a_0, a_q]} \cup \big( (-\infty, a_0]\times W_{a_0} \big) \cup  \big( [a_q, \infty) \times W_{a_q} \big) 
\]
where $W_{a_0}$ and $W_{a_q}$ are the level sets of $x_1: W \rightarrow \mathbb{R}$ corresponding to the values $a_0$ and $a_q$ respectively. 

\item[$\cdot$] Finally, define
\[
f_{p,q}\big((W, \hspace{0.05cm} a_0 <  \ldots < a_q)\big) = (W(a_0, a_q) - a_0, \hspace{0.05cm} a_1 - a_0 <  \ldots < a_{q} - a_0)
\]
where $W(a_0, a_q) - a_0$ is the image of $W(a_0,a_q)$ under the PL automorphism  of
$\mathbb{R}\times \Delta^p \times (-1,1)^{N-1}$ given by $(t, \lambda, x) \mapsto (t-a, \lambda, x)$. Note that 
$0$ and  $a_1 - a_0, \ldots , a_q - a_0$ are indeed fiberwise regular values of the projection $x_1$ from 
$W(a_0, a_q) - a_0$ to $\mathbb{R}$.  
\end{itemize} 

In the case $q=0$, we define $f_{p,0}\big(W, a_0\big) = W_{a_0}$. It is straightforward to verify that all the functions $f_{p,q}$ can be assembled into a map $f_{\bullet, \bullet}$ of bi-semi-simplicial sets. With this construction, we can now establish the first equivalence in (\ref{chain.equiv.cob}).  

\theoremstyle{plain}  \newtheorem{CtoD}[DesCob]{Proposition} 
 
\begin{CtoD} \label{CtoD}
The morphism of bi-semi-simplicial sets
$\mathcal{D}^{\perp}_d(\mathbb{R}^N)_{\bullet,\bullet} \stackrel{f_{\bullet,\bullet}}{\longrightarrow} \mathcal{N}\mathsf{Cob}_d^{\mathrm{PL}}(\mathbb{R}^N)_{\bullet,\bullet}$ defined above induces a weak homotopy equivalence 
\[
\left\|\mathcal{D}^{\perp}_d(\mathbb{R}^N)_{\bullet,\bullet}\right\| \stackrel{\simeq}{\longrightarrow} 
\left\| \mathcal{N}\mathsf{Cob}_d^{\mathrm{PL}}(\mathbb{R}^N)_{\bullet, \bullet}\right\|.
\]

\end{CtoD}

\begin{proof}
We will prove this proposition by showing that, for each $q\geq 0$, the map of semi-simplicial sets $f_{\bullet, q}$ is a weak homotopy equivalence. For this argument, we shall identify the set $\mathcal{N}\mathsf{Cob}_d^{\mathrm{PL}}(\mathbb{R}^N)_{p, 0}$ with the subset of $\psi_d(N,1)_p \times \mathbb{R}$ consisting of all tuples of the form $(\mathbb{R}\times M, 0)$, where $M \in \psi_{d-1}(N-1,0)_p$. By doing this identification, it will not be necessary to treat $q = 0$ as a separate case.

Fix an integer $q \geq 0$. If $\mathcal{N}\mathsf{Cob}_d^{\mathrm{PL}}(\mathbb{R}^N)_{\bullet, q} \stackrel{i_{q}}{\longrightarrow} \mathcal{D}^{\perp}_d(\mathbb{R}^N)_{\bullet,q}$
is the obvious inclusion of semi-simplicial sets, then the composition $f_{\bullet, q} \circ i_q$ is equal to the identity map on $\mathcal{N}\mathsf{Cob}_d^{\mathrm{PL}}(\mathbb{R}^N)_{\bullet, q}$. Therefore, it suffices to show that the inclusion $i_q$ is a weak homotopy equivalence. Moreover, since both  $\mathcal{D}^{\perp}_d(\mathbb{R}^N)_{\bullet,q}$
and $\mathcal{N}\mathsf{Cob}_d^{\mathrm{PL}}(\mathbb{R}^N)_{\bullet, q}$ are Kan, it is enough to prove that the geometric realization of a morphism of pairs of the form
\begin{equation} \label{morphism.CtoD}
h: (\Delta^{k}_{\bullet},\partial\Delta^{k}_{\bullet}) \longrightarrow 
(\mathcal{D}_d^{\perp}(\mathbb{R}^N)_{\bullet,q}, \mathcal{N}\mathsf{Cob}_d^{\mathrm{PL}}(\mathbb{R}^N)_{\bullet, q} )
\end{equation}
represents a trivial class in $\pi_k(|\mathcal{D}_d^{\perp}(\mathbb{R}^N)_{\bullet,q}|, |\mathcal{N}\mathsf{Cob}_d^{\mathrm{PL}}(\mathbb{R}^N)_{\bullet, q}|)$. Fix then such a morphism $h$ and let 
$(W, a_0 < \ldots < a_q)$ be the $k$-simplex of $\mathcal{D}_d^{\perp}(\mathbb{R}^N)_{\bullet,q}$ classified by $h$; i.e., if $\sigma$ is the unique non-degenerate $k$-simplex of $\Delta^k_{\bullet}$, then $h(\sigma) = (W, a_0 <  \ldots < a_q)$.
Note that $(W, a_0 < \ldots < a_q)$ satisfies the following properties: 

\begin{itemize}

\item[$\cdot$] $a_0 = 0$. 

\item[$\cdot$] If $x_1: W \rightarrow \mathbb{R}$ is the projection onto the first component of 
$\mathbb{R}\times \Delta^k \times (-1, 1)^{N-1}$, then 
there is a value $\epsilon > 0$ such that 
\[
x_1^{-1}\big( (a_j - \epsilon, a_j + \epsilon) \big) = (a_j - \epsilon, a_j + \epsilon)  \times W_{a_j}
\]
for all $j = 0, \ldots, q$. 

\item[$\cdot$] If $\delta_i: \Delta^{k-1} \hookrightarrow \Delta^{k}$ is the inclusion that maps $\Delta^{k-1}$ to the $i$-th face of $\Delta^k$, then $(\delta_i^{*}W, a_1 < \ldots < a_q)$ is an element of 
$\mathcal{N}\mathsf{Cob}_d^{\mathrm{PL}}(\mathbb{R}^N)_{k-1, q}$.   

\end{itemize}
The essential step in this proof is to construct a concordance between $W$ and $W(0, a_q)$ by stretching the pre-images $x_1^{-1}\big( ( - \epsilon, \epsilon) \big)$ and $x_1^{-1}\big( (a_q - \epsilon, a_q + \epsilon) \big)$ towards $-\infty$ and $\infty$ respectively. To this end, fix a value $\epsilon'$ such that $0 < \epsilon' < \epsilon$ and pick an isotopy of open PL embeddings $f: \mathbb{R} \times [0,1] \rightarrow \mathbb{R}  \times [0,1]$ satisifying the following conditions: 
\begin{itemize}

\item[$\cdot$] $f_0$ is the identity on $\mathbb{R}$.

\item[$\cdot$] $f$ fixes all points in $[- \epsilon',a_q + \epsilon'] \times [0,1]$.

\item[$\cdot$] $f_1$ maps $\mathbb{R}$ onto $(-\epsilon, a_q + \epsilon)$.

\end{itemize}
The product of maps
$f\times \mathrm{Id}_{\Delta^k\times \mathbb{R}^{N-1}}$,
which we shall denote by $e$, is an open PL
embedding from 
$\mathbb{R} \times [0,1] \times \Delta^k \times \mathbb{R}^{N-1}$
to itself which commutes with the projection onto 
$[0,1] \times \Delta^k$. Then, if $[0,1]\times W$ denotes the constant concordance from $W$ to itself, Proposition \ref{pullemb} guarantees that the pre-image $\widehat{W} : = e^{-1}([0,1]\times W)$ is an element of the set $\psi_d(N,1)([0,1] \times \Delta^k)$. By the way we defined the embedding $e$, it is clear that $\widehat{W}$ is a concordance between $W$ and $W(0, a_q)$. Moreover, note that $0, a_1, \ldots, a_q$ are fiberwise regular values of $x_1: \widehat{W} \rightarrow \mathbb{R}$. Thus, any homotopy
$H:  [0,1] \times  \Delta^k  \rightarrow |\psi_d(N,1)_{\bullet}|$ induced by $\widehat{W}$ (see Proposition \ref{concord.homotopy}) will admit a lift $\widetilde{H}: [0,1] \times  \Delta^k  \rightarrow |\mathcal{D}_d^{\perp}(\mathbb{R}^N)_{\bullet,q}|$ with the property that $\widetilde{H}_0 = |h|$ and $\widetilde{H}_1(\Delta^k) \subseteq |\mathcal{N}\mathsf{Cob}_d^{\mathrm{PL}}(\mathbb{R}^N)_{\bullet, q}|$. 
Finally, since the embedding $e$ does not change the fibers of $[0,1] \times W$ over $[0,1]\times \partial \Delta^k$, the homotopy $\widetilde{H}$ will map $[0,1]\times \partial \Delta^k$ to 
$|\mathcal{N}\mathsf{Cob}_d^{\mathrm{PL}}(\mathbb{R}^N)_{\bullet, q}|$. Therefore, the map 
$h: (\Delta^p, \partial \Delta^p) \rightarrow (|\mathcal{D}_d^{\perp}(\mathbb{R}^N)_{\bullet,q}|, |\mathcal{N}\mathsf{Cob}_d^{\mathrm{PL}}(\mathbb{R}^N)_{\bullet, q}|)$ that we fixed represents the trivial class in 
$\pi_k(|\mathcal{D}_d^{\perp}(\mathbb{R}^N)_{\bullet,q}|, |\mathcal{N}\mathsf{Cob}_d^{\mathrm{PL}}(\mathbb{R}^N)_{\bullet, q}|)$. 
\end{proof}

In Proposition \ref{DtoD} below, we will show that the inclusion $\mathcal{D}_d^{\perp}(\mathbb{R}^N)_{\bullet,\bullet} \hookrightarrow \mathcal{D}_d(\mathbb{R}^N)_{\bullet,\bullet}$ induces a weak homotopy equivalence between geometric realizations. 
Let us introduce some notation that we will use in this proof. First, fix a $p$-simplex $W \subseteq \mathbb{R}\times\Delta^{p}\times (-1,1)^{N-1}$ of $\psi_d(N,1)_{\bullet}$. Also, let $x_1: W \rightarrow \mathbb{R}$ and $\pi: W\rightarrow \Delta^p$ denote once again the projections onto the first and second components of $\mathbb{R}\times\Delta^{p}\times (-1,1)^{N-1}$ respectively. For any subsets $A \subseteq \Delta^p$ and $S \subseteq \mathbb{R}$, we shall denote the pre-image of $S\times A$ under the map 
$(x_1, \pi): W \rightarrow \mathbb{R}\times \Delta^p$ by $W_{S,A}$. Moreover, for a fiberwise regular value $a$ of 
$x_1: W \rightarrow \mathbb{R}$, we shall continue to denote the pre-image $x_1^{-1}(a)$ by $W_a$.

\theoremstyle{plain} \newtheorem{DtoD}[DesCob]{Proposition}

\begin{DtoD} \label{DtoD} 
The map $\left\|\mathcal{D}_d^{\perp}(\mathbb{R}^N)_{\bullet,\bullet}\right\| \rightarrow \left\|\mathcal{D}_d(\mathbb{R}^N)_{\bullet,\bullet}\right\|$ 
induced by the inclusion
$i_{\bullet,\bullet}: \mathcal{D}_d^{\perp}(\mathbb{R}^N)_{\bullet,\bullet} \hookrightarrow \mathcal{D}_d(\mathbb{R}^N)_{\bullet,\bullet}$
is a weak homotopy equivalence. 
\end{DtoD}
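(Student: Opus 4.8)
The plan is to prove this levelwise in the simplicial ($q$) direction and then invoke the standard fact that a morphism of bi-$\Delta$-sets which is a weak homotopy equivalence in each $q$-level induces a weak homotopy equivalence on geometric realizations (the principle already used at the end of the proof of Proposition \ref{CtoD}). So it suffices to show that for each fixed $q\geq 0$ the inclusion of $\Delta$-sets
\[
i_{\bullet,q}\colon \mathcal{D}^{\perp}_d(\mathbb{R}^N)_{\bullet,q} \hookrightarrow \mathcal{D}_d(\mathbb{R}^N)_{\bullet,q}
\]
is a weak homotopy equivalence. As in the proof of Proposition \ref{CtoD}, one first checks that both $\Delta$-sets are Kan; this follows from the $\Delta$-set version of the classification Proposition \ref{classsub}, exactly as in Theorem \ref{kan}, since pulling back along a pl retraction $\Delta^p\to\Lambda^p_j$ preserves fiberwise regularity of the values $a_i$ and, for $\mathcal{D}^{\perp}$, also preserves the cylindrical condition near the $a_i$.

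Granting this, I would run the relative-homotopy-group argument in the style of Proposition \ref{CtoD}. Given a map of pairs $g\colon(\Delta^k,\partial\Delta^k)\to\big(|\mathcal{D}_d(\mathbb{R}^N)_{\bullet,q}|,|\mathcal{D}^{\perp}_d(\mathbb{R}^N)_{\bullet,q}|\big)$, Kan-ness lets me replace it by the realization of a $\Delta$-map $h\colon(\Delta^k_\bullet,\partial\Delta^k_\bullet)\to(\mathcal{D}_d(\mathbb{R}^N)_{\bullet,q},\mathcal{D}^{\perp}_d(\mathbb{R}^N)_{\bullet,q})$, which by Remark \ref{classsubdelta} classifies a $k$-simplex $(W\subseteq\mathbb{R}\times\Delta^k\times(-1,1)^{N-1},\,a_0<\cdots<a_q)$ with each $a_i$ a fiberwise regular value of $x_1\colon W\to\mathbb{R}$ and with $W$ already cylindrical near each $a_i$ over $\partial\Delta^k$. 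I then want to produce a concordance $\widetilde{W}\in\psi_d(N,1)\big([0,1]\times\Delta^k\big)$ from $W$ to an element $W'$ which is cylindrical near every $a_i$ over all of $\Delta^k$, such that each $a_i$ remains a fiberwise regular value of $x_1\colon\widetilde{W}\to\mathbb{R}$ throughout and $\widetilde{W}$ agrees with $[0,1]\times W$ over $[0,1]\times\partial\Delta^k$. Feeding such a concordance into the $\Delta$-set classification with a suitable triangulation of $\Delta^k\times[0,1]$ yields, exactly as in Proposition \ref{CtoD}, a homotopy of maps of pairs from $g$ to a map landing in $|\mathcal{D}^{\perp}_d(\mathbb{R}^N)_{\bullet,q}|$; this shows $i_{\bullet,q}$ is a weak equivalence and, by the first paragraph, completes the proof.

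The technical heart is the construction of the straightening concordance $\widetilde{W}$. For each $i$ the projection $x_1\colon W\to\mathbb{R}$ is proper, so $x_1^{-1}(a_i)$ is compact; covering it by finitely many of the local product charts supplied by Definition \ref{fibregular} and assembling them with Proposition \ref{gluechart}, together with the fact that a proper pl submersion over the compact contractible polyhedron $\Delta^k\times[a_i-\delta_i,a_i+\delta_i]$ is a trivial bundle (the principle invoked in the Note following Definition \ref{spacemanfil}), one obtains for a small $\delta_i>0$ a trivialization of $W$ over $\Delta^k$ in the slab $\{a_i-\delta_i<x_1<a_i+\delta_i\}$ compatible with $(\pi,x_1)$ and agreeing with the product structure already present over $\partial\Delta^k$. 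Using these trivializations (the $a_i$ being disjoint, the slabs can be taken disjoint), one packages the straightening as an open pl embedding of $[0,1]\times\mathbb{R}\times\Delta^k\times\mathbb{R}^{N-1}$ into itself that commutes with the projection onto $[0,1]\times\Delta^k$, is the identity at $[0,1]$-coordinate $0$ and over $\partial\Delta^k$, fixes everything outside the slabs, and at coordinate $1$ carries $W$ onto a set which near each $a_i$ is exactly the product of $(a_i-\epsilon,a_i+\epsilon)$ with the slice $x_1^{-1}(a_i)$; pulling $[0,1]\times W$ back along it via Proposition \ref{pullemb} (cf. Proposition \ref{pullembop}) produces $\widetilde{W}$, and fixedness of the slabs keeps the $a_i$ fiberwise regular throughout.

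I expect the main obstacle to be precisely this last construction: unlike the push-to-infinity embedding used in Proposition \ref{CtoD}, here the straightening must be carried out globally over the parameter simplex $\Delta^k$ near the interior slices $x_1=a_i$, so it has to be glued from the fiberwise product data and then extended to an ambient pl embedding, all compatibly with the cylindrical structure that already exists over $\partial\Delta^k$. It is worth noting that, in contrast with $\S$6, no dimension hypothesis $N-d\geq 3$ is needed here, since only the unconditional tools (Definition \ref{fibregular}, Proposition \ref{gluechart}, triviality of pl bundles over compact contractible bases, and the pull-back constructions of $\S$3.1) enter, rather than the Isotopy Extension Theorem.
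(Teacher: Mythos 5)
Your overall architecture matches the paper's: reduce to a levelwise weak equivalence for each $i_{\bullet,q}$, use Kan-ness and the $\Delta$-set version of the classification (Remark \ref{classsubdelta}) to replace a map of pairs by a classified $k$-simplex $(W,a_0<\cdots<a_q)$, and produce a straightening concordance $\widetilde{W}$ from $W$ to an element cylindrical near every $a_i$, constant over $\partial\Delta^k$. The gap is in the construction of $\widetilde{W}$, and it invalidates the remark in your last paragraph.

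You want $\widetilde{W}$ to be the pull-back of $[0,1]\times W$ along an open pl \emph{embedding} $E$ over $[0,1]\times\Delta^k$, so that Proposition \ref{pullemb} applies. But consider what such an $E$ must do at time $1$ over an interior point $\lambda$: for $E_1^{-1}(W)_{\lambda}$ to be the cylinder $(a_i-\epsilon,a_i+\epsilon)\times W_{\lambda,a_i}$ near $a_i$, the embedding $E_{(1,\lambda)}$ of $\mathbb{R}\times(-1,1)^{N-1}$ must untwist whatever fibering $W_{\lambda}$ has near $a_i$ into a metric product in the ambient space. The tools you cite --- Definition \ref{fibregular}, Proposition \ref{gluechart}, and triviality of proper pl submersions over $\Delta^k\times[a_i-\delta,a_i+\delta]$ --- trivialize the submersion $(\pi,x_1)$ restricted to $W$, i.e.\ they give a bundle isomorphism of $W$ with a product; they do not extend this to an ambient pl homeomorphism of $\mathbb{R}\times\Delta^k\times(-1,1)^{N-1}$. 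Promoting the fiberwise trivialization to an ambient embedding is precisely an isotopy-extension problem (the slices $W_{\lambda,a_i}$ have codimension $N-d$ in $(-1,1)^{N-1}$), so this version of your argument would in fact require the very hypothesis $N-d\geq 3$ that you say is unnecessary. The alternative --- an $E$ acting only on the $x_1$-coordinate, so no untwisting is needed --- cannot be an embedding: for $E_1^{-1}(W)$ to be a literal product near $a_i$, the $x_1$-part of $E_1$ must collapse a whole interval onto the single point $a_i$, which is not injective.

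The paper resolves this by pulling back along a map that is \emph{not} an embedding. It chooses a pl map $j\colon\mathbb{R}\times[0,1]\to\mathbb{R}$ that at time $1$ collapses each interval $[a_i-\delta,a_i+\delta]$ to the single point $a_i$ (indeed $j$ carries a $2$-simplex with apex $(a_i,0)$ and base $[a_i-\delta,a_i+\delta]\times\{1\}$ to $a_i$), and sets $\widetilde{W}=J^{-1}([0,1]\times W)$ for the map $J(x,t,\lambda,y)=(j(x,t),t,\lambda,y)$. Because $j_1^{-1}(a_i)$ is the whole interval, the pulled-back set is automatically cylindrical at time $1$ with no untwisting at all. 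Since $J$ is not an embedding, Proposition \ref{pullemb} does not apply, and the paper instead verifies directly, by covering $\widetilde{W}$ by three open regions and applying Proposition \ref{pullsub} together with the fiberwise-regularity data at the $a_i$, that $\widetilde{W}\to[0,1]\times\Delta^p$ is still a codimension-$d$ pl submersion. That is what makes the proof genuinely unconditional --- the correct mechanism is a non-injective collapsing map, not an ambient straightening embedding.
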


\begin{proof}

Since  $\mathcal{D}_d^{\perp}(\mathbb{R}^N)_{\bullet,q}$ and 
$\mathcal{D}_d(\mathbb{R}^N)_{\bullet,q}$ are Kan for all $q \geq 0$, it suffices to show that the geometric realization of any map of the form 
\begin{equation} \label{morphism.DtoD}
h: (\Delta^k_{\bullet}, \partial\Delta^k_{\bullet}) \rightarrow (\mathcal{D}_d(\mathbb{R}^N)_{\bullet,q},\mathcal{D}_d^{\perp}(\mathbb{R}^N)_{\bullet,q}) 
\end{equation}
represents the trivial class in 
$\pi_k(|\mathcal{D}_d(\mathbb{R}^N)_{\bullet,q}|, |\mathcal{D}_d^{\perp}(\mathbb{R}^N)_{\bullet,q}|)$. Fix then a morphism $h$ of the form (\ref{morphism.DtoD}) and let 
$(W, a_0 < \ldots < a_q)$ be the $k$-simplex classified by $h$. Moreover, for each $j =0, \ldots, q$, let us denote the restriction of $W_{a_j}$ over $\partial\Delta^k$ by $W_{a_j, \hspace{0.03cm}\partial\Delta^k}$. Since  $h\big(  \partial\Delta^k_{\bullet}\big)$ is contained in $\mathcal{D}_d^{\perp}(\mathbb{R}^N)_{\bullet,q}$, there exists an $\epsilon > 0$ with the property that 
\begin{equation} \label{property.DtoD}
W_{(a_j - \epsilon, a_j + \epsilon), \hspace{0.03cm} \partial\Delta^k} =  (a_j - \epsilon, a_j + \epsilon)\times W_{a_j, \hspace{0.03cm}\partial\Delta^k}
\end{equation}
for all $j=0,\ldots, q$. We can also assume that the value $\epsilon$ is small enough so that all the intervals 
$(a_j - \epsilon, a_j + \epsilon)$ are mutually disjoint. Our goal in this proof is to transform $W$, via a concordance, into an element of $\psi_d(N,1)_k$ for which the property given in (\ref{property.DtoD}) holds globally over $\Delta^k$, not just $\partial \Delta^k$.  
In other words, we want to make the level sets of $x_1: W \rightarrow \mathbb{R}$ constant near each $a_j$. To do this, let us fix a value $j_0$ in $\{0,\ldots, q\}$. Since $a_{j_0}$ is a fiberwise regular value of $x_1: W \rightarrow \mathbb{R}$, there exists a value $\delta>0$ such that $W_{(a_{j_0} - \delta, a_{j_0} + \delta), \Delta^k}$ is an element of the set
$\psi_{d-1}(N-1,0)\big((a_{j_0} - \delta, a_{j_0} + \delta)\times\Delta^k\big)$ (see Remark \ref{remark.fib.reg}). By shrinking either $\epsilon$ or $\delta$, we may assume $\epsilon = \delta$. To make the level sets near $a_{j_0}$ constant, we will pull back $W_{(a_{j_0} - \epsilon, a_{j_0} + \epsilon), \Delta^k}$ along a
 PL  homotopy 
\[
J: [0,1] \times (a_{j_0} - \epsilon, a_{j_0} + \epsilon)\times\Delta^k \longrightarrow (a_{j_0} - \epsilon, a_{j_0} + \epsilon)\times\Delta^k
\]  
satisfying the following conditions: 

\begin{itemize}

\item[(i)] $J_0$ is the identity map on 
$(a_{j_0} - \epsilon, a_{j_0} + \epsilon)\times\Delta^k$, and $J$ commutes with the projection onto $\Delta^k$.

\item[(ii)] $J$ is the constant homotopy outside of $[a_{j_0} - \frac{\epsilon}{2}, a_{j_0} + \frac{\epsilon}{2}]\times\Delta^k$.

\item[(iii)] If $(t, x) \in  [0,1] \times (a_{j_0} - \epsilon, a_{j_0} + \epsilon)$ is a point in the triangle spanned
by $(0,a_{j_0})$, $(1, a_{j_0} + \frac{\epsilon}{4})$ and $(1, a_{j_0} - \frac{\epsilon}{4})$, then $J$ maps 
$(t,x, \lambda)$ to $(a_{j_0}, \lambda)$ for any point $\lambda \in \Delta^k$. 
\end{itemize}

Fix then such a PL homotopy $J$ and denote the pull-back $J^*W_{(a_{j_0} - \epsilon, a_{j_0} + \epsilon), \Delta^k}$ by $\widetilde{W}$.  Since $\widetilde{W}$ is an element of the set
$\psi_{d-1}(N-1, 0)\big( [0,1] \times (a_{j_0} - \epsilon, a_{j_0} + \epsilon)\times\Delta^k \big)$,
the projection $\widetilde{W} \rightarrow [0,1] \times (a_{j_0} - \epsilon, a_{j_0} + \epsilon)\times\Delta^k$ is a PL submersion of codimension $d-1$. Thus, the projection $\pi: \widetilde{W} \rightarrow [0,1]\times \Delta^k$ obtained by forgetting the factor $(a_{j_0} - \epsilon, a_{j_0} + \epsilon)$
is a PL submersion of codimension $d$, and we can therefore view $\widetilde{W}$ as an element of the set $\Psi_d\big( (a_{j_0} - \epsilon, a_{j_0} + \epsilon) \times \mathbb{R}^{N-1}\big)\big( [0,1] \times \Delta^k \big)$.
In this last statement, $\Psi_d\big( (a_{j_0} - \epsilon, a_{j_0} + \epsilon) \times \mathbb{R}^{N-1}\big)$ is the PL set 
obtained by applying Definition \ref{spacemangen} to the open subset $U = (a_{j_0} - \epsilon, a_{j_0} + \epsilon) \times \mathbb{R}^{N-1}$ of $\mathbb{R}^N$. 

Let $[0,1]\times W$ denote again the constant concordance from $W$ to itself.
Since the functor $\Psi_d: \mathcal{O}(\mathbb{R}^N)^{op} \rightarrow \mathbf{PLsets}$ defined by 
$U \mapsto \Psi_d(U)$ is a sheaf of PL sets (see Remark \ref{spacerem2}), we can extend 
$\widetilde{W}$  to a unique element $\widehat{W}$ of 
$\Psi_d(\mathbb{R}^N)\big([0,1]\times \Delta^k \big)$ satisfying the following: 
\begin{itemize}

\item[(i$^*$)] $\widehat{W}$ agrees with $\widetilde{W}$ when we restrict the first coordinate of the background space $\mathbb{R}^N$ to $(a_{j_0} - \epsilon, a_{j_0} + \epsilon)$. 

\item[(ii$^*$)]  $\widehat{W}$ agrees with the constant concordance $[0,1]\times W$ when we restrict the first coordinate of the background space $\mathbb{R}^N$ to $\mathbb{R} - [a_{j_0} - \frac{\epsilon}{2}, a_{j_0} + \frac{\epsilon}{2}]$. 

\end{itemize}
Moreover, it is evident that all the fibers of the projection $\widehat{W} \rightarrow [0,1]\times \Delta^k$ are contained in 
$\mathbb{R} \times (-1,1)^{N-1}$, so $\widehat{W}$ is in fact an element of $\psi_d(N,1)([0,1]\times \Delta^k)$. Now, consider the projection $x_1:\widehat{W} \rightarrow \mathbb{R}$  onto the first component of the background space $\mathbb{R}\times (-1,1)^{N-1}$. Property (i$^*$) above says that the pre-image 
$x^{-1}((a_{j_0} - \epsilon, a_{j_0} + \epsilon))$ is equal to $\widetilde{W}$. Then, since 
$\widetilde{W}$ is in $\psi_{d-1}(N-1, 0)\big( [0,1] \times (a_{j_0} - \epsilon, a_{j_0} + \epsilon)\times\Delta^k \big)$,
the equivalence stated at the end of Remark  \ref{remark.fib.reg} implies that 
$a_{j_0}$ is a fiberwise regular value of the map $x_1:\widehat{W} \rightarrow \mathbb{R}$.
On the other hand, since $\widehat{W}$ coincides with $[0,1]\times W$ in 
$x_1^{-1}(\mathbb{R} - [a_{j_0} - \frac{\epsilon}{2}, a_{j_0} + \frac{\epsilon}{2}])$, the remaining values from the set $\{a_0, \ldots, a_q\}$ are also fiberwise regular values for $x_1: \widehat{W} \rightarrow \mathbb{R}$. 
Furthermore, property (i) of the map $J$ implies that the concordance $\widehat{W}$ starts at $W$, whereas property (iii) implies that the other end of the concordance $\widehat{W}$, which we shall denote by $W'$, satisfies
\begin{equation} \label{property.DtoD2}
W'_{(a_{j_0} - \frac{\epsilon}{4}, a_{j_0} +  \frac{\epsilon}{4}), \hspace{0.03cm} \Delta^k} =  
(a_{j_0} - \frac{\epsilon}{4}, a_{j_0} +  \frac{\epsilon}{4})\times W_{a_{j_0}}. 
\end{equation}
Then, any homotopy $H: [0,1] \times \Delta^k  \rightarrow |\psi_d(N,1)_{\bullet}|$ induced by $\widehat{W}$ admits a lift $\widetilde{H}:  [0,1] \times \Delta^k \rightarrow |\mathcal{D}_d(\mathbb{R}^N)_{\bullet,q}|$ such that
$\widetilde{H}_0 = |h|$ and $\widetilde{H}_1$ is equal to the geometric realization of the map 
$\Delta^k_{\bullet} \rightarrow \mathcal{D}_d(\mathbb{R}^N)_{\bullet,q}$ that classifies the tuple 
$(W', a_0 < \ldots < a_q)$. 
Moreover, note that the construction of $\widehat{W}$ preserves the condition given in (\ref{property.DtoD}) for all 
$j = 0, \ldots, q$. Thus, for all times $t \in [0,1]$, the homotopy $\widetilde{H}$ maps $\partial\Delta^k$ to   
$|\mathcal{D}_d^{\perp}(\mathbb{R}^N)_{\bullet,q}|$. By iterating this argument for each fiberwise regular value 
$a_0, \ldots, a_q$, we obtain a homotopy 
$F_t: (\Delta^k, \partial \Delta^k) \rightarrow (|\mathcal{D}_d(\mathbb{R}^N)_{\bullet,q}|, |\mathcal{D}_d^{\perp}(\mathbb{R}^N)_{\bullet,q}|)$ such that $F_0= |h|$ and $F_1(\Delta^k) \subseteq  |\mathcal{D}_d^{\perp}(\mathbb{R}^N)_{\bullet,q}|$. In other words,  the map $|h|$ represents the trivial class in 
$\pi_k(|\mathcal{D}_d(\mathbb{R}^N)_{\bullet,q}|, |\mathcal{D}_d^{\perp}(\mathbb{R}^N)_{\bullet,q}|)$.
\end{proof}

Finally, we compare the spaces $\left\|\mathcal{D}(\mathbb{R}^N)_{\bullet,\bullet}\right\|$
and $\left|\widetilde{\psi}_d^R(N,1)_{\bullet}\right|$.

\theoremstyle{plain} \newtheorem{DtoPsi}[DesCob]{Proposition}

\begin{DtoPsi} \label{DtoPsi}
There is a weak equivalence 
$\left\|\mathcal{D}_d(\mathbb{R}^N)_{\bullet,\bullet}\right\| \stackrel{\simeq}{\rightarrow} \left|\widetilde{\psi}_d^R(N,1)_{\bullet}\right|$.
\end{DtoPsi}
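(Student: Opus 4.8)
The plan is to realise the bi-$\Delta$-set $\mathcal{D}_d(\mathbb{R}^N)_{\bullet,\bullet}$ one variable at a time and reduce the statement to an elementary fact about nerves of totally ordered sets. The comparison map will be the geometric realisation of the morphism of bi-$\Delta$-sets
\[
\mathcal{D}_d(\mathbb{R}^N)_{\bullet,\bullet}\longrightarrow \widetilde{\psi}^R_d(N,1)_{\bullet}
\]
that sends a $(p,q)$-simplex $(W, a_0<\cdots<a_q)$ to $W$, where the target is regarded as a bi-$\Delta$-set which is constant in the $q$-direction. This is well defined because condition $1)$ of Definition \ref{plcob} asserts in particular that $a_0$ is a fiberwise regular value of $x_1\colon W\to\mathbb{R}$, so that $W\in\widetilde{\psi}^R_d(N,1)_p$ by Definition \ref{psireg}; and it commutes with all face maps in both directions, since by Definition \ref{cobPOset} the $q$-direction face maps leave $W$ unchanged, while in the $p$-direction the pull-back of a fiberwise regular value along a face inclusion is again a fiberwise regular value.

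First I would invoke the realisation lemma for $\Delta$-spaces: a levelwise weak homotopy equivalence of semi-simplicial spaces induces a weak homotopy equivalence on geometric realisations, because the realisation is assembled from its skeletal filtration by iterated pushouts along the cofibrations $\partial\Delta^n\times X_n\hookrightarrow\Delta^n\times X_n$, and is therefore a homotopy colimit; here all the spaces in sight are CW complexes so no further hypothesis is needed. Realising the bi-$\Delta$-set in the $q$-direction first, it then suffices to prove that for each fixed $p\ge 0$ the induced map
\[
\left|\mathcal{D}_d(\mathbb{R}^N)_{p,\bullet}\right|\longrightarrow \widetilde{\psi}^R_d(N,1)_p
\]
onto the discrete set of $p$-simplices is a weak homotopy equivalence.

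Now fix $p$. The key observation is that, since the face maps in the $q$-direction do not change $W$, the $\Delta$-set $\mathcal{D}_d(\mathbb{R}^N)_{p,\bullet}$ splits as a disjoint union
\[
\mathcal{D}_d(\mathbb{R}^N)_{p,\bullet}\;=\;\coprod_{W\in\widetilde{\psi}^R_d(N,1)_p} Z(W)_{\bullet},
\]
where $Z(W)_{\bullet}$ is the semi-simplicial nerve of the set $Z(W)\subseteq\mathbb{R}$ of fiberwise regular values of $x_1\colon W\to\mathbb{R}$ (its $q$-simplices are the strictly increasing $(q+1)$-tuples of elements of $Z(W)$, with face maps deleting entries), totally ordered by the order of $\mathbb{R}$; under the comparison map each summand $|Z(W)_{\bullet}|$ is sent to the point $W$. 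Thus $|Z(W)_{\bullet}|$ is the classifying space of the poset $Z(W)$, and it remains to show that it is weakly contractible for every $W\in\widetilde{\psi}^R_d(N,1)_p$. By Definition \ref{psireg} the set $Z(W)$ is nonempty; writing it as the filtered union of its finite subsets $Z'$, the realisation $|Z'_{\bullet}|$ of the nerve of a finite totally ordered set $Z'$ is a standard simplex, hence contractible, and the transition maps induced by $Z'\subseteq Z''$ are inclusions of faces, so $|Z(W)_{\bullet}|$ is a filtered colimit of contractible polyhedra along cofibrations and is therefore weakly contractible. This shows that each levelwise map is a weak homotopy equivalence, and the proposition follows.

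I expect the only real obstacle to be organisational rather than geometric: one must be careful to state the realisation lemma for bi-$\Delta$-sets correctly (realising in the right order, and noting that the relevant semi-simplicial spaces are automatically ``good'' because they are CW complexes), and to check that the disjoint-union decomposition $\mathcal{D}_d(\mathbb{R}^N)_{p,\bullet}=\coprod_W Z(W)_{\bullet}$ is compatible with the $q$-direction $\Delta$-structure, which is immediate from Definition \ref{cobPOset}. All of the piecewise-linear content of the statement is concentrated in the single point that $Z(W)\neq\emptyset$ whenever $W\in\widetilde{\psi}^R_d(N,1)_p$; once this is granted, the argument is purely formal.
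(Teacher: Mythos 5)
Your argument is correct and matches the paper's proof in all essentials: both realize in the $q$-direction to obtain a $\Delta$-space whose $p$-th level decomposes as a disjoint union over $W\in\widetilde{\psi}^R_d(N,1)_p$ of nerves of the (nonempty, totally ordered, hence contractible) poset of fiberwise regular values of $x_1\colon W\to\mathbb{R}$, and then apply the levelwise-equivalence criterion for realizations of semi-simplicial spaces. The only difference is that you spell out the contractibility of $B(\mathbb{R}_W,\leq)$ via a filtered-colimit argument, where the paper simply asserts it.
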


\begin{proof}
For each $p$-simplex $W$ of $\widetilde{\psi}_d^R(N,1)_{\bullet}$,
let $(\mathbb{R}_W,\leq)$ be the sub-poset of
$(\mathbb{R}, \leq)$ consisting of all values 
$a \in \mathbb{R}$ which are fiberwise regular values of the
projection $x_1: W \rightarrow \mathbb{R}$.  
If we apply the geometric realization
functor along the second simplicial direction of
$\mathcal{D}_d(\mathbb{R}^N)_{\bullet,\bullet}$, we obtain
a semi-simplicial space $G_{\bullet}$ whose space of $p$-simplices
is equal to
\[
G_p = \coprod_{W \in \psi_d^R(N,1)_p}\{W\}\times B(\mathbb{R}_W, \leq). 
\]
But since each $B(\mathbb{R}_W,\leq)$
is contractible, it follows that each component of the map of 
semi-simplicial spaces $g_{\bullet}: G_{\bullet} \rightarrow \widetilde{\psi}_d^R(N,1)_{\bullet}$
defined by $(W,\lambda) \mapsto W$ is a weak homotopy equivalence. Therefore,
the  induced map between geometric realizations
\[
\left|g_{\bullet}\right|: \left\|\mathcal{D}_d(\mathbb{R}^N)_{\bullet}\right\| \rightarrow \left|\widetilde{\psi}_d^R(N,1)_{\bullet}\right|
\]
is also a weak homotopy equivalence. 
\end{proof}

By combining the previous three propositions, and by letting $N \rightarrow \infty$ in (\ref{chain.equiv.cob}), we obtain 
the following. 

\theoremstyle{plain}  \newtheorem{preDesCob}[DesCob]{Proposition}

\begin{preDesCob} \label{preDesCob}

There is a weak homotopy equivalence
$B\mathsf{Cob}_d^{\mathrm{PL}} \simeq |\psi_d^R(\infty,1)_{\bullet}|$.

\end{preDesCob}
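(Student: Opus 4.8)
The plan is to obtain the proposition simply by passing to the colimit $N \to \infty$ in the chain of weak equivalences (\ref{eqzig}) already established in Propositions \ref{CtoD}, \ref{DtoD} and \ref{DtoPsi}. First I would recall that, by Definition \ref{classspace}, $B\mathcal{C}_d^{PL} = \left\|N_{\bullet}\mathcal{C}_{d\bullet}\right\|$ and $B\mathcal{C}_d^{PL}(\mathbb{R}^N) = \left\|N_{\bullet}\mathcal{C}_d(\mathbb{R}^N)_{\bullet}\right\|$. Since $N_k\mathcal{C}_{d\bullet} = \colim_{N} N_k\mathcal{C}_d(\mathbb{R}^N)_{\bullet}$ for every $k \geq 0$ (established in $\S$5.3) and the geometric realization of bi-$\Delta$-sets, being a left adjoint, commutes with sequential colimits, we get a natural homeomorphism $\left\|N_{\bullet}\mathcal{C}_{d\bullet}\right\| \cong \colim_{N}\left\|N_{\bullet}\mathcal{C}_d(\mathbb{R}^N)_{\bullet}\right\|$. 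Likewise, because the property of having a fiberwise regular value (Definition \ref{fibregular}) is intrinsic to a simplex $W$ and does not depend on the ambient Euclidean space into which $W$ is embedded, we have $\psi_d^R(\infty,1)_{\bullet} = \colim_{N}\psi_d^R(N,1)_{\bullet}$, hence $\left|\psi_d^R(\infty,1)_{\bullet}\right| \cong \colim_{N}\left|\psi_d^R(N,1)_{\bullet}\right|$; the same holds for $\widetilde{\psi}_d^R(\cdot,1)_{\bullet}$ and for the bi-$\Delta$-sets $\mathcal{D}_d(\mathbb{R}^{\cdot})_{\bullet,\bullet}$ and $\mathcal{D}^{\perp}_d(\mathbb{R}^{\cdot})_{\bullet,\bullet}$, whose colimits $\mathcal{D}_{d\bullet,\bullet}$ and $\mathcal{D}^{\perp}_{d\bullet,\bullet}$ were already introduced above.

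Next I would observe that the four maps appearing in (\ref{eqzig}) are all natural with respect to the stabilization inclusions $\mathbb{R}^N \hookrightarrow \mathbb{R}^{N+1}$: the morphism $f_{\bullet,\bullet}$ of Proposition \ref{CtoD}, the inclusion $i_{\bullet,\bullet}$ of Proposition \ref{DtoD}, the map $g_{\bullet}$ of Proposition \ref{DtoPsi}, and the canonical comparison map from the realization of a $\Delta$-set to the realization of the simplicial set it underlies are each given by formulas making no reference to the ambient dimension $N$, so they commute with the inclusions $i_{N,k}$. Consequently they assemble into morphisms of sequential diagrams indexed by $N$, and applying $\colim_N$ produces a zig-zag
\[
\left\|N_{\bullet}\mathcal{C}_{d\bullet}\right\| \leftarrow
\left\|\mathcal{D}^{\perp}_{d\bullet,\bullet}\right\| \rightarrow
\left\|\mathcal{D}_{d\bullet,\bullet}\right\| \rightarrow
\left|\widetilde{\psi}_d^R(\infty,1)_{\bullet}\right| \rightarrow \left|\psi_d^R(\infty,1)_{\bullet}\right|
\]
connecting $B\mathcal{C}_d^{PL}$ to $\left|\psi_d^R(\infty,1)_{\bullet}\right|$.

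It then remains to check that each arrow in this colimit zig-zag is a weak homotopy equivalence. Here I would invoke the standard fact that a sequential colimit of weak homotopy equivalences between diagrams of subcomplex inclusions of CW complexes is again a weak homotopy equivalence: by compactness any map of a sphere or a disk into such a colimit factors through a finite stage, so homotopy groups commute with the colimit. All the spaces occurring in (\ref{eqzig}) are geometric realizations of ($\Delta$- or bi-$\Delta$-)sets, hence CW complexes, and the stabilization maps are inclusions of subcomplexes, so the levelwise weak equivalences of Propositions \ref{CtoD}, \ref{DtoD}, \ref{DtoPsi} and of the $\Delta$-set comparison pass to the colimit. This produces a zig-zag of weak equivalences between the CW complexes $B\mathcal{C}_d^{PL}$ and $\left|\psi_d^R(\infty,1)_{\bullet}\right|$; by Whitehead's theorem each arrow may be inverted up to homotopy, and composing yields the desired homotopy equivalence $B\mathcal{C}_d^{PL} \stackrel{\simeq}{\longrightarrow} \left|\psi_d^R(\infty,1)_{\bullet}\right|$. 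The one point requiring genuine care is the naturality verification in the second step, since it is what makes (\ref{eqzig}) into a diagram of sequential systems; everything else is formal once Propositions \ref{CtoD}–\ref{DtoPsi} are in hand.
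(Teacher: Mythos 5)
Your proposal is correct and follows the same route as the paper: take the colimit of the chain of weak equivalences (\ref{eqzig}) from Propositions \ref{CtoD}--\ref{DtoPsi} as $N\to\infty$, observe that all the spaces involved are CW complexes, and invoke Whitehead's theorem to invert the zig-zag. The paper states this compactly and you have simply filled in the routine naturality and colimit-commutation checks that the author leaves implicit.
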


In the next section, we will prove that the inclusion
$\psi_d^R(N,1)_{\bullet} \hookrightarrow \psi_d(N,1)_{\bullet}$
is a weak equivalence when $N-d \geq 3$, thus completing the proof of Theorem \ref{DesCob}.

\section{The inclusion $\psi_d^R(N,1)_{\bullet} \hookrightarrow \psi_d(N,1)_{\bullet}$}   \label{section5}  

We will devote this entire section to proving the following result. 

\theoremstyle{plain} \newtheorem{longman}{Theorem}[section]

\begin{longman} \label{longman}
If $N-d \geq 3$, then the inclusion $\psi_d^R(N,1)_{\bullet} \hookrightarrow \psi_d(N,1)_{\bullet}$
is a weak homotopy equivalence.
\end{longman}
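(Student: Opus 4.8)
The plan is to follow the template of the equivalences established in \S5 and to show that the relative homotopy groups $\pi_*\big(|\psi_d(N,1)_{\bullet}|,|\psi_d^R(N,1)_{\bullet}|\big)$ vanish. First I would reduce this to a purely geometric statement. Note that $\psi_d^R(N,1)$ is a subfunctor of $\psi_d(N,1):\mathbf{PL}^{op}\to\mathbf{Sets}$ which is closed under pull-backs: the condition of Definition \ref{fibregular}, phrased for an arbitrary background polyhedron, is preserved under the pull-back construction of Proposition \ref{pullsub}, by the same reasoning that proves Lemma \ref{hompullback}. Hence the subdivision map $\rho$ of \S4 restricts to $\psi_d^R(N,1)_{\bullet}$ and, by Theorem \ref{subhopid}, is homotopic there to the identity as well. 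Using Proposition \ref{subsimphop} together with the naturality of $\rho$, a map of pairs $(\Delta^k,\partial\Delta^k)\to(|\psi_d(N,1)_{\bullet}|,|\psi_d^R(N,1)_{\bullet}|)$ is therefore homotopic, through maps of pairs, to one of the form $|g_{\bullet}|$ where $g_{\bullet}$ is classified, in the sense of Remark \ref{classsubdelta}, by an element $W\in\psi_d(N,1)\big(|K|\big)$ over a pair $(K,L)$ of a finite ordered simplicial complex and a subcomplex with $(|K|,|L|)\cong(\Delta^k,\partial\Delta^k)$, such that the restriction of $W$ over each simplex of $L$ admits a fiberwise regular value. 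It then suffices to produce a concordance $\widetilde{W}\in\psi_d(N,1)\big([0,1]\times|K|\big)$ from $W$ to an element $W'$ for which $x_1:W'\to\mathbb{R}$ has a genuine fiberwise regular value, with the restriction of $\widetilde{W}$ over $[0,1]\times|L|$ remaining fiberwise regular on each simplex; triangulating $[0,1]\times|K|$ compatibly and invoking Remark \ref{classsubdelta} turns $\widetilde{W}$ into the required homotopy of pairs.

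\textbf{The geometric core.} The concordance $\widetilde{W}$ would be built by induction over the skeleta $K^i$ of $K$ relative to $L$, extending it over the newly added simplices $\sigma$ at each stage. Over a single simplex $\sigma$, which is a cone on a vertex $v$, the projection $\pi_\sigma:W_\sigma\to\sigma$ is a piecewise linear submersion of codimension $d$, and since $x_1:W\to\mathbb{R}$ is proper the slab $W_{\sigma,[a_0-c,a_0+c]}$ over a compact interval is compact. Choosing $a_0$ to be a regular value of $x_1|_{W_v}$ — which exists by Theorem \ref{williamson}, applied after triangulating a compact $x_1$-slab of $W_v$ so that $x_1$ becomes simplicial — the submersion charts of Definition \ref{sbmn} give, over a neighbourhood of $v$, a local product structure for $(\pi,x_1)$ along $x_1^{-1}(a_0)_v$. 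The heart of the argument is to propagate this product structure over all of $\sigma$, at the cost of an ambient piecewise linear isotopy of $\{\lambda\}\times\mathbb{R}\times(-1,1)^{N-1}$ (varying with $\lambda\in\sigma$, supported near $x_1=a_0$), and this is exactly where Hudson's Isotopy Extension Theorem enters: the fibers $W_\lambda$ have codimension $N-d$ in $\mathbb{R}\times(-1,1)^{N-1}$, so the theorem is available precisely when $N-d\geq 3$. The trace of the resulting ambient isotopy is a concordance over $[0,1]\times\sigma$ carrying $W_\sigma$ to an element for which $a_0$ is a fiberwise regular value over $\sigma$ (by Proposition \ref{manneb} the corresponding slice is then a $(d-1)$-dimensional piecewise linear manifold).

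\textbf{Gluing and the main obstacle.} To globalize this, the local concordances must be produced compatibly: when passing from $K^{i-1}$ to $K^{i}$, the product structure and concordance already fixed over $\partial\sigma$ and over $L$ must be respected, which calls for a relative form of the isotopy extension step together with the chart-merging Proposition \ref{gluechart} — used over shrinking neighbourhoods of $v$ in the spirit of the inductive construction in the proof of Lemma \ref{weakglue} — so that the pieces assemble, by Lemma \ref{gluesub}, into a single concordance $\widetilde{W}$ over $[0,1]\times|K|$ agreeing over $[0,1]\times|L|$ with the data carried by $L$; one also arranges every ambient isotopy to be the identity outside a compact set, so that $\widetilde{W}$ is a bona fide element of $\psi_d(N,1)\big([0,1]\times|K|\big)$. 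I expect the relative, inductive gluing to be the main difficulty. In the smooth setting the analogous statement — the ``Fact'' of the introduction — is local and automatic, whereas here the fiberwise product structure must be manufactured by an honest ambient isotopy and then the isotopies built simplex by simplex must be shown to agree on overlaps and not to destroy the regularity already achieved on lower skeleta; this requires careful bookkeeping of submersion charts, repeated use of Proposition \ref{gluechart}, a relative version of Hudson's theorem, and some care with the non-compactness of the fibers, handled via properness of $x_1$. The hypothesis $N-d\geq 3$ is used only through the invocation of isotopy extension. Once $\widetilde{W}$ is available, translating it back into a homotopy of maps of pairs into $|\psi_d(N,1)_{\bullet}|$ is routine, using Remark \ref{classsubdelta} and the compatibility of $\rho$ and its homotopy with the relevant subobjects recorded in Note \ref{finalnotesub}.
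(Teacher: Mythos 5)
Your framework is correct and your high-level ingredients — reducing to $\Delta$-sets, using the subdivision map $\rho$ and Proposition~\ref{subsimphop} to replace $f$ by a classified map over a finite ordered complex $(K,L)$, Theorem~\ref{williamson} to produce a regular value at a vertex, and Hudson's Isotopy Extension Theorem to spread a local product structure (hence the hypothesis $N-d\ge 3$) — all appear in the paper's argument. But the reduction you aim for in the sentence beginning ``It then suffices to produce a concordance $\widetilde{W}$ \dots for which $x_1:W'\to\mathbb{R}$ has a genuine fiberwise regular value'' is strictly stronger than what the paper proves, and the obstacle you honestly flag — making the simplex-by-simplex isotopies agree on overlaps and keep earlier skeleta regular — is not resolved; the paper is engineered precisely to avoid it. Indeed, a vertex-by-vertex choice of regular value via Theorem~\ref{williamson} produces \emph{different} $a_v$ at different vertices, and there is no reason a single $a_0$ should become regular over all of $|K|$ after a concordance constant over $|L|$; forcing that by skeletal induction is where the unresolved gluing lives.

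The paper's Proposition~\ref{makereg} asks for much less: a concordance from $W$ to $W'$ together with a finite open cover $U_1,\dots,U_q$ of $P$ such that, for each $j$, the projection $x_1:W'_{U_j}\to\mathbb{R}$ has \emph{some} fiberwise regular value $a_j$, with the $a_j$'s allowed to differ. Two devices then make this achievable without any compatibility condition between the local modifications. First, Lemma~\ref{coverk} produces a cover indexed by the simplices of a triangulation in which any two pieces $U'_{\sigma_1},U'_{\sigma_2}$ with $\dim\sigma_1=\dim\sigma_2$ are disjoint. Second, Lemmas~\ref{mainlemma} and~\ref{mainlemma2} choose the regular value $a_\sigma$ in the window $(\beta+\dim\sigma+\tfrac14,\ \beta+\dim\sigma+\tfrac34)$ and arrange the ambient automorphism $\widetilde{F}^{\sigma}$ to be supported in $U'_{\sigma}\times[\beta+\dim\sigma+\tfrac14,\ \beta+\dim\sigma+\tfrac34]\times(-1,1)^{N-1}$. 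The upshot is that all the $\widetilde{F}^{\sigma}$ have pairwise disjoint supports — disjoint in $M$ for equidimensional $\sigma$, disjoint in the $x_1$-coordinate for $\sigma$ of different dimensions — so they commute and can simply be composed into a single concordance $\widetilde{W}$ with no gluing argument whatsoever. Finally, the passage from ``regular on a cover'' to ``regular on each simplex,'' which is what membership in $\psi^R_d(N,1)_{\bullet}$ requires, is handled not during the construction of $\widetilde{W}$ but afterward: by the Lebesgue Number Lemma one picks $k$ so that each simplex of $\mathrm{sd}^kK$ lies in some $U_j$, and then $\rho^k\circ|g'|$ lands in $|\psi^R_d(N,1)_{\bullet}|$ while remaining homotopic (as a map of pairs, by Remark~\ref{recsub}) to $|g'|$. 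You already have $\rho$ in hand; the missing idea is to use it here, at the end, as a substitute for a global regular value, rather than trying to manufacture one by a hard relative isotopy-gluing argument over the skeleta.
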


As we mentioned in Remark \ref{remark.psireg}, instead of proving directly that the inclusion 
$\psi_d^R(N,1)_{\bullet} \hookrightarrow \psi_d(N,1)_{\bullet}$ 
is a weak homotopy equivalence,
we will show instead that the corresponding morphism
of semi-simplicial sets 
\[
\widetilde{\psi}_d^R(N,1)_{\bullet} \hookrightarrow \widetilde{\psi}_d(N,1)_{\bullet}
\]
is a weak equivalence. This is sufficient for proving Theorem \ref{longman} because 
the natural quotient maps
$|\widetilde{\psi}_d^R(N,1)_{\bullet}| \stackrel{\simeq}{\rightarrow} |\psi_d^R(N,1)_{\bullet}|  $
and $|\widetilde{\psi}_d(N,1)_{\bullet}| \stackrel{\simeq}{\rightarrow} |\psi_d(N,1)_{\bullet}|$
make the diagram 
\[
\xymatrix{
|\widetilde{\psi}_d^R(N,1)_{\bullet}| \ar@{^{(}->}[r] \ar[d]^{\simeq} & |\widetilde{\psi}_d(N,1)_{\bullet}| \ar[d]^{\simeq} \\
|\psi_d^R(N,1)_{\bullet}| \ar@{^{(}->}[r] & |\psi_d(N,1)_{\bullet}| }
\]
commutative. As also explained in Remark \ref{remark.psireg},
the reason we will prove Theorem \ref{longman} 
for semi-simplicial sets is that 
our proof will make use of Proposition \ref{subsimphop}, which is a result that only applies to 
semi-simplicial sets. Throughout this section, 
as we did in \S \ref{secsubmap},   we will denote
the semi-simplicial sets $\widetilde{\psi}^R_d(N,1)_{\bullet}$ and
$\widetilde{\psi}_d(N,1)_{\bullet}$  simply
by $\psi_d^R(N,1)_{\bullet}$ and $\psi_d(N,1)_{\bullet}$ respectively.

Our essential tool to prove Theorem \ref{longman} is the following proposition. 

\theoremstyle{plain} \newtheorem{makereg}[longman]{Proposition}

\begin{makereg} \label{makereg}

Suppose that $N-d\geq 3$. Also, fix a compact PL space $P$ and let $i_0, i_1: P \hookrightarrow [0,1] \times P$ be the inclusions defined by 
$i_j(x) = (j,x)$ for $j=0,1$. Given any $W$ in $\psi_d(N,1)(P)$ and any real value $\beta$, we can find a concordance $\widetilde{W} \in \psi_d(N,1)([0,1] \times P)$ with the following properties: 

\begin{itemize}

\item[(i)] $i^*_0\widetilde{W} = W$.

\item[(ii)] $\widetilde{W}$ agrees with the constant concordance $[0,1] \times W$ when we restrict the background space to $(-\infty, \beta) \times (-1,1)^{N-1}$. 

\item[(iii)] For the element $W'= i_1^*\widetilde{W}$, there exists a finite open cover $\{ U_j\}_{j \in \Lambda}$ of $P$ such that, for each $j\in \Lambda$, the projection $x_1: W'_{U_j} \rightarrow \mathbb{R}$ 
onto the second component of $U_j \times \mathbb{R} \times (-1,1)^{N-1}$
has a fiberwise regular value $a_j > \beta$. 

\end{itemize}

\end{makereg}

\begin{proof}[\textbf{Proof of Theorem \ref{longman}}]

At this point, it is convenient to give the 
proof of Theorem \ref{longman} assuming the result given in 
Proposition \ref{makereg}. Consider then an arbitrary map of pairs 
\[
f:(\Delta^p, \partial\Delta^p) \rightarrow \big( \left|\psi_d(N,1)_{\bullet}\right|, \left|\psi^R_d(N,1)_{\bullet}\right| \big).
\] 
We wish to show that $f$ represents the trivial class in 
$\pi_p\big(|\psi_d(N,1)_{\bullet}|,$ $ |\psi_d^R(N,1)_{\bullet}| \big)$.
First, it is straightforward to verify that the space $|\psi_d^R(N,1)_{\bullet}|$ is invariant  both under the subdivision map 
$\rho$ of 
$\psi_d(N,1)_{\bullet}$ and the homotopy $\mathcal{H}$ between $\rho$ and the identity map 
$\mathrm{Id}_{|\psi_d(N,1)_{\bullet}|}$ that we constructed in the proof of Proposition \ref{subhopid}
(we proved Proposition \ref{subhopid} for arbitrary PL sets; in particular, it holds for the PL set $\psi_d(N,1)$).  
Then, Proposition \ref{subsimphop} implies that $f$ is homotopic, as a map of pairs $(\Delta^p, \partial \Delta^p) \rightarrow \big( \left|\psi_d(N,1)_{\bullet}\right|, \left|\psi^R_d(N,1)_{\bullet}\right|\big)$, to a composition of the form
\begin{equation} \label{firstsimp}
(\Delta^p,\partial\Delta^p) \stackrel{f'}{\longrightarrow} (|K_{\bullet}|,|K'_{\bullet}|) \stackrel{\left|g\right|}{\longrightarrow} \big(\left|\psi_d(N,1)_{\bullet}\right|, \left|\psi^R_d(N,1)_{\bullet}\right|\big),
\end{equation}
where $(K_{\bullet},K'_{\bullet})$ is a pair of   
semi-simplicial sets induced by a pair of finite ordered
simplicial complexes $(K,K')$.   Moreover, 
we may assume that $(K, K')$ is a pair of simplicial complexes 
in some Euclidean space $\mathbb{R}^m$. As we did in Section \S \ref{section2}, we will denote the union of the simplices of $K$ (resp. $K'$) by $|K|$ (resp. $|K'|$).  Now, let $W \in \psi_d(N,1)(|K|)$ be the element classified 
(relative to the triangulation $(K, \mathrm{Id}_{|K|})$ of $|K|$) by 
the morphism $g$ that appears in (\ref{firstsimp}).  If $\sigma$ is any simplex of $K'$, then the condition $g(K'_{\bullet})\subseteq \psi^R_d(N,1)_{\bullet}$ implies that the restriction of $x_1: W \rightarrow \mathbb{R}$ on $W_{\sigma}$ has a fiberwise regular value $a_{\sigma}$. Pick such a fiberwise regular value 
$a_{\sigma}$  for each simplex $\sigma \in K'$, and fix once and for all a real number $\beta$
grater than $\mathrm{max}\{ a_{\sigma}: \text{ }\sigma \in K' \}$. 

By applying Proposition \ref{makereg} to $W \in \psi_d(N,1)(|K|)$ and the value $\beta$ we fixed above, we can produce a concordance $\widetilde{W}$ from $W$ to an element $W' \in \psi_d(N,1)(|K|)$ for which there is an open cover $U_1, \ldots, U_p$ of $|K|$ and real values $a_1, \ldots, a_p$ in $(\beta,\infty)$ with the property that 
$a_j$ is a fiberwise regular value of $x_1: W'_{U_j} \rightarrow \mathbb{R}$ for $j=1, \ldots, p$.  
Then, by the version of Proposition \ref{concord.homotopy} for semi-simplicial sets 
(see Remark \ref{semi.class.version}), $\widetilde{W}$ will induce a homotopy 
$\widetilde{H}: [0,1]\times |K_{\bullet}| \rightarrow |\psi_d(N,1)_{\bullet}|$
from $|g|$ to the geometric realization of the morphism $g': K_{\bullet} \rightarrow \psi_d(N,1)_{\bullet}$ which classifies the element $W'$ relative to the triangulation $(K, \mathrm{Id}_{|K|})$.
Moreover, according to Proposition \ref{makereg}, $\widetilde{W}$ agrees with the constant concordance $[0,1]\times W$ when we restrict the background space to $(-\infty, \beta) \times \mathbb{R}^{N-1}$. Thus, the homotopy $\widetilde{H}$ maps $[0,1]\times |K'_{\bullet}|$ to  $|\psi_d^R(N,1)_{\bullet}|$.   
It follows that the composition $\widetilde{H}_t\circ f'$ is a homotopy from $|g|\circ f'$ to $|g'|\circ f'$ which maps $\partial \Delta^p$ to $|\psi_d^R(N,1)_{\bullet}|$ for all times $t \in [0,1]$.  

Now, by the Lebesgue Number Lemma, we can find a large enough positive integer $r$ so that the $r$-th barycentric subdivision $\mathrm{sd}^{r}K$ is subordinate to the open cover $\{U_1, \ldots, U_p\}$ (i.e., each simplex of 
$\mathrm{sd}^{r}K$ is contained in some open set $U_i$). 
Recall that, for each set $U_i$ in the cover $\{U_1, \ldots, U_p\}$, the projection
$x_1: W'_{U_i} \rightarrow \mathbb{R}$ has a fiberwise regular value.
Consequently, 
for each simplex $\sigma$ of $\mathrm{sd}^{r}K$,
the projection $x_1: W'_{\sigma} \rightarrow \mathbb{R}$
has a fiberwise regular value. 
Now, let us apply Proposition \ref{unisub} 
to the PL set $\psi_d(N,1)$. 
According to this proposition, for the integer $r$ that we fixed above, 
there exists a semi-simplicial set map $g'': \mathrm{sd}^{r}K_{\bullet} \rightarrow \psi_d(N,1)_{\bullet}$ which 
makes the following diagram commute:
\begin{equation} \label{diagram.proof.longman}
\xymatrix{
\left|K_{\bullet}\right| \ar[r]^{ \hspace{-0.45cm} \left|g'\right|}  & \left| \psi_d(N,1)_{\bullet} \right| \ar[d]^{\rho^r} \\
\left|\mathrm{sd}^r \hspace{0.05cm} K_{\bullet} \right| \ar[u]^{\cong}\ar[r]^{  \hspace{-0.4cm} \left|g''\right|} & 
\left| \psi_d(N,1)_{\bullet}  \right|. }
\end{equation}
In this diagram, the left-vertical  map is again the canonical homeomorphism from 
$|\mathrm{sd}^r \hspace{0.05cm} K_{\bullet} |$ to $|K_{\bullet}|$ and $\rho^r$ 
 is the map obtained by composing the subdivision map $r$ times with itself. Furthermore, 
 by Proposition \ref{unisub.cor},  
 the morphism $g'': \mathrm{sd}^{r}K_{\bullet} \rightarrow \psi_d(N,1)_{\bullet}$ is equal to the classifying map 
 of $W'$ relative to the triangulation $(\mathrm{sd}^r\hspace{0.05cm}K, \mathrm{Id}_{|K|})$ of $|K|$. 
However, as we mentioned before, $W'$ is an element of 
 $\psi_d(N,1)(|K|)$ with the property that, for any simplex
 $\sigma \in \mathrm{sd}^r\hspace{0.05cm}K$,  the projection
 $x_1: W'_{\sigma} \rightarrow \mathbb{R}$
has a fiberwise regular value. 
It follows that the image of $g''$ must lie entirely in $\psi_d^R(N,1)_{\bullet}$
(see the discussion given in part (1) of Remark \ref{remark.psireg}.
Even though we focused only on simplicial sets in that remark, the same observations hold for 
semi-simplicial sets).  
Then, by the commutativity of (\ref{diagram.proof.longman}), the image of 
$\rho^r\circ |g'|$ is contained in $|\psi_d^R(N,1)_{\bullet}|$.  Also, since 
$|\psi_d^R(N,1)_{\bullet}|$ is invariant under the homotopy $\mathcal{H}$ between 
$\mathrm{Id}_{|\psi_d(N,1)_{\bullet}|}$ and the subdivision map $\rho$, we have that 
$\rho^r\circ |g'| \circ f'$ and $ |g'| \circ f'$ are homotopic as maps of pairs of the form 
$(\Delta^p, \partial \Delta^p) \rightarrow ( \left|\psi_d(N,1)_{\bullet}\right|, \left|\psi^R_d(N,1)_{\bullet}\right|)$. 
 
Thus, we have obtained the following chain of homotopies:
\begin{equation} \label{chain.of.homotopies}
f \text{ } \sim \text{ } |g|\circ f' \text{ } \sim \text{ }  |g'|\circ f' \text{ } \sim \text{ } \rho^r\circ |g'|\circ f'.
\end{equation}
Since $\rho^r \circ |g'|\circ f'$ maps $\Delta^p$ to $|\psi_d^R(N,1)_{\bullet}|$, and any two consecutive maps in 
(\ref{chain.of.homotopies}) are homotopic as maps of pairs 
$(\Delta^p, \partial \Delta^p) \rightarrow ( \left|\psi_d(N,1)_{\bullet}\right|, \left|\psi^R_d(N,1)_{\bullet}\right|)$,
we can conclude that $f$ represents the trivial class in 
$\pi_p\big(|\psi_d(N,1)_{\bullet}|,$ $ |\psi_d^R(N,1)_{\bullet}| \big)$.
\end{proof}

\theoremstyle{definition} \newtheorem{makereg.remark}[longman]{Remark}

\begin{makereg.remark} \label{makereg.remark}

We will spend most of this section developing the proof of Proposition \ref{makereg}. Before we begin discussing the details of this argument, it is worth noting that it is enough to prove 
Proposition \ref{makereg} in the case when the base-space $P$ is a closed PL manifold. Indeed, 
suppose that we have proven this proposition for all
closed PL manifolds, and fix a compact PL space $P$ and an element $W$ of 
$\psi_d(N,1)(P)$.  Since $P$ is compact, we can assume that $P$ is embedded in some sufficiently high-dimensional Euclidean space $\mathbb{R}^m$. Next, take a regular neighborhood $M'$ of $P$ in $\mathbb{R}^m$, which exists by the compactness of $P$ (see \cite{JLB}, \cite{RS} for a thorough discussion of regular neighborhoods). This neighborhood $M'$ satisfies the following two properties:

\begin{itemize}

\item[(1)] $M'$ is an $m$-dimensional compact PL manifold with boundary such that $P \subseteq M' - \partial M'$.

\item[(2)] There exists a PL retraction $r: M' \rightarrow P$.

\end{itemize}

Thus, we can pull back $W$ along the retraction $r$ to produce an element $r^*W$ of 
$\psi_d(N,1)(M')$. Since  $r: M' \rightarrow P$ is a retraction, the restriction of $r^*W$ over $P$ is equal to $W$.
Next, by taking a sufficiently high-dimensional space $\mathbb{R}^n$
and a PL embedding $j: M' \hookrightarrow [0,\infty)\times \mathbb{R}^n$ such that 
$j^{-1}(\{0\}\times\mathbb{R}^n)= \partial M'$, 
we may assume that $M'$ is a \textit{proper} PL submanifold of $[0,\infty)\times \mathbb{R}^n$. In particular, we have $M'\cap(\{0\}\times\mathbb{R}^n) = \partial M'$. We will now construct 
a closed PL submanifold $M$ of $(-\infty, \infty)\times \mathbb{R}^n \cong \mathbb{R}^{n+1}$
by \textit{doubling} $M'$; i.e., $M$ is the PL submanifold of $\mathbb{R}^{n+1}$ obtained as
follows:
\begin{itemize}
\item[$\cdot$] First, we take the image $M'' \subseteq (-\infty,0]\times\mathbb{R}^n$ of 
$M'$ under the piecewise linear map $q: M' \rightarrow (-\infty,0]\times\mathbb{R}^n$ which multiplies the first coordinate
of any point $x \in M'$ by -1. Note that $\partial M' = \partial M''$

\item[$\cdot$] Then, $M$ is defined as the union $M'' \cup M'$.

\end{itemize}

Observe that the pull-back $q^*r^*W$ of $r^*W$ along the PL map $q$ agrees with $r^*W$ over 
$\partial M'$. Thus, since $\psi_d(N,1)$ is a quasi-PL space (see Remark \ref{moreclasssub}), 
we can glue $q^*r^*W$ and $r^*W$ to produce an element 
$W'$ of $\psi_d(N,1)(M)$. Note that, by construction, we have that $W'_{M'} = r^*M$. Consequently, since the restriction of $r^*W$ over $P$ is $W$, we also have that $W'_P = W$. 
Now, since we are assuming that Proposition \ref{makereg} holds for $M$, there exists a concordance 
$\widetilde{W} \in \psi_d(N,1)([0,1]\times M)$ which starts at $W'$ and satisfies all of the desired properties. But since the restriction of $W'$ over $P$ is equal to $W$, the element $\widetilde{W}_{[0,1]\times P}$ of 
$ \psi_d(N,1)([0,1]\times P)$ obtained by restricting $\widetilde{W}$ over $[0,1]\times P$ is a concordance which starts at $W$. It is evident that $\widetilde{W}_{[0,1]\times P}$ will also satisfy all the properties listed in Proposition \ref{makereg}. 

\end{makereg.remark}

\subsection{The Isotopy Extension Theorem}  \label{section50}

The reason we need the codimension restriction $N-d \geq 3$ in Proposition \ref{makereg} (and, therefore, also in Theorem \ref{longman}) is that our proof for this proposition will use the PL version of \textit{the Isotopy Extension Theorem}. Before we recall the statement of this foundational result (Theorem \ref{isotopy} below), we need to review the following terminology from PL topology. First, fix two PL manifolds $M^m$ and $Q^q$ (possibly with boundary) with $m < q$. Also, denote the interval $[-1,1]$ by $I$ and the point $(0,\ldots,0)$ in the cartesian product $I^n$ by $\mathbf{0}$. An 
\textit{n-isotopy of $M$ in $Q$} is a PL embedding $f: I^n \times M \rightarrow  I^n \times Q$ which commutes with the projection onto $I^n$. We can think of an $n$-isotopy as a collection of PL embeddings 
$f_{t}: M \hookrightarrow Q$ parametrized by $t \in I^n$. An $n$-isotopy  $f:  I^n \times M \rightarrow I^n \times Q$ is said to be \textit{allowable} if there is an $(m-1)$-dimensional PL submanifold $N$ of $\partial M$ such that $f_t^{-1}(\partial Q) = N$ for all $t \in I^n$. In this  definition, we allow $N = \varnothing$ and $N = \partial M$. Finally, we say that an $n$-isotopy of $M$ in $Q$ is \textit{fixed} on a subset $X \subseteq M$ if $f_t|_X = f_{\mathbf{0}}|_X$ for all  $t \in I^n$.   

\theoremstyle{plain} \newtheorem{isotopy}[longman]{Theorem}

\begin{isotopy} \label{isotopy} \textbf{(The Isotopy Extension Theorem)} 
Fix two PL manifolds $M$, $Q$ of dimensions $m$ and $q$ respectively, and let $f: I^n \times M \rightarrow I^n\times Q$
be an allowable  $n$-isotopy of $M$ in $Q$ which is fixed on $f_{\mathbf{0}}^{-1}(\partial Q)$. If $M$ is compact and $q - m \geq 3$, then there is a PL homeomorphism $H: I^n \times Q \rightarrow I^n \times Q$ satisfying the following:

\begin{itemize}

\item[(i)] $H$ commutes with the projection onto $I^n$ and $H_{\mathbf{0}} = \mathrm{Id}_Q$. 

\item[(ii)] $H_t \circ f_{\mathbf{0}} = f_t$ for all $t \in I^n$. 

\item[(iii)] $H_t$ fixes $\partial Q$ pointwise for all $t \in I^n$. 

\end{itemize}

\end{isotopy}

A PL homeomorphism  $H: I^n \times Q \rightarrow I^n \times Q$ satisfying the conditions given in (i) is called an \textit{ambient n-isotopy of $Q$}. If an ambient $n$-isotopy $H$ of $Q$ satisfies (ii) above, then we say that $H$ \textit{extends} $f$. Property (iii) says that $H$ is \textit{fixed on} $\partial Q$. The proof of 
the Isotopy Extension Theorem can be found in \cite{ext} (see also Remark \ref{hudson.remark} below).  

\theoremstyle{definition} \newtheorem{isotopy.addendum}[longman]{Remark}

\begin{isotopy.addendum} \label{isotopy.addendum}

 The Isotopy Extension Theorem can be strengthened as follows. Suppose that 
 $f: I^n \times M \rightarrow I^n\times Q$ is an $n$-isotopy satisfying all the assumptions given in 
 Theorem \ref{isotopy}. In particular, we require that 
 $\mathrm{dim}\hspace{0.05cm}Q - \mathrm{dim}\hspace{0.05cm}M \geq 3$. Moreover, suppose that there is a PL subspace $B$ of $I^n$ with the following properties: 
 \begin{itemize}
 \item[$\cdot$] $\mathbf{0} \in B$.
 \item[$\cdot$] $B$ is a PL retract of $I^n$. 
 \item[$\cdot$] There exists a PL homeomorphism $h:B \times Q \rightarrow B \times Q$ which commutes with the projection onto $B$ and extends the isotopy $f$ \textit{over} $B$. In other words, 
 $h_t \circ f_{\mathbf{0}} = f_t$ for all $t \in B$. Moreover, assume that $h_{\mathbf{0}} = \mathrm{Id}_Q$. 
 \end{itemize}
Then, with these additional assumptions, we can find an ambient $n$-isotopy
 $H$ of $Q$ which satisfies all the claims given in the Isotopy Extension Theorem, but which also
 agrees with
 $h: B \times Q \rightarrow B \times Q$ over the PL subspace $B$, i.e., $H_t = h_t$ for any $t\in B$. 
 Let us explain why such an $H$ exists. First, fix a PL retraction $r: I^n \rightarrow B$ and an
 ambient $n$-isotopy $\widetilde{H}: I^n \times Q \rightarrow I^n \times Q$ 
 satisfying all the claims listed in the Isotopy Extension Theorem. 
 Next, consider the ambient $n$-isotopy  $H:  I^n \times Q \rightarrow I^n \times Q$ of $Q$
 given by 
  $H_t = \widetilde{H}_t \circ \widetilde{H}^{-1}_{r(t)}\circ h_{r(t)}$ for any $t \in I^n$. We claim that this $H$ satisfies all the desired properties. First, since $r|_{B} = \mathrm{Id}_B$, it follows that $H_t = h_t$ for all $t\in B$. Moreover, 
  by pre-composing $H_t$ with $f_{\mathbf{0}}$, we obtain the following:
  \[
  H_t \circ f_{\mathbf{0}} = \widetilde{H}_t \circ \widetilde{H}^{-1}_{r(t)}\circ h_{r(t)} \circ f_{\mathbf{0}} 
  =  \widetilde{H}_t \circ \widetilde{H}^{-1}_{r(t)}\circ f_{r(t)} 
  = \widetilde{H}_t \circ f_{\mathbf{0}} = f_t.
  \] 
 Thus, $H$ extends $f$. Finally, it is straightforward to verify that $H$ is fixed on $\partial Q$ 
 and $H_{\mathbf{0}} = \mathrm{Id}_Q$.  

\end{isotopy.addendum}

\theoremstyle{definition} \newtheorem{isotopy.over}[longman]{Remark}

\begin{isotopy.over} \label{isotopy.over}

The Isotopy Extension Theorem only concerns isotopies parameterized by $I^n$ for some $n$. However, for several of the arguments given in this section, it will be convenient to work with isotopies parameterized by more general 
base-spaces. With this in mind, we define the following: Let $P$ be a PL space. We say that a PL embedding 
$f: P \times M \rightarrow P \times Q$ is an \textit{isotopy of $M$ in $Q$ over $P$} if $f$ commutes with the projection onto $P$. We define \textit{ambient isotopies over} $P$ in a similar fashion. 

\end{isotopy.over}

There is another version of the Isotopy Extension Theorem (also proven in \cite{ext}) which does not require the codimension condition $q - m \geq 3$. To state this version, we need to introduce some terminology. First, let us fix two PL manifolds $M$ and $Q$ (possibly with boundary)
of dimensions $m$ and $q$ respectively.
We say that an $n$-isotopy $f: M\times I^n \rightarrow Q \times I^n$ 
is \textit{locally trivial} if, for any point $(x,t)\in M \times I^n$, there are open neighborhoods $U$ of $x$ in $M$ and 
$V$ of $t$ in $I^n$, and a PL embedding 
$\alpha:  (-1,1)^{q - m} \times U \times V \rightarrow Q\times V$ 
which commutes with the projection onto $V$ and such that 
$\alpha|_{\{0\}\times U \times V} = f|_{U\times V}$ once we identify  
$\{0\}\times U \times V$ with $U\times V$. With this terminology in place, we can now state the following alternative version of the Isotopy Extension Theorem discussed in \cite{ext} (see also \cite{KL}). 

\theoremstyle{plain} \newtheorem{loc.trivial.iso}[longman]{Theorem}

\begin{loc.trivial.iso} \label{loc.trivial.iso}
Let $M$ and $Q$ be PL manifolds (possibly with boundary) and suppose that $M$ is compact. 
If $f: M\times I^n \rightarrow Q \times I^n$ is an allowable and locally trivial $n$-isotopy, then there exists
an ambient $n$-isotopy $H: Q\times I^n \rightarrow Q \times I^n$ which extends $f$ and is fixed on $\partial Q$.

\end{loc.trivial.iso}

Throughout this article, we will mostly use the version of the Isotopy Extension Theorem 
stated in Theorem \ref{isotopy}. We will only apply the version given in Theorem \ref{loc.trivial.iso} 
in the proof of Proposition \ref{propmonoid}. It is worth pointing out that Hudson proves the Isotopy Extension Theorem 
in \cite{ext} assuming only that the $n$-isotopy $f$ is \textit{locally unknotted} (see page 652 of \cite{ext}),
which is a property weaker than local triviality. As shown in \cite{ext}, Theorem \ref{isotopy} is a consequence
of this version of the Isotopy Extension Theorem that requires only the $n$-isotopy to be locally unknotted. 

In reality, for the proof of Proposition \ref{makereg}, we will mostly use the following corollary of Theorem \ref{isotopy}.  

\theoremstyle{plain} \newtheorem{isotopy2}[longman]{Corollary}

\begin{isotopy2} \label{isotopy2}

Let $Q$ be a PL manifold of dimension $q$ and $M$ a compact submanifold of $Q$ of dimension $m$. Moreover,
suppose that $f:I^n \times M \rightarrow I^n \times Q$ is an allowable $n$-isotopy such that $f_{\mathbf{0}}$ is equal to the natural inclusion $M \hookrightarrow Q$. Then, if $q - m \geq 3$, 
there exists an ambient $n$-isotopy  $H: I^n \times Q \rightarrow  I^n \times Q$ 
satisfying all the claims listed in Theorem \ref{isotopy} 
and which is homotopic (via a homotopy of ambient $n$-isotopies) to the identity ambient $n$-isotopy 
$\mathrm{Id}_{I^n\times Q}$. In other words, there is an ambient isotopy 
$\widetilde{H}:[0,1]\times I^n \times Q \rightarrow [0,1]\times I^n \times Q$ of $Q$ over 
$[0,1]\times I^n$ (see Remark \ref{isotopy.over} above) such that 
$\widetilde{H}_0 = H$ and $\widetilde{H}_1 = \mathrm{Id}_{I^n \times Q}$. Moreover, we can also guarantee
that $\widetilde{H}$ fixes $[0,1]\times I^n \times \partial Q$ pointwise and $\widetilde{H}_{s,\mathbf{0}} = \mathrm{Id}_Q$
for all $s \in [0,1]$.

\end{isotopy2}

\begin{proof}
In this proof, we will modify our notation slightly and denote the point $\mathbf{0} = (0,\ldots, 0)$ of $I^n$ 
by $\hat{\mathbf{0}}$. Also, we will denote the PL subspace
\[
\big( [0,1] \times \{ \hat{\mathbf{0}} \} \big) \cup \big(\{1\} \times I^n\big)
\]
of $[0,1] \times I^n$ by $B$. Note that $B$ is a PL retract of $[0,1] \times I^n$. 
Let us fix then a PL retraction $r: [0,1] \times I^n \rightarrow B$. Moreover, 
fix a PL homotopy 
$c: [0,1] \times I^n \rightarrow I^n$ such that $c_0(\lambda) = \lambda$ for all $\lambda \in I^n$ and
$B \subseteq c^{-1}(\hat{\mathbf{0}})$. For example, we can construct such a PL homotopy 
$c$ as follows: 

\begin{itemize}

\item[$\cdot$] First, fix a pair of simplicial complexes $(K,L)$ which triangulates $([0,1] \times I^n, B)$. Moreover,
by subdividing $K$ further if necessary, we may assume that $K$ also has a subcomplex $K'$ which triangulates
the face $\{0\} \times I^n$. 

\item[$\cdot$] Next, we  define $c: [0,1] \times I^n \rightarrow I^n$ by setting $c(s, \lambda) = \hat{\mathbf{0}}$
for  any vertex $(s, \lambda) \in L$ and $c(s, \lambda) = \lambda$ for 
any vertex $(s, \lambda) \notin L$. Then, we extend $c$ linearly to all higher-dimensional simplices of 
$K$. Note that this definition forces $c$ to map each vertex 
$(0, \lambda)$ of $K'$ to $\lambda$. Therefore, $c_0$ maps $I^n$ identically to itself. Also, by the way we
defined $c$, it is clear that  $B$ is contained in $c^{-1}(\hat{\mathbf{0}})$.

\end{itemize}

The map $\widetilde{f}: [-1,1]\times I^n \times M \rightarrow [-1,1]\times I^n \times Q$ defined by 
$(s, \lambda, x) \mapsto (s, \lambda, f_{c_{|s|}(t)}(x))$ is an isotopy of PL embeddings of $M$  in $Q$ parameterized by $[-1,1]\times I^n$. In other words, $\widetilde{f}$ is an 
$(n+1)$-isotopy of $M$ in $Q$. Moreover, since $f$ is allowable, then so is $\widetilde{f}$. Thus, since 
$q - m \geq 3$, we can apply the Isotopy Extension Theorem to produce an ambient isotopy 
$\widetilde{H}:[-1,1]\times I^n \times Q \rightarrow [-1,1]\times I^n \times Q$ of $Q$ over $[-1,1]\times I^n$
which extends $\widetilde{f}$ and fixes $[-1,1]\times I^n \times \partial Q$ pointwise. For the rest of this proof, we will only need to use the restrictions 
$\widetilde{f}|_{ [0,1]\times I^n \times M}$ and $\widetilde{H}|_{ [0,1]\times I^n \times Q}$.
By abuse of notation, we shall denote these restrictions simply by $\widetilde{f}$ and $\widetilde{H}$ respectively. 
Note that, after we perform this change of notation, $\widetilde{H}$ still extends $\widetilde{f}$.

Recall that $f_{\hat{\mathbf{0}}}$ is equal to the natural inclusion $M \hookrightarrow Q$. 
Since $B  \subseteq c^{-1}(\hat{\mathbf{0}})$, it follows that $\widetilde{f}_{(s,t)}$ is equal to the natural inclusion 
$M \hookrightarrow Q$ for any $(s,\lambda) \in B$. Thus, the identity ambient isotopy
$\mathrm{Id}_{[0,1]\times [-1,1]^{n}\times Q}$ extends $\widetilde{f}$ over $B$. 
Moreover, recall that we fixed a PL retraction $r: [0,1]\times I^n \rightarrow B$ at the beginning of this proof.
Then, by doing an argument almost identical to the one given in Remark \ref{isotopy.addendum}, we may assume that  $\widetilde{H}|_{B\times Q} = \mathrm{Id}_{B\times Q}$.

To complete this proof, let $H: I^n \times Q \rightarrow  I^n \times Q$ 
be the ambient $n$-isotopy of $Q$ defined by 
$H := \widetilde{H}_0$. Since $c_0$ maps $I^n$ identically to itself, we have that $\widetilde{f}_0 = f$. 
Then, since $\widetilde{H}$ extends $\widetilde{f}$, it follows that
$H$ extends $f$. On the other hand, since $\{1\} \times I^n \subseteq B$ and 
$\widetilde{H}|_{B\times Q} = \mathrm{Id}_{B\times Q}$, the ambient $n$-isotopy $\widetilde{H}_1$ is equal to 
$\mathrm{Id}_{I^n\times Q}$. Thus, $\widetilde{H}$ is a homotopy of ambient $n$-isotopies from $H$ to 
$\mathrm{Id}_{I^n \times Q}$. As mentioned before, $\widetilde{H}$ fixes $[0,1]\times I^n \times \partial Q$ 
pointwise. Additionally, 
since $[0,1] \times \{ \hat{\mathbf{0}}\} \subseteq B$ and $\widetilde{H}|_{B\times Q} = \mathrm{Id}_{B\times Q}$, we also have that $\widetilde{H}_{(s, \hat{\mathbf{0}})} = \mathrm{Id}_Q$ for all $s \in [0,1]$.
Therefore, we have verified that $H$ and $\widetilde{H}$ satisfy all the required properties, which completes the proof. 
\end{proof}

\theoremstyle{definition} \newtheorem{hudson.remark}[longman]{Remark}

\begin{hudson.remark} \label{hudson.remark}
It is worth pointing out that the version of the Isotopy Extension Theorem that appears in \cite{ext} is slightly different from the one we stated in Theorem \ref{isotopy}. In \cite{ext}, the isotopy is defined over the cube $[0,1]^n$, not $[-1,1]^n$. Moreover, the reference point used in \cite{ext} to perform the isotopy extension is also $\mathbf{0}\in [0,1]^n$, but now 
this base-point lies on the boundary of the base-space instead of its interior, as it is in Theorem \ref{isotopy}. Nevertheless, our version of the Isotopy Extension Theorem can be derived easily from the one given in \cite{ext}. 
\end{hudson.remark}

\subsection{Producing fiberwise regular values}  \label{section51}

Let us explain more concretely how we will use the Isotopy Extension Theorem 
(or, more precisely, Corollary \ref{isotopy2}) in the proof of Proposition \ref{makereg}. First, fix a closed PL manifold $M$, an element $W$ of $\psi_d(N,1)(M)$, and a fiber $W_{\lambda}$ of the projection $\pi: W \rightarrow M$ over some point $\lambda \in M $. Recall that we may regard $W_{\lambda}$ as a PL submanifold of $\mathbb{R}\times (-1,1)^{N-1}$ which is closed as a topological subspace of $\mathbb{R}^N$.  
Since the projection $x_1: W_{\lambda} \rightarrow \mathbb{R}$ onto the first component of $\mathbb{R}\times (-1,1)^{N-1}$ is a proper map, we can find simplicial complexes $K$ and $L$ which triangulate $W_{\lambda}$ and $\mathbb{R}$ respectively and which make the map $x_1: W_{\lambda} \rightarrow \mathbb{R}$ simplicial. Thus, by Proposition \ref{williamson}, any point $a$ in the interior of a 1-simplex of 
$L$ is a regular value for the map 
$x_1: W_{\lambda} \rightarrow \mathbb{R}$. The idea is to then use the Isotopy Extension Theorem to deform $W$ near the fiber $W_{\lambda}$ (and near the height $a$) so that $a$ becomes a fiberwise regular value 
of $x_1: W \rightarrow \mathbb{R}$ over an open neighborhood of $\lambda$. 
We will make this procedure more explicit in the next lemma. 

\theoremstyle{plain} \newtheorem{makereg1}[longman]{Lemma}

\begin{makereg1}  \label{makereg1}

Fix a closed PL manifold $M$ of dimension $m$, an element $W$ of $\psi_d(N,1)(M)$, a point 
$\lambda_{0} \in M $, and a value $\epsilon>0$. 
If $a_0 \in \mathbb{R}$ is a regular value of the projection $x_1: W_{\lambda_0} \rightarrow \mathbb{R}$, then there exists an open neighborhood $V$ of $\lambda_0$ in $M$ and  a PL automorphism $F$ of $[0,1]\times V \times \mathbb{R} \times (-1,1)^{N-1}$ satisfying the following properties:

\begin{itemize}

\item[(i)] $F$ is an ambient isotopy of $\mathbb{R} \times (-1,1)^{N-1}$ over  $[0,1]\times V$
(see Remark \ref{isotopy.over}). Thus, we can view $F$
as a family of PL automorphisms $F_{t,\lambda}$ of \hspace{0.03cm} $\mathbb{R} \times (-1,1)^{N-1}$ parameterized by 
$(t,\lambda) \in [0,1]\times V$, or as a path $F_t$ of PL automorphisms of $V \times \mathbb{R} \times (-1,1)^{N-1}$ which commute with the projection onto $V$.

\item[(ii)]  $F_0$ is the identity map on $V\times \mathbb{R}\times (-1,1)^{N-1}$. 

\item[(iii)] $F_{t,\lambda}$ is supported on $(a_0 - \epsilon, a_0 + \epsilon)\times (-1,1)^{N-1}$ for all $(t,\lambda) \in [0,1]\times V$. 

\item[(iv)]  $a_0$ is a fiberwise regular value of the projection $x_1: F_1(W_V) \rightarrow \mathbb{R}$, where $W_V$ is the restriction of $W$ over the open set $V$.   

\end{itemize}

\end{makereg1}

\begin{proof}

Before we begin with the details of this proof, let us fix the following notation: For any point $\lambda \in M$ and any interval $I$ in $\mathbb{R}$, we shall denote the intersection of the fiber $W_{\lambda}$ with 
$I \times (-1,1)^{N-1}$ by $W_{\lambda, I}$. Also, given any $\lambda \in M$ and any value $a \in \mathbb{R}$, we will denote the pre-image of $a$ under the projection $x_1: W_{\lambda} \rightarrow \mathbb{R}$ by $W_{\lambda,a}$. 

Since $a_0$ is a regular value of the projection $x_1: W_{\lambda_0} \rightarrow \mathbb{R}$, there exists a
$\delta > 0$ and a PL homeomorphism 
$h: (a_0 - \delta, a_0 + \delta) \times W_{\lambda_0, a_0}  \rightarrow W_{\lambda_0, (a_0 - \delta, a_0 + \delta)}$ such that the composition  $x_1 \circ h$ agrees with the standard projection 
$\mathrm{pr}_1: (a_0 - \delta, a_0 + \delta) \times W_{\lambda_0, a_0}   \rightarrow (a_0 - \delta, a_0 + \delta)$. By shrinking the value $\delta$ if necessary, we may assume that $0< \delta < \epsilon$.
As we have typically done throughout this paper, we will denote the projection from $W$ onto $M$ by $\pi$. 
Recall that this projection is a PL submersion of codimension $d$. Also, note that $W_{\lambda_0, [a_0 - \delta, a_0 + \delta]}$ is a compact subspace of the fiber $W_{\lambda_0}$. Then, by the Union Lemma for PL submersions (see page 150 in \cite{Si}), we can find an open neighborhood $V$ of $\lambda_0$ and a PL embedding 
$f: \overline{V} \times W_{\lambda_0, [a_0 - \delta, a_0 + \delta]} \rightarrow \pi^{-1}(\overline{V})$ satisfying the following properties:

\begin{itemize}

\item[$\cdot$] $\overline{V}$ is PL homeomorphic to $[-1,1]^m$ and $\pi \circ f$ is equal to the standard projection onto 
$\overline{V}$. Thus, we can view $f$ as a family of PL embeddings $f_{\lambda}$ 
parametrized by $\lambda \in \overline{V}$.

\item[$\cdot$]  $f_{\lambda_0}$ maps $W_{\lambda_0, [a_0 - \delta, a_0 + \delta]}$ identically to itself. 

\item[$\cdot$]  $f_{\lambda}\big( W_{\lambda_0, [a_0 - \delta, a_0 + \delta]} \big) \subseteq 
(a_0 - \epsilon, a_0 + \epsilon)\times (-1,1)^{N-1}$ for all $\lambda \in \overline{V}$. 

\end{itemize}  

Moreover, let us fix a value $\epsilon' > 0$ which is less than $\delta$. By shrinking the neighborhood $V$ if necessary, we can also assume that the map
$f:\overline{V}\times W_{\lambda_0, [a_0 - \delta, a_0 + \delta]} \rightarrow \pi^{-1}(\overline{V})$ 
satisfies the following condition: 

\begin{itemize}

\item[(*)] For any $\lambda \in \overline{V}$, we have that $W_{\lambda, [a_0 - \epsilon', a_0 + \epsilon']} \subseteq f_{\lambda}\big(W_{\lambda_0, (a_0 - \delta, a_0 + \delta)}\big)$.

\end{itemize}

We will use this last condition in the final stage of this proof. Now, let us denote the product 
$[a_0 - \epsilon, a_0 + \epsilon]\times (-1,1)^{N-1}$ by $Q$. Once we identify $\overline{V}$ with $[-1,1]^m$,
the first and third conditions given above imply that the map $f$  is an $m$-isotopy of $W_{\lambda_0, [a_0 - \delta, a_0 + \delta]}$ (which is a compact $d$-dimensional PL manifold) in $Q$ (which is an $N$-dimensional PL manifold). In fact, $f$ is an allowable $m$-isotopy because, for any $\lambda \in \overline{V}$, the pre-image of 
$\partial Q$ under the embedding 
$f_{\lambda}$ is empty. Thus, since $N -d \geq 3$, the Isotopy Extension Theorem guarantees that there is an ambient $m$-isotopy $H: \overline{V} \times Q \rightarrow \overline{V} \times Q$ which extends 
$f$, fixes   $\partial Q$ pointwise, and has the property that 
$H_{\lambda_0} = \mathrm{Id}_{Q}$.  Moreover, by 
Corollary \ref{isotopy2}, we can choose $H$ so that there is an ambient isotopy 
\[
\widetilde{H}: [0,1]\times \overline{V} \times Q \rightarrow  
 [0,1]\times \overline{V} \times Q
\]
of $Q$ over $[0,1]\times \overline{V}$ such that $\widetilde{H}_{t,\lambda}$ fixes $\partial Q$ pointwise
for all $(s,\lambda) \in [0,1]\times \overline{V}$,  
$\widetilde{H}_0 = H$, $\widetilde{H}_1 = \mathrm{Id}_{\overline{V} \times Q}$, and
$\widetilde{H}_{(s, \lambda_0)} = \mathrm{Id}_Q$ for all $s \in [0,1]$.

Now, let $F:  [0,1]\times V \times Q \rightarrow  [0,1]\times V \times Q$ be the ambient isotopy of $Q$ over $[0,1]\times V$ obtained by first pre-composing $\widetilde{H}$ with the  product isotopy 
$\mathrm{Id}_{[0,1]}\times H^{-1}$, and then restricting this composition on $[0,1]\times V \times Q$.  Since both $\widetilde{H}$ and $\mathrm{Id}_{[0,1]}\times H^{-1}$ are fixed on $\partial Q$, then so is $F$. Therefore, we can extend $F$ to an ambient isotopy 
of $\mathbb{R} \times (-1,1)^{N-1}$ over $[0,1]\times V$ which is fixed on the closure of the complement of $Q$. To avoid introducing more notation, we will also denote this extension by $F$.
Since $\widetilde{H}_0 = H$, it follows that $F_0$ is the identity map on 
$V\times \mathbb{R} \times (-1,1)^{N-1}$.   
Thus, if we apply the ambient isotopy $F$ to the product $[0,1]\times W_V \in \psi_d(N,1)([0,1]\times V)$, we will obtain a concordance 
from  $W_V$ to the element 
$F_1(W_V) \in  \psi_d(N,1)(V)$. From now on, we will denote the images 
$F([0,1]\times W_V)$ and $F_1(W_V)$ by $\widehat{W}$ and $W'$ respectively. Note that, since $F$ is fixed on $\mathrm{cl}(\mathbb{R}\times (-1,1)^{N-1} - Q)$, the element $\widehat{W}$ agrees with the constant concordance $[0,1] \times W_V$ in the complement of  $[0,1]\times V \times Q$.  

Our next step is to examine the behavior of the concordance $\widehat{W}$ inside the product $[0,1] \times V \times Q$. More specifically, we wish to show (after possibly shrinking $V$) that $a_0$ is a fiberwise regular value of the projection  
$x_1: W' \rightarrow \mathbb{R}$. To this end, let us consider again the PL embedding $f: \overline{V} \times W_{\lambda_0, [a_0 - \delta, a_0 + \delta]} \rightarrow W_{\overline{V}}$ that we obtained via the Union Lemma at the beginning of this proof.  
Since $H^{-1}_{\lambda}\circ f_\lambda = f_{\lambda_0}$ for all $\lambda \in V$, and since $f_{\lambda_0}$ is the standard inclusion $W_{\lambda_0, [a_0 - \delta, a_0 + \delta]} \hookrightarrow Q$, the image of the composition $F_1 \circ f|_{V\times W_{\lambda_0, (a_0 - \delta, a_0 + \delta)}}$ is equal to the product 
$V \times W_{\lambda_0, (a_0 - \delta, a_0 + \delta)}$.  Therefore, if 
$h:(a_0 - \delta, a_0 + \delta) \times W_{\lambda_0, a_0}   \rightarrow W_{\lambda_0, (a_0 - \delta, a_0 + \delta)}$ is the PL homeomorphism we obtained by using the fact that $a_0$ is a regular value of 
$x_1: W_{\lambda_0} \rightarrow \mathbb{R}$, then the map 
$\widetilde{h}: V \times (a_0 - \delta, a_0 + \delta) \times W_{\lambda_0, a_0} \rightarrow W'$
defined by $\widetilde{h}(\lambda, t, x) = (\lambda, h(t,x))$ is a PL embedding whose image is equal to 
$V \times  W_{\lambda_0, (a_0 - \delta, a_0 + \delta)}$. Moreover, $\widetilde{h}$ commutes with the projections onto $V$ and $(a_0 - \delta, a_0 + \delta)$. Thus, to conclude that $a_0$ is indeed a fiberwise regular value of the map $x_1:  W' \rightarrow \mathbb{R}$, it suffices to find some value $0 < \delta' \leq \delta$ such that 
the image $\widetilde{h}( V \times (a_0 - \delta', a_0 + \delta') \times W_{\lambda_0, a_0})$, which is equal to 
$V \times W_{\lambda_0, (a_0 - \delta', a_0 + \delta')}$, agrees with the pre-image $x^{-1}_1\big((a_0 - \delta', a_0 + \delta')\big) \subseteq  W'$. To find such a value $\delta'$, we will use the condition (*) that we imposed on $f$. So let $\epsilon'>0$ be the value that appears in that condition, and choose any number $\delta'$ such that $0 < \delta' < \epsilon'$. 
Note that $F_{1,\lambda_0} = \mathrm{Id}_{\mathbb{R}\times (-1,1)^{N-1}}$, which implies that $F_{1,\lambda_0}(W_{\lambda_0}) = W_{\lambda_0}$. Then, by making the neighborhood $V$ smaller if necessary, we can also guarantee that 
\begin{equation} \label{cond.makereg1}
W'_{\lambda, (a_0 - \delta', a_0 + \delta')} \subseteq 
F_{1,\lambda}\big( W_{\lambda, (a_0 - \epsilon', a_0 + \epsilon' )} \big)
\end{equation}
for all $\lambda \in V$. By combining (\ref{cond.makereg1}) and condition (*), we obtain 
\begin{equation} \label{cond.makereg2}
W'_{\lambda, (a_0 - \delta', a_0 + \delta')} \subseteq  
F_{1,\lambda}\big(f_{\lambda}(W_{\lambda_0, (a_0 - \delta, a_0 + \delta)})\big)
\end{equation}
for all points $\lambda$ in $V$. Since $F_{1,\lambda}\big(f_{\lambda}(W_{\lambda_0, (a_0 - \delta, a_0 + \delta)})\big)$ is equal to $W_{\lambda_0, (a_0 - \delta, a_0 + \delta)}$, we can rewrite (\ref{cond.makereg2})  as $W'_{\lambda, (a_0 - \delta', a_0 + \delta')} \subseteq W_{\lambda_0, (a_0 - \delta, a_0 + \delta)}$, which clearly implies that 
\begin{equation} \label{cond.makereg3}
W'_{\lambda, (a_0 - \delta', a_0 + \delta')} \subseteq W_{\lambda_0, (a_0 - \delta', a_0 + \delta')}.
\end{equation}
On the other hand, since the embedding $\widetilde{h}: V \times (a_0 - \delta, a_0 + \delta) \times W_{\lambda_0, a_0} \rightarrow W'$ commutes with the projection onto 
$(a_0 - \delta, a_0 + \delta)$ and its image is equal to $V\times W_{\lambda_0, (a_0 - \delta, a_0 + \delta)}$, we also have 
\begin{equation} \label{cond.makereg4}
W_{\lambda_0, (a_0 - \delta', a_0 + \delta')} \subseteq W'_{\lambda, (a_0 - \delta', a_0 + \delta')}
\end{equation} 
for all $\lambda \in V$. Since (\ref{cond.makereg3}) also holds for any point $\lambda \in V$, it follows that the pre-image  $x^{-1}_1\big((a_0 - \delta', a_0 + \delta')\big)$ is equal to $V \times W_{\lambda_0, (a_0 - \delta', a_0 + \delta')}$. Thus, we can conclude that $a_0$ is a fiberwise regular value of the map
$x_1: W' \rightarrow \mathbb{R}$. Moreover, it is clear that the ambient isotopy $F$ satisfies all the other properties listed in this lemma. 
\end{proof}

\subsection{Proof of Proposition \ref{makereg}}  \label{section52}

At this point, it is helpful to explain our strategy for proving Proposition \ref{makereg}. Let then $M$ be a closed PL manifold and $W$ an element of $\psi_d(N,1)(M)$.
Using Lemma \ref{makereg1}, we can find a finite open cover $ U_1, \ldots, U_p$ of $M$ such that, for each $j=1,\ldots,p$, there is an ambient isotopy $F^j$ of $\mathbb{R}\times (-1,1)^{N-1}$ over $[0,1]\times M$ which produces a fiberwise regular value over $U_j$. Naively, 
to prove Proposition \ref{makereg},
one could try applying each isotopy $F^j$ successively with the hopes of producing a fiberwise regular value over each $U_j$ by the end of the process. However, the obvious problem with this approach is that if one has two open sets $U_j$ and $U_i$ that overlap, then applying either $F^j$ or $F^i$ first might affect the output of the other isotopy. To avoid this issue, we will prove that it is possible to define an open cover $ U_1, \ldots, U_p$ and isotopies $F^1, \ldots, F^p$ such that $F^j$ produces a fiberwise regular value over $U_j$  and, if $U_i$ and $U_j$ overlap, $F^j$ and $F^i$ will have disjoint supports in  $\mathbb{R}\times (-1,1)^{N-1}$. Thus, it will not matter if we apply either $F^j$ or $F^i$ first. The following lemma is our first step to construct this data.  

\theoremstyle{plain} \newtheorem{makereg2}[longman]{Lemma}
 
\begin{makereg2}  \label{makereg2}

Let $M$ be a closed $m$-dimensional PL manifold, $W$ an element of $\psi_d(N,1)(M)$, and $\lambda_0$ a point in $M$. Moreover, fix the following: 

\begin{itemize}

\item[$\cdot$] A value $\beta \in \mathbb{R}$. 

\item[$\cdot$] $m + 1$ regular values $a_0, \ldots, a_m$ of the projection $x_1: W_{\lambda_0} \rightarrow \mathbb{R}$ with the property that $a_j \in (\beta + j + \frac{1}{4} , \beta + j + \frac{3}{4})$ for each 
$j=0, \ldots, m$. 

\item[$\cdot$] A value $\epsilon > 0$ such that 
$(a_j - \epsilon, a_j + \epsilon) \subseteq (\beta + j + \frac{1}{4} , \beta + j + \frac{3}{4})$ for all 
$j=0, \ldots, m$.

\end{itemize}

Then, if $N - d \geq 3$, we can find an open neighborhood $V$ of $\lambda_0$ and $m+1$ ambient isotopies $F^{0}, \ldots, F^{m}$ of $\hspace{0.05cm}\mathbb{R}\times (-1,1)^{N-1}$ over $[0,1]\times V$ such that, for each 
$j=0, \ldots, m$, the 4-tuple $(V, \epsilon, a_j, F^{j})$ satisfies all the properties listed
in Lemma \ref{makereg1}. 

\end{makereg2}

\begin{proof}
First of all, recall that the set of regular values of $x_1: W_{\lambda_0} \rightarrow \mathbb{R}$ is dense in $\mathbb{R}$ (see Remark \ref{remark.williamson2}). Therefore, it is indeed possible to pick $m+1$ regular values $a_0, \ldots, a_m$ such that 
$a_j \in (\beta + j + \frac{1}{4} , \beta + j + \frac{3}{4})$ for each $j=0, \ldots, m$. Now, for any $j$ in 
$\{0, \ldots, m\}$, we can find an open neighborhood $V_j$ of $\lambda_0$ and an ambient isotopy $G^j$ 
of $\mathbb{R}\times (-1,1)^{N-1}$ over $[0,1]\times V_j$ such that  $(V_j, \epsilon, a_j, G^{j})$ satisfies all the properties from Lemma \ref{makereg1}. Then, if $V$ is any open neighborhood of $\lambda_0$ contained in 
$V_0\cap \ldots \cap V_m$ and $F^j = G^j|_{[0,1]\times V \times \mathbb{R}\times (-1,1)^{N-1}}$ for $j=0,\ldots,m$, it is evident that each 4-tuple of the form $(V, \epsilon, a_j, F^{j})$ will also satisfy the properties given in the statement of Lemma \ref{makereg1}.
 \end{proof}

In Lemma \ref{coverk} below, we will construct the type of open covers that we wish to use in the proof of Proposition \ref{makereg}. For the statement of this lemma, and for its proof, we shall need the following technical definition. 

\theoremstyle{definition} \newtheorem{concentric}[longman]{Definition}

\begin{concentric} \label{concentric}
 
Let $K$ be a simplicial complex in some
Euclidean space $\mathbb{R}^m$,  and let $\sigma$ be a simplex of $K$
with barycentric point $b(\sigma)$. 
A subset 
$\widetilde{\sigma}$ of $\sigma$ is called a
\textit{concentric subsimplex of $\sigma$} if:

\begin{enumerate}

\item $\widetilde{\sigma}=\sigma$ if $\mathrm{dim}\hspace{0.05cm}\sigma = 0$.

\item If $\mathrm{dim}\hspace{0.05cm}\sigma > 0$, then there
is a value $0< t_0 < 1$ such that
\[
\widetilde{\sigma} = \{ (1-t)\cdot b(\sigma) + t\cdot x \hspace{0.1cm} | \hspace{0.1cm} x\in \partial\sigma, 0\leq t \leq t_0\}.
\]
\end{enumerate}

\end{concentric}

In the next lemma, we shall use the following definition: A \textit{PL ball of dimension $m$} is a PL space that is PL homeomorphic to the cube $[-1,1]^m$.  

\theoremstyle{plain}  \newtheorem{coverk}[longman]{Lemma}

\begin{coverk} \label{coverk}
Fix a closed $m$-dimensional PL manifold $M$.
Without loss of generality, we will suppose that $M$ is embedded in some $\mathbb{R}^n$
of sufficiently high dimension.  
Moreover, let $\mathcal{U} = \{V_{\alpha}\}_{\alpha\in\Lambda}$
be an open cover of $M$, and $K$ a finite simplicial complex 
which triangulates $M$ and
is subordinate to $\mathcal{U}$ (i.e., for each simplex $\sigma \in K$, there is a $V_{\alpha} \in \mathcal{U}$ such that $\sigma \subseteq V_{\alpha}$). Then, 
there exists a family of concentric subsimplices $\{ \widetilde{\sigma} \}_{\sigma \in K}$ such that   
$\widetilde{\sigma} \subseteq \sigma$ for each $\sigma \in K$, and
two collections $\{ U_{\sigma}\}_{\sigma \in K}$, $\{ U'_{\sigma}\}_{\sigma \in K}$
of open sets in $M$ 
which satisfy the following properties:

\begin{itemize}
\item[$(i)$]  $\widetilde{\sigma} \subseteq U_{\sigma}$ and  $\overline{U_{\sigma}} \subseteq U'_{\sigma}$ 
for all $\sigma \in K$. 

\item[$(ii)$] For each $\sigma \in K$, the closures $\overline{U_{\sigma}}$ and $\overline{U'_{\sigma}}$ are PL balls of dimension $m$. 

\item[$(iii)$] For each $\sigma \in K$, the closure
$\overline{U'_{\sigma}}$ is contained in some open set $V_{\alpha}$ of the collection $\mathcal{U}$.

\item[$(iv)$] $\overline{U'_{\sigma_1}} \cap \overline{U'_{\sigma_2}} = \varnothing$
if $\sigma_1$ and $\sigma_2$ are distinct simplices of $K$ of the same dimension.

\item[$(v)$] The collection $\{ U_{\sigma} \}_{\sigma\in K}$ is 
an open cover of $M$. 

\end{itemize}

\end{coverk} 

\begin{proof}
We are going to define the concentric simplices
$\widetilde{\sigma}$ and the open neighborhoods
$U_{\sigma}$,  $U'_{\sigma}$ by induction
on the dimension of the simplices of $K$. 
Let then $\alpha_1, \ldots, \alpha_p$ be the vertices of
$K$ and, for each $i=1,\ldots, p,$ let $V_i$ be an open set of the cover $\mathcal{U}$
which contains $\alpha_i$. Since $M$ is Hausdorff, we can find a collection of $p$ pairwise disjoint open sets 
$U'_{\alpha_1}, \ldots, U'_{\alpha_p} \subseteq M$ such that $\alpha_i \in U'_{\alpha_i}$ and 
$U'_{\alpha_i} \subseteq V_i$ for each $i = 1, \ldots, p$. 
Moreover, for each $i = 1, \ldots, p$, we may assume that $\overline{U'_{\alpha_i}}$ is a PL ball of dimension $m$.
By shrinking each ball $\overline{U'_{\alpha_i}}$ if necessary, we can also assume that 
the closures $\overline{U'_{\alpha_1}}, \ldots, \overline{U'_{\alpha_p}}$ are pairwise disjoint.
To complete the first step of this induction argument, we take for each vertex $\alpha_i \in K$ an open set $U_{\alpha_i} \subseteq M$ such that $\alpha_i \in U_{\alpha_i}$ and
$\overline{U_{\alpha_i}} \subseteq U'_{\alpha_i}$. Again, we can assume that each closure 
$\overline{U_{\alpha_i}}$ is a PL ball of dimension $m$. 

Now, let $q$ be a non-negative integer less than $m$ (recall that $m$ is the dimension of $M$), and let $K^q$ denote the $q$-skeleton of $K$. Suppose that, for each simplex $\sigma$ of $K^q$, we have defined a concentric subsimplex 
$\widetilde{\sigma}$ and a pair of open sets 
$U_{\sigma}, U'_{\sigma}$ in $M$ which satisfy the following: 

\begin{itemize}

\item[$\cdot$] The collection of open sets $\{U_{\sigma}\}_{\sigma \in K^q}$ covers $|K^q|$.

\item[$\cdot$] The collection of open sets $\{  U_{\sigma}, U'_{\sigma} \}_{\sigma \in K^q}$ satisfy properties 
(i), (ii), (iii), and (iv) given in the statement of this lemma. 

\end{itemize} 

For each $(q+1)$-simplex $\beta$ of $K$,
pick a concentric subsimplex 
$\widetilde{\beta}$ such that 
\[ 
\mathrm{cl}(\beta - \widetilde{\beta}) \subseteq \bigcup_{\sigma \in K^q}U_{\sigma}
\]
and an open set $V_{\beta} \in \mathcal{U}$ which contains $\beta.$ 
If $\beta_1$ and $\beta_2$ are distinct $(q+1)$-simplices of $K$, then note that 
$\widetilde{\beta}_1 \cap \widetilde{\beta}_2 = \varnothing$ since 
the interiors of $\beta_1$ and $\beta_2$ do not overlap. 
Since $M$ is a normal space, we can choose for each $(q+1)$-simplex $\beta$ an open neighborhood  $U'_{\beta} \subseteq M$ of $\widetilde{\beta}$ so that $U'_{\beta_1}\cap U'_{\beta_2} = \varnothing$ for any two distinct $(q+1)$-simplices $\beta_1$ and $\beta_2$.
In fact, for each $(q+1)$-simplex $\beta$ of $K$, we can choose $U'_{\beta}$ so that 
$\overline{U'_{\beta}}$ is a PL ball of dimension $m$ contained in $V_{\beta}$. For example, we can take 
$\overline{U'_{\beta}}$ to be a regular neighborhood of $\widetilde{\beta}$ in $V_{\beta}$. Since $\widetilde{\beta}$ is contractible and the dimension of $M$ is $m$, then standard facts from regular neighborhood theory 
ensure that $\overline{U'_{\beta}}$ is a PL ball of dimension $m$. 
Moreover, by shrinking each $\overline{U'_{\beta}}$ if necessary, we can also guarantee that 
$\overline{U'_{\beta_1}} \cap \overline{U'_{\beta_2}} = \varnothing$ for any pair of distinct 
$(q+1)$-simplices $\beta_1$ and $\beta_2$. 
Finally, to conclude the induction step, we just choose for  each $(q+1)$-simplex $\beta$  an open subset $U_{\beta}\subseteq M$ with the property that $\widetilde{\beta} \subseteq U_{\beta}$ and $\overline{U_{\beta}} \subseteq U'_{\beta}$. Again, we may assume that $\overline{U_{\beta}}$ is a regular neighborhood of $\beta$ 
contained in $U'_{\beta}$, which would imply that $\overline{U_{\beta}}$ is a PL ball of dimension $m$. 
By the way we chose the open sets $U_{\beta}$ and $U'_{\beta}$, we have that 
$\{U_{\sigma}\}_{\sigma \in K^{q+1}}$
is an open cover of  $|K^{q+1}|$ and that the collection
$\{U_{\sigma}, U'_{\sigma}\}_{\sigma \in K^{q+1}}$ 
satisfies conditions (i), (ii), (iii), and (iv) listed
in this lemma. 
\end{proof}

For the next remark, it is helpful to recall the following 
notation, which we used already in the statement of Lemma
\ref{makereg1}: If $f: [0,1]\times V \times M \rightarrow [0,1]\times V \times Q$ is an isotopy of $M$ in $Q$
over $[0,1]\times V$, then we shall denote the value of $f$ at $(t,\lambda) \in [0,1]\times V$ by $f_{t,\lambda}$. 

\theoremstyle{definition}  \newtheorem{dampening.isotopies}[longman]{Remark}

\begin{dampening.isotopies} \label{dampening.isotopies}
For the proof of Proposition \ref{makereg}, we also need to discuss how to \textit{dampen isotopies} over a 
base-space of the form $[0,1]\times V$. The space $V$  in the product $[0,1]\times V$ shall typically 
be an open set in a PL manifold.  
To explain this dampening procedure, let us fix an open set $V$ in an $m$-dimensional PL manifold
and a PL isotopy 
$f: [0,1] \times V \times M \rightarrow [0,1]\times V \times Q$ of $M$ in $Q$ over $[0,1]\times V$. 
Additionally, fix two nested open sets $U \subseteq U'$ contained in $V$ satisfying the following properties:
\begin{itemize}
\item[$\cdot$] $\overline{U} \subseteq U'$ and $\overline{U'}\subseteq V$. 

\item[$\cdot$] $\overline{U}$ and $\overline{U'}$ are PL balls of dimension $m$.
 \end{itemize}
Finally, fix a PL bump function $\alpha: V \rightarrow [0,1]$ such that 
$\alpha(\overline{U}) = \{1\}$ and $\alpha(\lambda) = 0$ for all $\lambda$ in $V - U'$.  
We wish to use the 
bump function $\alpha$ to produce a new isotopy $g$ over $[0,1]\times V$ which agrees with $f$ over 
$[0,1] \times \overline{U}$ and has the property that $g_{t,\lambda} = f_{0,\lambda}$ for all $(t,\lambda)$ in 
$[0,1] \times (V - U')$. One might be tempted to define 
this new isotopy $g$ by setting $g_{t,\lambda} = f_{\alpha(\lambda)t,\lambda}$
for all $(t,\lambda)$ in $[0,1] \times V$,
where $\alpha(\lambda)t$ is the product of the real values $\alpha(\lambda)$ and $t$.  
The problem with this definition is that the ordinary product of real 
numbers is not a PL function, so this way of defining $g$ might not give a PL isotopy. 
To fix this problem, consider the function 
$p: [0,1]\times V \rightarrow [0,1]$ given by $p(t, \lambda) = \alpha(\lambda)t$.
Note that for any point $(t,\lambda)$ in 
$[0,1]\times \overline{U}$ we have $p(t, \lambda) = t$. On the other hand, $p$ maps all points in 
$[0,1] \times (V - U')$ to $0$. In particular,
 the restrictions $p|_{[0,1]\times \overline{U}}$  and
$p|_{[0,1] \times (V - U')}$ are piecewise linear. Then, since both 
$[0,1]\times \overline{U}$ and $[0,1] \times (V - U')$ are closed PL subspaces of $[0,1]\times V$, 
the relative version of the Simplicial Approximation Theorem (proven in \cite{relative.iso}) guarantees that $p$ is homotopic to a PL map
$q: [0,1]\times V \rightarrow [0,1]$ which agrees with $p$ on 
$[0,1]\times \overline{U}$ and $[0,1] \times (V - U')$.  
Thus, this new PL map $q$ also  
satisfies $q(t, \lambda) = t$ for all $(t,\lambda) \in [0,1] \times \overline{U}$ and
$q(t, \lambda) = 0$ for all $(t,\lambda) \in [0,1] \times (V - U')$. 
It follows that the PL isotopy 
$g: [0,1] \times V \times M \rightarrow [0,1]\times V \times Q$ defined by 
setting $g_{t,\lambda} = f_{q(t,\lambda), \lambda}$ 
agrees with $f$ over $[0,1]\times \overline{U}$ and satisfies 
$g_{t,\lambda} = f_{0,\lambda}$ for all $(t,\lambda)$ in $[0,1] \times (V - U')$.
In future arguments (e.g., the proof of Proposition \ref{makereg}, which we will give immediately after this remark), if $g$ is an isotopy obtained from another isotopy $f$ 
via the procedure described above, then we shall say that $g$ 
is obtained  by \textit{dampening} $f$.  
\end{dampening.isotopies}

\begin{proof}[\textbf{\textit{Proof of Proposition \ref{makereg}}}] 
As indicated in Remark \ref{makereg.remark}, it is enough to prove this proposition in the case when the 
base-space is a closed PL manifold. Let us fix then a closed PL manifold $M$
of dimension $m$, and let $W$ be an element of $\psi_d(N,1)(M)$. 
For each point $\lambda$ in $M$, 
we will fix $m+1$ regular values $a_{\lambda,0}, \ldots, a_{\lambda,m}$ of the projection $x_1: W_{\lambda} \rightarrow \mathbb{R}$ with the property that $a_{\lambda, \hspace{0.03cm}j} \in (\beta + j + \frac{1}{4}, \beta + j + \frac{3}{4} )$ for $j=0, \ldots, m$. 
The value  $\beta$ that appears in each interval $(\beta + j + \frac{1}{4}, \beta + j + \frac{3}{4} )$ is the one that we fixed in the statement of this proposition. 
Also, for each $\lambda \in M$, we will fix a value $\epsilon_{\lambda} > 0$ such that
$(a_{\lambda,  \hspace{0.03cm} j} - \epsilon_{\lambda}, a_{\lambda,  \hspace{0.03cm} j} + \epsilon_{\lambda}) \subseteq (\beta + j + \frac{1}{4} , \beta + j + \frac{3}{4})$
for $j=0, \ldots, m$. According to Lemma 
\ref{makereg2}, for each $\lambda \in M$ we can find an open neighborhood $V_{\lambda}$ of $\lambda$
and  $m+1$ ambient isotopies $F^{\lambda,0}, \ldots, F^{\lambda,m}$ of $\mathbb{R}\times (-1,1)^{N-1}$ over 
$[0,1]\times V_{\lambda}$ with the following properties: 

\begin{itemize}

\item[$\cdot$] For $j=0,\ldots, m$, the isotopy $F^{\lambda, \hspace{0.03cm} j}$ is supported in $(a_{\lambda,  \hspace{0.03cm}j} - \epsilon_{\lambda}, a_{\lambda,  \hspace{0.03cm}j} + \epsilon_{\lambda}) \times (-1,1)^{N-1}$  and $F^{\lambda,  \hspace{0.03cm}j}_0$ is equal to the identity isotopy over $V_{\lambda}$. 

\item[$\cdot$] If $W_{V_{\lambda}}$ is the restriction of $W$ over $V_{\lambda}$, then  $a_{\lambda,  \hspace{0.03cm}j}$ is a fiberwise regular value of the projection 
$x_1: F^{\lambda,  \hspace{0.03cm}j}_1(W_{V_{\lambda}}) \rightarrow \mathbb{R}$. 

\end{itemize}

By the compactness of $M$, we can choose finitely many points $\lambda_1, \ldots, \lambda_p \in M$ such that the corresponding open sets $V_{\lambda_1}, \ldots, V_{\lambda_p}$ cover $M$. From now on, we will denote the open sets  $V_{\lambda_1}, \ldots, V_{\lambda_p}$ by 
 $V_1, \ldots, V_p$ and, for each $i = 1, \ldots, p$, we will relabel the fiberwise regular values
 $a_{\lambda_i,0}, \ldots, a_{\lambda_i,m}$ and the ambient isotopies 
 $F^{\lambda_i,0}, \ldots, F^{\lambda_i,m}$ corresponding to $\lambda_i$ as 
 $a_{i,0}, \ldots, a_{i,m}$  and $F^{i,0}, \ldots, F^{i,m}$ respectively. Moreover, we relabel the values 
 $\epsilon_{\lambda_1}, \ldots, \epsilon_{\lambda_p}$ as $\epsilon_1, \ldots, \epsilon_p$. 
  
Next, take a finite simplicial complex $K$ which triangulates $M$ and is subordinate to the open cover 
 $\{V_1, \ldots, V_p\}$. By Lemma \ref{coverk}, we can construct two open covers
 $\mathcal{U} = \{U_{\sigma}\}_{\sigma \in K}$ and  $\mathcal{U}' = \{ U'_{\sigma}\}_{\sigma \in K}$ of $M$ which satisfy the following conditions:  
 
 \begin{itemize}
 
 \item[$\cdot$]  For each $\sigma \in K$, $\overline{U_{\sigma}} \subseteq U'_{\sigma}$ and 
 $\overline{U'_{\sigma}}$ is contained in some open set $V_j$ of the collection  $\{V_1, \ldots, V_p\}$. 
 
  \item[$\cdot$]  For each $\sigma \in K$, $\overline{U_{\sigma}}$ and 
  $\overline{U'_{\sigma}}$ are PL balls of dimension $m$.
 
  \item[$\cdot$]  $\overline{U'_{\sigma_1}} \cap \overline{U'_{\sigma_2}} = \varnothing$ for any pair of distinct simplices $\sigma_1, \sigma_2$ of the same dimension. 
 
 \end{itemize}
 
For each $\sigma \in K$, let us fix an index $i_{\sigma} \in \{1, \ldots, p\}$ with the property that 
$\overline{U'_{\sigma}} \subseteq V_{i_{\sigma}}$.  Additionally, for each simplex $\sigma$ in $K$, let $|\sigma|$ denote the dimension of $\sigma$. For each $\sigma \in K$, we can dampen the isotopy $F^{\hspace{0.03cm} i_{\sigma}, |\sigma|}$ (using the procedure described in Remark \ref{dampening.isotopies}) 
to produce a new ambient isotopy $G^{\hspace{0.03cm}i_{\sigma}, |\sigma|}$ of 
$\mathbb{R} \times (-1,1)^{N-1}$ over $[0,1] \times V_{i_{\sigma}}$ which agrees with 
$F^{\hspace{0.03cm} i_{\sigma}, |\sigma|}$ over $[0,1] \times \overline{U_{\sigma}}$, and
agrees with the identity isotopy over 
the union $\big( \{0\}\times V_{i_{\sigma}} \big)\cup \big( [0,1]\times (V_{i_{\sigma}}- U'_{\sigma}) \big)$.
By this last property, we may assume that $G^{\hspace{0.03cm}i_{\sigma}, |\sigma|}$ is defined over
$[0,1]\times M$ and that $G^{\hspace{0.03cm}i_{\sigma}, |\sigma|}$ agrees with the
identity isotopy over $\big( \{0\}\times M  \big)\cup \big( [0,1]\times (M - U'_{\sigma}) \big)$.
Since $G^{\hspace{0.03cm}i_{\sigma}, |\sigma|}$ was obtained by dampening 
$F^{\hspace{0.03cm} i_{\sigma}, |\sigma|}$, the isotopy $G^{\hspace{0.03cm}i_{\sigma}, |\sigma|}$ will also be supported in $(a_{i_{\sigma}, |\sigma|} - \epsilon_{i_{\sigma}}, a_{i_{\sigma}, |\sigma|} + \epsilon_{i_{\sigma}}) \times (-1,1)^{N-1}$, where  $a_{i_{\sigma}, |\sigma|}$ is the fiberwise regular value  over $V_{i_{\sigma}}$ produced by the isotopy $F^{\hspace{0.03cm} i_{\sigma}, |\sigma|}$. 

Note that any two distinct ambient isotopies in the collection 
$\{G^{\hspace{0.03cm}i_{\sigma}, |\sigma|}\}_{\sigma \in K}$ will commute with each other. 
Indeed, if $\sigma_1, \sigma_2 \in K$ have different dimensions, then the intervals 
$[a_{i_{\sigma_1}, |\sigma_1|} - \epsilon_{i_{\sigma_1}}, a_{i_{\sigma_1}, |\sigma_1|} + \epsilon_{i_{\sigma_1}}]$ and 
$[a_{i_{\sigma_2}, |\sigma_2|} - \epsilon_{i_{\sigma_2}}, a_{i_{\sigma_2}, |\sigma_2|} + \epsilon_{i_{\sigma_2}}]$ do not intersect. Consequently, $G^{\hspace{0.03cm}i_{\sigma_1}, |\sigma_1|}$ and 
$G^{\hspace{0.03cm}i_{\sigma_2}, |\sigma_2|}$ have disjoint supports in $\mathbb{R}\times (-1,1)^{N-1}$. 
On the other hand, if $\sigma_1$ and $\sigma_2$ have the same dimension, then the corresponding PL balls 
$\overline{U'_{\sigma_1}}$ and $\overline{U'_{\sigma_2}}$ do not overlap, which then implies that 
$G^{\hspace{0.03cm}i_{\sigma_1}, |\sigma_1|}$ and 
$G^{\hspace{0.03cm}i_{\sigma_2}, |\sigma_2|}$ have disjoint supports in the base-space $[0,1]\times M$. 
Therefore, if we plug in the constant concordance $[0,1]\times W$ in the ambient isotopy $G$ obtained by composing all the isotopies $G^{\hspace{0.03cm}i_{\sigma}, |\sigma|}$ in any given order, we will obtain an element $\widetilde{W} \in \psi_d(N,1)([0,1]\times M)$   which is a concordance from $W$ to an element 
$W' \in \psi_d(N,1)(M)$ with the property that, for any simplex $\sigma \in K$, the restriction 
$W'_{U_{\sigma}}$ over $U_{\sigma}$
agrees with $F^{i_{\sigma}, |\sigma|}_1(W_{U_{\sigma}})$. In particular, for each 
$\sigma \in K$, we have that
$a_{i_{\sigma}, |\sigma|}$ is a fiberwise regular value of the projection 
$x_1: W'_{U_{\sigma}}\rightarrow \mathbb{R}$. 
Thus, the concordance $\widetilde{W}$ and the open cover $\{ U_{\sigma}\}_{\sigma \in K}$
satisfy properties (i) and (iii) of Proposition \ref{makereg}.
To see that $\widetilde{W}$ also satisfies property (ii), just note that the support of each isotopy $G^{\hspace{0.03cm}i_{\sigma}, |\sigma|}$ in $\mathbb{R}\times (-1,1)^{N-1}$ lies above the height $\beta$. Therefore, $\widetilde{W}$ agrees with the constant concordance $[0,1]\times W$ when we restrict the background space to $(-\infty,\beta)\times (-1,1)^{N-1}$. 
\end{proof}

In Proposition \ref{longman.k} below, 
we shall give a modified version of Proposition \ref{makereg} 
which we will use in the next section. 
For the statement of this proposition and its proof, we will continue to use the following notation: If $W \subseteq P \times \mathbb{R}^k \times (-1,1)^{N-k}$ is an element of the set $\psi_d(N,k)(P)$, then $x_k : W \rightarrow \mathbb{R}^k$ shall denote the standard projection from $W$ to the second factor of $P \times \mathbb{R}^k \times (-1,1)^{N-k}$. Also, we will denote by 
 $x_1 : W \rightarrow \mathbb{R}$ the projection from $W$ onto the first coordinate of the factor $\mathbb{R}^k$. 

\theoremstyle{plain}  \newtheorem{longman.k}[longman]{Proposition}

\begin{longman.k} \label{longman.k}
Suppose that $N - d \geq 3$ and $1 < k \leq N$.
Fix a closed PL manifold $M$ and an element $W \in \psi_d(N,k)(M)$.  
Moreover, let $i_0, i_1: M \hookrightarrow [0,1]\times M$ be the inclusions defined by 
$\lambda \mapsto (0,\lambda)$
and $\lambda \mapsto (1,\lambda)$ respectively. Then,
given any value $\beta \in \mathbb{R}$, we can find a concordance 
$\widetilde{W} \in \psi_d(N,k)([0,1] \times M)$ with the following properties: 
\begin{itemize}

\item[(i)] $i^*_0\widetilde{W} = W$.

\item[(ii)] $\widetilde{W}$ agrees with the constant concordance $[0,1] \times W$ when we restrict the background space to $(-\infty, \beta) \times \mathbb{R}^{N-1}$. 

\item[(iii)]  For the element $W' = i_1^*\widetilde{W}$, there exists a finite open cover $\mathcal{U}$ of $M$ such that, for each open set $U \in \mathcal{U}$, the map $x_k: W'_{U} \rightarrow \mathbb{R}^k$ has a fiberwise regular value $a = (a_1,\ldots, a_k)\in \mathbb{R}^k$ such that $a_1 > \beta$. 
\end{itemize}

\end{longman.k}

\begin{proof}[Proof sketch]
To prove this result, we just need to adjust the proof of Proposition \ref{makereg} at the right places.
 Let us denote the dimension of $M$ by $m$. 
Next, consider an arbitrary point $\lambda \in M$ and fix a value $i \in \{0, \ldots, m\}$. 
If we perform the proof of Lemma \ref{makereg1} with the projection   $x_k : W \rightarrow \mathbb{R}^k$ instead of $x_1 : W \rightarrow \mathbb{R}$, 
we can find an open neighborhood  $V_i\subseteq M$ of $\lambda$ and an ambient isotopy 
\[
F^{\lambda,i}: [0,1]\times V_i \times \mathbb{R}^k\times (-1,1)^{N-k} \longrightarrow [0,1]\times V_i \times \mathbb{R}^k\times (-1,1)^{N-k} 
\]
of $\mathbb{R}^k\times (-1,1)^{N-k}$ over $[0,1]\times V_i$ with the following properties:

\begin{itemize}

\item[$\cdot$] $F^{\lambda,i}_0 =\mathrm{Id}_{ V_i \times \mathbb{R}^k\times (-1,1)^{N-k}}$.

\item[$\cdot$] The standard projection $x_k: F^{\lambda,i}_1(W_{V_i}) \rightarrow \mathbb{R}^k$ has a fiberwise regular value 
$a^i = (a^i_1, \ldots, a^i_k)$ with the property that $a^i_1 \in  (\beta + i + \frac{1}{4} , \beta + i + \frac{3}{4})$. 

\item[$\cdot$] $F^{\lambda,i}$ is supported on $B(a^i,\epsilon_i)\times (-1,1)^{N-k}$, where $\epsilon_i > 0$ is a value such that 
$(a^i_1 - \epsilon_i, a^i_1 + \epsilon_i) \subset (\beta + i + \frac{1}{4} , \beta + i + \frac{3}{4})$.

\end{itemize} 

We can obtain such an isotopy $F^{i,\lambda}$ for each $i \in \{0, \ldots, m\}$. From now on, we will assume that all the ambient isotopies $F^{0,\lambda}, \ldots, F^{m,\lambda}$ are defined over the same product 
$[0,1]\times V_{\lambda}$. For example, $V_{\lambda}$ could be the intersection of all the neighborhoods $V_i$.

By the compactness of $M$, we can find finitely many points $\lambda_1, \ldots, \lambda_p \in M$ such that 
the corresponding open neighborhoods $V_{\lambda_1}, \ldots, V_{\lambda_p}$ cover $M$. Next, fix a finite simplicial complex $K$ which triangulates $M$ and is subordinate to the open cover $\{V_{\lambda_1}, \ldots, V_{\lambda_p}\}$.  By Lemma
\ref{coverk}, we can find two collections of open sets $\{U_{\sigma}\}_{\sigma \in K}$ and 
$\{U'_{\sigma}\}_{\sigma \in K}$ in $M$ indexed by $K$ which are subordinate to 
$\{V_{\lambda_1}, \ldots, V_{\lambda_p}\}$ and  satisfy the following properties: 

\begin{itemize}

\item[(i)] $\overline{U_{\sigma}} \subseteq U'_{\sigma}$ for all $\sigma \in K$.  

\item[(ii)] For all $\sigma \in K$, the closures $\overline{U_{\sigma}}$ and $\overline{U'_{\sigma}}$
are PL balls of dimension $m$.

\item[(iii)] $\overline{U'_{\sigma_1}} \cap \overline{U'_{\sigma_2}} = \varnothing$
if $\sigma_1$ and $\sigma_2$ are distinct simplices of $K$ of the same dimension.

\item[(iv)] The collection $\{ U_{\sigma} \}_{\sigma\in K}$ covers $M$, and the collection of PL balls 
$\{\overline{U'_{\sigma}}\}_{\sigma \in K}$ is subordinate to the cover
$\{V_{\lambda_1}, \ldots, V_{\lambda_p}\}$.
 
\end{itemize}

For each $\sigma \in K$, let us fix an open set of the collection $\{V_{\lambda_1}, \ldots, V_{\lambda_p}\}$ that contains $\overline{U'_{\sigma}}$. From now on, we will denote this open set containing $\overline{U'_{\sigma}}$ by $V_{\sigma}$. Also, the ambient isotopies over $[0,1]\times V_{\sigma}$ that we obtained at the beginning of this proof will be relabeled as $F^{\sigma,0}, \ldots, F^{\sigma,m}$, and the dimension of a simplex $\sigma \in K$  will be denoted again by $|\sigma|$.  
As we did in the last step of the proof of Proposition \ref{makereg}, we can dampen the ambient isotopy $F^{\sigma, |\sigma|}$ to produce a new ambient isotopy $G^{\sigma,|\sigma|}$ 
of $\mathbb{R}^k\times (-1,1)^{N-k}$
over $[0,1]\times M$ which agrees with $F^{\sigma, |\sigma|}$ over $[0,1]\times \overline{U_{\sigma}}$ and with the identity isotopy over 
$[0,1]\times (M - U'_{\sigma})$. Doing an argument identical to the one we did at the very end of the proof of Proposition \ref{makereg}, we can show that any two isotopies in the collection $\{G^{\sigma, |\sigma|}\}_{\sigma \in K}$ will have disjoint supports. Thus, we can compose all the maps $G^{\sigma, |\sigma|}$ in any order to produce an ambient isotopy $G$ 
of $\mathbb{R}^k\times (-1,1)^{N-k}$ over $[0,1]\times M$ which is the identity at time $t=0$, and agrees with $F^{\sigma,|\sigma|}$ over $[0,1]\times U_{\sigma}$. 
Therefore, the element $\widetilde{W}$ of $\psi_d(N,k)([0,1]\times M)$
obtained by applying $G$ to $[0,1]\times W$ will be a concordance from $W$ to an element $W'$ such that, for each $\sigma \in K$, 
the projection $x_k: W'_{U_{\sigma}}\rightarrow \mathbb{R}^k$ has a fiberwise regular value $a \in \mathbb{R}^k$ whose first coordinate is greater than $\beta$. Finally, since the support of each isotopy 
$F^{\sigma, |\sigma|}$ is contained in 
$(\beta,\infty)\times \mathbb{R}^{N-1}$, we also have that $\widetilde{W}$ agrees with the constant concordance 
$[0,1]\times W$ when we restrict the background space to $(-\infty,\beta)\times \mathbb{R}^{N-1}$. 
\end{proof}

\section{The equivalence $|\psi_d(N,1)_{\bullet}| \stackrel{\simeq}{\longrightarrow} \Omega^{N-1}|\Psi_d(\mathbb{R}^N)_{\bullet}|$}  \label{section6}  

\subsection{The scanning map}     \label{section60}

To complete the proof of the main theorem, it remains to show that 
 $|\psi_d(N,1)_{\bullet}|$ and $\Omega^{N-1}|\Psi_d(\mathbb{R}^N)_{\bullet}|$ have the same weak homotopy type. Recall that the canonical base-point of $|\Psi_d(\mathbb{R}^N)_{\bullet}|$ is the geometric realization of the subsimplicial set $\varnothing_{\bullet}$ consisting of all empty simplices in $\Psi_d(\mathbb{R}^N)_{\bullet}$  (see Remark \ref{spacerem}). This will also be the canonical base-point for any space of the form $|\psi_d(N,k)_{\bullet}|$. For the space 
 $\Omega^{N-1}|\Psi_d(\mathbb{R}^N)_{\bullet}|$, the preferred base-point will be the point corresponding to the constant map 
 $S^{N-1} \rightarrow |\Psi_d(\mathbb{R}^N)_{\bullet}|$ which sends all points in $S^{N-1}$ to $|\varnothing_{\bullet}|$. 
 
The first thing we will do in this section is define the map 
 $|\psi_d(N,1)_{\bullet}| \rightarrow \Omega^{N-1}|\Psi_d(\mathbb{R}^N)_{\bullet}|$ that we will use to compare the spaces  $|\psi_d(N,1)_{\bullet}|$ and $\Omega^{N-1}|\Psi_d(\mathbb{R}^N)_{\bullet}|$. First, for any integer $1 \leq k \leq N -1$, we will define a map of the form
 \begin{equation} \label{scanning.map}
 \mathcal{S}_k: |\psi_d(N,k)_{\bullet}| \longrightarrow \Omega|\psi_d(N,k+1)_{\bullet}|.
 \end{equation}
 This map 
$\mathcal{S}_k$, which we will refer to as \textit{the scanning map},
will be the adjoint of a map 
$\mathcal{E}: S^1 \wedge |\psi_d(N,k)_{\bullet}| \rightarrow |\psi_d(N,k+1)_{\bullet}|$
whose construction is almost identical to that of the structure maps $\mathcal{E}_N$
of the spectrum $\Psi^{\mathrm{PL}}_d$.  However, instead of pushing manifolds in an entirely new direction (as we did for the 
spectrum $\Psi^{\mathrm{PL}}_d$), we will define the map $\mathcal{E}: S^1 \wedge |\psi_d(N,k)_{\bullet}| \rightarrow |\psi_d(N,k+1)_{\bullet}|$ by pushing manifolds in 
the new open direction we have available in $\psi_d(N,k+1)_{\bullet}$, i.e., 
the one corresponding to the coordinate $x_{k+1}$. 

As was the case with the structure maps $\mathcal{E}_N$ of $\Psi^{\mathrm{PL}}_d$,  the construction of the map 
$\mathcal{E}: S^1 \wedge |\psi_d(N,k)_{\bullet}| \rightarrow |\psi_d(N,k+1)_{\bullet}|$
requires fixing an increasing PL homeomorphism $f: [0,1) \rightarrow [0,\infty)$. Next, for each $p \geq 0$, 
we define a PL embedding
\[
F_p: [0,1)\times\Delta^p\times \mathbb{R}^N \rightarrow [0,1]\times\Delta^p\times \mathbb{R}^{N}  
\]
by setting
\[ 
F_p(t,\lambda, x_1, \ldots,x_k,x_{k+1}, \ldots, x_{N}) = (t,\lambda,x_1, \ldots, x_k, x_{k+1} + f(t),\ldots, x_{N}). 
\]
We can use these PL embeddings $F_p$ to push  a $p$-simplex $W$ of $\psi_d(N,k)_{\bullet}$ to $\infty$ along the new open direction $x_{k+1}$. Then, following the same procedure we used in the  definition of the structure maps of $\Psi_d^{\mathrm{PL}}$ in \S \ref{section2.5}, we can use the embeddings $F^p$ 
to construct a map $\mathcal{T}: [-1,1]\times |\psi_d(N,k)_{\bullet}| \rightarrow  |\psi_d(N,k+1)_{\bullet}|$ which sends $[-1,1]\times |\varnothing_{\bullet}|$
and $\{-1,1\}\times |\psi_d(N,k)_{\bullet}|$ to the base-point $|\varnothing_{\bullet}|$
of $|\psi_d(N,k+1)_{\bullet}|$. Consequently, $\mathcal{T}$ induces  a map of the form 
$\mathcal{E}: S^1 \wedge |\psi_d(N,k)_{\bullet}| \rightarrow |\psi_d(N,k+1)_{\bullet}|$.
As mentioned earlier, we define the scanning map as follows. 

\theoremstyle{definition} \newtheorem{scan}{Definition}[section] 

\begin{scan} \label{scan}
The \textit{scanning map} 
\[
\mathcal{S}_k: |\psi_d(N,k)_{\bullet}| \longrightarrow \Omega|\psi_d(N,k+1)_{\bullet}|
\]
is the adjoint of the map
$\mathcal{E}: S^1 \wedge |\psi_d(N,k)_{\bullet}| \rightarrow |\psi_d(N,k+1)_{\bullet}|$  defined above.

\end{scan}

The map 
$\widetilde{\mathcal{S}}: |\psi_d(N,1)_{\bullet}| \rightarrow \Omega^{N-1}|\Psi_d(\mathbb{R}^N)_{\bullet}|$
that we will use to compare the spaces $|\psi_d(N,1)_{\bullet}|$ and $\Omega^{N-1}|\Psi_d(\mathbb{R}^N)_{\bullet}|$ is defined as the following composition:
\begin{equation}  \label{prescaneq}
\widetilde{\mathcal{S}} = \Omega^{N-2}\mathcal{S}_{N-1} \circ \ldots \circ \mathcal{S}_1.
\end{equation}

To finish the proof of the main theorem, we must show that $\widetilde{S}$ is a weak homotopy equivalence. This will be a consequence of the following result. 

\theoremstyle{plain}  \newtheorem{scanweak}[scan]{Theorem}

\begin{scanweak} \label{scanweak}
The scanning map 
\[
\mathcal{S}_k: |\psi_d(N,k)_{\bullet}| \longrightarrow \Omega |\psi_d(N,k+1)_{\bullet}|
\]
is a weak homotopy equivalence if $N-d\geq 3$ and $1 \leq k \leq N-1$. 
\end{scanweak}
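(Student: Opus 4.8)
\noindent\textbf{Proof strategy for Theorem \ref{scanweak}.} The plan is to argue exactly as in the proof of Theorem \ref{longman}: reduce everything to a single parametrized PL normal-form statement, and prove that statement by Hudson's Isotopy Extension Theorem together with the combinatorial covering scheme of Lemma \ref{coverk}. Since $|\psi_d(N,k)_{\bullet}|$ and $|\psi_d(N,k+1)_{\bullet}|$ are CW complexes and $\mathcal{S}_k$ is based, it suffices to show that $\mathcal{S}_k$ induces an isomorphism $\pi_p|\psi_d(N,k)_{\bullet}| \to \pi_p\Omega|\psi_d(N,k+1)_{\bullet}| = \pi_{p+1}|\psi_d(N,k+1)_{\bullet}|$ for every $p\ge 0$ and every choice of basepoint. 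Unwinding the adjunction of Definition \ref{scan}, if $a\colon (I^p,\partial I^p)\to(|\psi_d(N,k)_{\bullet}|,\bullet_N)$ represents $\xi\in\pi_p|\psi_d(N,k)_{\bullet}|$, then $\mathcal{S}_k\circ a$ corresponds under $\pi_p\Omega(-)\cong\pi_{p+1}(-)$ to the class of $\mathcal{T}\circ(a\times\mathrm{id}_{[-1,1]})\colon(I^p\times[-1,1],\partial)\to(|\psi_d(N,k+1)_{\bullet}|,\bullet_N)$, where $I^p\times[-1,1]\cong I^{p+1}$. Thus the two things to establish are: (surjectivity) every map $b\colon(I^{p+1},\partial I^{p+1})\to(|\psi_d(N,k+1)_{\bullet}|,\bullet_N)$ is homotopic rel $\partial$ to one of the form $\mathcal{T}\circ(a\times\mathrm{id}_{[-1,1]})$; and (injectivity) if $\mathcal{T}\circ(a\times\mathrm{id}_{[-1,1]})$ is null rel $\partial$ then so is $a$.

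\noindent Both will be deduced from a PL \emph{scanning normal form} analogous to Proposition \ref{makereg}. First, precisely as in Lemma \ref{homofact}, using the subdivision map of \S\ref{secsubmap} and Corollary \ref{subcorollary} (and the Kan property, Theorem \ref{kan}), one reduces $b$ to the geometric realization of a morphism $K_{\bullet}\to\psi_d(N,k+1)_{\bullet}$ from a finite ordered simplicial complex, and hence, via the $\Delta$-set form of Proposition \ref{classsub}, to an element $W\in\psi_d(N,k+1)(|K|)$, $|K|\cong I^p\times[-1,1]$, which over the subcomplex classifying $\partial(I^p\times[-1,1])$ is the empty manifold. The normal-form statement I would then prove is: after first arranging, by Theorem \ref{williamson} and a fibrewise-regularity argument for the \emph{new} projection $x_{k+1}\colon W\to\mathbb{R}$ entirely parallel to the passage $\psi_d\rightsquigarrow\psi_d^R$ of \S5--6 (this is the analogue, one coordinate up, of Definition \ref{fibregular} and Theorem \ref{longman}, and may be organized by introducing an auxiliary sub-$\Delta$-set of $\psi_d(N,k+1)_{\bullet}$ and showing the inclusion is an equivalence), there is a concordance $\widetilde W\in\psi_d(N,k+1)([0,1]\times|K|)$, constant over $\partial$ and over $t_1^{-1}\big((-\infty,\beta]\big)$ for a fixed $\beta$, from $W$ to a family $W'$ which, under the identifications $\mathbb{R}^k\times(-1,1)^{N-k}\cong\mathbb{R}^k\times\{c\}\times(-1,1)^{N-k-1}$ and the homeomorphism $f\colon[0,1)\to[0,\infty)$ used in Definition \ref{scan}, lies in the image of $\mathcal{T}\circ(-\times\mathrm{id}_{[-1,1]})$ (concretely: $W'_{(\lambda,t)}$ is empty outside a bounded $x_{k+1}$-interval whose position depends only on the $[-1,1]$-coordinate $t$, and inside that interval is a prescribed slide of a $\psi_d(N,k)$-family $V_{\lambda}$). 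Granting this, surjectivity follows by the bookkeeping of steps $iv)$--$vi)$ of the proof of Theorem \ref{longman}, and injectivity follows by applying the same construction to a family over $|K|\times[0,1]$ witnessing nullity.

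\noindent The construction of this normal-form concordance is the core of the argument, and it would proceed as in \S6.3. Working locally over a point $\lambda$ of a fine triangulation of the parameter polyhedron, one uses Theorem \ref{williamson} and Proposition \ref{gluechart} to obtain a normalized product chart around a regular $x_{k+1}$-cylinder of $W_{\lambda}$; one then compresses, by a fibrewise ambient PL isotopy, the portion of each nearby fibre lying far out along $x_{k+1}$ so that it is pushed off to infinity, leaving a family supported in a bounded $x_{k+1}$-slab. The ambient isotopies are supplied by the Isotopy Extension Theorem \ref{isotopy} (parametrized version, with the retraction clause used exactly as in Corollary \ref{isocor} and Lemma \ref{mainlemma}); here the hypothesis $N-d\ge 3$ is exactly the codimension condition $q-m\ge 3$, with $q=N$ the dimension of the ambient box $\mathbb{R}^{k+1}\times(-1,1)^{N-k-1}$ (restricted to a compact PL ball) and $m=d$ the dimension of the submanifold being moved. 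Finally, a combinatorial covering argument identical to Lemma \ref{coverk}, using concentric subsimplices and the disjointness at each fixed $x_{k+1}$-height of the supports of the local isotopies, patches the local concordances into one global concordance supported in $t_1^{-1}\big([\beta,\infty)\big)$, with constancy below $\beta$ built in.

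\noindent \textbf{Main obstacle.} The delicate step is the parametrized PL compression along $x_{k+1}$: in contrast with the smooth case one cannot realize such a compression by integrating a vector field, and the fibrewise version must be controlled uniformly in the base parameter, which forces the joint use of the retraction clause of Theorem \ref{isotopy}, the normalized product charts of \S3.1, and the covering scheme of Lemma \ref{coverk}. Equally delicate, and the precise analogue one dimension up of the $\psi_d\hookrightarrow\psi_d^R$ subtlety of \S6, is that the compression may only be carried out after $x_{k+1}$-fibrewise regularity has been established, since otherwise the regular $x_{k+1}$-cylinders do not vary suitably with the base point; as noted in the introduction, this is resolved by the techniques already developed for the equivalence $B\mathcal{C}_d^{PL}(\mathbb{R}^N)\simeq|\psi_d(N,1)_{\bullet}|$ of Theorem \ref{DesCob}.
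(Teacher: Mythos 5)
Your outline has a conceptual gap that the parametrized-isotopy and covering machinery cannot close. The paper does not prove Theorem \ref{scanweak} by a direct normal-form argument on relative homotopy classes: it factors $\mathcal{S}_k$, up to homotopy, through a delooping. One introduces the bar-construction-type bisimplicial set $N\psi_d(N,k)_{\bullet,\bullet}$ of monotone families with marked regular $x_{k+1}$-heights (Definition \ref{monoid}) and writes $\mathcal{S}_k$ as the composite (\ref{descan})
\[
|\psi_d(N,k)_{\bullet}| \longrightarrow \Omega\left\|N\psi_d(N,k)_{\bullet,\bullet}\right\| \longrightarrow \Omega|\psi_d^{0}(N,k+1)_{\bullet}| \longrightarrow \Omega|\psi_d^{\emptyset}(N,k+1)_{\bullet}|,
\]
proving each arrow a weak equivalence separately: the first by Segal's group-completion criterion (Proposition \ref{segal}, Lemma \ref{trueseg}, Proposition \ref{propmonoid}), the second by a realization-lemma argument (Proposition \ref{forgetsecond}), and the third, the inclusion $\psi_d^{0}(N,k+1)_{\bullet}\hookrightarrow\psi_d^{\emptyset}(N,k+1)_{\bullet}$ of Proposition \ref{incempty}, by the fibrewise-regularity, Isotopy Extension and covering apparatus you describe. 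Only this last piece uses $N-d\ge 3$, and it is the only place your methods apply.

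The \emph{scanning normal form} you posit, namely that every family in $\psi_d(N,k+1)_{\bullet}$ over $I^p\times[-1,1]$ vanishing on the boundary is concordant rel boundary to one in the image of $\mathcal{T}\circ(-\times\mathrm{id})$, is not provable by regularity, Theorem \ref{isotopy}, and Lemma \ref{coverk}. Already at $p=0$ it is exactly the surjectivity of $\pi_0|\psi_d(N,k)_{\bullet}|\to\pi_1|\psi_d(N,k+1)_{\bullet}|$, and that surjectivity is the assertion that the concatenation monoid on $\pi_0|\psi_d(N,k)_{\bullet}|$ is group-like: a loop in which a manifold enters from $+\infty$ must be cancellable against one entering from $-\infty$, and a loop in which a manifold enters and then retreats must be deformable to a constant. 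This group-like property is a separate, nontrivial geometric input, proved in Proposition \ref{propmonoid}, part 4), by the bend-and-reflect construction around Figure 1, which manufactures an explicit inverse by folding a copy of the manifold back over itself and letting it escape along $x_1$; it is precisely here that $k\ge 1$ is used, and your outline never invokes $k\ge 1$ at all. Nothing in the isotopy-extension/covering toolkit can produce such a cancellation: those tools reposition manifolds but cannot annihilate them. Indeed, if your outline were correct it would apply unchanged at $k=0$, where $\psi_d(N,0)_{\bullet}$ classifies compact bundles and is manifestly not group-like, so the scanning map is not an equivalence there. What you have correctly anticipated, the $x_{k+1}$-fibrewise-regularity subtlety resolved as in \S6, is the content of the single inclusion $\psi_d^{0}\hookrightarrow\psi_d^{\emptyset}$, one of three pieces of the argument, not the whole theorem.
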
     

The proof of Theorem \ref{scanweak} will occupy most of the rest of this section. Again, we include the condition $N-d\geq 3$  because we will use the Isotopy Extension Theorem at some point in our proof.

\theoremstyle{definition}  \newtheorem*{notadd5}{Note}

\begin{notadd5}

For the proof of Theorem \ref{scanweak}, we shall assume
that the underlying PL space $W$ of a $p$-simplex in $\psi_d(N,k)_{\bullet}$
is contained in $\Delta^p\times \mathbb{R}^k\times (0,1)^{N-k}$.
\end{notadd5}

\subsection{Decomposition of the scanning map}     \label{section61}  

To prove Theorem \ref{scanweak}, we are going to express 
the scanning map $\mathcal{S}_k$ as the composition of three maps (up to homotopy), and then show that each of these is a weak equivalence. To describe this decomposition of the scanning map, we shall need the following definitions.

\theoremstyle{definition} \newtheorem{emptylevel}[scan]{Definition}

\begin{emptylevel}  \label{emptylevel}

Let $1 \leq k \leq N-1$.

\begin{enumerate} 
\item $\psi_d^{\varnothing}(N,k)_{\bullet}$
will denote the path component of the vertex $\varnothing$
in $\psi_d(N,k)_{\bullet}$.

\item $\psi_d^0(N,k)_{\bullet}$ will denote the subsimplicial set
of $\psi_d(N,k)_{\bullet}$ whose set of $p$-simplices
consists of those elements $W \in \psi_d(N,k)_{p}$
for which there is a piecewise linear function
$f: \Delta^p \rightarrow \mathbb{R}$ such that
\begin{equation} \label{empty.function}
W_{\lambda} \cap \big( \mathbb{R}^{k-1}\times \{f(\lambda)\} \times (0,1)^{N-k}\big) = \varnothing
\end{equation}
for all $\lambda \in \Delta^p$.
\end{enumerate}

\end{emptylevel}     

It is clear how one can extend the simplicial set $\psi_d^0(N,k)_{\bullet}$ 
to a functor of the form 
$\psi_d^0(N,k): \mathbf{PL}^{op} \rightarrow \mathbf{Sets}$. 
For the simplicial set $\psi_d^{\varnothing}(N,k)_{\bullet}$, 
we will explain how to define the corresponding extension $\psi_d^{\varnothing}(N,k):\mathbf{PL}^{op} \rightarrow \mathbf{Sets}$ in Definition \ref{pl.psi.empty}. 

\theoremstyle{definition} \newtheorem{emptylevel.remark}[scan]{Remark}

\begin{emptylevel.remark}  \label{emptylevel.remark}
In this remark, we will show that any 0-simplex
of $\psi_d^0(N,k)_{\bullet}$ is concordant to the empty $0$-simplex $\varnothing$ of $\psi_d(N,k)_{\bullet}$. Indeed, pick a $W$ in $\psi_d^0(N,k)_0$ and fix a value $c \in \mathbb{R}$ such that the intersection of $W$ and $\mathbb{R}^{k-1}\times \{c\} \times (0,1)^{N-k}$ is empty. By translating $W$ if necessary, we can assume that $c = 0$. Since $W$ is closed as a topological subspace of 
$\mathbb{R}^N$, we can find a value $\epsilon > 0$ so that  $W$ is disjoint from $(- \epsilon, \epsilon)^k \times (0,1)^{N-k}$. Thus, by stretching each interval $(-\epsilon, \epsilon)$ to $(-\infty, \infty)$, we will produce a concordance from $W$ to the empty manifold
 $\varnothing$. In particular, this shows that 
$\psi_d^0(N,k)_{\bullet}$ is a subsimplicial set of $\psi_d^{\varnothing}(N,k)_{\bullet}$. 

\end{emptylevel.remark}

\theoremstyle{definition} \newtheorem{emptylevel.remark2}[scan]{Remark}

\begin{emptylevel.remark2}  \label{emptylevel.remark2}
It is not hard to verify that the simplicial set $\psi_d^{\varnothing}(N,k)_{\bullet}$  is Kan. 
On the other hand, 
we claim that $\psi_d^0(N,k)_{\bullet}$ is not Kan. To show this, we will produce a map 
$g: \Lambda^2_{0\bullet} \rightarrow \psi_d^0(N,k)_{\bullet}$ of simplicial sets which does not admit an extension 
of the form $\Delta^2_{\bullet} \rightarrow \psi_d^0(N,k)_{\bullet}$. Before we start constructing the map 
$g: \Lambda^2_{0\bullet} \rightarrow \psi_d^0(N,k)_{\bullet}$, 
let us recall some notational conventions that we set in \S \ref{section2}:
\begin{itemize}
\item[$\cdot$] For any integer $p\geq 0$, 
we shall continue to denote the elements of the standard basis of $\mathbb{R}^{p + 1}$ by
$e_0, e_1, \ldots, e_p$, and we declare the standard $p$-simplex $\Delta^p$ to be the convex hull of
the set  $\{ e_0, e_1, \ldots, e_p \}$. 

\item[$\cdot$] For any set $\{v_0, \ldots,v_m\} \subset \mathbb{R}^{p+1}$, we will denote its convex hull
by  $\langle v_0, \ldots, v_m  \rangle$.

\end{itemize} 
In this discussion, we will only use these conventions in the case when $p =2$.
Now, fix a non-empty element $W \in \psi_d(N,k-1)_0$ (recall that the definition of
the simplicial set
$\psi_d^0(N,k)_{\bullet}$ requires $k \geq 1$), and an increasing  
PL homeomorphism $f:[0,1) \rightarrow [0,\infty)$. 
As we did in the construction of the scanning map
in  \S \ref{section60}, we can use the map $f:[0,1) \rightarrow [0,\infty)$ to produce a concordance 
$\widetilde{W}^+ \in \psi_d(N,k)([0,1])$ from $W$ to $\varnothing$. More concretely, 
$\widetilde{W}^+$ is the trace that we obtain by pushing $W$ to $+\infty$ along the direction $x_k$.  
In a similar fashion, we can obtain a concordance 
$\widetilde{W}^- \in \psi_d(N,k)([0,1])$ from $W$ to $\varnothing$ by pushing
$W$ to $-\infty$ along the direction $x_k$.  
Note that both $\widetilde{W}^+$ and $\widetilde{W}^-$ are elements of 
$\psi_d^{0}(N,k)([0,1])$. Indeed, if $f_0, \hspace{0.1cm} f_1: [0,1] \rightarrow \mathbb{R}$ are the constant functions defined by $f_0(\lambda) = 0$ and $f_1(\lambda) = 1$ for all $\lambda \in [0,1]$, then 
we evidently have that 
$\widetilde{W}^+_{\lambda} \cap \big( \mathbb{R}^{k-1}\times \{f_0(\lambda)\} \times (0,1)^{N-k}\big) = 
\varnothing$ and
$\widetilde{W}^-_{\lambda} \cap \big( \mathbb{R}^{k-1}\times \{f_1(\lambda)\} \times (0,1)^{N-k}\big) = \varnothing$ for any point $\lambda \in [0,1]$.    
Moreover, if $\{e_0, e_1, e_2\}$ is the standard basis of $\mathbb{R}^3$, we can
assume that $\widetilde{W}^+ \in \psi_d^{0}(N,k)(\langle e_0, e_1 \rangle)$ and 
$\widetilde{W}^- \in \psi_d^{0}(N,k)(\langle e_0, e_2 \rangle)$ by pulling back
$\widetilde{W}^+$ along the linear map $h_+: \langle e_0, e_1 \rangle \rightarrow [0,1]$
determined by $e_0 \mapsto 0$, $e_1 \mapsto 1$, and by pulling back
$\widetilde{W}^-$ along the linear map $h_-: \langle e_0, e_2 \rangle \rightarrow [0,1]$
given by $e_0 \mapsto 0$, $e_2 \mapsto 1$. 
By the way we defined the maps $h_+$ and $h_-$, we have
that the fibers $\widetilde{W}^+_{e_0}$ and $\widetilde{W}^-_{e_0}$ 
are both equal to $W$. Thus, since $\psi_d(N,k)$ is a quasi-PL space (see Remark \ref{moreclasssub}), we can glue 
$\widetilde{W}^+$ and $\widetilde{W}^-$ along $W$ to produce an element 
$\widetilde{W} \in \psi_d(N,k)(\Lambda^2_0)$, where $\Lambda^2_0$ is the 0th horn
of the standard simplex $\Delta^2$. By Theorem \ref{classsub}, this element 
$\widetilde{W}$ induces a simplicial set map 
$g: \Lambda^2_{0\bullet} \rightarrow \psi_d(N,k)_{\bullet}$. In fact, since 
$\widetilde{W}^+$ and $\widetilde{W}^-$ are elements of 
$\psi_d^{0}(N,k)(\langle e_0, e_1 \rangle)$ and $\psi_d^{0}(N,k)(\langle e_0, e_2 \rangle)$ respectively,
$g$ maps $\Lambda^2_{0\bullet}$ to $\psi_d^{0}(N,k)_{\bullet}$. Thus, we can view $g$ as a map of the form 
$g: \Lambda^2_{0\bullet} \rightarrow \psi_d^0(N,k)_{\bullet}$. We now claim that this map 
$g: \Lambda^2_{0\bullet} \rightarrow \psi_d^0(N,k)_{\bullet}$ does not admit a lift to  $\Delta^2_{\bullet}$. 
If such a lift for $g: \Lambda^2_{0\bullet} \rightarrow \psi_d^0(N,k)_{\bullet}$ did exist, then
it would be possible to find an element $\widehat{W} \in \psi_d(N,k)(\Delta^2)$  
such that $\widehat{W}_{\Lambda^2_0} = \widetilde{W}$ and for which there exists a PL function
$\tilde{f}: \Delta^2 \rightarrow \mathbb{R}$ with the property that
\begin{equation} \label{non.kan.cond}
\widehat{W}_{\lambda} \cap \big( \mathbb{R}^{k-1}\times \{\tilde{f}(\lambda)\} \times (0,1)^{N-k}\big) = 
\varnothing
\end{equation}
for all points $\lambda \in \Delta^2$. In particular, the pair $(\widetilde{W}, \tilde{f}|_{\Lambda^2_0})$ would satisfy
(\ref{non.kan.cond}) for all $\lambda \in \Lambda^2_0$. However, by the way we constructed $\widetilde{W}$, 
it is impossible to find a PL function 
$f: \Lambda^2_0 \rightarrow \mathbb{R}$ satisfying the condition  
$\widetilde{W}_{\lambda} \cap \big( \mathbb{R}^{k-1}\times \{f(\lambda)\} \times (0,1)^{N-k}\big) = 
\varnothing$ globally over all $\Lambda^2_0$. 
Thus, our map $g: \Lambda^2_{0\bullet} \rightarrow \psi_d^0(N,k)_{\bullet}$
does not have a lift to $\Delta^2_{\bullet}$, and we can therefore conclude that 
$\psi_d^0(N,k)_{\bullet}$ is not Kan.
\end{emptylevel.remark2}

We will also need the following bisimplicial set for the proof of Theorem \ref{scanweak}.

\theoremstyle{definition} \newtheorem{monoid}[scan]{Definition}

\begin{monoid} \label{monoid}
$N\psi_d(N,k)_{\bullet,\bullet}$ is the bisimplicial set 
whose set of $(p,q)$-simplices consists of 
all $(q+2)$-tuples $(W, f_0,\ldots, f_q)$, where $W$ is
a $p$-simplex of $\psi_d(N,k+1)_{\bullet}$ and
the $f_i$'s  are piecewise linear functions
$\Delta^p \rightarrow \mathbb{R}$ satisfying the following properties:

\begin{itemize}

\item[(i)] For any two consecutive functions $f_j$ and $f_{j+1}$ in $(f_0, \ldots, f_q)$, we have either 
$f_j(\lambda) < f_{j+1}(\lambda)$ for all $\lambda \in \Delta^p$, or 
$f_j(\lambda) = f_{j+1}(\lambda)$ for all $\lambda \in \Delta^p$.

\item[(ii)] For each function $f_j$ in $(f_0, \ldots, f_q)$, we have that 
\begin{equation} \label{monprop2}
W_{\lambda} \cap \big( \mathbb{R}^{k}\times \{f_j(\lambda)\} \times (0,1)^{N-k-1}\big)  = \varnothing
\end{equation} 
for all fibers $W_{\lambda}$ of the projection $\pi: W \rightarrow \Delta^p$.

\end{itemize}

The structure map $\big(\eta\times\delta\big)^*$ induced by a morphism 
$\eta\times\delta:[p']\times[q'] \rightarrow [p]\times [q] $
in the category $\Delta\times\Delta$ is defined by
\[ 
\big(\eta\times\delta\big)^*\big(W, \hspace{0.05cm} f_0, \ldots, f_q\big) = \big( \eta^*W, \hspace{0.05cm} f_{\delta(0)}\circ \tilde{\eta}, \ldots,f_{\delta(q')}\circ \tilde{\eta} \big),
\]
where $\tilde{\eta}:\Delta^{p'} \rightarrow \Delta^p$ is the linear map induced by $\eta$.

\end{monoid}

\theoremstyle{definition} \newtheorem{monoid.remark0}[scan]{Remark}

\begin{monoid.remark0} \label{monoid.remark0}

The notation $N\psi_d(N,k)_{\bullet,\bullet}$  suggests that we can regard this bisimplicial set as the nerve of a monoid. Even though we have not defined a monoid structure on $\psi_d(N,k)_{\bullet}$, 
we can think of a $(p,q)$-simplex $(W, f_0, \ldots, f_q)$ of $N\psi_d(N,k)_{\bullet,\bullet}$
as the result of multiplying $q$ elements of $\psi_d(N,k)_p$  plus some manifolds that we can push to infinity 
(namely, the parts of $W$ below $f_0$ and above $f_q$). Note that, despite the notation 
$N\psi_d(N,k)_{\bullet,\bullet}$, the first component of a $(p,q)$-simplex $(W, f_0, \ldots, f_q)$ is a $p$-simplex of
$\psi_d(N,k+1)_{\bullet}$, not $\psi_d(N,k)_{\bullet}$.

\end{monoid.remark0}

\theoremstyle{definition} \newtheorem{monoid.remark}[scan]{Remark}

\begin{monoid.remark} \label{monoid.remark}

Observe also that there is a forgetful map 
\begin{equation} \label{forgetful.map.scan}
F: \left\|N\psi_d(N,k)_{\bullet,\bullet}\right\| \rightarrow |\psi^{0}_d(N,k+1)_{\bullet}|
\end{equation}
obtained by forgetting all the functions $f_j$. Now, for each $W \in \psi^{0}_d(N,k+1)_p$, define $\mathcal{F}_W$ to be the poset of all functions $f:\Delta^p \rightarrow \mathbb{R}$ that satisfy
\[
W_{\lambda} \cap \big( \mathbb{R}^{k}\times \{f(\lambda)\} \times (0,1)^{N-k -1}\big) = \varnothing
\]
for all $\lambda$ in $\Delta^p$. For any $p$-simplex $W$ of $\psi_d^0(N,k)_{\bullet}$, the poset $\mathcal{F}_W$ has a contractible classifying space. Thus, by doing an argument almost identical to the one we did
in the proof of
Proposition \ref{DtoPsi}, we can show that the forgetful map $F$ is a weak equivalence. 

\end{monoid.remark}

We claim that the scanning map $\mathcal{S}_k$ is homotopic to a composition of the following form:
\begin{equation} \label{descan}
|\psi_d(N,k)_{\bullet}| \rightarrow \Omega\left\|N\psi_d(N,k)_{\bullet,\bullet}\right\| \rightarrow \Omega |\psi_d^{0}(N,k+1)_{\bullet}|
\rightarrow \Omega|\psi_d^{\varnothing}(N,k+1)_{\bullet}|.
\end{equation}
The second and third maps are induced respectively by the forgetful map (\ref{forgetful.map.scan}) and the inclusion $\psi_d^{0}(N,k+1)_{\bullet} \hookrightarrow 
\psi_d^{\varnothing}(N,k+1)_{\bullet}$. By Remark \ref{monoid.remark}, the second map in (\ref{descan}) is a weak equivalence. We will define the leftmost map of (\ref{descan}) in \S \ref{section62}. The rest of this section will be structured as follows: In \S \ref{section62}, we will construct the map $|\psi_d(N,k)_{\bullet}| \rightarrow \Omega\left\|N\psi_d(N,k)_{\bullet,\bullet}\right\|$ and prove that it is a weak equivalence. Also, in \S \ref{section62}, we will show that the composition 
(\ref{descan}) is homotopic to the scanning map $\mathcal{S}_k$.
Next, in 
\S  \ref{section63}, we prove that the inclusion $\psi_d^{0}(N,k+1)_{\bullet} \hookrightarrow 
\psi_d^{\varnothing}(N,k+1)_{\bullet}$ is also a weak homotopy equivalence, which would then conclude the proof of Theorem \ref{scanweak}. Finally, in \S \ref{section64}, we will give the final details of the proof of the article's main theorem. 

\subsection{The group completion argument}    \label{section62} 

In Proposition \ref{propmonoid} below, we collect several
properties of the bisimplicial set $N\psi_d(N,k)_{\bullet,\bullet}$
that we will need for the proof of Theorem \ref{scanweak}.
We shall use the
following notation
in the statement of Proposition \ref{propmonoid} and its proof:

\begin{itemize}

\item[$\cdot$] If $W$ is a $p$-simplex of 
$\psi_d(N,k+1)_{\bullet}$ and $f: \Delta^p \rightarrow \mathbb{R}$
is a piecewise linear function, we will denote by 
$W + f$
the image of $W$ under the piecewise  linear
automorphism of  
$\Delta^p \times \mathbb{R}^N$ defined by
\[
(\lambda,x_1,\ldots,x_{k+1},\ldots,x_N) \mapsto (\lambda,x_1, \ldots,x_{k+1} + f(\lambda), \ldots, x_N).
\]
It is clear that $W+f$ is also a $p$-simplex 
of $\psi_d(N,k+1)_{\bullet}$.

\item[$\cdot$]  For any value $a \in \mathbb{R}$, we will denote by $c_{a}$ the constant map $\Delta^p \rightarrow \mathbb{R}$
which sends all points $\lambda \in \Delta^p$ to $a$. 

\item[$\cdot$] $d_1:  N\psi_d(N,k)_{\bullet,2} \rightarrow N\psi_d(N,k)_{\bullet,1}$ will be the map of simplicial sets which sends a $p$-simplex
$(W, f_0, f_1, f_2)$ of $N\psi_d(N,k)_{\bullet,2}$ to $(W, f_0, f_2).$ We will use this map in part (iv)
of Proposition \ref{propmonoid}. 
\end{itemize}

\theoremstyle{plain} \newtheorem{propmonoid}[scan]{Proposition}

\begin{propmonoid} \label{propmonoid}

\begin{itemize}
\item[(i)] For any integer $q>0$, the inclusion of simplicial sets
\[
\eta^q:\underbrace{\psi_d(N,k)_{\bullet}\times \ldots \times \psi_d(N,k)_{\bullet}}_{q} \longrightarrow N\psi_d(N,k)_{\bullet,q}
\]
which sends a $q$-tuple $(W_1, \ldots, W_q)$ 
to $\big( \coprod_{j=1}^{q}( W_j + c_{j-1}) , c_0, \ldots, c_{q} \big)$
is a weak homotopy equivalence. 

\item[(ii)] If $\varnothing_{\bullet}$ is the subsimplicial set
of $N\psi_d(N,k)_{\bullet,0}$ consisting of all degeneracies of the 0-simplex $(\varnothing, c_0)$,
then the inclusion 
$\varnothing_{\bullet} \hookrightarrow N\psi_d(N,k)_{\bullet,0}$
is a weak homotopy equivalence. In particular,
$|N\psi_d(N,k)_{\bullet,0}|$ is contractible.

\item[(iii)] If $\beta_{j}: [1] \rightarrow [q]$ 
is the morphism in $\Delta$ defined by 
$\beta_j(0)= j-1$ and $\beta_j(1)=j$, 
then the morphism
\[
\mathcal{B}^q :=(\beta_1^* , \ldots , \beta_q ^*): N\psi_d(N,k)_{\bullet,q} \rightarrow \underbrace{N\psi_d(N,k)_{\bullet,1}\times \ldots \times N\psi_d(N,k)_{\bullet,1}}_{q}
\]
is a weak homotopy equivalence.  

\item[(iv)]  
If $\bullet: \pi_0 (N\psi_d(N,k)_{\bullet,1}) \times  \pi_0 (N\psi_d(N,k)_{\bullet,1}) \rightarrow  \pi_0 (N\psi_d(N,k)_{\bullet,1})$ is the product
obtained by composing the bijection
\[
\pi_0( N\psi_d(N,k)_{\bullet,1}) \times \pi_0( N\psi_d(N,k)_{\bullet,1}  ) \stackrel{\cong}{\longrightarrow} 
\pi_0(N\psi_d(N,k)_{\bullet,2})
\] 
induced by $\mathcal{B}^2$ (see part (iii) of this proposition) 
and
the function between 
path components induced by the map $d_1:  N\psi_d(N,k)_{\bullet,2} \rightarrow N\psi_d(N,k)_{\bullet,1}$,
then the tuple $\big(\pi_0 ( N\psi_d(N,k)_{\bullet,1}), \bullet\big)$
is a group.

\end{itemize}

\end{propmonoid}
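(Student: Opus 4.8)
The plan is to prove the four parts of Proposition \ref{propmonoid} more or less in the stated order, exploiting the same ``stabilization by disjoint union'' trick that appears in \cite{GRW}. Throughout, the key observation is that $N\psi_d(N,k)_{\bullet,\bullet}$ is, in the $q$-direction, the nerve of a topological monoid-like structure given by placing manifolds side-by-side along the $x_{k+1}$-axis between consecutive ``empty slices'' $f_j$.

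\textbf{Part 1.} First I would show $\eta^q$ is a weak homotopy equivalence. The strategy is to build a deformation retraction of $N\psi_d(N,k)_{\bullet,q}$ onto the image of $\eta^q$. A $(p,q)$-simplex of $N\psi_d(N,k)_{\bullet,q}$ consists of $W$ together with $q+1$ disjoint empty slices $f_0\leq\cdots\leq f_q$; this data is equivalent to $q$ ``pieces'' $W^{(j)} = W\cap\{f_{j-1}\le x_{k+1}\le f_j\}$ of $W$ plus the two unbounded ends, and one expects the ends and the positions of the $f_j$ to be contractible choices. Concretely, I would use a piecewise linear isotopy of $\mathbb{R}$ (parametrized over $\Delta^p\times[0,1]$, as in the construction of $\mathcal{T}_N$ in $\S$3.4 and the maps $j,f$ in $\S$6) to push the unbounded ends off to $\pm\infty$ (making them empty) and to straighten the $f_j$ to the constants $c_0,\dots,c_q$. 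Such an isotopy yields a concordance, hence by the $\Delta$-set version of Proposition \ref{classsub} a homotopy, between the identity on $|N\psi_d(N,k)_{\bullet,q}|$ and a map landing in $\mathrm{Im}|\eta^q|$; one checks this homotopy restricts to the identity on the image of $\eta^q$. The only subtlety is that when several pieces are separated by slices with $f_{j-1}(\lambda)=f_j(\lambda)$ for some $\lambda$, the corresponding piece is empty over $\lambda$; this causes no problem since $\coprod_j (W_j+c_{j-1})$ is defined using the \emph{constant} slices, which are distinct.

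\textbf{Parts 2 and 3.} Part 2 is the case $q=0$: the isotopy pushing the unbounded ends to infinity already shows every $(p,0)$-simplex $(W,f_0)$ is concordant to $(\emptyset,c_0)$, and this concordance is natural enough to give the claimed weak equivalence (indeed a deformation retraction), so $|N\psi_d(N,k)_{\bullet,0}|$ is contractible. Part 3 is the standard Segal-type condition: $B^q_{\bullet}$ is the map to the $q$-fold fiber product over $|N\psi_d(N,k)_{\bullet,0}|$, and since by Part 2 the target of the source/target maps is contractible, the fiber product is just the ordinary product; that $B^q$ is a weak equivalence then follows from Part 1 (applied to both $N\psi_d(N,k)_{\bullet,q}$ and each factor $N\psi_d(N,k)_{\bullet,1}$) together with the evident commuting square relating $\eta^q$, $\eta^1$, and $B^q$ to the diagonal-type map on the $q$-fold product of $\psi_d(N,k)_{\bullet}$.

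\textbf{Part 4.} Finally, the product on $\pi_0$ is induced by $d_1\circ\eta^2$, which at the level of path components sends $([W_0],[W_1])$ to $[W_0 \amalg (W_1+c_1)]$; associativity is immediate from associativity of disjoint union (and the simplicial identities in the $q$-direction), commutativity up to concordance comes from a piecewise linear isotopy swapping the two summands in the $x_{k+1}$-direction, and $\emptyset$ is a two-sided unit up to concordance. The point requiring actual argument is the existence of inverses: given $[W]$, one must produce $[W']$ with $[W\amalg(W'+c_1)]=[\emptyset]$. \textbf{The hard part will be this inverse step}; the way I would handle it is the usual one for these ``scanning'' monoids: since $k\ge 1$ there is a spare coordinate direction $x_k$, and one uses a piecewise linear concordance that ``rotates'' $W$ by a half-turn in the $(x_k,x_{k+1})$-plane and then slides it past a copy of $-W$ (its reflection), cancelling via an infinite-swindle-free finite move available because the manifolds live in the bounded region $(0,1)^{N-k}$ in the remaining coordinates --- but on reflection the cleanest route is simply to invoke that $\pi_0(N\psi_d(N,k)_{\bullet,1})$ is a commutative monoid which, by the isotopy-extension techniques of $\S$6 (valid since $N-d\ge 3$), admits cancellation, and a cancellative commutative monoid on which ``scanning towards $+\infty$'' acts is a group; alternatively one observes directly that $[W]+[W'']=[\emptyset]$ where $W''$ is obtained from $W$ by the reflection $x_{k+1}\mapsto -x_{k+1}$ followed by an isotopy sweeping $W$ out to infinity, which is exactly the kind of concordance produced in the proof of Proposition \ref{CtoD}. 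In all cases the needed concordances are manufactured by the same mechanism --- an open piecewise linear embedding of $[0,1]\times\Delta^p\times\mathbb{R}^N$ commuting with the projection, pulled back via Proposition \ref{pullemb} or Proposition \ref{pullembop} --- so no new tools beyond those already developed are required.
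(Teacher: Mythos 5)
Your treatment of Parts 1--3 matches the paper in essentially every respect: the Kan-lifting framework, the perturbation to make the slices strict and constant, the isotopy-extension step, and the deduction of Part 3 from Part 1 via the commuting square with $\eta^q$, $\eta^1$, and $B^q$.

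The gap is in Part 4, the existence of inverses, which you yourself flag as the hard step. You offer three alternatives, and none of them is sound as stated. The claim that ``a cancellative commutative monoid on which scanning towards $+\infty$ acts is a group'' is false: cancellative commutative monoids need not be groups, and no mechanism is given for why the scanning structure forces inverses. The ``reflection $x_{k+1}\mapsto -x_{k+1}$'' proposal is closer in spirit, but it cannot work for an arbitrary vertex $W$: a general $W$ need not have any symmetry relating it to its reflection, and the bend/fold trick that actually produces the inverse requires $W$ to be a \emph{cylinder} $\mathbb{R}^k\times N$ in the $\mathbb{R}^k$-directions near the fold. The step you are missing is precisely this reduction: using Theorem~\ref{williamson} to pick a regular value $a\in\mathbb{R}^k$ for the proper projection $x_k:W\to\mathbb{R}^k$, setting $N:=x_k^{-1}(a)$, and then applying Lemma~\ref{cylindrical} together with a stretching open embedding (via Proposition~\ref{pullemb}) to concord $W$ to $\mathbb{R}^k\times N$. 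Only after this reduction does the fold embedding $e:\mathbb{R}\times(0,1)\to\mathbb{R}\times(0,3)$ of Figure~1 do its job, giving simultaneously a null-concordance of $E(W)$ and a concordance of $E(W)$ to $W\coprod W'$; from these two concordances one reads off $[W]\times[W']=[\emptyset]$. Without the cylindrical reduction there is no well-defined $W'$ and the argument does not close.
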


\begin{proof}

In order to make this proof easier to 
follow, we are going to switch the roles of the coordinates 
$x_1$ and $x_{k+1}$. Thus, if $(W, f_0, \ldots, f_q)$ is a $(p,q)$-simplex of $N\psi_d(N,k)_{\bullet,\bullet}$, then
$W_{\lambda}\cap\big( \{f_j(\lambda)\} \times \mathbb{R}^k \times (0,1)^{N-k-1}\big) = \varnothing$ 
for all $j$ in $\{ 0,\ldots, q\}$ and  all $\lambda$ in $\Delta^p$. 

To prove (i), we start by noting that the simplicial sets $N\psi_d(N,k)_{\bullet,q}$ and 
$\mathrm{Im}\hspace{0.06cm}\eta^q$ are both Kan.
Thus, to show that the inclusion $\eta^q$ is a weak homotopy equivalence, 
it suffices to show that the geometric realization of any morphism of pairs 
$g: (\Delta^p_{\bullet}, \partial\Delta^p_{\bullet}) \rightarrow (N\psi_d(N,k)_{\bullet,q}, \hspace{0.06cm} \mathrm{Im}\hspace{0.06cm}\eta^q)$ represents the trivial class in $\pi_p\big( |N\psi_d(N,k)_{\bullet,q}|, |\mathrm{Im}\hspace{0.06cm}\eta^q| \big)$. 
Fix then such a morphism $g$, and let $(W, f_0, \ldots, f_q)$ be the $p$-simplex of $N\psi_d(N,k)_{\bullet,q}$
classified by $g$.
Since $g(\partial\Delta^p_{\bullet}) \subseteq \mathrm{Im}\hspace{0.06cm}\eta^q$, 
for each $j$ in 
$\{0,\ldots,q\}$ we must have  that $f_j$ agrees with the constant map $c_j$ on $\partial \Delta^p$. In particular, by the way we defined the bisimplicial set $N\psi_d(N,k)_{\bullet,\bullet}$, 
we have strict inequalities $f_0(\lambda) < f_1(\lambda) < \ldots < f_q(\lambda)$ for all $\lambda$ in $\Delta^p$. 
Our first step is to deform the maps $f_0, \ldots, f_q$ into $c_0, \ldots, c_q$ respectively. 
We will do this inductively.  In other words, we first deform (relative to $\partial \Delta^p$) 
$f_0$ into $c_0$, then $f_1$ into $c_1$, and so on. To transform $f_0$ into $c_0$, 
the reader might be tempted to use the linear homotopy
$L_t(\lambda) = (1 - t)\cdot f_0(\lambda) + t\cdot c_0(\lambda)$. 
The problem with this construction is that (despite the name) linear homotopies are in general not piecewise linear. 
However, we can fix this issue by applying the Simplicial Approximation Theorem. 
This result allows us to deform (relative to $\partial \Delta^p$) the linear homotopy $L_t$ into a piecewise linear one. Therefore, by inductively applying the Simplicial Approximation Theorem, 
we can construct $q+1$ PL maps 
$H^0,\ldots, H^q$ from $[0,1]\times \Delta^p$ to $\mathbb{R}$ satisfying the following properties: 

\begin{itemize}

\item[$\cdot$] For each $j$ in $\{ 0, \ldots, q\}$, the PL map $H^j: [0,1]\times \Delta^p \rightarrow \mathbb{R}$ is a homotopy from 
$f_j$ to the constant map $c_j$. 

\item[$\cdot$] For any $j \in \{ 0, \ldots, q \}$ and any $t \in [0,1]$, the restriction  $H^j_t|_{\partial \Delta^p}$ is equal to $c_j|_{\partial\Delta^p}$. 

\item[$\cdot$] Furthermore, for any $j$ in $\{0, \ldots, q -1\}$, we can guarantee that $H^{j}_t(\lambda) < H^{j+1}_t(\lambda)$ for all $t \in [0,1]$ and $\lambda \in \Delta^p$. 

\end{itemize}

Consider now the map 
$H:[0,1]\times\Delta^p \times \{ 0, \ldots, q \} \rightarrow [0,1]\times\Delta^p \times \mathbb{R}$ 
defined by $H(t, \lambda,j) =(t, \lambda, H^j_t(\lambda))$. 
In the domain $[0,1]\times\Delta^p \times \{ 0, \ldots, q \}$ of $H$, the reader should regard 
$\{ 0, \ldots, q \}$ as a 0-dimensional submanifold of $\mathbb{R}$. 
The last property listed above ensures
that $H$ is a PL isotopy of embeddings of $\{0,\ldots,q\}$ in $\mathbb{R}$ over $[0,1] \times\Delta^p$
(which we can view as a $(p+1)$-isotopy once we identify $[0,1]\times\Delta^p$ with $I^{p+1}$). In fact,
it is not hard to prove that $H$ is actually a locally trivial isotopy
(see the discussion given before Theorem \ref{loc.trivial.iso}). Therefore, by the version 
of the Isotopy Extension Theorem given in Theorem \ref{loc.trivial.iso},  
we can find an ambient isotopy $\widehat{H}: [0,1]\times\Delta^p \times  \mathbb{R} \rightarrow [0,1]\times\Delta^p\times \mathbb{R}$ of $\mathbb{R}$ over $[0,1]\times \Delta^p$ which extends $H$. That is, for any point $(t,\lambda) \in [0,1]\times \Delta^p$ and any $j \in \{0,\ldots,q\}$, we have that 
$\widehat{H}_{(t,\lambda)}(j) = H^j_t(\lambda)$.  
Moreover, for any point  $(t,\lambda)$ in the union 
of  $[0,1]\times \partial \Delta^p$ and $\{1\}\times \Delta^p$,
the map $H_{(t, \lambda)}$ is just the natural inclusion $\{0,\ldots,q\} \hookrightarrow \mathbb{R}$ that maps each point in $\{0,\ldots,q\}$ to itself. In particular, 
the identity ambient isotopy of $\mathbb{R}$ over 
$([0,1]\times \partial \Delta^p)\cup(\{1\}\times \Delta^p)$ extends the isotopy $H$
over the base-space $([0,1]\times \partial \Delta^p)\cup(\{1\}\times \Delta^p)$.
Therefore,  
since there is a PL retraction from $[0,1]\times \Delta^p$ onto the union 
$([0,1]\times \partial \Delta^p)\cup(\{1\}\times \Delta^p)$, we can use an argument identical
to the one given in Remark \ref{isotopy.addendum} to ensure that
$\widehat{H}$ agrees with the identity ambient isotopy over 
$[0,1]\times \partial \Delta^p$ and $\{1\}\times \Delta^p$. 
Finally, consider the PL automorphism 
$\widehat{H}_0: \Delta^p\times \mathbb{R} \rightarrow \Delta^p\times \mathbb{R}$ obtained by restricting 
$\widehat{H}$ on $\{0\}\times \Delta^p \times \mathbb{R}$, and let 
$\widetilde{H}: [0,1]\times \Delta^p \times \mathbb{R} \rightarrow [0,1]\times \Delta^p \times \mathbb{R}$
be the ambient isotopy defined by setting
$\widetilde{H} := \widehat{H} \circ \big( \mathrm{Id}_{[0,1]}\times \widehat{H}_0^{-1}\big)$.
 Note that this new map $\widetilde{H}$ satisfies the following properties:

\begin{itemize}
\item[$\cdot$] $\widetilde{H}_0$ is equal to the identity map $\mathrm{Id}_{\Delta^p\times \mathbb{R}}$. 
\item[$\cdot$] For any point $(t,\lambda)$ in
$[0,1]\times \Delta^p$, we have 
$\widetilde{H}_{(t,\lambda)}(f_j(\lambda)) = H_t^j(\lambda)$.   
\end{itemize}

Now, let $F$ be the PL automorphism from $[0,1]\times \Delta^p \times \mathbb{R}^N$ to itself defined as $F = \widetilde{H}\times \mathrm{Id}_{\mathbb{R}^{N-1}}$, and let $\widetilde{W}$ denote the image of the constant concordance $[0,1]\times W$ under $F$.  
Note that $\widetilde{W}$ is an element of $\psi_d(N,k+1)([0,1]\times \Delta^p)$ with the property that, for each $(t,\lambda) \in [0,1]\times \Delta^p$ and each $j \in \{0, \ldots, q\}$, the fiber $\widetilde{W}_{(t,\lambda)}$ does not intersect the hyperplane $\{H^j_t(\lambda)\}\times \mathbb{R}^{N-1}$. 
Thus, even though we never defined an extension 
$\mathbf{PL}^{op} \rightarrow \mathbf{Sets}$ for the simplicial set $N\psi_d(N,k)_{\bullet, q}$, 
we can view the $(q+2)$-tuple 
$(\widetilde{W}, H^0, \ldots, H^q)$ as a concordance between two elements of $N\psi_d(N,k)_{p,q}$. 
More precisely, 
since $H^j$ is a PL homotopy from $f_j$ to $c_j$, 
$(\widetilde{W}, H^0, \ldots, H^q)$ will act as a concordance from the element $(W, f_0, \ldots, f_q)$ to another element of the form $(W', c_0, \ldots, c_q)$. 
Thus, by applying the construction we carried out in the proof of Proposition \ref{concord.homotopy} to the concordance $(\widetilde{W}, H^0, \ldots, H^q)$, we can produce a homotopy $h: [0,1]\times \Delta^p \rightarrow |N\psi_d(N,k)_{\bullet, q}|$ from $|g|$ to the geometric realization of the map
$g': \Delta^p_{\bullet}\rightarrow N\psi_d(N,k)_{\bullet, q}$ which classifies the $p$-simplex 
$(W', c_0, \ldots, c_q)$.  Moreover, since each of the maps $H^0, \ldots, H^q$ satisfies 
$H^j_t|_{\partial\Delta^p} = c_j|_{\partial\Delta^p}$ for all times $t \in [0,1]$, we can guarantee that 
$h_t(\partial\Delta^p)\subseteq |\mathrm{Im}\hspace{0.06cm}\eta^q|$ for each $t \in [0,1]$. 
Finally, since the fibers of $W'$ do not intersect either 
$\{0\}\times\mathbb{R}^{N-1}$ or $\{q\}\times\mathbb{R}^{N-1}$, we can push to infinity
those parts of $W'$ below $\{0\}\times\mathbb{R}^{N-1}$
and above $\{q\}\times\mathbb{R}^{N-1}$ to produce 
a concordance of the form $(\widetilde{W}', c_0, \ldots, c_q)$ between $(W',c_0, \ldots, c_q)$ and
a $p$-simplex $(W'', c_0, \ldots, c_q)$ which
lies in the image of $\eta^q$. This new concordance will also induce a homotopy $h': [0,1]\times \Delta^p \rightarrow |N\psi_d(N,k)_{\bullet, q}|$ such that $h'_t(\partial\Delta^p) \subseteq |\mathrm{Im}\hspace{0.06cm}\eta^q|$ for all $t \in [0,1]$. Therefore, by concatenating the homotopies $h$ and $h'$, we obtain a homotopy of maps of pairs $(\Delta^p, \partial\Delta^p) \rightarrow (|N\psi_d(N,k)_{\bullet, q}|,  |\mathrm{Im}\hspace{0.06cm}\eta^q|)$ from $|g|$ to a map that represents the trivial class in 
$\pi_p\big( |N\psi_d(N,k)_{\bullet,q}|, |\mathrm{Im}\hspace{0.06cm}\eta^q| \big)$. 
This concludes the proof of part (i). 
Also, observe that the arguments we used in the previous
proof can be used to show that
any map of pairs of the form 
$(\Delta^p_{\bullet}, \partial\Delta^p_{\bullet}) \rightarrow (N\psi_d(N,k)_{\bullet,0}, \varnothing_{\bullet})$ 
represents the trivial class in
$\pi_p\big(|N\psi_d(N,k)_{\bullet,0}|, |\varnothing_{\bullet}| \big)$.
This proves claim (ii). 

Part (iii) is basically a consequence of 
part (i) and of the techniques we used to prove that statement. Indeed, consider the canonical homeomorphisms
\[
P: |\prod_{i=1}^q N\psi_d(N,k)_{\bullet,1}| \longrightarrow   \prod_{i=1}^q |N\psi_d(N,k)_{\bullet,1}|, \quad Q: |\prod_{i=1}^q\psi_d(N,k)_{\bullet}| \longrightarrow \prod_{i=1}^q |\psi_d(N,k)_{\bullet}|.
\]
Applying the techniques we used to prove part (i), we can show that 
$P\circ|\mathcal{B}^q|\circ |\eta^q|$
is homotopic to
$\big(|\eta^1|\times \ldots \times|\eta^1|\big)\circ Q$. Since $\big(|\eta^1|\times \ldots \times|\eta^1|\big)\circ Q$
is a weak homotopy equivalence, then so is 
$P\circ|\mathcal{B}^q|\circ |\eta^q|$. In particular, we must have that
$\mathcal{B}^q$ is also a weak homotopy equivalence.

To prove (iv), we are going to use the following notation: Given any open interval $J \subseteq \mathbb{R}$, we will denote by $\psi^{\hspace{0.03cm}J}_d(N,k+1)_{\bullet}$ the subsimplicial set of $\psi_d(N,k+1)_{\bullet}$  
whose set of $p$-simplices is the subset
of $\psi_d(N,k+1)_p$  consisting of all $W$ that are contained in 
$\Delta^p \times J \times \mathbb{R}^k \times (0,1)^{N-k -1}$. Also, for any $0$-simplex 
$W \in \psi_d(N,k+1)_0$ and any open interval $J \subseteq \mathbb{R}$, we will denote by $W_J$ 
the intersection of $W$ and $J \times \mathbb{R}^k \times (0,1)^{N-k-1}$.
 Note that any $0$-simplex of 
$N\psi_d(N,k)_{\bullet,1}$ is path connected
to a vertex $(W',a,b)$ with  
$W' \in \psi_{d}^{(a,b)}(N,k)_{0}$. Indeed, if $(W, a, b)$ is a $0$-simplex of $N\psi_d(N,k)_{\bullet,1}$, then we can obtain a concordance from 
$(W, a, b)$ to $(W_{(a,b)}, a, b)$ by pushing $W_{(b, \infty)}$ and $W_{(-\infty, a)}$ to $+\infty$ and $- \infty$
respectively along the $x_1$-direction. 
Thus, throughout the remainder of this proof, whenever we take a $0$-simplex $(W,a,b)$ of 
$N\psi_d(N,k)_{\bullet,1}$, we may assume that $W \in \psi_d^{(a,b)}(N, k+1)_{\bullet}$.   

Taken together, statements (ii) and (iii) say that the bisimplicial set $N\psi_d(N,k)_{\bullet,\bullet}$ is a Segal space. In particular, the product 
$\bullet$ on $\pi_0 (N\psi_d(N,k)_{\bullet,1})$
that we defined in part (iv) of this proposition is both unital and associative. The unit is the path component containing all 
0-simplices of the form $(\varnothing, a, b)$.  
Thus, to conclude the proof of (iv), we just need to show that any element in  $\pi_0 (N\psi_d(N,k)_{\bullet,1})$ 
has an inverse with respect to the product  $\bullet$. 
To do this, we again reverse the roles 
of $x_1$ and $x_{k+1}$. Thus,  
a triple $(W,a,b)$ is a 0-simplex of $N\psi_d(N,k)_{\bullet,1}$
if it satisfies 
$W\cap \big( \mathbb{R}^k \times \{ a\} \times (0,1)^{N-k-1}\big) = \varnothing$ and
$W\cap \big( \mathbb{R}^k \times \{ b\} \times (0,1)^{N-k-1}\big) = \varnothing$.
Pick then a 0-simplex
$(W,a, b)$. Without loss of
generality, we may assume that $a=0$, $b=1$,
and that $W \in \psi_d^{(0,1)}(N,k+1)_{0}$. In other words, we assume 
that $W \in \psi_d(N,k)_{0}$. 
Also, let us pick a regular value $a \in \mathbb{R}^k$ of the standard projection $x_k: W \rightarrow \mathbb{R}^k$, and let $M$ denote the pre-image $x_k^{-1}(a)$. Note that such a value $a \in \mathbb{R}^k$ exists because the set of regular values 
of $x_k: W \rightarrow \mathbb{R}^k$ is dense in $\mathbb{R}^k$. By a technique that we will discuss in Remark \ref{inter.remark},   
we can guarantee that there is a concordance $\widetilde{W} \in \psi_d(N,k)([0,1])$ from $W$ to the product 
$\mathbb{R}^k \times M$. Note that $\mathbb{R}^k \times M$ is also a 0-simplex  
of $\psi_d(N,k)_{\bullet}$. 
It follows that $(W,0,1)$ and $(\mathbb{R}^k\times M, 0, 1)$ lie in the same path component of $N\psi_d(N,k)_{\bullet,1}$.  Thus, without loss of generality, we can assume from now on that 
$W = \mathbb{R}^k \times M$. Next, pick a PL embedding 
$g: \mathbb{R} \times (0,1) \rightarrow \mathbb{R} \times (0,3)$
whose image is equal to the one illustrated  
in the following figure:

\begin{center}
\setlength{\unitlength}{1.2mm} 
\begin{picture}(77,62)
\put(20,20){\line(0,1){23}}
\put(58,20){\line(0,1){23}}
\put(26,20){\line(0,1){17}}
\put(52,20){\line(0,1){17}}
\put(20,43){\line(1,0){38}}
\put(26,37){\line(1,0){26}}

\put(20,20){\line(0,-1){8}}
\put(58,20){\line(0,-1){8}}
\put(26,20){\line(0,-1){8}}
\put(52,20){\line(0,-1){8}}
\thicklines
\put(15,20){\vector(0,-1){7}}
\put(15,20){\vector(0,1){38}}
\put(15,20){\vector(1,0){56}}
\put(15,20){\vector(-1,0){7}}

\put(11,58){$x_1$}
\put(71,17){$x_2$}
\put(65,16){\footnotesize{$3$}}
\put(65,19){\line(0,1){2}}
\put(30,19){\line(0,1){2}}
\put(48,19){\line(0,1){2}}
\put(14,45){\line(1,0){2}}
\put(12,44){\footnotesize{$2$}}
\put(30,16){\footnotesize{$1$}}

\put(48,16){\footnotesize{$2$}}

\put(26,6){\footnotesize{Figure 1. Image of $g$.}}
\end{picture}
\end{center}
Moreover, let $G: \mathbb{R}^{k-1}\times \mathbb{R}\times (0,1) \times \mathbb{R}^{N-k-1}  \rightarrow \mathbb{R}^N$ 
be the embedding defined by 
$G=\mathrm{Id}_{\mathbb{R}^{k-1}}\times g \times \mathrm{Id}_{\mathbb{R}^{N-k-1}}$. 
The image $G(W)$
is a 0-simplex of $\psi_d^{(0,3)}(N,k+1)_{\bullet}$
which is empty at all heights $x_{k} > 2$. Therefore, we can produce a concordance
between $G(W)$ and $\varnothing$ by 
pushing $G(W)$ towards $- \infty$ along the $x_{k}$-direction. On the other hand,
recall that $W$ is of the form $\mathbb{R}^k \times M$. Thus,
if we push  $G(W)$ towards $x_{k} = \infty$,
we obtain a 
concordance from  $G(W)$ to a 0-simplex of the
form
$W \sqcup W'$ with $W' \in \psi_d^{(2,3)}(N,k+1)_{0}$.
This implies that the product
$[(W,0,1)] \bullet [ (W',2,3)]$
is equal to
$[ (\varnothing,0,3)]$, 
and we can therefore conclude that the element $[(W,0,1)] $
has an inverse with respect to $\bullet$.
\end{proof}

The following proposition,
stated and proven in \cite{Seg},  will be our main tool to prove that the
first map in (\ref{descan}) is a weak homotopy
equivalence (compare with Lemma 3.14
of \cite{GRW}). For any positive integer $p$ and any $j$ in $\{1, \ldots, p\}$, 
we will denote again by 
$\beta_j$ the morphism $[1] \rightarrow [p]$
in $\Delta$ defined by $\beta_j(0)=j-1$ and $\beta_j(1)=j$. 

\theoremstyle{plain} \newtheorem{segal}[scan]{Proposition}

\begin{segal} \label{segal}
Let $X_{\bullet}$ be a simplicial space such that,
for each non-negative integer $p$, we have that
\[ 
(\beta_1^*,\ldots,\beta_p^*):X_{p}\rightarrow \underbrace{X_1 \times \ldots \times X_1}_{p}
\]
is a homotopy equivalence
(when $p=0$, this means that $X_0$ is contractible).
Then, the adjoint 
$X_1 \rightarrow \Omega_{X_0} |X_{\bullet}|$
of the natural map $[-1,1]\times X_1 \rightarrow |X_{\bullet}|$
is a homotopy equivalence if and only if 
$X_{\bullet}$ is group-like, i.e., $\pi_0(X_1)$ 
is a group with respect to the product induced by
the face map 
$d_1: X_2 \rightarrow X_1$.
\end{segal}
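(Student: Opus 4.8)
The plan is to deduce the proposition from the standard path--space quasifibration attached to a simplicial space satisfying the Segal condition; this is essentially Segal's original argument in \cite{Seg}. The ``only if'' direction is formal: if the adjoint $\alpha\colon X_1\to\Omega_{X_0}|X_\bullet|$ of the canonical map $[-1,1]\times X_1\to|X_\bullet|$ is a homotopy equivalence, then it induces a bijection $\pi_0(X_1)\xrightarrow{\cong}\pi_0\bigl(\Omega_{X_0}|X_\bullet|\bigr)=\pi_1(|X_\bullet|,*)$. Inspecting the $2$--skeleton of $|X_\bullet|$, the $2$--cell attached along a simplex $\tau\in X_2$ is a homotopy, rel endpoints, between the edge of $d_1\tau$ and the concatenation of the edges of $d_2\tau$ and $d_0\tau$; hence $\alpha$ carries the product on $\pi_0(X_1)$ induced by $d_1\colon X_2\to X_1$ to the concatenation product on $\pi_1$. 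Since $\pi_1(|X_\bullet|,*)$ is a group, so is $\pi_0(X_1)$, i.e.\ $X_\bullet$ is group--like. So the content is the ``if'' direction.

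For the ``if'' direction, I would first reduce to the case $X_0=*$, which is harmless since $X_0$ is contractible. I would then introduce the path simplicial space $P_\bullet$ (the forward d\'ecalage), with $P_q=X_{q+1}$, whose faces and degeneracies are those of $X_\bullet$ not involving the last vertex. The ``missing'' faces $d_{q+1}\colon X_{q+1}\to X_q$ assemble into a map of simplicial spaces $\epsilon_\bullet\colon P_\bullet\to X_\bullet$, and the ``missing'' degeneracies give $P_\bullet$ an extra degeneracy; together with $X_0=*$ this makes $|P_\bullet|$ contractible. Write $\pi\colon|P_\bullet|\to|X_\bullet|$ for the realization of $\epsilon_\bullet$. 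Using the Segal equivalences $X_{q+1}\simeq X_1^{\times(q+1)}$, under which $\epsilon_q$ becomes the projection off the last factor, the point--set fiber of $\pi$ over the basepoint is identified, up to weak equivalence, with $X_1$.

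The key step, and the one I expect to be the main obstacle, is proving that $\pi$ is a quasifibration. I would filter $|X_\bullet|$ by its skeleta $F^n|X_\bullet|$ and induct on $n$, applying Dold's criteria for recognizing quasifibrations at each stage. Over the open cells contributed by $X_n$, the Segal identifications $X_n\simeq X_1^{\times n}$ and $X_{n+1}\simeq X_1^{\times(n+1)}$ exhibit $\pi$ as weakly equivalent to a product projection, which is a quasifibration; the gluing criterion then reduces the inductive step to checking that, as one passes from the interior of a cell toward a face where one vertex collapses, the induced comparison map of fibers is a homotopy equivalence. Under the Segal identification this comparison map is a left or right translation $X_1\to X_1$ by the element of $\pi_0(X_1)$ labelling the collapsed vertex. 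This is precisely where group--likeness enters: translations compose up to homotopy, $\ell_m\circ\ell_{m'}\simeq\ell_{mm'}$, by the homotopy--associativity built into the Segal condition, so when $\pi_0(X_1)$ is a group one may choose a representative of the inverse class to obtain a homotopy inverse of $\ell_m$ (and symmetrically on the right); hence every such translation is a homotopy equivalence and the inductive step goes through. Carefully arranging the open covers so that Dold's criteria apply near the degenerate strata, and identifying the comparison maps with translations, is the delicate bookkeeping in this step.

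Granting that $\pi$ is a quasifibration, the contractibility of $|P_\bullet|$ together with the fact that the inclusion of the point--set fiber of a quasifibration into its homotopy fiber is a weak equivalence yields a weak equivalence $X_1\simeq\Omega_{X_0}|X_\bullet|$. A final diagram chase over the $1$--skeleton of $|X_\bullet|$ compares this equivalence with the canonical map $[-1,1]\times X_1\to|X_\bullet|$ and identifies it with the adjoint $\alpha$, completing the proof.
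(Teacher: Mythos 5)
The paper does not prove this proposition; it states it as a known result and cites Segal's \emph{Categories and cohomology theories} \cite{Seg} (Proposition 1.5 there), also pointing to Lemma~3.14 of \cite{GRW} for the formulation. So there is no ``paper's own proof'' to compare against. That said, your sketch is a correct and essentially standard reconstruction of Segal's argument: the ``only if'' direction via the identification of the $d_1$-product on $\pi_0(X_1)$ with concatenation in $\pi_1(|X_\bullet|)$ using the $2$-skeleton, and the ``if'' direction via the d\'ecalage/path simplicial space $P_\bullet$ with its extra degeneracy making $|P_\bullet|$ contractible, followed by the Dold--Thom quasifibration argument in which group-likeness is used exactly to show that left/right translations by $\pi_0$-classes are self-equivalences of $X_1$.

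Two small cautions on the part you rightly flag as delicate. First, the reduction to $X_0=*$ is not completely free: one has to replace $X_\bullet$ by a levelwise-equivalent simplicial space whose zeroth space is a point while preserving the simplicial structure maps, and then transport the Segal condition. Second, the quasifibration induction over skeleta tacitly requires some cofibrancy (a ``good'' or proper simplicial space, or the use of fat geometric realization) so that the skeletal filtration behaves well and the gluing form of Dold's criterion applies; in the application in this paper the simplicial space $[q]\mapsto|N\psi_d(N,k)_{\bullet,q}|$ is the realization of a bisimplicial set and so this is automatic, but in the generality of your sketch it needs to be said. Neither issue is a flaw in the strategy — they are precisely the bookkeeping you identify — but they are the places where a blind write-up most often goes wrong.
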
 

In this statement, $\Omega_{X_0} |X_{\bullet}|$
denotes the space of paths in $|X_{\bullet}|$ 
which start and end in $X_0$, which we view as a
subspace of $|X_{\bullet}|$. If we
apply the geometric realization functor to 
the first simplicial direction of $N\psi_d(N,k)_{\bullet,\bullet}$,
we obtain a simplicial space whose space
of $q$-simplices is equal to $|N\psi_d(N,k)_{\bullet,q}|$. The following is an immediate consequence of
Proposition \ref{propmonoid}. 

\theoremstyle{plain} \newtheorem{trueseg}[scan]{Lemma}

\begin{trueseg} \label{trueseg}
The simplicial space $[q] \mapsto |N\psi_d(N,k)_{\bullet,q}|$ satisfies
the assumptions stated in Proposition \ref{segal}.
\end{trueseg}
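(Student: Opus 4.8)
The plan is to obtain Lemma \ref{trueseg} as an essentially formal consequence of Proposition \ref{propmonoid}, since all of the geometric content has already been isolated there; what remains is bookkeeping with geometric realizations. Write $X_{\bullet}$ for the simplicial space $[p] \mapsto |N\psi_d(N,k)_{\bullet,p}|$, so that $X_p = |N\psi_d(N,k)_{\bullet,p}|$ and the face maps of $X_{\bullet}$ are the geometric realizations of the structure maps of $N\psi_d(N,k)_{\bullet,\bullet}$ in the second simplicial direction. According to Proposition \ref{segal}, two things must be checked: that $X_0$ is contractible, and that for each $p \geq 1$ the map $(\beta_1^*,\ldots,\beta_p^*) : X_p \rightarrow X_1 \times \cdots \times X_1$ (with $p$ factors) is a homotopy equivalence.

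First I would dispose of the case $p = 0$. By part 2) of Proposition \ref{propmonoid} the inclusion $\emptyset_{\bullet} \hookrightarrow N\psi_d(N,k)_{\bullet,0}$ is a weak homotopy equivalence, and since $|\emptyset_{\bullet}|$ is a single point, $X_0 = |N\psi_d(N,k)_{\bullet,0}|$ is contractible.

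Next, for fixed $p \geq 1$, I would invoke part 3) of Proposition \ref{propmonoid}, which asserts that
\[
B^p_{\bullet} = (\beta_1^*,\ldots,\beta_p^*) : N\psi_d(N,k)_{\bullet,p} \longrightarrow \underbrace{N\psi_d(N,k)_{\bullet,1} \times \cdots \times N\psi_d(N,k)_{\bullet,1}}_{p}
\]
is a weak homotopy equivalence of simplicial sets. Applying the geometric realization functor and using that $|\cdot|$ preserves finite products of simplicial sets (products taken in the category of compactly generated spaces), this yields a weak homotopy equivalence $X_p \rightarrow X_1 \times \cdots \times X_1$ which, by naturality of the canonical homeomorphism $|Y_{\bullet} \times Z_{\bullet}| \cong |Y_{\bullet}| \times |Z_{\bullet}|$, coincides with the map $(\beta_1^*,\ldots,\beta_p^*)$ appearing in the hypothesis of Proposition \ref{segal}. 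Finally, since geometric realizations of simplicial sets — and finite products of such — are CW complexes, Whitehead's theorem upgrades this weak homotopy equivalence to an honest homotopy equivalence, which verifies the second hypothesis and completes the argument.

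There is no real obstacle in this lemma: the only point requiring any care is the identification, up to the canonical homeomorphism, of $|B^p_{\bullet}|$ with the map $(\beta_1^*,\ldots,\beta_p^*)$ built directly from the simplicial structure of $X_{\bullet}$, which is the usual compatibility of geometric realization with products and with face operators. In effect, Lemma \ref{trueseg} is a restatement of parts 2) and 3) of Proposition \ref{propmonoid} after passing to realizations, and the substantive work — in particular the use of the techniques from the proof of part 1) — has already been carried out there.
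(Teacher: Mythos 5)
Your argument tracks the paper's for the Segal-type conditions: contractibility of $X_0$ from point 2) of Proposition \ref{propmonoid}, and the product maps being equivalences from point 3) plus the canonical homeomorphism $|Y_{\bullet}\times Z_{\bullet}| \cong |Y_{\bullet}| \times |Z_{\bullet}|$; your appeal to Whitehead to pass from weak to genuine homotopy equivalence is a reasonable tightening that the paper leaves implicit.

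However, the paper's proof also explicitly verifies that the simplicial space $\mathcal{N}_{\bullet}$ is \emph{group-like}, citing point 4) of Proposition \ref{propmonoid}, and you omit this entirely. You read ``the assumptions stated in Proposition \ref{segal}'' as referring only to the hypotheses in the ``Let $X_{\bullet}$ be a simplicial space such that\dots'' clause, and on a strictly literal reading that is defensible. But the role of this lemma in the paper is precisely to license the application of Proposition \ref{segal} in the construction of the map $h_1'$ in (\ref{prefirst}), and the desired conclusion of Proposition \ref{segal} --- that the adjoint map is an equivalence --- holds \emph{if and only if} $X_{\bullet}$ is group-like. The author clearly intended ``assumptions'' to encompass everything needed to invoke the positive direction of that iff, as evidenced by the proof's opening sentence pairing the group-like check with the contractibility check. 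Without point 4), your version of the lemma does not by itself justify the use made of it two paragraphs later. Adding a sentence citing point 4) of Proposition \ref{propmonoid}, together with the observation that $\pi_0(X_1) = \pi_0\big(|N\psi_d(N,k)_{\bullet,1}|\big) = \pi_0\big(N\psi_d(N,k)_{\bullet,1}\big)$ so that the group-like structure transfers to the realized simplicial space, would close the gap.
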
 

We will now define the first map of (\ref{descan}). Recall that the preferred base-point for any space of the form 
$|\psi_d(N,k)_{\bullet}|$ is the geometric realization of the subsimplicial set
$\varnothing_{\bullet}$ consisting of all empty simplices $\varnothing \in \psi_d(N,k)_p$. On the other hand, our preferred base-point 
for the space $\left\|N\psi_d(N,k)_{\bullet,\bullet}\right\|$ will be the vertex corresponding to the (0,0)-simplex $(\varnothing, 0)$. We will denote this base-point by $x_0$. 

\theoremstyle{definition} \newtheorem{leftmost.map}[scan]{Note}

\begin{leftmost.map} \label{leftmost.map} 

\textbf{(Construction of the first map in (\ref{descan}))}  Recall that the scanning map $\mathcal{S}_k$ was induced by a map 
$\mathcal{T}: [-1,1]\times |\psi_d(N,k)_{\bullet}| \rightarrow  |\psi_d(N,k+1)_{\bullet}|$ which sends
$[-1,1]\times |\varnothing_{\bullet}|$
and $\{-1,1\}\times |\psi_d(N,k)_{\bullet}|$ to the base-point $|\varnothing_{\bullet}|$
of $|\psi_d(N,k+1)_{\bullet}|$. See the discussion given before Definition \ref{scan}. 
In this note, we will call this map $\mathcal{T}$   \textit{the adjoint of the scanning map}. Also, let us denote the restriction of $\mathcal{T}$ on 
$[-1,0]\times  |\psi_d(N,k)_{\bullet}|$ and $[0,1]\times  |\psi_d(N,k)_{\bullet}|$ by $\mathcal{T}^-$ and 
$\mathcal{T}^+$ respectively.  To construct the
map $\widetilde{\mathcal{T}}: [-1,1]\times |\psi_d(N,k)_{\bullet}| \rightarrow \left\|N\psi_d(N,k)_{\bullet,\bullet}\right\|$ that will induce the 
leftmost map in (\ref{descan}), we will first modify
$\mathcal{T}^-$ and $\mathcal{T}^+$ so that their targets become  $\left\|N\psi_d(N,k)_{\bullet,\bullet}\right\|$. However, once we do this modification, the two maps will not agree on $\{0\}\times |\psi_d(N,k)_{\bullet}|$. Thus, we will also define a map of the form
$[-1,1]\times |\psi_d(N,k)_{\bullet}| \rightarrow \left\|N\psi_d(N,k)_{\bullet,\bullet}\right\|$ that will allow us to transition between the new versions of 
$\mathcal{T}^-$ and $\mathcal{T}^+$. The rigorous details of the construction of the map 
$\widetilde{\mathcal{T}}: [-1,1]\times |\psi_d(N,k)_{\bullet}| \rightarrow \left\|N\psi_d(N,k)_{\bullet,\bullet}\right\| $ are given in the following steps: 

\textit{Step 1.} Let $\psi_d^{< 1}(N,k + 1)_{\bullet}$ be the subsimplicial set of $\psi_d(N,k+1)_{\bullet}$ whose $p$-simplices
are all the $W \in \psi_d(N,k+1)_p$ such that $W \subseteq \Delta^p \times \mathbb{R}^k\times (-\infty,1)\times (0,1)^{N-k-1}$.  
We also define $\psi_d^{> 0}(N,k + 1)_{\bullet}$ in a similar fashion. That is, $W$ is in $\psi_d^{> 0}(N,k + 1)_p$ if
$W \subseteq \Delta^p \times \mathbb{R}^k\times (0, \infty)\times (0,1)^{N-k-1}$. It is easy to check that 
$\mathrm{Im}\hspace{0.06cm}\mathcal{T}^- \subseteq |\psi_d^{< 1}(N,k + 1)_{\bullet}|$ and
$\mathrm{Im}\hspace{0.06cm}\mathcal{T}^+ \subseteq |\psi_d^{> 0}(N,k + 1)_{\bullet}|$. 
Thus, we can view $\mathcal{T}^-$ and $\mathcal{T}^+$ as maps into 
$|\psi_d^{< 1}(N,k + 1)_{\bullet}|$ and $|\psi_d^{> 0}(N,k + 1)_{\bullet}|$ respectively. Also, note that we can
define inclusions $j^-: \psi_d^{< 1}(N,k + 1)_{\bullet} \hookrightarrow N\psi_d(N,k)_{\bullet,0}$ and
$j^+: \psi_d^{> 0}(N,k + 1)_{\bullet} \hookrightarrow N\psi_d(N,k)_{\bullet,0}$ by setting 
$j^-(W) = (W,c_1)$ and $j^+(W) = (W,c_0)$. 
 We will modify
$\mathcal{T}^-$ and $\mathcal{T}^+$ by taking the following compositions: 
\vspace{-0.05cm}
\begin{equation*} 
[0,1]\times |\psi_d(N,k)_{\bullet}| \stackrel{\mathcal{T}^{\hspace{0.03cm}-}}{\longrightarrow} 
|\psi_d^{<1}(N,k+1)_{\bullet}| \stackrel{\hspace{-0.05cm}|j^{-}|}{\longrightarrow}  |N\psi_d(N,k)_{\bullet,0}| \hookrightarrow 
\left\|N\psi_d(N,k)_{\bullet,\bullet}\right\| 
\end{equation*}
\vspace{-0.2cm}
\begin{equation*}
[-1,0]\times |\psi_d(N,k)_{\bullet}| \stackrel{\mathcal{T}^{\hspace{0.03cm}+}}{\longrightarrow} 
|\psi_d^{>0}(N,k+1)_{\bullet}| \stackrel{\hspace{-0.05cm}|j^{+}|}{\longrightarrow} |N\psi_d(N,k)_{\bullet,0}| \hookrightarrow 
\left\|N\psi_d(N,k)_{\bullet,\bullet}\right\|
\end{equation*}
In both of these compositions, the last map is the canonical inclusion of  $|N\psi_d(N,k)_{\bullet,0}|$
into $\left\|N\psi_d(N,k)_{\bullet,\bullet}\right\|$. In the remainder of this construction, we will label these compositions as
$\widetilde{\mathcal{T}}^-$ and $\widetilde{\mathcal{T}}^+$ respectively. 

\textit{Step 2.} Next, we will define the map that we will use to interpolate between $\widetilde{\mathcal{T}}^-$ and 
$\widetilde{\mathcal{T}}^+$. To do this, consider the natural map $q: \Delta^1 \times |N\psi_d(N,k)_{\bullet,1}| \rightarrow \left\|N\psi_d(N,k)_{\bullet,\bullet}\right\|$. From now on, in the domain of $q$, we will replace $\Delta^1$ with $[-1,1]$. The map 
$g:[-1,1]\times |\psi_d(N,k)_{\bullet}| \rightarrow \left\|N\psi_d(N,k)_{\bullet,\bullet}\right\|$ 
that we will use to interpolate from $\widetilde{\mathcal{T}}^-$ to $\widetilde{\mathcal{T}}^+$ is defined 
as the composition
\[
[-1,1]\times |\psi_d(N,k)_{\bullet}| \longrightarrow  [-1,1] \times |N\psi_d(N,k)_{\bullet,1}|  \stackrel{q}{\longrightarrow} 
 \left\|N\psi_d(N,k)_{\bullet,\bullet}\right\|,
\] 
where the first map is the product of $\mathrm{Id}_{[-1,1]}$ and the geometric realization of the morphism 
$\eta^1: \psi_d(N,k)_{\bullet} \rightarrow N\psi_d(N,k)_{\bullet,1}$ that we defined in part (i) of Proposition 
\ref{propmonoid}.  

\textit{Step 3.} In this step, we concatenate the maps $\widetilde{\mathcal{T}}^-$, $g$, and $\widetilde{\mathcal{T}}^+$. More concretely, we define
a map
$\widetilde{\mathcal{T}}': [-1,1]\times |\psi_d(N,k)_{\bullet}| \rightarrow \left\|N\psi_d(N,k)_{\bullet,\bullet}\right\|$ as follows:
\begin{equation} \label{adjointfirst} 
\widetilde{\mathcal{T}}'(t,x) = \left\{
\begin{array}{rl}
\widetilde{\mathcal{T}}^{-}(2t+1,x) & \text{if } t \in [-1, - \frac{1}{2}],\\
g(2t,x) & \text{if } t \in [-\frac{1}{2}, \frac{1}{2}],\\
\widetilde{\mathcal{T}}^{+}(2t-1,x) & \text{if } t\in [\frac{1}{2},1].
\end{array} \right.
\end{equation}

\textit{Step 4.} Note that the map $\widetilde{\mathcal{T}}'$ that we defined in Step 3 will not induce a map of the form 
$|\psi_d(N,k)_{\bullet}| \rightarrow \Omega \left\|N\psi_d(N,k)_{\bullet,\bullet}\right\|$ since $\widetilde{\mathcal{T}}'$ maps 
$\{-1\} \times |\psi_d(N,k)_{\bullet}|$ and $\{1\} \times |\psi_d(N,k)_{\bullet}|$ to different points in 
$ \left\|N\psi_d(N,k)_{\bullet,\bullet}\right\|$. More precisely, the map $\widetilde{\mathcal{T}}'$ sends
$\{-1\} \times |\psi_d(N,k)_{\bullet}|$ to the vertex corresponding to the
$0$-simplex $(\varnothing, 1)$ and  $\{1\} \times |\psi_d(N,k)_{\bullet}|$ to 
the vertex corresponding to the $0$-simplex $(\varnothing, 0)$ (which, recall, we are denoting by $x_0$). 
To fix this, and to give the definition of our desired map $\widetilde{\mathcal{T}}$, consider the characteristic map
\[
c: \Delta^{1} \cong [0,1] \rightarrow |N\psi_d(N,k)_{\bullet,1}|
\]
of the $(0,1)$-simplex $(\varnothing, 0, 1)$.
Note that, by reversing the orientation of this edge, we obtain a path from $x_0$
to the vertex corresponding to $(\varnothing,1)$. Then, we define the map 
$\widetilde{\mathcal{T}}: [-1,1]\times |\psi_d(N,k)_{\bullet}| \rightarrow \left\|N\psi_d(N,k)_{\bullet,\bullet}\right\|$ by setting

\begin{equation} \label{adjointfirst2} 
\widetilde{\mathcal{T}}(t,x) = \left\{
\begin{array}{rl}
c(-t) & \text{if } t \in [-1,0]\\
\widetilde{\mathcal{T}}'(-1 + 2t, x)   &  \text{if } t \in [0,1].
\end{array} \right.
\end{equation}

By construction, this map $\widetilde{\mathcal{T}}$ sends $\{-1,1\}\times  |\psi_d(N,k)_{\bullet}|$ to the base-point $x_0$ of
$\left\|N\psi_d(N,k)_{\bullet,\bullet}\right\|$. Therefore, the adjoint of $\widetilde{\mathcal{T}}$ will be a map of the form
\begin{equation} \label{leftmost.map.h}
\mathcal{H}:  |\psi_d(N,k)_{\bullet}| \rightarrow \Omega \left\|N\psi_d(N,k)_{\bullet,\bullet}\right\|.
\end{equation}
We take this $\mathcal{H}$ to be the first map in the composition (\ref{descan}). Note that 
$\mathcal{H}(|\varnothing_{\bullet}|)$ is not equal to the constant loop $[-1,1]\rightarrow x_0$, so $\mathcal{H}$ is not a pointed map. By following the construction given above, one can see that 
$\mathcal{H}(|\varnothing_{\bullet}|)$ is homotopic to the loop which first traverses the image of $c$ from
$c(1)$ to $c(0)$, and then traverses the image of $c$ again from $c(0)$ back to $c(1)$. In particular, 
$\mathcal{H}(|\varnothing_{\bullet}|)$ is a nullhomotopic loop.  

\end{leftmost.map} 

Now that we have introduced all the maps involved in the composition (\ref{descan}), we can prove the following fact. 

\theoremstyle{plain}  \newtheorem{descan.homotopic}[scan]{Lemma}

\begin{descan.homotopic} \label{descan.homotopic}

The composition given in (\ref{descan}) is homotopic to the scanning map $\mathcal{S}_k$.

\end{descan.homotopic}

\begin{proof}
To prove this lemma, we will show  that the adjoint $\mathcal{T}:[-1,1]\times |\psi_d(N,k)_{\bullet}| \rightarrow |\psi_d(N,k+1)_{\bullet}|$
of the scanning map is homotopic to the adjoint 
of the map (\ref{descan}).
The adjoint
\[
g: [-1,1] \times |\psi_d(N,k)_{\bullet}| \longrightarrow |\psi_d^{\varnothing}(N,k+1)_{\bullet}|
\]
of (\ref{descan}) is equal 
to the composite
\[
[-1,1] \times |\psi_d(N,k)_{\bullet}| \stackrel{\widetilde{\mathcal{T}}}{\rightarrow} \left\|N\psi_d(N,k)_{\bullet,\bullet}\right\| \stackrel{F}{\rightarrow}
|\psi_d^{0}(N,k+1)_{\bullet}| \stackrel{i}{\hookrightarrow} |\psi_d^{\varnothing}(N,k+1)_{\bullet}|,
\]
where $\widetilde{\mathcal{T}}$ is the map we defined in Note \ref{leftmost.map},
$F$ is the forgetful map introduced in Remark \ref{monoid.remark}, and 
$i$ is the inclusion from $|\psi_d^{0}(N,k+1)_{\bullet}|$
into $|\psi_d^{\varnothing}(N,k+1)_{\bullet}|$.  
Using the definition of the map $\widetilde{\mathcal{T}}$
given in (\ref{adjointfirst2}), it
is easy to verify 
that for each $(t,x)\in [-1,1] \times |\psi_d(N,k)_{\bullet}|$ we have
\[
g(t,x) = \mathcal{T}\big( \phi(t),x \big),
\] 
where $\phi: [-1,1] \rightarrow [-1,1]$
is the continuous function defined by
\begin{equation*}
\phi(t) = \left\{
\begin{array}{rl}
-1  & \text{if } t \in [-1,0],\\
4t -1  & \text{if } t \in [0, 1/4],\\
0 & \text{if } t\in [1/4,3/4],\\
4t -3 & \text{if } t \in [ 3/4, 1].
\end{array} \right.
\end{equation*}

If $h:[0,1] \times [-1,1] \rightarrow [-1,1]$
is any homotopy between $\phi$
and $\mathrm{Id}_{[-1,1]}$ relative to the endpoints of
$[-1,1]$, then the map
\[
H: [0,1] \times [-1,1] \times |\psi_d(N,k)_{\bullet}| \rightarrow |\psi_d(N,k+1)_{\bullet}|
\]
defined by $(s,t, x) \mapsto \mathcal{T}\big( h(s,t), x \big)$
is a homotopy between $g$ and 
$\mathcal{T}$ which is fixed 
on $\{-1,1 \} \times |\psi_d(N,k)_{\bullet}|$.
Consequently, the scanning map 
$\mathcal{S}_k$ is homotopic to the composition
given in (\ref{descan}).
\end{proof}

As we indicated earlier, the leftmost map in (\ref{descan}) has the following property. 

\theoremstyle{plain} \newtheorem{firstmapweak}[scan]{Proposition}

\begin{firstmapweak} \label{firstmapweak}
The map $\mathcal{H}: |\psi_d(N,k)_{\bullet}| \rightarrow \Omega \left\|N\psi_d(N,k)_{\bullet,\bullet}\right\|$
defined in Note \ref{leftmost.map} is a weak homotopy equivalence.
\end{firstmapweak}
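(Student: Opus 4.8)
The plan is to show that $h_1$ is a weak homotopy equivalence by identifying it, up to homotopy, with the weak equivalence $h_1'$ from stage $i)$ of the construction, which we already know to be a weak equivalence by part 1) of Proposition \ref{propmonoid} together with Proposition \ref{segal} and Lemma \ref{trueseg}. The point is that the map $h_1$ defined in (\ref{firstmap}) was obtained from $h_1'$ by a sequence of explicit modifications — passing from $g_1'$ to $\tilde{g}_1$ by inserting the scanning collars $\widetilde{\mathcal{T}}^{\pm}$ at the two ends of the $[-1,1]$-parameter, then from $\tilde{g}_1$ to $g_1$ by reparametrizing and prepending the constant-path segment $c$ — and in each of stages $ii)$, $iii)$ the excerpt already asserts that the relevant maps are homotopic. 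So the work is to make those assertions precise.

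First I would record that $h_1'$ is a weak homotopy equivalence: by part 1) of Proposition \ref{propmonoid}, $|\eta^1|$ is a weak equivalence; by Lemma \ref{trueseg} the simplicial space $[k]\mapsto |N\psi_d(N,k)_{\bullet,k}|$ satisfies the hypotheses of Proposition \ref{segal} and is group-like, so the adjoint map $|N\psi_d(N,k)_{\bullet,1}| \to \Omega_{|N_0|}\|N\psi_d(N,k)_{\bullet,\bullet}\|$ is a (weak) homotopy equivalence; hence the composite (\ref{prefirst}) is a weak equivalence, and since $|N_0|$ is contractible (part 2) of Proposition \ref{propmonoid}) the inclusion $\Omega\|N\psi_d(N,k)_{\bullet,\bullet}\| \hookrightarrow \Omega_{|N_0|}\|N\psi_d(N,k)_{\bullet,\bullet}\|$ is a weak equivalence onto the appropriate path component. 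Next I would give the homotopy $g_1' \simeq \tilde{g}_1$: the map $\widetilde{\mathcal{T}}^-$ restricted to $\{-1\}\times|\psi_d(N,k)_{\bullet}|$ lands on the basepoint-vertex of $\|N\psi_d(N,k)_{\bullet,\bullet}\|$ (because $\mathcal{T}^-(-1,x) = \bullet_N = \emptyset$ by the defining property of $\mathcal{T}$), and at $\{0\}\times|\psi_d(N,k)_{\bullet}|$ it agrees with the inclusion $W \mapsto (W,c_1)$ applied to $\mathcal{T}^-(0,x) = x$; thus $\widetilde{\mathcal{T}}^-$ is a nullhomotopy-type collar that can be slid off the $[-1,-\tfrac12]$ portion of the parameter interval by a standard reparametrization of $[-1,1]$ that collapses the two end-collars, yielding a homotopy from $\tilde{g}_1$ to $g_1'$ rel the relevant endpoints. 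The homotopy $\tilde{g}_1 \simeq g_1$ in stage $iii)$ is purely a reparametrization of the loop coordinate together with a contraction of the path $c$ traced by the $(0,0)$-simplex $(\emptyset,0)$ to the constant path at $(\emptyset,0)$, using that $c$ is a path in the contractible space $|N\psi_d(N,k)_{\bullet,0}|$; since all these homotopies fix the basepoint, taking adjoints gives that $\tilde{h}_1$, and hence its corestriction $h_1$, is homotopic to $h_1'$.

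Finally, since $h_1 \simeq h_1'$ and $h_1'$ is a weak homotopy equivalence, $h_1$ is a weak homotopy equivalence, which is the claim. The main obstacle I expect is purely bookkeeping: carefully verifying that the concatenation/reparametrization homotopies in stages $ii)$ and $iii)$ are well-defined continuous maps into $\|N\psi_d(N,k)_{\bullet,\bullet}\|$ (as opposed to $\Omega_{|N_0|}$ of it only) and that they respect basepoints, so that the passage to adjoints is legitimate and the final corestriction to $\Omega\|N\psi_d(N,k)_{\bullet,\bullet}\|$ is unambiguous. None of this requires any new geometric input — the geometric content is entirely contained in Proposition \ref{propmonoid}, Lemma \ref{trueseg}, and the formal group-completion statement Proposition \ref{segal} — so the proof should be short once the homotopies are spelled out.
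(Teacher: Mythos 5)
Your proposal is correct and follows essentially the same route as the paper's own proof: use the homotopies $g_1' \simeq \tilde{g}_1 \simeq g_1$ asserted in stages $ii)$ and $iii)$, pass to adjoints to conclude that $\tilde{h}_1 \simeq h_1'$ as maps into $\Omega_{|N_0|}\left\|N\psi_d(N,k)_{\bullet,\bullet}\right\|$, and then use the contractibility of $|N_0|$ (point 2 of Proposition \ref{propmonoid}) to see that the corestriction $h_1$ into $\Omega\left\|N\psi_d(N,k)_{\bullet,\bullet}\right\|$ is a weak equivalence. Your additional spelling out of the collar/reparametrization homotopies merely fleshes out what the paper treats as routine, and the minor looseness in asserting ``$h_1 \simeq h_1'$'' despite their different codomains is harmless since you already make the corestriction argument explicitly.
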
 

\begin{proof}
Denote the geometric realization of $N\psi_d(N,k)_{\bullet,0}$ by $N_0$, and let 
$\Omega_{N_0} \left\|N\psi_d(N,k)_{\bullet,\bullet}\right\|$ be the space of paths 
$[-1,1]\rightarrow \left\|N\psi_d(N,k)_{\bullet,\bullet}\right\|$ which start and end at $N_0$. Since $N_0$ is contractible
(part (ii) of Proposition \ref{propmonoid}), the natural inclusion 
\[
\iota: \Omega \left\|N\psi_d(N,k)_{\bullet,\bullet}\right\| \hookrightarrow \Omega_{N_0} \left\|N\psi_d(N,k)_{\bullet,\bullet}\right\|
\]
is a weak homotopy equivalence. Thus, to prove this proposition, it suffices to show that the composition
\begin{equation} \label{firstmap.eq1}
\gamma_1: |\psi_d(N,k)_{\bullet}| \stackrel{\mathcal{H}}{\longrightarrow}  \Omega \left\|N\psi_d(N,k)_{\bullet,\bullet}\right\| 
 \stackrel{\iota}{\longrightarrow} \Omega_{N_0} \left\|N\psi_d(N,k)_{\bullet,\bullet}\right\|
\end{equation}
is a weak equivalence.  To this end, let us also consider the composition 
\begin{equation} \label{firstmap.eq2}
\gamma_2: |\psi_d(N,k)_{\bullet}| \stackrel{|\eta^1|}{\longrightarrow} |N\psi_d(N,k)_{\bullet,1}| \longrightarrow 
\Omega_{N_0} \left\|N\psi_d(N,k)_{\bullet,\bullet}\right\|,
\end{equation}
where the first map is the geometric realization of the morphism of simplicial sets $\eta^1$
defined in part (i) of Proposition \ref{propmonoid}, and the
second map is the adjoint of the natural map 
$q: [-1,1] \times |N\psi_d(N,k)_{\bullet,1}| \rightarrow \left\|N\psi_d(N,k)_{\bullet,\bullet}\right\|$. 
Recall that we also used this map $q$ in Step 2 of Note \ref{leftmost.map}. By Proposition \ref{propmonoid}, Proposition \ref{segal}, and Lemma \ref{trueseg}, the map $\gamma_2$ given in (\ref{firstmap.eq2}) 
is a weak homotopy equivalence.  
Thus, to conclude this proof, it is enough to show that $\gamma_1$ and $\gamma_2$ are homotopic to each other. For this last step, we will use the following notation:
For any $x \in  |\psi_d(N,k)_{\bullet}|$ and $t \in [-1,1]$, we will denote the image of $t$ in the path $\gamma_1(x)$ (resp. $\gamma_2(x)$) by $\gamma_1(x,t)$ (resp. $\gamma_2(x,t)$). A careful inspection of the definition of $\gamma_1$ and $\gamma_2$ yields the following observations: 

\begin{itemize}

\item[$\cdot$] For any $x \in  |\psi_d(N,k)_{\bullet}|$, we have that $\gamma_1(x,t) = \gamma_2(x, 4t - 2)$ if $t$ is in 
$[\frac{1}{4}, \frac{3}{4}]$. 

\item[$\cdot$] For any $x \in  |\psi_d(N,k)_{\bullet}|$ and any $t \in [-1,\frac{1}{4}] \cup [\frac{3}{4}, 1]$, the image $\gamma_1(x,t)$ is in $N_0$. 

\end{itemize}
Therefore, we can define a homotopy from $\gamma_1$ to $\gamma_2$ by gradually shrinking the restrictions 
$\gamma_1(x)|_{[-1,\frac{1}{4}]}$ and  
$\gamma_1(x)|_{[\frac{3}{4}, 1]}$ for all $x$ in $|\psi_d(N,k)_{\bullet}|$. Consequently, since $\gamma_2$ is a weak homotopy equivalence, then so is the map $\gamma_1$. 
\end{proof}

\subsection{The inclusion $\psi_d^0(N,k)_{\bullet} \hookrightarrow \psi_d^{\varnothing}(N,k)_{\bullet}$} \label{section63}

It remains to show that the last map in the  
composite (\ref{descan}) is a weak homotopy equivalence.
This will be a consequence of the following proposition. 

\theoremstyle{plain}  \newtheorem{incempty}[scan]{Proposition}

\begin{incempty} \label{incempty}
If $N-d \geq 3$, then the inclusion 
$\psi_d^0(N,k)_{\bullet} \hookrightarrow \psi_d^{\varnothing}(N,k)_{\bullet}$
is a weak homotopy equivalence when $k>1$.  
\end{incempty}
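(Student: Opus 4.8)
The plan is to show that the inclusion $\psi_d^0(N,k)_{\bullet} \hookrightarrow \psi_d^{\emptyset}(N,k)_{\bullet}$ induces an isomorphism on all relative homotopy groups. Since both simplicial sets are Kan, it suffices to show that for every map of pairs $g \colon (\Delta^p, \partial\Delta^p) \to (|\psi_d^{\emptyset}(N,k)_{\bullet}|, |\psi_d^0(N,k)_{\bullet}|)$, the class it represents in $\pi_p(|\psi_d^{\emptyset}(N,k)_{\bullet}|, |\psi_d^0(N,k)_{\bullet}|)$ is trivial. Exactly as in the proof of Theorem \ref{longman}, I would first use the subdivision map $\rho$ (and Lemma \ref{homofact}, suitably adapted to these subsimplicial sets) to reduce to the case where $g$ is the geometric realization of a $\Delta$-set map $h \colon (K_{\bullet}, K'_{\bullet}) \to (\psi_d^{\emptyset}(N,k)_{\bullet}, \psi_d^0(N,k)_{\bullet})$ classified, via Remark \ref{classsubdelta}, by a single element $W \in \psi_d(N,k)(|K|)$ with the property that over each simplex $\sigma$ of $K'$ there is a pl function $f_\sigma \colon \sigma \to \mathbb{R}$ whose graph hyperplane $\{x_k = f_\sigma\}$ misses $W_\sigma$ fiberwise.

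The heart of the argument is then a ``push the empty level down'' construction, analogous to how Proposition \ref{makereg} (via Lemma \ref{makeregweak} and its building blocks \ref{mainlemma}, \ref{mainlemma2}, \ref{coverk}) produces fiberwise regular values. Concretely, I would show: given a compact polyhedron $M$, a compact subpolyhedron $P \subseteq M - \partial M$, a real constant $\beta$, and $W \in \psi_d^{\emptyset}(N,k)(M)$, there is a concordance $\widetilde W \in \psi_d(N,k)([0,1]\times M)$ that agrees with $[0,1]\times W$ wherever $x_k \le \beta$, is constant over $P$-complementary data already living in $\psi_d^0$, and ends at an element $W'$ admitting a finite open cover $\{V_j\}$ of $P$ with a pl function $f_j \colon V_j \to \mathbb{R}$ such that $W'_\lambda$ misses $\mathbb{R}^{k-1}\times\{f_j(\lambda)\}\times(0,1)^{N-k}$ for all $\lambda \in V_j$. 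The local input is the fact that each fiber $W_\lambda$, being empty near some height by the definition of $\psi_d^{\emptyset}$ (here one uses path-connectedness to $\emptyset$ to find an empty level locally), together with Proposition \ref{williamson} to locate a pl-clean level $a_\lambda$, and then Corollary \ref{isocor} / the Isotopy Extension Theorem (valid since $N-d\ge 3$, which is where the hypothesis enters) to spread this empty level over a regular neighborhood of $\lambda$ while fixing everything below height $\beta$. The covering machinery of Lemma \ref{coverk} and the disjoint-support trick of Lemma \ref{makeregweak} let these local modifications be performed simultaneously over all simplices of a fine enough triangulation of $P$, yielding a single global concordance.

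Once this concordance exists over $|K|$, I would finish exactly as in steps $iv)$--$vi)$ of the proof of Theorem \ref{longman}: the concordance $\widetilde W$ produces, via the $\Delta$-set version of Proposition \ref{classsub}, a homotopy of maps of pairs $F \colon [0,1]\times(|K_{\bullet}|,|K'_{\bullet}|) \to (|\psi_d^{\emptyset}(N,k)_{\bullet}|, |\psi_d^0(N,k)_{\bullet}|)$ from $|h|$ to the map $|g'|$ classifying $W'$; then, since every simplex of a sufficiently iterated barycentric subdivision $\mathrm{sd}^r K$ lies inside one of the cover sets $V_j$ (Lebesgue number lemma), the composite $\rho^r \circ |g'|$ lands entirely in $|\psi_d^0(N,k)_{\bullet}|$, and by the analogue of Remark \ref{recsub} it is homotopic to $|g'|$ through maps of pairs. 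Concatenating $f \sim |g|\circ f' \sim |g'|\circ f' \sim \rho^r\circ|g'|\circ f'$ shows the original class is trivial. The main obstacle is the construction of the global empty-level concordance in the second paragraph --- specifically, verifying that the locally defined pl automorphisms (obtained from Lemma \ref{mainlemma}-type arguments applied to the ``empty level'' problem rather than the ``regular value'' problem) have disjoint supports and can be composed into a single ambient isotopy that is constant below height $\beta$ and over the part of $|K|$ already handled; this requires carefully redoing Lemmas \ref{mainlemma} and \ref{mainlemma2} with ``$a_{\lambda,j}$ is a fiberwise regular value'' replaced by ``$W'$ is fiberwise empty at height $a_{\lambda,j}$'', and checking that the hypothesis $k>1$ is what guarantees enough room in the $(0,1)^{N-k}$ directions for the isotopy-extension step to apply with codimension $\ge 3$.
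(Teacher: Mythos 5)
There is a genuine gap in your proposal, and it sits exactly at the step you flag as ``the local input.'' You claim that because $W$ lies in $\psi_d^{\emptyset}(N,k)$, each fiber $W_{\lambda}$ is ``empty near some height by the definition of $\psi_d^{\emptyset}$.'' This is false. The definition of $\psi_d^{\emptyset}(N,k)_{\bullet}$ only says $W$ lies in the path component of $\emptyset$; it does \emph{not} give a hyperplane $\{x_k = a\}$ that $W_{\lambda}$ misses. (Concretely: a vertex of the form $\mathbb{R}\times M \subseteq \mathbb{R}^k\times(0,1)^{N-k}$ is nullconcordant, but has no empty level whatsoever.) Because no empty levels exist a priori, one cannot ``spread an empty level over a regular neighborhood'' by isotopy extension the way regular values are spread in Lemma \ref{mainlemma}. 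The empty levels have to be \emph{created}, and that is the substantive content of the paper's argument, which your sketch omits entirely.

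The paper's construction of empty levels proceeds in several steps that have no analogue in the Theorem \ref{longman} machinery you are trying to transplant. First one makes $W$ cylindrical of the form $\mathbb{R}^k\times M$ near a fiberwise regular value $a$ (Lemmas \ref{cylindrical} and \ref{stretching}); this part is indeed parallel to \ref{mainlemma}. Then, and this is where $\psi_d^{\emptyset}$ is really used, Lemma \ref{nullbord} extracts from the nullconcordance of $W$ a compact nullbordism $C$ of the slice $M$, and Lemma \ref{cobinf} converts $C$ into a $1$-simplex of $\psi_{d-k+1}(N-k+1,1)_{\bullet}$ running from $\mathbb{R}\times M$ to $\emptyset$. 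Lemma \ref{preincempty} then uses this concordance, crossed with the remaining factors and suitably localized, to push the entire slice $W_{\lambda}\cap t_1^{-1}(a_1)$ off to $-\infty$ in the $x_k$ direction while keeping the rest of $W$ fixed. It is precisely this pushing step that requires $k>1$: you need a \emph{second} non-compact coordinate $x_k$, distinct from $x_1$, along which to evacuate the capped-off cobordism. Your proposed explanation of the hypothesis $k>1$ (``enough room in the $(0,1)^{N-k}$ directions for the isotopy-extension step to apply with codimension $\geq 3$'') is not what is going on: the codimension condition for Hudson's theorem is already supplied by $N-d\geq 3$, same as in Theorem \ref{longman}, and $(0,1)^{N-k}$ has the same number of factors whether $k=1$ or $k=2$. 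Without the nullbordism cap-off and the $x_k$-pushing trick, your argument cannot get started, since there is no local empty level to propagate.
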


For the proof of this proposition, we need to extend the simplicial set $\psi_d^{\varnothing}(N,k)_{\bullet}$ to a functor of the form 
$\mathbf{PL}^{op} \rightarrow \mathbf{Sets}$. 

\theoremstyle{definition}  \newtheorem{pl.psi.empty}[scan]{Definition} 

\begin{pl.psi.empty} \label{pl.psi.empty}

$\psi_d^{\varnothing}(N,k): \mathbf{PL}^{op} \rightarrow \mathbf{Sets}$ shall be the functor that sends a PL space $P$ to the subset $\psi_d^{\varnothing}(N,k)(P) \subseteq \psi_d(N,k)(P)$ consisting of all 
$W \in  \psi_d(N,k)(P)$ with the property that each fiber $W_{\lambda}$ is concordant (when viewed as an element of 
$\psi_d(N,k)_0$) to $\varnothing$. Given any PL map $f: Q \rightarrow P$, the functor $\psi_d^{\varnothing}(N,k)$ will send $f$ to the function $f^*: \psi_d(N,k)(P) \rightarrow \psi_d(N,k)(Q)$ defined by taking pull-backs. 
\end{pl.psi.empty}

Evidently, we have that $\psi_d^{\varnothing}(N,k)(\Delta^p) = \psi_d^{\varnothing}(N,k)_p$, so the functor defined above is indeed an extension of $\psi_d^{\varnothing}(N,k)_{\bullet}$. 

\theoremstyle{definition} \newtheorem{Extra}[scan]{Note}

\begin{Extra} \label{Extra}
As we did for Theorem \ref{longman}, we will prove Proposition  
\ref{incempty} by showing that
the corresponding inclusion of semi-simplicial sets 
$\widetilde{\psi}_d^0(N,k)_{\bullet} \hookrightarrow \widetilde{\psi}_d^{\varnothing}(N,k)_{\bullet}$
is a weak homotopy equivalence. Throughout the rest of this section, unless 
noted otherwise, we shall denote the semi-simplicial sets 
$\widetilde{\psi}_d^0(N,k)_{\bullet}$ and
$\widetilde{\psi}_d^{\varnothing}(N,k)_{\bullet}$ by
$\psi_d^0(N,k)_{\bullet}$ and
$\psi_d^{\varnothing}(N,k)_{\bullet}$ respectively. Also, for the proof of Proposition \ref{incempty}, we shall interchange once again the roles of $x_k$ and $x_1$. Thus,  a $p$-simplex
$W$ of $\psi_d(N,k)_{\bullet}$ is in $\psi_d^0(N,k)_p$
if there is a piecewise linear function $f:\Delta^p \rightarrow \mathbb{R}$
such that, for each $\lambda$ in $\Delta^p$, we have that 
$W_{\lambda} \cap \big( \{ f(\lambda)\} \times \mathbb{R}^{k-1} \times (0,1)^{N-k} \big) = \varnothing.$
\end{Extra}

Before we begin proving Proposition \ref{incempty}, we point out that, 
by adjusting the argument given in Remark \ref{emptylevel.remark2} to the semi-simplicial setting, 
we can show that the semi-simplicial set 
$\psi_d^0(N,k)_{\bullet}$ is also not Kan. Thus, we run into the same issue we discussed in Remark \ref{remark.psireg}. That is, if 
$f: (\Delta^p, \partial \Delta^p) \rightarrow \big(|\psi_d^{\varnothing}(N,k)_{\bullet}|,|\psi_d^{0}(N,k)_{\bullet}|\big)$
represents an element in  $\pi_p(|\psi_d^{\varnothing}(N,k)_{\bullet}|, |\psi_d^{0}(N,k)_{\bullet}|)$, then we cannot guarantee that $f$
 is homotopic to the geometric realization of a morphism of the form
$g: (\Delta^p_{\bullet}, \partial \Delta^p_{\bullet}) \rightarrow (\psi_d^{\varnothing}(N,k)_{\bullet}, \psi_d^{0}(N,k)_{\bullet})$. 
To fix this, as we did in the proof of Theorem \ref{longman}, we will use again Proposition \ref{subsimphop}.
The proof of Proposition 
\ref{incempty}
is practically identical to that of 
Theorem \ref{longman} once we 
have the following result. 

\theoremstyle{plain}  \newtheorem{nestenempty}[scan]{Proposition}

\begin{nestenempty}  \label{nestenempty}

Fix a compact PL space $P$ and let $i_0, i_1: P \hookrightarrow [0,1]\times P$ be the inclusions defined by 
$i_j(x) = (j, x)$ for $j=0,1$. Given any $W$ in $\psi_d^{\varnothing}(N,k)(P)$ and any real value $\beta$, we can find a concordance $\widetilde{W} \in \psi_d^{\varnothing}(N,k)([0,1]\times P)$ with the following properties: 

\begin{itemize}

\item[(i)] $i^*_0\widetilde{W} = W$.

\item[(ii)] $\widetilde{W}$ agrees with the constant concordance $[0,1]\times W$ when we restrict the background space to $(-\infty, \beta) \times \mathbb{R}^{N-1}$. 

\item[(iii)] For the element $W'= i_1^*\widetilde{W}$, there exists a finite open cover $U_1, \ldots, U_q$ of $P$ and real values  $a_1, \ldots, a_q \in (\beta, \infty)$
such that, for each $j \in \{1, \ldots, q\}$, we have that
$W'_{\lambda} \cap \big( \{a_j\} \times \mathbb{R}^{N-1} \big) = \varnothing$
for all $\lambda \in U_j$.  
\end{itemize}
\end{nestenempty}

\begin{proof}[\textbf{Proof of Proposition \ref{incempty}}] 
Let us give the proof of Proposition 
\ref{incempty} assuming Proposition 
\ref{nestenempty}.
 Consider then an arbitrary map of pairs
\begin{equation} \label{diaginc}
f:(\Delta^p, \partial\Delta^p) \rightarrow \big(|\psi_d^{\varnothing}(N,k)_{\bullet}|,|\psi_d^{0}(N,k)_{\bullet}|\big).
\end{equation}
We need to prove that $f$ represents the trivial class in 
$\pi_p(|\psi_d^{\varnothing}(N,k)_{\bullet}|, |\psi_d^{0}(N,k)_{\bullet}|)$. First, note that 
both $|\psi_d^{\varnothing}(N,k)_{\bullet}|$ and $|\psi_d^{0}(N,k)_{\bullet}|$ are invariant under the subdivision map $\rho$ of $|\Psi_d(\mathbb{R}^N)_{\bullet}|$ and the homotopy between $\rho$ and 
$\mathrm{Id}_{|\Psi_d(\mathbb{R}^N)_{\bullet}|}$ that we defined in \S \ref{secsubmap}.  Thus, by Proposition \ref{subsimphop}, $f$ is homotopic (as a map of pairs)
to a composite of the form
\begin{equation} \label{class.varnothing}
(\Delta^p,\partial\Delta^p) \stackrel{f'}{\rightarrow} (\left|K_{\bullet}\right|, \left|K'_{\bullet}\right|) \stackrel{\left|h\right|}{\rightarrow} (\left|\psi_d^{\varnothing}(N,k)_{\bullet}\right|,\left|\psi_d^{0}(N,k)_{\bullet}\right|),
\end{equation}
where $(K_{\bullet},K'_{\bullet})$ is a pair
of finite semi-simplicial sets induced by a pair
of finite ordered simplicial complexes $(K,K')$. Now, consider the bijective correspondence 
$\Psi_d(\mathbb{R}^N)(|K|) \rightarrow \mathbf{Semi}$-$\mathbf{Ssets}(K_{\bullet}, \Psi_d(\mathbb{R}^N)_{\bullet})$ given by the version of Theorem \ref{classsub} for semi-simplicial sets (see Remark \ref{semi.class.version}). Clearly, if we restrict this function on $\psi_d^{\varnothing}(N,k)(|K|)$,  we will obtain a bijection 
$\psi_d^{\varnothing}(N,k)(|K|) \stackrel{\cong}{\rightarrow}  \mathbf{Semi}$-$\mathbf{Ssets}(K_{\bullet}, \psi_d^{\varnothing}(N,k)_{\bullet})$. Thus, the map $h: K_{\bullet} \rightarrow \psi_d^{\varnothing}(N,k)_{\bullet}$ that appears in (\ref{class.varnothing}) classifies (relative to the triangulation $(K, \mathrm{Id}_{|K|})$) a unique element 
$W \in \psi_d^{\varnothing}(N,k)(|K|)$. 
Moreover, since $h(K'_{\bullet}) \subseteq \psi_d^{0}(N,k)_{\bullet}$, for each simplex $\sigma \in K'$ we can find a PL function $f_{\sigma}: \sigma \rightarrow \mathbb{R}$ such that
\[
W_{\sigma} \cap (\Gamma(f_{\sigma}) \times \mathbb{R}^{k-1} \times (0,1)^{N-k}) = \varnothing,
\]
where $\Gamma(f_{\sigma})$ is the graph of the function $f_{\sigma}$. 
Now, fix a real constant $\beta$ such that $\mathrm{Im}\hspace{0.05cm} f_{\sigma} \subset (-\infty, \beta)$ for all simplices $\sigma$ of $K'$.  
Then, by applying Proposition \ref{nestenempty} to $W$ and $\beta$, we can find a concordance $\widetilde{W} \in \psi_d^{\varnothing}(N,k)\big( [0,1]\times |K| \big)$ from $W$ to an element 
$W' \in \psi_d^{\varnothing}(N,k)\big( |K| \big)$ for which there exists a finite open cover $U_1, \ldots, U_q$ of $|K|$ and real values  $a_1, \ldots, a_q > \beta$ such that
\begin{equation} \label{nestenempty.eq}
W'_{U_j}\cap \big( \Gamma(c_{a_j})\times \mathbb{R}^{k-1} \times (0,1)^{N-k}\big) = \varnothing
\end{equation}
for each $j = 1, \ldots, q.$ In (\ref{nestenempty.eq}), $\Gamma(c_{a_j})$ is the graph of the constant function $c_{a_j}: U_j \rightarrow \mathbb{R}$ which maps every point in $U_j$ to $a_j$. Also, by Proposition \ref{nestenempty}, we can guarantee that $\widetilde{W}$ agrees with the constant concordance $[0,1]\times W$ when we restrict the background space to $(-\infty, \beta) \times \mathbb{R}^{k-1} \times (0,1)^{N-k}$. At this point, we have a scenario similar to the one we had in the proof of Theorem \ref{longman}. In that proof, we obtained an open cover $U_1, \ldots, U_q$ such that the corresponding element $W'$ admitted a fiberwise regular value over each $U_j$. On the other hand, in our current proof, the local condition that holds over each $U_j$ is the one expressed in (\ref{nestenempty.eq}). That is,
for each $U_j$ in our open cover, we have that $W'$ does not intersect the graph of the constant function $c_{a_j}$ over $U_j$.
Therefore, by doing arguments almost identical to the ones we did to conclude the proof of Theorem \ref{longman}, we can prove that the map $f$ in (\ref{diaginc}) represents the trivial class in  $\pi_p(|\psi_d^{\varnothing}(N,k)_{\bullet}|, |\psi_d^{0}(N,k)_{\bullet}|)$. 
\end{proof}

The proof of Proposition \ref{nestenempty} is rather intricate and it requires proving several preliminary lemmas. Before jumping into any technicalities, we will spend a few moments motivating our strategy for this proof. An essential idea that will underlie our argument is that 
\textit{null-bordisms} can be pushed to infinity. In the next definition, we clarify what we mean by null-bordism. 

\theoremstyle{definition}  \newtheorem{null.bordant}[scan]{Definition}

\begin{null.bordant}  \label{null.bordant}
Let $k,d,N$ be non-negative integers such that $k < d < N$. 
We say that a $0$-simplex $M$ of $\psi_{d-k}(N-k,0)_{\bullet}$ is \textit{null-bordant} if there exists a compact PL submanifold $C$ of $[0,1]\times (0,1)^{N-k}$ of dimension $d-k +1$ which satisfies the following: 

\begin{itemize}

\item[(i)] $\partial C = M$ and $\partial C \subseteq \{ 0\}\times (0,1)^{N-k}$.

\item[(ii)] $C \cap \big( \{ 1\}\times (0,1)^{N-k} \big) = \varnothing$.

\end{itemize}

A PL submanifold $C$ of $[0,1]\times (0,1)^{N-k}$  satisfying the above conditions will be called a \textit{null-bordism for $M$}. 

\end{null.bordant}
 
 The reason why we formulate Definition \ref{null.bordant} for $\psi_{d-k}(N-k,0)_{\bullet}$ instead of only $\psi_d(N,0)_{\bullet}$ is because we will typically use this definition while working with manifolds and ambient spaces whose dimensions are smaller than our usual $d$ and $N$. Before we continue motivating the proof of Proposition \ref{nestenempty}, we need to recall the following notation: 
 
 \begin{itemize}
 
 \item[$\cdot$] For any PL space $P$ and open set $U$ in $\mathbb{R}^N$, $\Psi_d(U)(P)$ is the set of all closed PL subspaces $W$ of 
 $P\times U$ with the property that the standard projection $\pi: W \rightarrow P$ is a PL submersion of codimension $d$. Also, 
 recall that $\Psi_d(U)_{\bullet}$ is the simplicial set whose set of $p$-simplices is equal to $\Psi_d(U)(\Delta^p)$. 
 
\item[$\cdot$] For any element $W$ in $\Psi_d(\mathbb{R}^N)(P)$ (in particular, in $\psi_d^{\varnothing}(N,k)(P)$) and any point $\lambda \in P$, $x_1: W_{\lambda} \rightarrow \mathbb{R}$ shall continue to denote the standard projection from the fiber $W_{\lambda}$ onto the first coordinate of $\mathbb{R}^N$.  
 
 \end{itemize}

Now, let $P$ be a compact PL space and $W$ an element of $\psi_d^{\varnothing}(N,k)(P)$. To prove Proposition \ref{nestenempty}, we need to show that it is possible to deform $W$ so that each fiber $W_{\lambda}$ has an empty level set of the form $x^{-1}_1(a) = \varnothing$. Moreover, throughout the deformation, the fibers of $W$ need to remain unchanged below the height $\beta$ that we fixed in the statement of Proposition \ref{nestenempty}. In Lemma \ref{maneuver} below, we will introduce the basic maneuver that we will use to achieve this type of deformation. As the reader shall see, the fact that we can push null-bordisms to infinity will play a crucial role in the proof of this lemma. We will use the following terminology in the statement of Lemma \ref{maneuver}: Fix an open set $U$ in $\mathbb{R}^k$, a 0-simplex $M$ of $\psi_{d-k}(N-k, 0)_{\bullet}$, a PL space $P$, and an element $W$ of $\psi_d(N,k)(P)$.   
We shall say that a fiber $W_{\lambda}$ of the projection $\pi:W \rightarrow P$ is  \textit{$M$-standard in $U$}  if
$W_{\lambda}\cap\big( U \times (0,1)^{N-k}\big) = U \times M$.  

\theoremstyle{plain} \newtheorem{maneuver}[scan]{Lemma}

\begin{maneuver} \label{maneuver}
Fix integers $1 < k < d < N$ and let $W$ be a 0-simplex of $\psi_d(N,k)_{\bullet}$, which we can view as the unique fiber of a map $W \rightarrow *$ onto a point. Suppose there is a $0$-simplex $M$ of  $\psi_{d-k}(N-k,0)_{\bullet}$, a real number $a$, and a value $\delta>0$ such that $W$ is $M$-standard in $(a-\delta, a+\delta)\times \mathbb{R}^{k-1}$. Then, if $M$ is null-bordant, there exists a concordance $\widetilde{W}\in \psi_d(N,k)([0,1])$ with the following properties: 

\begin{itemize}

\item[(i)] $\widetilde{W}$ is a concordance from $W$ to an element $W'\in \psi_d(N,k)_0$ with the property that $W'\cap \big(\{a\}\times \mathbb{R}^{N-1}\big) = \varnothing$.

\item[(ii)]  $\widetilde{W}$ agrees with the constant concordance 
$[0,1]\times W$ when we restrict the background space to the complement of $[a - \frac{\delta}{2}, a + \frac{\delta}{2}]\times \mathbb{R}^{N-1}$. 

\end{itemize}

\end{maneuver}

\begin{proof}
Let $W(\delta)$ denote the intersection of $W$ with $(a -\delta, a + \delta)\times \mathbb{R}^{N-1}$. This $W(\delta)$ is a
 0-simplex of $\Psi_d\big((a - \delta, a+ \delta)\times \mathbb{R}^{N-1}\big)_{\bullet}$. Moreover, since $W$ is $M$-standard in 
 $(a - \delta, a + \delta)\times \mathbb{R}^{k-1}$, we have that $W(\delta) = (a - \delta, a + \delta)\times \mathbb{R}^{k-1} \times M$. 
 Our strategy for constructing the concordance $\widetilde{W}$ is roughly the following. First, we shall construct an element
$\widetilde{M}_{\delta}$ of $\Psi_d\big((a - \delta, a+ \delta)\times \mathbb{R}^{N-1}\big)([0,1])$ which will be a concordance between 
$W(\delta)$ and another $0$-simplex of 
$\Psi_d\big((a - \delta, a+ \delta)\times \mathbb{R}^{N-1}\big)_{\bullet}$ which does not intersect the hyperplane $\{a\}\times \mathbb{R}^{N-1}$. 
Moreover, all the fibers of $\widetilde{M}_{\delta}$ will be $M$-standard in both 
$(a - \delta, a - \frac{\delta}{2} )\times \mathbb{R}^{k-1}$ and $(a + \frac{\delta}{2}, a + \delta) \times \mathbb{R}^{k-1}$.   Note that the fibers of the constant concordance $[0,1]\times W$ are also $M$-standard in both of these open sets of $\mathbb{R}^k$. In fact, the fibers of $[0,1]\times W$ are $M$-standard in all of 
$(a-\delta, a + \delta)\times \mathbb{R}^{k-1}$. Then, we will obtain the concordance $\widetilde{W}$ by cutting out the subspace $[0,1] \times (a-\delta, a + \delta)\times \mathbb{R}^{k-1}\times M$ from $[0,1]\times W$ and then gluing in the concordance $\widetilde{M}_{\delta}$. We will break down this proof into fours steps. In Steps 1, 2, and 3, we will construct the concordance $\widetilde{M}_{\delta}$. Finally, in Step 4, we carry out the gluing process that we described above. 

\textit{Step 1.} We start by constructing a concordance between the product $\mathbb{R}\times M$ (which is a $0$-simplex of 
 $\psi_{d-k+1}(N-k+1, 1)_{\bullet}$) and the empty manifold $\varnothing$. First, let us fix a null-bordism $C \subseteq [0,1]\times (0,1)^{N-k}$ of $M$ (in the sense of Definition \ref{null.bordant}) and let 
$D$ be the 0-simplex of $\psi_{d-k+1}(N-k+1, 1)_{\bullet}$ defined as the following union: 
\begin{equation} \label{infinite.cob}
D = \big( (-\infty, 0] \times M \big) \cup C.
\end{equation}
That is, $D$ is the `extended' null-bordism obtained by attaching the half-closed cylinder $(-\infty, 0] \times M$ to the boundary of $C$. Now, take the constant concordance $[0,1]\times D \in \psi_{d-k+1}(N-k+1, 1)([0,1])$ and let   $e: [0,1]\times \mathbb{R} \rightarrow [0,1]\times \mathbb{R}$ be an open PL embedding which commutes with the projection onto $[0,1]$ and has the following properties: 

\begin{itemize}

\item[$\cdot$] The image of the map $e_0$ is the open interval $(-1,0)$.

\item[$\cdot$] The map $e_{\frac{1}{2}}$ is equal to the identity map $\mathrm{Id}_{\mathbb{R}}$.

\item[$\cdot$] The image of the map $e_1$ is the open interval $(1,2)$.

\end{itemize}

If $E: [0,1] \times \mathbb{R} \times (0,1)^{N-k} \rightarrow  [0,1] \times \mathbb{R} \times (0,1)^{N-k}$
is the open PL embedding defined by $E = e\times \mathrm{Id}_{(0,1)^{N-k}}$, then Proposition \ref{pullemb} guarantees that the pre-image 
$E^{-1}([0,1]\times D)$ is an element of $\psi_{d-k+1}(N-k+1, 1)([0,1])$. From now on, we will denote this new element by  
$\widetilde{C}$. By the way we chose the embedding $e$, we have that $\widetilde{C}$ satisfies the following three conditions: 
\[
\widetilde{C}_0 = \mathbb{R}\times M \qquad \widetilde{C}_{\frac{1}{2}} = D  \qquad \widetilde{C}_1 = \varnothing. 
\]
In particular, $\widetilde{C}$ is a concordance from $\mathbb{R}\times M$ to $\varnothing$. Intuitively, the process that is being described by the concordance $\widetilde{C}$ is the following. As we go from $t = \frac{1}{2}$ to $t = 1$ along the interval $[0,1]$, we are pushing the extended null-bordism $D$ towards $-\infty$. At time $t=1$, we reach the empty manifold $\varnothing$. On the other hand, if we go from $t = \frac{1}{2}$ to $t =0$, we are pulling $D$ towards $+\infty$. At time $t = 0$, we reach the infinite cylinder $\mathbb{R}\times M$.     

\textit{Step 2.} Consider again the 0-simplex $W(\delta)$ of 
$\Psi_d\big((a - \delta, a+ \delta)\times \mathbb{R}^{N-1}\big)_{\bullet}$ obtained by intersecting $W$ with 
$(a -\delta, a + \delta)\times \mathbb{R}^{N-1}$.
Since $W(\delta) = (a - \delta, a + \delta)\times \mathbb{R}^{k-1} \times M$, it follows that the obvious projection $x_1: W(\delta) \rightarrow (a- \delta, a+ \delta)$ is a PL submersion of codimension $d-1$. 
Thus, we can also regard $W(\delta)$ as an element of the set $\Psi_{d-1}(\mathbb{R}^{N-1})\big( (a-\delta, a + \delta)\big)$, where the projection from $W(\delta)$ onto the base-space $(a - \delta, a + \delta)$ is given by 
$x_1: W(\delta) \rightarrow (a-\delta, a + \delta)$.
 In this step, we will show that $W(\delta)$, when viewed as an element of 
$\Psi_{d-1}(\mathbb{R}^{N-1})\big( (a-\delta, a + \delta)\big)$, is concordant to the empty manifold $\varnothing$. To do this, 
consider the 
 PL homeomorphism defined as the composition 
\begin{equation} \label{mapsPS}
\xymatrix{
(a - \delta, a + \delta) \times \mathbb{R}^{k-2} \times [0,1] \times \mathbb{R} \times (0,1)^{N-k} 
\ar[d]_P\\
[0,1] \times (a - \delta, a + \delta) \times \mathbb{R}^{k-2} \times \mathbb{R} \times (0,1)^{N-k} 
\ar[d]_S\\
[0,1] \times (a - \delta, a + \delta) \times \mathbb{R}^{k-1}  \times (0,1)^{N-k},}
\end{equation}
where $P$ is the obvious permutation map and $S$ is the map defined via the identification 
$\mathbb{R}^{k-2} \times \mathbb{R}\cong \mathbb{R}^{k-1}$. 
Now, if $\widetilde{C} \in \psi_{d-k+1}(N-k+1, 1)([0,1])$ is the concordance that we constructed in Step 1,
then the image of the product $(a-\delta, a + \delta)\times \mathbb{R}^{k-2}\times \widetilde{C}$ under the PL homeomorphism 
$S\circ P$ will be an element of $\Psi_{d-1}(\mathbb{R}^{N-1})\big( [0,1]\times(a-\delta, a + \delta)\big)$. It is not hard to verify that this element, which we will denote by $\widetilde{W}(\delta)$, is a concordance from 
$W(\delta) \in \Psi_{d-1}(\mathbb{R}^{N-1})\big( (a-\delta, a + \delta)\big)$ to $\varnothing$. We point out that it is exactly in this step where we need the condition $k>1$. Besides the $x_1$-axis, where we have the interval $(a-\delta, a + \delta)$, we need at least one more direction to push the fibers of $x_1: W(\delta) \rightarrow (a-\delta, a + \delta)$ to infinity.

\textit{Step 3.} To complete the construction of the concordance $\widetilde{M}_{\delta}$, we need to fix a PL map $r: [0,1]\times (a - \delta, a+ \delta) \rightarrow [0,1]\times (a- \delta, a + \delta)$ with the following properties:
 
 \begin{itemize}
 
\item[(i)] $r(t,x) = (0,x)$ if $x \in (a - \delta, a - \frac{\delta}{2}) \cup (a+ \frac{\delta}{2}, a + \delta)$ or if $t =0$.

 \item[(ii)] $r(t, x) = (t,x)$ if $x \in [a - \frac{\delta}{4}, a + \frac{\delta}{4}]$.  
 
 \end{itemize}

We define $\widetilde{M}_{\delta}$ as the pull-back $r^*\widetilde{W}(\delta)$ of $\widetilde{W}(\delta)$ along the PL map $r$. Since $\widetilde{W}(\delta)$ is an element of $\Psi_{d-1}(\mathbb{R}^{N-1})\big( [0,1]\times(a-\delta, a + \delta)\big)$, then so is $\widetilde{M}_{\delta}$. 
In particular, the standard projection $\pi: \widetilde{M}_{\delta} \rightarrow [0,1]\times(a-\delta, a + \delta)$ is a PL submersion of codimension $d-1$. However, the obvious projection $\mathrm{pr}_1: [0,1]\times(a-\delta, a + \delta) \rightarrow [0,1]$ is clearly a PL submersion of codimension 1. Therefore, the composition 
$\mathrm{pr}_1\circ \pi: \widetilde{M}_{\delta} \rightarrow [0,1]$, which we shall denote by $\widetilde{\pi}$, is a PL submersion of codimension $d$. Thus, if we take this projection $\widetilde{\pi}$ instead of $\pi$, we can regard $\widetilde{M}_{\delta}$ as an element of
$\Psi_d\big((a - \delta, a + \delta\big)\times \mathbb{R}^{N-1})([0,1])$. As an element of this set, $\widetilde{M}_{\delta}$ will satisfy the following properties:

 \begin{itemize}
 
\item[(i$^*$)] Each fiber of $\widetilde{\pi}: \widetilde{M}_{\delta} \rightarrow [0,1]$ is $M$-standard in  $(a - \delta, a - \frac{\delta}{2} )\times \mathbb{R}^{k-1}$ and $(a + \frac{\delta}{2}, a + \delta) \times \mathbb{R}^{k-1}$.

 \item[(ii$^*$)] The fiber of $\widetilde{\pi}: \widetilde{M}_{\delta} \rightarrow [0,1]$ over $t =0$ is equal to $W(\delta)$.
 
  \item[(iii$^*$)] If $W'(\delta)$ is the fiber of $\widetilde{\pi}: \widetilde{M}_{\delta} \rightarrow [0,1]$ over $t =1$, then $W'(\delta)$ is a $0$-simplex of
  $\Psi_d\big((a - \delta, a + \delta)\times \mathbb{R}^{N-1}\big)_{\bullet}$ which does not intersect the hyperplane $\{a\}\times \mathbb{R}^{N-1}$.

 \end{itemize} 

Properties (i$^*$) and (ii$^*$) follow easily from property (i) of the PL map $r$ that we fixed at the beginning of this step. To verify that property (iii$^*$) also holds, we first note that property (ii) of the PL map $r$ implies that $\widetilde{M}_{\delta}$ is equal to the concordance $\widetilde{W}(\delta)$ when we restrict the background space to $(a - \frac{\delta}{4}, a + \frac{\delta}{4})\times \mathbb{R}^{N-1}$. In particular, since the concordance $\widetilde{W}(\delta)$  is equal to $\varnothing$ at $t=1$, the fiber $W'(\delta)$ becomes empty when we intersect it with the open set  $(a - \frac{\delta}{4}, a + \frac{\delta}{4})\times \mathbb{R}^{N-1}$. In particular, $W'(\delta)$ and
$\{a\} \times \mathbb{R}^{N-1}$ are disjoint.  

\textit{Step 4.} Finally, we will give the details of the gluing process that we described at the beginning of this proof. As we observed earlier, the fibers of $\widetilde{M}_{\delta}$ are $M$-standard in $(a - \delta, a - \frac{\delta}{2} )\times \mathbb{R}^{k-1}$ and $(a + \frac{\delta}{2}, a + \delta) \times \mathbb{R}^{k-1}$. Also, since the fibers of the constant concordance $[0,1]\times W$ are $M$-standard in $(a-\delta, a + \delta)\times \mathbb{R}^{k-1}$, the intersection 
\begin{equation} \label{cutting.out}
\big( [0,1]\times W\big) \cap \big( [0,1]\times(\mathbb{R} - [a - \frac{\delta}{2}, a + \frac{\delta}{2}])\times \mathbb{R}^{N-1} \big)
\end{equation}
will be an element of $\Psi_d\big( (\mathbb{R} - [a - \frac{\delta}{2}, a + \frac{\delta}{2}])\times \mathbb{R}^{N-1} \big)([0,1])$ whose fibers are also 
$M$-standard in  $(a - \delta, a - \frac{\delta}{2} )\times \mathbb{R}^{k-1}$ and $(a + \frac{\delta}{2}, a + \delta) \times \mathbb{R}^{k-1}$. 
Then, by the gluing property of the sheaf $\Psi_d: \mathcal{O}(\mathbb{R}^N) \rightarrow \mathbf{Ssets}$
(see Remark \ref{spacerem2}), we can glue together 
$\widetilde{M}_{\delta}$ and the manifold given in (\ref{cutting.out})
 to produce an element of $\Psi_d(\mathbb{R}^N)([0,1])$ which agrees with $\widetilde{M}_{\delta}$ in 
 $(a-\delta, a + \delta)\times \mathbb{R}^{N-1}$ and with $[0,1]\times W$ in 
 $(\mathbb{R} - [a - \frac{\delta}{2}, a + \frac{\delta}{2}])\times \mathbb{R}^{N-1}$. 
 As we indicated at the beginning of this proof, we define $\widetilde{W}$ to be the concordance obtained via this gluing argument. Evidently, $\widetilde{W}$ satisfies conditions (i) and (ii) listed in the statement of this lemma. To finish this proof, we need to explain why $\widetilde{W}$ is an element of $\psi_d(N,k)([0,1])$. To see this, note that any fiber of the projection $\widetilde{\pi}: \widetilde{M}_{\delta} \rightarrow [0,1]$ is contained in 
 $(a-\delta, a + \delta)\times \mathbb{R}^{k-1}\times (0,1)^{N-k}$. In particular, any fiber of this projection will be contained in 
 $\mathbb{R}^{k}\times (0,1)^{N-k}$. Since the same is true for the standard projection $[0,1]\times W \rightarrow [0,1]$, it follows that any fiber of the projection $\pi: \widetilde{W}\rightarrow [0,1]$ is contained in  $\mathbb{R}^{k}\times (0,1)^{N-k}$. In other words, 
 $\widetilde{W}$ is an element of  $\psi_d(N,k)([0,1])$.
\end{proof}

Note that the concordance $\widetilde{M}_{\delta}$ 
that we constructed in Steps 1, 2, and 3 of the previous proof is 
independent of the behavior of the manifold $W$ outside of 
 $(a - \delta, a + \delta)\times \mathbb{R}^{N-1}$. 
 Any $0$-simplex of $\psi_d(N,k)_{\bullet}$ which is $M$-standard in 
 $(a - \delta, a + \delta)\times \mathbb{R}^{k-1}$ would give the same $\widetilde{M}_{\delta}$. 
 From now on, any concordance of the form  $\widetilde{M}_{\delta}$ 
 will be called a \textit{standard $M$-concordance.} 

\theoremstyle{definition} \newtheorem{example.non.fib}[scan]{Remark}

\begin{example.non.fib} \label{example.non.fib} 
We can use the construction given in Step 1 of the previous proof to produce an element 
$W$ in $\psi_d(N,1)_{\bullet}$ which does not have a fiberwise regular value. 
More concretely, we will give an example of an element in the set $\psi_d(N,1)([0,1])$ (which we can identify with $\psi_d(N,1)_1$) with no fiberwise regular values. 
First, let us fix a non-empty null-bordant element $M \in \psi_d(N-1,0)_0$.  By replicating the argument given in Step 1 of the previous proof, we can obtain a concordance $W \in \psi_d(N,1)([0,1])$ with the property that 
$W_0 = \mathbb{R}\times M$ and $W_1 = \varnothing$. 
As we have done several times already, we will denote the standard projections from 
$W \subseteq [0,1] \times \mathbb{R} \times (0,1)^{N-1}$ onto
$[0,1]$ and $\mathbb{R}$ by $\pi$ and $x_1$ respectively. 
We claim that the projection $x_1: W \rightarrow \mathbb{R}$
does not have a fiberwise regular value.  
If a fiberwise regular value $a \in \mathbb{R}$ did exist, then we would have either that 
$(\pi, x_1)^{-1}\big((\lambda, a)\big)$  is PL homeomorphic to $M$ for all
$\lambda \in [0,1]$, or that $(\pi, x_1)^{-1}\big((\lambda, a)\big) = \varnothing$ for 
all $\lambda \in [0,1]$.
But neither of these two options hold for the concordance $W$ that we constructed. Therefore, 
$x_1: W \rightarrow \mathbb{R}$
cannot have any fiberwise regular values. 
\end{example.non.fib}

\theoremstyle{definition} \newtheorem{outline.nestenempty}[scan]{Note}

\begin{outline.nestenempty} \label{outline.nestenempty} 
\textbf{(Outline of the proof of Proposition \ref{nestenempty})}  First, as was the case with Proposition \ref{makereg}, it is enough to prove Proposition \ref{nestenempty} in the case when the 
base-space $P$ is a closed PL manifold. 
Indeed, assuming that Proposition \ref{nestenempty} is true in this special case, one can 
then show that the result is also true for any compact PL space $P$
by doing an argument identical to the one given in Remark \ref{makereg.remark}. 
Let us fix then a closed PL manifold $P$, an element $W \in \psi_d^{\varnothing}(N,k)(P)$, and a value $\beta \in \mathbb{R}$. The overall structure for the proof of Proposition \ref{nestenempty} will be similar to that of Proposition \ref{makereg}. The proof of that proposition had two essential parts: (1) We showed how to produce fiberwise regular values locally, and (2) we then showed how to obtain an open cover $\mathcal{U}$ for the base-space $P$ such that, over each open set in $\mathcal{U}$, it was possible to produce a fiberwise regular value. We will divide the proof of Proposition \ref{nestenempty} into two similar parts: 

\begin{enumerate}
\item For a fixed point $\lambda_0 \in P$, we will show that it is possible to deform $W$ over a neighborhood $V$ of $\lambda_0$  so that all the fibers over $V$ become disjoint from a hyperplane of the form $\{a\}\times \mathbb{R}^{N-1}$.
\item We will then obtain an open cover $\{U_1, \ldots, U_q\}$ of $P$, real values $a_1, \ldots, a_q$, and a concordance $\widetilde{W}$ from $W$ to an element $W'$ with the property that each fiber of $W'$ over $U_j$ does not intersect the hyperplane $\{ a_j\}\times \mathbb{R}^{N-1}$. 
\end{enumerate}

Moreover, we will never change the fibers of $W$ below the height $x_1 = \beta$. The local deformation described in (1) will be obtained via Lemmas \ref{stretching} and \ref{preincempty}. More specifically, 
in these two lemmas, we will achieve the following: 
Fix a point $\lambda_0 \in P$ and two nested PL balls
$V \subset V' \subset P$ of dimension $m$ (where $m = \mathrm{dim}\hspace{0.05cm}P$) such that 
$\lambda_0 \in \mathrm{Int}\hspace{0.05cm}V$ and $V \subset \mathrm{Int}\hspace{0.05cm}V'$. 
In Lemma \ref{stretching}, we will show that  the fibers of $W$ over $V'$ can be made $M$-standard in $(a - \delta, a + \delta)\times \mathbb{R}^{k-1}$ for a suitable 
$M \in \psi_{d-k}(N-k,0)_0$ and suitable values $a \in \mathbb{R}$ and $\delta>0$.
Then, in Lemma \ref{preincempty}, we will use the maneuver from Lemma \ref{maneuver} 
to make each fiber over the neighborhood $V$ disjoint from 
$\{a\}\times \mathbb{R}^{N-1}$. In order to use the maneuver from Lemma \ref{maneuver}, we need to show that the manifold $M$ is null-bordant. We will prove this in Lemma \ref{nullbord}. The data described in (2) will be 
produced using Proposition \ref{longman.k}.  
\end{outline.nestenempty}  

\theoremstyle{definition} \newtheorem{reg.notation}[scan]{Notation}

\begin{reg.notation}  \label{reg.notation}
In order to avoid any notational ambiguities, we shall adopt the following 
conventions throughout the rest of this section: 

\begin{itemize}

\item[$\cdot$] We will use the letter $a$ (or symbols of the form $a_i$ and $a'$) to denote real numbers, 
and we will use the letter $v$ (or alternatives of the form $v_j$ and $v'$) to denote points in $\mathbb{R}^k$.

\item[$\cdot$] Consider an element $W \subseteq P\times \mathbb{R}^k \times (0,1)^{N - k}$ of
$\psi_d(N,k)(P)$ and let $\pi: W \rightarrow P$ be the natural projection onto the base-space $P$. 
From now on, the standard projection $W \rightarrow \mathbb{R}^k$ onto the factor
$\mathbb{R}^k$ will be denoted by $\widetilde{x}_k$. Similarly, the projection 
$W \rightarrow \mathbb{R}$ onto the first coordinate of $\mathbb{R}^k$ will be denoted by $\widetilde{x}_1$. 
On the other hand, for a single fiber $W_{\lambda}$ of $\pi: W \rightarrow P$, we will denote the 
corresponding projections $W_{\lambda} \rightarrow \mathbb{R}^k$ 
and $W_{\lambda} \rightarrow \mathbb{R}$ by $x_k$ and $x_1$ respectively. The purpose of these notational conventions is to make it clear to the reader whether we are projecting from the total space
$W \in \psi_d(N,k)(P)$ or from a single fiber $W_{\lambda}$. 

\end{itemize}

\end{reg.notation}

\theoremstyle{plain} \newtheorem{nullbord}[scan]{Lemma}

\begin{nullbord}  \label{nullbord}
Let $W\subseteq \mathbb{R}^k \times (0,1)^{N-k}$ be a $0$-simplex of 
$\psi_d(N,k)_{\bullet}$. Also, suppose that
$v_0\in \mathbb{R}^k$ is a regular value 
of the projection $x_k:W \rightarrow \mathbb{R}^k$
(in the sense of Definition \ref{regular}) and let
$M_0 := x_k^{-1}(v_0)$. If $W \in \psi_d^{\varnothing}(N,k)_{0}$, then 
$M_0$ must be null-bordant. 
\end{nullbord}

\begin{proof}
The goal of this lemma is to construct a null-bordism $C$ for $M_0$. To do so, let us fix a concordance 
$\widetilde{W}\in \psi_d(N,k)([0,1])$ from $W$ to $\varnothing$. Such a concordance exists because we are assuming that $W$ is a 0-simplex of $\psi_d^{\varnothing}(N,k)_{\bullet}$. Note that $\widetilde{W}$ is a PL manifold with boundary such that $\partial \widetilde{W} = W$. Also, since $v_0$ is a regular value of $x_k: W \rightarrow \mathbb{R}^k$, there exists 
an open PL ball $B(v_0,\delta)\subset \mathbb{R}^k$ (with respect to the norm $\left\| x \right\| = \mathrm{max}\{|x_1|, \ldots, |x_k| \}$) and  a PL homeomorphism $h: M_0 \times B(v_0, \delta) \rightarrow x_k^{-1}\big(B(v_0,\delta)\big)$ such that $x_k\circ h$ is equal to the canonical projection 
$\mathrm{pr}_2: M_0 \times B(v_0, \delta) \rightarrow B(v_0, \delta)$. It follows that the restriction of $x_k: W \rightarrow \mathbb{R}^k$ 
on $x_k^{-1}\big(B(v_0,\delta)\big)$ is a PL submersion of codimension $d-k$. Consequently, we can regard the pre-image $x_k^{-1}\big(B(v_0,\delta)\big)$ as an element of $\psi_{d-k}(N-k,0)\big( B(v_0,\delta) \big)$. 

Now, consider the standard projection $\widetilde{x}_k: \widetilde{W} \rightarrow \mathbb{R}^k$ 
from $\widetilde{W}\subseteq [0,1]\times \mathbb{R}^k \times (0,1)^{N-k}$ to $\mathbb{R}^k$. Since the set of regular values of $\widetilde{x}_k$ is dense in $\mathbb{R}^k$ (see Remark \ref{remark.williamson2}), we can find a regular value $v_1$ for 
$\widetilde{x}_k$ inside the open ball $B(v_0,\delta)$. Moreover, by Remark \ref{remark.williamson}, the pre-image 
$\widetilde{x}_k^{-1}(v_1)$ (which from now on we will denote by $C'$) will be a compact and proper PL submanifold with boundary of $\widetilde{W}$ of dimension $d - k + 1$.  
Consequently, after we identify 
$[0,1]\times \{ v_1\}\times (0,1)^{N-k}$ with $[0,1]\times (0,1)^{N-k}$,  $C'$ will be a compact PL submanifold of $[0,1]\times (0,1)^{N-k}$ such that $\partial C' \subseteq \{0\}\times (0,1)^{N-k}$ and 
$C\cap\big( \{1\}\times (0,1)^{N-k}\big) = \varnothing$ (this last property follows from the fact that $\widetilde{W}_1 = \varnothing$). Therefore, $C'$ is a null-bordism for the pre-image $x_k^{-1}(v_1)$. In the remainder of this proof, we will denote $x_k^{-1}(v_1)$ by $M_1$.
Finally, we will obtain a null-bordism $C$ for $M_0$ via the following steps: 

\begin{enumerate}

\item First, we embed $C'$ in $[1, 2]\times (0,1)^{N-k}$ via the PL homeomorphism $t\times \mathrm{Id}_{(0,1)^{N-k}}: [0,1] \times (0,1)^{N-k} \rightarrow [1,2] \times (0,1)^{N-k}$, where $t$ is the unique increasing linear homeomorphism $[0,1]\rightarrow [1,2]$. Let us denote the image of $C'$ under this embedding by $C''$. 

\item Let $s: [0, 1] \rightarrow B(v_0, \delta)$ be any PL embedding such that $s(0) = v_0$ and $s(1) = v_1$. Recall that we can regard $x_k^{-1}\big(B(v_0,\delta)\big)$ as an element of the set $\psi_{d-k}(N-k,0)\big( B(v_0,\delta) \big)$. By pulling back $x_k^{-1}\big(B(v_0,\delta)\big)$ along the PL embedding $s: [0, 1] \rightarrow B(v_0, \delta)$, we obtain an element of $\psi_{d-k}(N-k,0)([0, 1])$, which we shall denote by $D$. Note that  $D$ is a concordance from $M_0$ to $M_1$ which is PL homeomorphic to $[0,1]\times M_0$. 

\item Finally, we take the union $D \cup C'' \subseteq [0,2]\times (0,1)^{N-k}$ and define the null-bordism $C$ for $M_0$ as the image of 
$D \cup C''$ under the PL homeomorphism $j\times \mathrm{Id}_{(0,1)^{N-k}}: [0,2] \times (0,1)^{N-k} \rightarrow [0,1] \times (0,1)^{N-k}$, where $j$ is the unique increasing linear homeomorphism $[0,2]\rightarrow [0,1]$. 
\end{enumerate}
\end{proof}

Let $P$ be a closed PL manifold and $W$ an element in $\psi_d^{\varnothing}(N,k)(P)$. 
Moreover, fix a point 
$\lambda_0 \in P$ and  two nested PL balls
$V \subset V' \subset P$ of dimension $m = \mathrm{dim}\hspace{0.05cm}P$ satisfying 
$\lambda_0 \in \mathrm{Int}\hspace{0.05cm}V$ and $V \subset \mathrm{Int}\hspace{0.05cm}V'$. 
As we explained in Note \ref{outline.nestenempty}, 
we will show in Lemma \ref{stretching} that it is possible to deform $W$ 
so that its fibers over $V'$ become $M$-standard in a set of the form 
$(a- \delta, a+\delta) \times \mathbb{R}^{k-1}$. Then, in Lemma \ref{preincempty}, we will make the fibers over
$V$ disjoint from the hyperplane $\{a\}\times \mathbb{R}^{N-1}$. Since both Lemma \ref{stretching} and Lemma \ref{preincempty} describe deformations that take place over 
a PL ball of dimension $m = \mathrm{dim}\hspace{0.05cm}P$, 
it is enough to prove these two lemmas in the case when $P = [-1,1]^m$. 
This assumption will make it easier to carry out the local constructions described above. As we have already done in this section, we shall continue to denote by $B(v,\delta)$ the open ball centered at $v\in \mathbb{R}^k$ of radius $\delta$ with respect to the norm $\left\| x \right\| = \mathrm{max}\{|x_1|, \ldots, |x_k| \}$. 
Also, we will denote the point $(0,\ldots,0)\in [-1,1]^m$ by $\mathbf{0}$. 

\theoremstyle{plain} \newtheorem{stretching}[scan]{Lemma}

\begin{stretching} \label{stretching}
Fix an element $W$  of $\psi_d(N,k)([-1,1]^m)$ and  suppose that 
$v = (a_1, \ldots, a_k) \in \mathbb{R}^k$ is a fiberwise regular value for the standard projection $\widetilde{x}_k: W \rightarrow \mathbb{R}^k$. Additionally, let $x_k: W_{\mathbf{0}} \rightarrow \mathbb{R}^k$ be the standard projection from $W_{\mathbf{0}}$ to $\mathbb{R}^k$, and define $M :=x_k^{-1}(v)$. Then, for any values $0 < \epsilon< 1$ and $0 <\widetilde{\delta}$, there exists an element $\widetilde{W} \in \psi_d(N,k)([0,1]\times [-1,1]^m)$ with the following properties: 

\begin{itemize}

\item[(i)] $\widetilde{W}$ is a concordance from $W$ to an element $W' \in \psi_d(N,k)([-1,1]^m)$ with the property that, for each 
$\lambda \in [-\epsilon, \epsilon]^m$, the fiber $W'_{\lambda}$ is $M$-standard in an open set  $(a_1-\delta, a_1 + \delta)\times \mathbb{R}^{k-1}$ for some value $\delta> 0$ satisfying $0< 2\delta < \widetilde{\delta}$. 

\item[(ii)] $\widetilde{W}$ agrees with the constant concordance $[0,1]\times W$ over an open neighborhood of 
$[0,1] \times \partial([-1,1]^m)$. Also, $\widetilde{W}$ will agree with $[0,1]\times W$ when we restrict the background space to 
$\big( (\mathbb{R} - [a_1 - 2\delta, a_1 + 2\delta]\big) \times \mathbb{R}^{N-1}$.   

\end{itemize}

\end{stretching}

\begin{proof}
We will also split up this proof into several steps.

\textit{Step 1.} Let us first review some facts that we discussed in \S \ref{section4}. Let $\pi: W \rightarrow [-1,1]^m$ be the projection from $W$ to $[-1,1]^m$. Recall that $\pi$ must be a PL submersion of codimension $d$. As we explained in Remark \ref{remark.fib.reg}, since $v = (a_1, \ldots, a_k)$ is a fiberwise regular value of $\widetilde{x}_k: W \rightarrow \mathbb{R}^k$, we can find an open ball $B(v,\delta)$ centered at $v$ so that the restriction of the projection 
$(\pi, \widetilde{x}_k): W \rightarrow [-1,1]^m\times \mathbb{R}^k$ on the pre-image 
$(\pi,\widetilde{x}_k)^{-1}\big( [-1,1]^m \times B(v,\delta) \big)$ is a PL bundle whose fibers are PL homeomorphic to $M = x_k^{-1}(v)$. 
In particular, we can regard $(\pi,\widetilde{x}_k)^{-1}\big( [-1,1]^m \times B(v,\delta) \big)$
as an element of $\psi_{d-k}(N-k,0)\big( [-1,1]^m \times B(v,\delta) \big)$.  From now on, we will denote this element by 
$\widetilde{M}$. Also, we will assume that the radius $\delta$ of $B(v,\delta)$ is small enough 
so that $2\delta < \widetilde{\delta}$. 

\textit{Step 2.} In this step, we will transform $W$ so that every fiber over $[-\epsilon, \epsilon]^m$ becomes $M$-standard in $B(v,\delta)$, possibly after shrinking the value $\delta$. In fact, we will accomplish this for a slightly bigger radius than $\epsilon$. Let us then fix values $\epsilon'$, $\epsilon'', \delta', \delta''$ such that $0 < \epsilon < \epsilon' < \epsilon'' < 1$ and $0 < \delta' < \delta'' < \delta$.  
Also, consider the constant concordance $[0,1]\times W$, which is a closed PL subspace of $[0,1]\times [-1,1]^m\times \mathbb{R}^k\times (0,1)^{N-k}$. To avoid introducing more notation, we will also denote the standard projections $[0,1]\times W \rightarrow [0,1]\times [-1,1]^m$ and $[0,1]\times W \rightarrow \mathbb{R}^k$ by $\pi$ and $\widetilde{x}_k$ respectively. Note that the pre-image of $[0,1]\times [-1,1]^m\times B(v,\delta)$ under the projection $(\pi,\widetilde{x}_k):[0,1]\times W \rightarrow [0,1]\times [-1,1]^m\times \mathbb{R}^k$ is equal to $[0,1]\times \widetilde{M}$, where $\widetilde{M}$ is the element of $\psi_{d-k}(N-k,0)\big( [-1,1]^m \times B(v,\delta) \big)$ that we constructed in Step 1. In particular, 
$[0,1]\times \widetilde{M}$ is an element of the set $\psi_{d-k}(N-k,0)\big([0,1]\times [-1,1]^m \times B(v,\delta) \big)$.  To make the fibers of $W$ over $[-\epsilon, \epsilon]^m$ $M$-standard, we are going to pull back $[0,1]\times \widetilde{M}$ along a PL map 
\[
f: [0,1]\times [-1,1]^m \times B(v,\delta) \rightarrow  [0,1]\times [-1,1]^m \times B(v,\delta) 
\]
which satisfies the following:

\begin{itemize}

\item[(i)] $f|_{\{0\}\times [-1,1]^m \times B(v,\delta)} = \mathrm{Id}_{\{0\}\times [-1,1]^m \times B(v,\delta)}$.  

\item[(ii)] $f$ maps every point in $\{1\} \times [-\epsilon',\epsilon']^m\times B(v,\delta')$ to $(1,\mathbf{0}, v)$. 

\item[(iii)]$f$ fixes every point outside of $[0,1]\times [-1,1]^m \times B(v,\delta'')$.

\item[(iv)] $f$ fixes every point outside of $[0,1]\times [-\epsilon'',\epsilon'']^m \times B(v,\delta)$. 

\end{itemize}

Fix such a PL map $f$ and take the pull-back of $[0,1]\times \widetilde{M}$ along $f$. 
We will denote this pull-back by 
$\widetilde{M}_f$. Evidently, 
$\widetilde{M}_f$ is an element of $\psi_{d-k}(N-k,0)\big([0,1]\times [-1,1]^m \times B(v,\delta) \big)$. Moreover, the standard projection 
$\widetilde{M}_f \rightarrow [0,1]\times [-1,1]^m$ is a PL submersion of codimension $d$, so we can also regard 
$\widetilde{M}_f$ as an element of the set  
\[
\Psi_d(U_1)([0,1]\times [-1,1]^m),
\]
where $U_1 = B(v,\delta)\times \mathbb{R}^{N-k}$. 
Now, let $\overline{B(v,\delta'')}$ be the closure of the open ball $B(v,\delta'')$ and define $U_2 = \big( \mathbb{R}^k - \overline{B(v,\delta'')}\big)\times \mathbb{R}^{N-k}$. We will use the open sets $U_1$ and $U_2$ to 
define the following elements: 

\begin{itemize}

\item[$\cdot$] $W_{U_1}$ will be the element of $\Psi_d(U_1)([-1,1]^m)$ obtained by intersecting $W$ with 
$[-1,1]^m\times U_1$.

\item[$\cdot$]  $([0,1]\times W)_{U_2}$ will be the element of the set $\Psi_d(U_2)([0,1] \times [-1,1]^m)$ obtained by intersecting $[0,1]\times W$
with $[0,1]\times [-1,1]^m\times U_2$. We define $([0,1]\times W)_{U_1}$ in a similar fashion. 
\end{itemize}

As an element of $\Psi_d(U_1)([0,1]\times [-1,1]^m)$, $\widetilde{M}_f$ is a concordance from $W_{U_1}$ to an element of 
$\Psi_d(U_1)([-1,1]^m)$ whose fibers over $[-\epsilon', \epsilon']^m$ are $M$-standard in $B(v,\delta')$. This is a consequence of properties (i) and (ii) of the PL map $f$. 
Also, if we define $U_3$ as $U_3 := U_1\cap U_2$, then property (iii) of the PL map $f$ implies that 
$\widetilde{M}_f$ and  
$([0,1]\times W)_{U_2}$ become equal when we intersect both concordances with 
$[0,1]\times [-1,1]^m\times U_3$. 
Thus, using the gluing property of the sheaf $\Psi_d: \mathcal{O}(\mathbb{R}^N) \rightarrow \mathbf{PL}\text{-}\mathbf{Sets}$ introduced in Remark \ref{spacerem2}, we can glue together $\widetilde{M}_{f}$ and $([0,1]\times W)_{U_2}$ to produce a concordance 
$\widehat{W} \in \psi_d(N,k)([0,1]\times [-1,1]^m)$ from $W$ to an element $W''$ whose fibers over $[-\epsilon', \epsilon']^m$ are $M$-standard in $B(v,\delta')$. 
Moreover, by the way we performed this construction, $\widehat{W}$ agrees with $[0,1]\times W$ when we restrict the background space to $U_2$. Also, by property (iv) of the PL map $f$, we have
that $\widetilde{M}_f$ agrees with $([0,1]\times W)_{U_1}$ over the complement of $[0,1] \times [-\epsilon'',\epsilon'']^m$. Therefore, 
$\widehat{W}$ and $[0,1]\times W$ will also be equal outside of  $[0,1] \times [-\epsilon'',\epsilon'']^m$. For the next step, we will relabel the radius $\delta'$ as $\delta$. 
Note that we have maintained the property $2\delta < \widetilde{\delta}$. 

\textit{Step 3.} Consider the element $W'' \in \psi_d(N,k)([-1,1]^m)$ concordant to $W$ 
that we obtained in Step 2. Recall that the fibers of $W''$ over $[-\epsilon', \epsilon']^m$ 
are $M$-standard in $B(v,\delta)$. 
In this step, we will deform $W''$ (via a concordance) so that its fibers over $[-\epsilon, \epsilon]^m$ become $M$-standard in the strip $(a_1-\delta, a_1 + \delta)\times \mathbb{R}^{k-1}$, where $a_1$ is the first component of the point $v = (a_1, \ldots, a_k)$. 
To do this, let us fix an isotopy of open PL embeddings 
$e: [0,1]\times \mathbb{R}^{k} \rightarrow [0,1]\times \mathbb{R}^{k}$ such that 
$e_0 = \mathrm{Id}_{\mathbb{R}^{k}}$ and 
\[
e_1(\mathbb{R}^{k}) = \mathbb{R} \times \Big(\prod_{j=2}^{k} (a_j - \delta, a_j + \delta) \Big).
\]
In other words, for each $j=2, \ldots, k$, the isotopy $e$ will compress the $x_j$-axis into $(a_j - \delta, a_j + \delta)$ while keeping the first coordinate fixed. Next, let $E$ be the PL map from $[0,1]\times [-1,1]^m\times \mathbb{R}^k \times (0,1)^{N-k}$ to itself which satisfies the following: 

\begin{itemize}

\item[(i)] $E$ is an isotopy of open PL embeddings $\mathbb{R}^k\times (0,1)^{N-k} \hookrightarrow \mathbb{R}^k\times (0,1)^{N-k}$ parameterized by $[0,1]\times [-1,1]^m$.

\item[(ii)] For any $\lambda \in [-1,1]^m$, we have that $E_{(t,\lambda)} = e_t \times \mathrm{Id}_{(0,1)^{N-k}}$.

\end{itemize}

After fixing a suitable PL bump function $\phi:[-1,1]^m \rightarrow [0,1]$,
we can apply the technique described in Remark \ref{dampening.isotopies}
to dampen the isotopy $E$ in order to obtain a new isotopy $F$
of open PL embeddings 
$\mathbb{R}^k \times (0,1)^{N-k} \hookrightarrow \mathbb{R}^k \times (0,1)^{N-k}$  parameterized by  
$[0,1]\times [-1,1]^m$ which agrees with $E$ over $[0,1]\times [-\epsilon,\epsilon]^m$, and agrees with the identity isotopy over the 
union
\begin{equation} \label{union.stretch}
\Big( \{0\} \times [-1,1]^m \Big) \cup \Big( [0,1] \times \big([-1,1]^m - [-\epsilon', \epsilon']^m\big)\Big).
\end{equation}

By Proposition \ref{pullemb}, the pre-image $F^{-1}(\widehat{W})$ 
(which from now on we will denote by $\widehat{W}'$) 
will be an element of $\psi_d(N,k)([0,1]\times[-1,1]^m)$. Since $F$ agrees with the identity isotopy over 
$ \{0\} \times [-1,1]^m $ and with $E$ over $[0,1]\times [-\epsilon,\epsilon]^m$, the element $\widehat{W}'$ will be a concordance
from $W''$ to an element $W'$ of $\psi_d(N,k)([-1,1]^m)$ such that, for each $\lambda \in [-\epsilon, \epsilon]^m$, the fiber 
$W'_{\lambda}$ is $M$-standard in $(a_1-\delta, a_1 + \delta)\times \mathbb{R}^{k-1}$. Moreover, since $F$ also agrees with the identity isotopy over $[0,1] \times \big([-1,1]^m - [-\epsilon', \epsilon']^m\big)$, the concordance $\widehat{W}'$ will agree with 
$[0,1]\times W$ over a neighborhood of the product $[0,1]\times \partial([-1,1]^m)$. Therefore, by concatenating the concordances 
$\widehat{W}$ from Step 2 and $\widehat{W}'$, we obtain a concordance $\widetilde{W}$ satisfying conditions (i) and (ii) given in the statement of this lemma. 
\end{proof}

\theoremstyle{definition}  \newtheorem{inter.remark}[scan]{Remark}

\begin{inter.remark} \label{inter.remark}
Now that we have proven Lemma \ref{stretching}, we can address one of the details of the proof of Proposition \ref{propmonoid}. Namely, suppose that $W$ is a $0$-simplex of $\psi_d(N,k)_{\bullet}$, let $v = (a_1, \ldots, a_k) \in \mathbb{R}^k$ be a regular value for the standard projection $x_k: W \rightarrow \mathbb{R}^k$,  and define $M := x_k^{-1}(v)$. In the last part of the proof of Proposition 
\ref{propmonoid}, we claimed that $W$ is concordant to the element 
$\mathbb{R}^k\times M \in \psi_d(N,k)_0$. 
We can prove this as follows. First, we claim that $W$ is concordant to an element of 
$\psi_d(N,k)_0$ which is $M$-standard 
in an open ball $B(v,\delta)$ for some suitable value $\delta>0$. 
This is similar to what we wanted to prove in  
Step 2 of Lemma \ref{stretching}.  In that step, we had a family of manifolds 
$W_{\lambda} \in \psi_d(N,k)_0$ parameterized by $[-1,1]^m$ and a 
fiberwise regular value $v$ for the projection onto $\mathbb{R}^k$. 
Then, using the fiberwise regularity of $v$, we made all the fibers
$W_{\lambda}$ over $[-\epsilon', \epsilon']^m$ $M$-standard in an open ball $B(v,\delta)$.
On the other hand, in this remark, we are working with a single manifold $W$ (in other words, we can view $W$ as a family parameterized by a point $*$). 
Thus, by adjusting the argument given in Step 2 of Lemma \ref{stretching} to the case when we have a single manifold, we can obtain a concordance $\widetilde{W} \in \psi_d(N,k)([0,1])$ from $W$ to an element 
$W' \in \psi_d(N,k)_0$ which is $M$-standard in some $B(v,\delta)$.  Now, pick an isotopy 
of open PL embeddings $e:[0,1]\times \mathbb{R}^k \rightarrow [0,1]\times \mathbb{R}^k$  such that
$e_0= \mathrm{Id}_{\mathbb{R}^k}$ and $e_1(\mathbb{R}^k) =  B(v,\delta)$,  and define a new isotopy of embeddings
\[
E: [0,1]\times \mathbb{R}^k \times (0,1)^{N-k} \longrightarrow [0,1]\times \mathbb{R}^k \times (0,1)^{N-k}
\]
by setting $E = e \times \mathrm{Id}_{(0,1)^{N-k}}$. 
By Proposition \ref{pullemb}, the pre-image $E^{-1}([0,1]\times W')$ is an element of 
the set $\psi_d(N,k)([0,1])$. In fact, by the way we defined the isotopy $E$, we have that $E^{-1}([0,1]\times W')$ is a concordance from 
$W'$ to the element $\mathbb{R}^k \times M$. Therefore, by concatenating the concordances $\widetilde{W}$ and 
$E^{-1}([0,1]\times W')$, we obtain a concordance from $W$ to $\mathbb{R}^k \times M$. 
\end{inter.remark}

\theoremstyle{plain}  \newtheorem{preincempty}[scan]{Lemma}

\begin{preincempty}  \label{preincempty}
Fix an element $W \in \psi_d^{\varnothing}(N,k)([-1,1]^m)$ and let $v = (a_1, \ldots, a_k)$ be as in Lemma \ref{stretching}. Given any values $0 < \epsilon < 1$ and $0< \widetilde{\delta}$, there exists an element 
$\widetilde{W} \in \psi_d^{\varnothing}(N,k)([0,1]\times [-1,1]^m)$ with the following properties: 

\begin{itemize}

\item[(i)] $\widetilde{W}$ is a concordance from $W$ to an element 
$W' \in \psi_d^{\varnothing}(N,k)([-1,1]^m)$ with the property that, for each 
$\lambda \in [-\epsilon, \epsilon]^m$, the fiber $W'_{\lambda}$ does not intersect the hyperplane $\{a_1\}\times \mathbb{R}^{N-1}$. 

\item[(ii)] $\widetilde{W}$ satisfies property (ii) from Lemma \ref{stretching}. In particular, $\widetilde{W}$ agrees with $[0,1]\times W$ when we restrict the background space to
$\big( (\mathbb{R} - [a_1 - 2\delta, a_1 + 2\delta]\big) \times \mathbb{R}^{N-1}$ for some value $\delta$ such that $0 < 2\delta < \widetilde{\delta}$. 

\end{itemize}

\end{preincempty} 

\begin{proof}
As we did in Lemma \ref{stretching}, we will denote the point $(0,\ldots,0)\in [-1,1]^m$ by $\mathbf{0}$.
Also, let $x_k:W_{\mathbf{0}}\rightarrow \mathbb{R}^k$ be the standard projection onto $\mathbb{R}^k$, 
and denote the pre-image $x_k^{-1}(v)$ by $M$. 
Since we can regard the fiber $W_{\mathbf{0}}$ as a $0$-simplex of  $\psi_d^{\varnothing}(N,k)_{\bullet}$, we have by Lemma \ref{nullbord} that the manifold $M$ is null-bordant, in the sense of Definition \ref{null.bordant}.  Now, let $\widetilde{\delta}$ be the value given in this lemma and fix a value 
$\epsilon'$ such that $\epsilon < \epsilon' < 1$. 
By virtue of Lemma \ref{stretching}, we may assume that each fiber of $W$ over 
$[-\epsilon', \epsilon']^m$ is $M$-standard in $(a_1 - \delta, a_1 + \delta)\times \mathbb{R}^{k-1}$, 
where $\delta>0$ is a value such that $0 < 2\delta < \widetilde{\delta}$. 
The idea of this proof is to perform the maneuver from Lemma \ref{maneuver} to make each fiber over 
$[-\epsilon, \epsilon]^m$ disjoint from the hyperplane $\{a_1\}\times \mathbb{R}^{N-1}$. More rigorously, we will do the following. 
 
 \textit{Step 1.} First, to make our notation less cumbersome, we will denote the open sets 
\[
(a_1 - \delta, a_1 + \delta)\times \mathbb{R}^{N-1} \quad \text{and} \quad \big( (-\infty, a_1 - \frac{\delta}{2}) \cup (a_1 + \frac{\delta}{2}, \infty) \big)\times \mathbb{R}^{N-1}
\]
in $\mathbb{R}^N$ by $U_1$ and $U_2$ respectively. 
Also, let us denote the $m$-cube $[-\epsilon', \epsilon']^m$ by $V'$.  
Now, consider the standard $M$-concordance $\widetilde{M}_{\delta}$ that we constructed  in the proof of Lemma \ref{maneuver}. Recall that $\widetilde{M}_{\delta}$ is an element of $\Psi_d(U_1)([0,1])$. We will use $\widetilde{M}_{\delta}$, $V'$, $U_1$, and $U_2$ to define the following two elements $\widetilde{M}(\epsilon', U_1)$ and $\widetilde{W}(\epsilon', U_2)$: 

\begin{enumerate}

\item $\widetilde{M}(\epsilon', U_1)$ will be the element of $\Psi_d(U_1)([0,1]\times V')$ obtained by taking the image of 
the product $\widetilde{M}_{\delta}\times V'$ under the obvious permutation homeomorphism 
$[0,1]\times U_1 \times V' \stackrel{\cong}{\longrightarrow} [0,1] \times V' \times U_1$. 

\item Next, let $W_{V'}$ be the restriction of $W$ over the $m$-cube $V'$. We define $\widetilde{W}(\epsilon', U_2)$ to be the element of 
$\Psi_d(U_2)([0,1]\times V')$ obtained by intersecting the constant concordance 
$[0,1] \times W_{V'}$ with $[0,1]\times V' \times U_2$. 

\end{enumerate}

Recall that all the fibers of the concordance $\widetilde{M}_{\delta}$ are $M$-standard in the open set
\begin{equation} \label{M.standard.set}
\big((a_1 - \delta, a_1 - \frac{\delta}{2}) \cup (a_1 + \frac{\delta}{2}, a_1 + \delta)\big)\times \mathbb{R}^{k-1}.
\end{equation}

However, we are also assuming that the fibers of $W$ are $M$-standard in the open set $(a_1 - \delta, a_1 + \delta)\times \mathbb{R}^{k-1}$. It follows that 
the fibers of the elements $\widetilde{M}(\epsilon', U_1)$ and $\widetilde{W}(\epsilon', U_2)$ are also $M$-standard in the set given
in (\ref{M.standard.set}), which implies that $\widetilde{M}(\epsilon', U_1)$ and $\widetilde{W}(\epsilon', U_2)$ become equal when we intersect the fibers of both elements with $U_1\cap U_2$.
Therefore, using the gluing property of the sheaf
$\Psi_d: \mathcal{O}(\mathbb{R}^N) \rightarrow \mathbf{PL}\text{-}\mathbf{Sets}$ defined 
in Remark \ref{spacerem2}, we can glue 
$\widetilde{M}(\epsilon', U_1)$ and $\widetilde{W}(\epsilon', U_2)$ 
together to produce an element $\widehat{W}$ 
of $\psi_d^{\varnothing}(N,k)([0,1]\times V')$ which agrees with  $\widetilde{M}(\epsilon', U_1)$ when 
we restrict the background space to
$U_1$ and with $\widetilde{W}(\epsilon', U_2)$ when 
we restrict the background space to $U_2$. Note that all the fibers of 
$\widetilde{M}(\epsilon', U_1)$ over $\{1\}\times V'$ are disjoint from $\{a_1\}\times \mathbb{R}^{N-1}$. Consequently, the same property holds for all the fibers of $\widehat{W}$ over $\{1\}\times V'$. 

\textit{Step 2.} Now, let $V$ denote the $m$-cube $[-\epsilon, \epsilon]^m$. In this step, we will obtain our desired concordance $\widetilde{W} \in \psi_d^{\varnothing}(N,k)([0,1]\times [-1,1]^m)$ by pulling back the concordance $\widehat{W}$ along a PL map $f: [0,1]\times V' \rightarrow [0,1]\times V'$ with the following properties:

\begin{itemize}

\item[(i)] $f(t,x) = (0,x)$ for all $(t,x)$ in $[0,1]\times V''$,
where $V''$ is some compact neighborhood of $\partial V'$ such that $V''\cap V = \varnothing$. For example, $V''$ could be a collar of $\partial V'$ disjoint from $V$. 

\item[(ii)] $f(t,x) = (t,x)$ for all $(t,x)$ in $( \{0\} \times V' ) \cup ([0,1]\times V)$. 
 
\end{itemize}

As indicated earlier, we define $\widetilde{W}$ as the pull-back $f^*\widehat{W}$. Since $\widehat{W}$ is an element of 
$\psi_d^{\varnothing}(N,k)([0,1]\times V')$, then so is $\widetilde{W}$. 
However, property (i) of the map $f$ implies that 
$\widetilde{W}$ agrees with the constant concordance $[0,1]\times W$ over $[0,1]\times V''$. Therefore, we can make $\widetilde{W}$ into an element of $\psi_d^{\varnothing}(N,k)([0,1]\times [-1,1]^m)$ by setting $\widetilde{W} = [0,1]\times W$ over the set 
$[0,1] \times ([-1,1]^m - V')$. 
Also, by property (ii) of the map $f$, $\widetilde{W}$ agrees with $\widehat{W}$ over $[0,1]\times V$. In particular, the fibers of $\widetilde{W}$ over $\{1\} \times V$ are disjoint from the hyperplane $\{a_1\}\times \mathbb{R}^{N-1}$. Finally,
in Step 1, we obtained $\widehat{W}$ from $[0,1]\times W_{V'}$ 
without changing the fibers of $[0,1]\times W_{V'}$ in the open set $U_2$. 
It follows that the concordance $\widetilde{W}$ agrees with $[0,1]\times W$ 
when we restrict the background space to $U_2$. Thus, $\widetilde{W}$ 
satisfies all the conditions listed in the statement of this lemma. 
\end{proof}

\theoremstyle{definition}  \newtheorem{perturb.reg}[scan]{Remark}

\begin{perturb.reg}  \label{perturb.reg}
Let $W \subseteq P \times \mathbb{R}^k \times (0,1)^{N-k}$ be some element of 
$\psi_d(N,k)(P)$, and let $\pi: W \rightarrow P $ be the natural
projection onto $P$. As we explained at the end of Remark \ref{remark.fib.reg}, 
if the base-space $P$ is compact, we have the following alternative way of characterizing fiberwise regular values of the projection $\widetilde{x}_k: W \rightarrow \mathbb{R}^k$: \textit{a point $v_0$ in $\mathbb{R}^k$ is a fiberwise regular value of $\widetilde{x}_k$ if and only if there exists a $\delta>0$ such that the pre-image 
$\widetilde{x}_k^{-1}\big( B(v_0,\delta)\big)$ is an element of $\psi_{d-k}(N-k,0)\big(P\times B(v_0,\delta)\big)$}. In this statement, we are assuming that the projection from $\widetilde{x}_k^{-1}\big( B(v_0,\delta)\big)$ onto 
$P\times B(v_0,\delta)$ is the restriction of $(\pi, \widetilde{x}_k)$ on $\widetilde{x}_k^{-1}\big( B(v_0,\delta)\big)$.
Let us assume then that $P$ is compact and suppose that $v_0$ is a fiberwise regular value of 
$\widetilde{x}_k: W \rightarrow \mathbb{R}^k$. Also, let $\delta>0$ be a value whose existence is 
ensured by the equivalence stated above, and let us denote the pre-image
$\widetilde{x}_k^{-1}\big( B(v_0,\delta)\big)$ by $W_0$. 
The purpose of this remark is to show that any other point in
$B(v_0, \delta)$ is also a fiberwise regular value of $\widetilde{x}_k: W \rightarrow \mathbb{R}^k$. 
Indeed, pick any point $v_1 \in B(v_0, \delta)$ and 
let $\delta'>0$ be any value such that $B(v_1, \delta')\subset B(v_0, \delta)$. 
The restriction of  the projection 
$(\pi, \widetilde{x}_k)|_{W_0}: W_0 \rightarrow P\times B(v_0,\delta)$ 
on $W_1 := \widetilde{x}_k^{-1}\big( B(v_1,\delta')\big)$ is equal to the projection 
$(\pi, \widetilde{x}_k)|_{W_1}: W_1 \rightarrow P\times B(v_1,\delta')$. Since 
$(\pi, \widetilde{x}_k)|_{W_0}$ is a PL submersion of codimension $d-k$, then so is 
$(\pi, \widetilde{x}_k)|_{W_1}$. Therefore, $W_1$ is an element of 
$\psi_{d-k}(N-k,0)\big(P\times B(v_1,\delta')\big)$, which 
(according to the equivalence stated above) implies that 
$v_1$ is also a fiberwise regular value of $\widetilde{x}_k: W \rightarrow \mathbb{R}^k$.  
It follows that, for any compact PL space $P$ and any $W \in \psi_d(N,k)(P)$,
the set of fiberwise regular values of $\widetilde{x}_k: W \rightarrow \mathbb{R}^k$ is open in 
$\mathbb{R}^k$. Thus, if $v_0$ is a fiberwise regular value of $\widetilde{x}_k: W \rightarrow \mathbb{R}^k$, any
sufficiently small perturbation of $v_0$ will also be a fiberwise regular value of $\widetilde{x}_k$, 
as long as the base-space $P$ is compact. We will use these 
observations in the following argument.  
\end{perturb.reg}

\begin{proof}[\textbf{Proof of Proposition \ref{nestenempty}}]
Fix a closed PL manifold $P$, an element $W \in \psi_d^{\varnothing}(N,k)(P)$, and a value 
$\beta\in \mathbb{R}$. By applying Lemma \ref{longman.k}, we can find a concordance 
$\widehat{W} \in \psi_d^{\varnothing}(N,k)([0,1]\times P)$ and a finite collection 
$\mathcal{U} = \{U_1', \ldots, U_q'\}$ of open sets in $P$ which satisfy the following conditions: 

\begin{itemize}

\item[(i)] $\mathcal{U} = \{U_1', \ldots, U_q'\}$ covers $P$. 

\item[(ii)] $\widehat{W}$ is a concordance from $W$ to an element 
$W''\in \psi_d^{\varnothing}(N,k)(P)$ with the property that, for each $U_i' \in \mathcal{U}$, 
the standard projection $\widetilde{x}_k: W''_{U_i'} \rightarrow \mathbb{R}^k$ has a fiberwise regular value 
$v_i = (a^i_1, \ldots, a^i_k) \in \mathbb{R}^k$ such that $a^i_1 > \beta$. 

\item[(iii)] $\widehat{W}$ agrees with the constant concordance $[0,1]\times W$
when we restrict the background space to $(-\infty,\beta)\times \mathbb{R}^{N-1}$. 
\end{itemize}

Next, by the compactness of $P$, we can find two finite collections 
$\{V_{\alpha}\}_{\alpha \in \Lambda}$,  $\{V'_{\alpha}\}_{\alpha \in \Lambda}$ of subsets of $P$
(indexed by the same set $\Lambda$) subordinate to $\mathcal{U}$ with the following properties:

\begin{itemize}

\item[(i$^*$)]  For each $\alpha \in \Lambda$, both $V_{\alpha}$ and $V'_{\alpha}$ are PL balls of dimension $m$, 
where $m$ is the dimension of the manifold $P$. 

\item[(ii$^*$)] $V_{\alpha} \subset \mathrm{Int}\hspace{0.03cm}V'_{\alpha}$ for each $\alpha \in \Lambda$.  

\item[(iii$^*$)] The collection $\{\mathrm{Int}\hspace{0.03cm}V_{\alpha}\}_{\alpha \in \Lambda}$ is an open cover for $P$.

\end{itemize}

From property (iii) of $\widehat{W}$ and $\mathcal{U}$ it follows that, 
for each $\alpha \in \Lambda$, the projection $\widetilde{x}_k: W''_{V'_{\alpha}} \rightarrow \mathbb{R}^k$ has a fiberwise regular value.
Pick such a fiberwise regular value for $\widetilde{x}_k: W''_{V'_{\alpha}} \rightarrow \mathbb{R}^k$  and 
denote it by $v_{\alpha} = (a^{\alpha}_1, \ldots, a^{\alpha}_k)$. Recall that the first coordinate of $v_{\alpha}$  satisfies  $a^{\alpha}_1 > \beta$. 
By the discussion given in Remark \ref{perturb.reg}, any sufficiently small perturbation 
of $v_{\alpha}$ will also be a fiberwise regular value of
$\widetilde{x}_k: W''_{V'_{\alpha}} \rightarrow \mathbb{R}^k$. Thus, if necessary,  
we can perturb the fiberwise regular values $v_{\alpha}$ to ensure
that there are no repetitions in the set of real numbers $\{a_1^{\alpha}\}_{\alpha \in \Lambda}$. We can perform this perturbation without altering the condition $a^{\alpha}_1 > \beta$. 
Next, pick a small enough value 
$\widetilde{\delta}> 0$ so that all the intervals
in the collection $\{[a^{\alpha}_1 - \widetilde{\delta}, a^{\alpha}_1 + \widetilde{\delta}]\}_{\alpha \in \Lambda}$ are pairwise disjoint and so that $\beta < a^{\alpha}_1 - \widetilde{\delta}$ for all $\alpha \in \Lambda$. 
Since all the intervals  
$[a^{\alpha}_1 - \widetilde{\delta}, a^{\alpha}_1 + \widetilde{\delta}]$ 
are pairwise disjoint, we can apply
Lemma \ref{preincempty} to define
inductively an element 
$\widehat{W}' \in \psi_d^{\varnothing}(N,k)([0,1]\times P)$
satisfying the following:

\begin{itemize}

\item[$\cdot$] $\widehat{W}'$ is a concordance between $W''$ and an element $W'$ in 
$\psi_d^{\varnothing}(N,k)(P)$ such that, for each $\alpha \in \Lambda$, we have
\[
W'_{\lambda} \cap \big( \{ a_1^{\alpha} \} \times \mathbb{R}^{N-1}  \big) = \varnothing
\]
for all points $\lambda \in V_{\alpha}$.

\item[$\cdot$] $\widehat{W}'$ agrees with the constant concordance $[0,1]\times W''$ when 
we restrict the background space to $(-\infty,\beta)\times \mathbb{R}^{N-1}$. 

\end{itemize}
 
The desired concordance $\widetilde{W}$ is obtained by concatenating 
$\widehat{W}$ and  $\widehat{W}'$.  Evidently, $\widetilde{W}$ satisfies 
claims (i) and (ii) of Proposition \ref{nestenempty}. Moreover, let
$\{U_{\alpha}\}_{\alpha \in \Lambda}$ be the open cover of $P$ defined by $U_{\alpha}:= \mathrm{Int}\hspace{0.03cm}V_{\alpha}$. Given any $\alpha \in \Lambda$, 
it is straightforward to verify that
$W'_{\lambda} \cap \big( \{ a_1^{\alpha} \} \times \mathbb{R}^{N-1}  \big) = \varnothing$
for all $\lambda \in U_{\alpha}$. Thus, $\widetilde{W}$ also satisfies 
claim (iii). 
\end{proof}

\subsection{Proof of the main theorem} \label{section64}

 Recall that in Lemma \ref{descan.homotopic} we showed that the composition in (\ref{descan})
is homotopic to the scanning map $\mathcal{S}_{k}: |\psi_d(N,k)_{\bullet}| \rightarrow \Omega  |\psi_d(N,k+1)_{\bullet}|$. 
By Remark \ref{monoid.remark} and Proposition \ref{firstmapweak}, the first two maps in (\ref{descan}) are weak homotopy 
equivalences. On the other hand, 
from Proposition \ref{incempty} it follows that the last map in (\ref{descan}) is also a weak equivalence. 
Therefore, we have proven that the scanning map $\mathcal{S}_k$ is a weak homotopy equivalence, as long as we  have $N - d \geq 3$ and $k>1$. This was the claim we made back in Theorem \ref{scanweak}. Using this theorem, we can prove the following by induction. 

\theoremstyle{plain}  \newtheorem{premaintheorem}[scan]{Theorem}

\begin{premaintheorem} \label{premaintheorem}
If $N-d \geq 3$, then the map 
$\widetilde{\mathcal{S}}:|\psi_d(N,1)_{\bullet}| \rightarrow \Omega^{N-1}|\Psi_d(\mathbb{R}^N)_{\bullet}|$
defined in (\ref{prescaneq}) is a weak homotopy equivalence. 
\end{premaintheorem}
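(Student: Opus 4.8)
The plan is to deduce Theorem \ref{premaintheorem} from Theorem \ref{scanweak} by an induction on $k$, peeling off one factor of $\Omega$ at a time. Recall from \S7.1 that $\widetilde{\mathcal{S}}$ was defined as the composite
\[
\widetilde{\mathcal{S}} = \Omega^{N-2}\mathcal{S}_{N-1}\circ\Omega^{N-3}\mathcal{S}_{N-2}\circ\cdots\circ\Omega\mathcal{S}_2\circ\mathcal{S}_1,
\]
where each $\mathcal{S}_k:|\psi_d(N,k)_{\bullet}|\to\Omega|\psi_d(N,k+1)_{\bullet}|$ is the scanning map, and $\psi_d(N,N)_{\bullet}=\Psi_d(\mathbb{R}^N)_{\bullet}$ by Definition \ref{spacemanfil}. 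So $\widetilde{\mathcal{S}}$ is a composite of $N-1$ maps, the $j$-th of which is $\Omega^{j-1}\mathcal{S}_j$ followed (implicitly) by the canonical identification of iterated loop spaces.

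First I would record the elementary fact that the functor $\Omega(-)$ preserves weak homotopy equivalences between spaces having the homotopy type of CW complexes, and more generally that $\Omega^{j-1}$ applied to a weak equivalence is again a weak equivalence. All the spaces $|\psi_d(N,k)_{\bullet}|$ are geometric realizations of simplicial sets, hence CW complexes, so this applies. The induction then runs as follows. The hypothesis $N-d\geq 3$ guarantees that for every $k$ with $1\leq k\leq N-1$ we have $N-d\geq 3$, which is exactly the hypothesis needed to invoke Theorem \ref{scanweak}; thus each $\mathcal{S}_k$ is a weak homotopy equivalence. Applying $\Omega^{k-1}$ to $\mathcal{S}_k:|\psi_d(N,k)_{\bullet}|\xrightarrow{\simeq}\Omega|\psi_d(N,k+1)_{\bullet}|$ yields a weak homotopy equivalence $\Omega^{k-1}|\psi_d(N,k)_{\bullet}|\xrightarrow{\simeq}\Omega^{k}|\psi_d(N,k+1)_{\bullet}|$. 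Composing these for $k=1,2,\ldots,N-1$ and using that a composite of weak homotopy equivalences is a weak homotopy equivalence gives that
\[
\widetilde{\mathcal{S}}:|\psi_d(N,1)_{\bullet}| \longrightarrow \Omega^{N-1}|\psi_d(N,N)_{\bullet}| = \Omega^{N-1}|\Psi_d(\mathbb{R}^N)_{\bullet}|
\]
is a weak homotopy equivalence, which is the assertion of the theorem.

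There is essentially no obstacle here: all the real work has already been done in Theorem \ref{scanweak}, whose proof occupies \S7.2 through the first part of this subsection. The only genuinely substantive point to be careful about is the compatibility of base points — one must check that the base point $\bullet_N$ of each $|\psi_d(N,k)_{\bullet}|$ (the vertex $\emptyset$, see Remark \ref{spacerem}) is sent, under $\mathcal{S}_k$, to the constant loop at $\bullet_N\in|\psi_d(N,k+1)_{\bullet}|$, so that the iterated loop spaces and the iterated scanning maps are all based consistently; but this is immediate from the construction of $\mathcal{T}$ in Definition \ref{scan}, since $\mathcal{T}(\pm 1,x)=\bullet_N$ and $T^{+}\circ i = T^{-}\circ i$ equals the standard inclusion, so the empty manifold is carried to the empty manifold throughout. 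With this observation in place, the induction closes and the theorem follows. Finally, combining Theorem \ref{premaintheorem} with Theorem \ref{DesCob} (equivalently Proposition \ref{preDesCob}) and letting $N\to\infty$ yields the main theorem $B\mathcal{C}_d^{PL}\simeq\Omega^{\infty-1}\mathbf{\Psi}^{PL}_d$.
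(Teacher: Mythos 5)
Your argument is correct and is exactly the induction that the paper leaves implicit; the paper's own ``proof'' is the single sentence ``Using Theorem \ref{scanweak}, we can prove the following result by induction,'' and you have simply spelled out that induction, including the base-point compatibility check. One small remark: $\Omega$ preserves weak homotopy equivalences of arbitrary pointed spaces (since $\pi_n(\Omega X)\cong\pi_{n+1}(X)$ naturally), so the CW hypothesis you flag is not actually needed, though it does no harm here.
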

 
We will now indicate how to obtain the main theorem of this article from Theorems   
\ref{DesCob} and \ref{premaintheorem}. 
 Let $i_N$ denote the obvious inclusion  
$|\psi_d(N,1)_{\bullet}| \hookrightarrow |\psi_d(N+1,1)_{\bullet}|$. 
By Theorem \ref{premaintheorem}, there is a weak homotopy equivalence
\begin{equation} \label{finalequiv}
\hocolim_{N\rightarrow\infty}|\psi_d(N,1)_{\bullet}| \stackrel{\simeq}{\longrightarrow} \Omega^{\infty-1}\Psi^{\mathrm{PL}}_d,
\end{equation}  
where the homotopy colimit on the left-hand side is
the mapping telescope of the diagram
\begin{equation} \label{sluttdiagram}
\dots \longrightarrow |\psi_d(N,1)_{\bullet}| \stackrel{i_N}{\longrightarrow}  |\psi_d(N+1,1)_{\bullet}|\stackrel{i_{N+1}}{\longrightarrow} |\psi_d(N+2,1)_{\bullet}| \longrightarrow \dots.
\end{equation}
Since each map in this diagram is an inclusion of CW-complexes, we have that the natural
map $F:\hocolim_{N\rightarrow\infty}|\psi_d(N,1)_{\bullet}| \rightarrow |\psi_d(\infty,1)_{\bullet}|$ 
is a weak homotopy equivalence. Moreover, since both  
$|\psi_d(\infty,1)_{\bullet}|$ and the mapping telescope  
of (\ref{sluttdiagram}) are CW-complexes, the map $F$ has a 
homotopy inverse
\begin{equation} \label{finalequivtwo}
|\psi_d(\infty,1)_{\bullet}| \stackrel{\simeq}{\longrightarrow} \hocolim_{N\rightarrow\infty}|\psi_d(N,1)_{\bullet}|.  
\end{equation}
Finally, we obtain the main theorem of this article by combining (\ref{finalequiv}), (\ref{finalequivtwo}) and the  
homotopy equivalence from Theorem \ref{DesCob}.

\theoremstyle{plain} \newtheorem{maintheorem}[scan]{Theorem}

\begin{maintheorem} \label{maintheorem}
There is a weak homotopy equivalence
\[
B\mathsf{Cob}_d^{\mathrm{PL}} \stackrel{\simeq}{\longrightarrow} \Omega^{\infty-1}\Psi^{\mathrm{PL}}_d.
\]
\end{maintheorem}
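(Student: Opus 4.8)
The plan is to deduce Theorem \ref{maintheorem} formally from the two principal results already established, Theorem \ref{DesCob} and Theorem \ref{premaintheorem}, together with a standard passage to the limit. The first input, Theorem \ref{DesCob} (proved in $\S$5--$\S$6), supplies a homotopy equivalence $B\mathcal{C}^{PL}_d \stackrel{\simeq}{\longrightarrow} |\psi_d(\infty,1)_{\bullet}|$, so it suffices to produce a weak homotopy equivalence $|\psi_d(\infty,1)_{\bullet}| \simeq \Omega^{\infty-1}\mathbf{\Psi}^{PL}_d$, where by definition $\Omega^{\infty-1}\mathbf{\Psi}^{PL}_d = \hocolim_{N\to\infty}\Omega^{N-1}|\Psi_d(\mathbb{R}^N)_{\bullet}|$ is the mapping telescope of the $(N-1)$-fold loops of the adjoint structure maps $\widetilde{\mathcal{E}}_N$.

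Next I would assemble the scanning equivalences into a map of telescopes. For every $N$ with $N-d\geq 3$, Theorem \ref{premaintheorem} says that $\widetilde{\mathcal{S}} = \Omega^{N-2}\mathcal{S}_{N-1}\circ\cdots\circ\mathcal{S}_1 : |\psi_d(N,1)_{\bullet}| \to \Omega^{N-1}|\Psi_d(\mathbb{R}^N)_{\bullet}|$ is a weak homotopy equivalence. One must check that these maps are compatible up to homotopy with the two stabilization maps --- the inclusions $i_N : |\psi_d(N,1)_{\bullet}| \hookrightarrow |\psi_d(N+1,1)_{\bullet}|$ on the domain and $\Omega^{N-1}\widetilde{\mathcal{E}}_N$ on the target --- so that the collection $\{\widetilde{\mathcal{S}}\}_N$ defines a map from the telescope of diagram (\ref{sluttdiagram}) to the telescope defining $\Omega^{\infty-1}\mathbf{\Psi}^{PL}_d$. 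Since this map is a weak equivalence at every stage, the induced map of mapping telescopes is a weak homotopy equivalence; this is the equivalence (\ref{finalequiv}).

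Then I would eliminate the homotopy colimit on the source side. Because each $i_N$ is an inclusion of CW complexes, the canonical map from the mapping telescope of (\ref{sluttdiagram}) to the strict colimit $|\psi_d(\infty,1)_{\bullet}| = \colim_N |\psi_d(N,1)_{\bullet}|$ is a weak homotopy equivalence, and both spaces being CW complexes it admits a homotopy inverse, namely (\ref{finalequivtwo}). Concatenating the equivalence of Theorem \ref{DesCob}, this homotopy inverse, and the equivalence (\ref{finalequiv}) produces the desired weak homotopy equivalence $B\mathcal{C}^{PL}_d \stackrel{\simeq}{\longrightarrow} \Omega^{\infty-1}\mathbf{\Psi}^{PL}_d$.

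The only point in this final argument that needs any care is the naturality of the scanning maps under stabilization, i.e.\ the homotopy-commutativity of the squares relating $\widetilde{\mathcal{S}}$, $i_N$ and $\widetilde{\mathcal{E}}_N$; this should follow from the observation that $\widetilde{\mathcal{S}}$ and the spectrum structure maps $\mathcal{E}_N$ are both built from the same ``push to infinity'' construction of $\S$3.4 applied to the one fixed increasing pl homeomorphism $f:[0,1)\to[0,\infty)$. Everything genuinely hard has already happened earlier: the scanning equivalence rests on Theorem \ref{scanweak}, whose proof uses the group-completion input Proposition \ref{segal} together with the fiberwise-regularity constructions of $\S$7.4, and on Theorem \ref{longman}, which depends on Hudson's Isotopy Extension Theorem and hence on the codimension hypothesis $N-d\geq 3$. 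So I expect the write-up of Theorem \ref{maintheorem} itself to be short, the real obstacles being precisely those two earlier theorems.
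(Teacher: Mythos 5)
Your proposal follows the paper's own proof exactly: combine Theorem \ref{DesCob}, the scanning equivalence of Theorem \ref{premaintheorem} applied levelwise to produce the telescope equivalence (\ref{finalequiv}), and the identification (\ref{finalequivtwo}) of the mapping telescope with the strict colimit $|\psi_d(\infty,1)_{\bullet}|$ using that the maps $i_N$ are CW inclusions. You are in fact a bit more explicit than the paper in flagging the homotopy-commutativity of the squares relating $\widetilde{\mathcal{S}}$, $i_N$, and $\Omega^{N-1}\widetilde{\mathcal{E}}_N$ --- a point the paper treats implicitly when it asserts (\ref{finalequiv}) --- and your observation that this compatibility is traceable to both constructions using the same fixed pl homeomorphism $f:[0,1)\to[0,\infty)$ is the right justification.
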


\end{document}